\title{Homomorphic images of algebraic groups}
\author{Uri Bader and Elyasheev Leibtag }
\date{}
\begin{document}
\maketitle
\begin{abstract}
    We study topological group theoretic properties of algebraic groups over local fields. In particular, we find conditions under which such groups have closed images under arbitrary continuous homomorphisms into arbitrary topological groups.
\end{abstract}
\section{Introduction}

Let $k$ be a local field, and let $\bfG$ denote a linear algebraic group over $k$.
The group $\bfG(k)$ has a natural locally compact topology, called \emph{the $k$-point analytic topology}, making it a topological group.
In this paper, we are interested in properties of $\bfG(k)$ viewed as a topological group.

\begin{convention*}
Throughout this paper, unless explicitly specified otherwise, all topological groups are assumed to be Hausdorff, hence Tychonoff.
\end{convention*}

A topological group that admits no weaker (Hausdorff) group topology is called \emph{minimal}.
It is straightforward that every compact group is minimal.
Moreover, the center of a Polish minimal group must be compact, see Lemma~\ref{lem : minimal has compact center}.
In \cite{omori1966homomorphic} and \cite{goto1973absolutely}, H. Omori and M. Goto showed that for connected real algebraic groups this necessary condition is in fact sufficient - a connected real algebraic group with a compact center is minimal.
The following is an extension of the above result which applies to an arbitrary local field of characteristic zero, see Theorem~\ref{theorem : minimal condition algebraic groups char 0}.

\begin{mthm} \label{mthm : theorem A}
Let $k$ be a local field of characteristic zero, and let $\bfG$ be a connected $k$-algebraic group.
Then the group $\bfG(k)$ is minimal if and only if the center of $\bfG(k)$ is compact.
\end{mthm}

We note that the work of Omori and Goto uses the connectivity of the real group $G$ in an essential way.
Connectivity is a feature lacking over non-Archimedian local fields which are totally disconnected. As such, our proof of Theorem~\ref{mthm : theorem A} is entirely different from theirs.

It is a standard fact that 
a Polish topological group is minimal if and only if 
its image is closed under any \emph{injective} continuous group homomorphism into any topological group, see Corollary~\ref{cor : minimal has closed image} below.
A topological group is said to be \emph{sealed} if its image is closed under any (not necessarily injective) continuous group homomorphism into any topological group.
Clearly, a Polish sealed group must have a compact center for all of its quotients.
Relying on the work of Omori and Goto, 
Mayer showed in \cite{MR1428116} that 
for connected locally compact groups this necessary condition is in fact sufficient - a connected locally compact group such that all of its quotients have a compact center is sealed.
The following is a generalization that applies to connected algebraic groups over an arbitrary local field of characteristic zero, see Theorem~\ref{theorem : sealed theorem}.

\begin{mthm} \label{mthm : theorem B}
Let $k$ be a local field of characteristic zero and let $\bfG$ be a connected $k$ group with a $k$-Levi Decomposition, $\bfL\ltimes \Rad{u}{\bfG}$. The following are equivalent
\begin{enumerate}
    \item The group $\bfG(k)$ is sealed.
    \item For any normal unipotent $k$-algebraic subgroup $\bfN\lhd \bfG$ the center $Z((\bfG/\bfN) (k))$ is compact.
    \item The center of the reductive group $\bfL$ is anisotropic (equivalently, the center of $\bfL(k)$ is compact) and the action of $\bfL$ on $\Rad{u}{\bfG}$ is with no non-trivial fixed points, 
    \item The center of the reductive group $\bfL$ is anisotropic (equivalently, the center of $\bfL(k)$ is compact) and the action of $\bfL$ on $\Lie(\Rad{u}{\bfG})$ is with no invariant vectors.
\end{enumerate}
\end{mthm}

\subsection{Previous literature}
As already mentioned, minimality questions for connected groups were considered by
Omori, Goto and Mayer, see \cite{goto1973absolutely}, \cite{omori1966homomorphic} and \cite{MR1428116}.
In the special case of semisimple groups, Theorem~\ref{mthm : theorem A} and~\ref{mthm : theorem B}
were established by Bader-Gelander in \cite{MR3692904} over an arbitrary local filed.
In another direction, an interesting recent work by Megrelishvili and Shlossberg gives minimality criteria for groups of the form $\SL{n}{F}$, where $F\le k$ is an arbitrary subfield, see \cite{MR4495833}, as well as \cite{shlossberg2022minimality} concerning number theoretic applications.

For an extensive survey on minimality and closed image properties we refer the reader to \cite{MR3205486},\cite{MR1651174}, \cite{dikranjan1998categorically} and \cite{banakh2017categorically}.

Lately, Carter and Willis investigated the sealed property in the context of locally compact totally disconnected groups arising as automorphism groups of buildings, see \cite{carter2022homomorphic}.

Regarding infinite-dimensional isometry groups, Duchesne showed that the Polish topology of the isometry group of the infinite-dimensional hyperbolic space is minimal
 \cite{duchesne2020polish}.

In a recent work by Ghadernezhad and de la Nuez González, the authors investigate minimality of the automorphism group of homogeneous structures, see \cite{ghadernezhad2019group}. Moreover, in \cite{de2022compact}, de la Nuez González points out that the compact-open topology on the
homeomorphism group of a surface without
boundary is minimal.

\subsection{A note on positive characteristic}
In proving Theorems~\ref{mthm : theorem A} and~\ref{mthm : theorem B}, we use the existence of a Levi Decomposition over $k$, and that the connected $k$-unipotent radical is $k$-split. Moreover, in proving Theorem~\ref{mthm : theorem B}, we use the fact that the Levi factor is completely reducible. These properties are always  satisfied in zero characteristic, but do not generally hold over fields of positive characteristics, hence the zero characteristic assumption we take in Theorems~\ref{mthm : theorem A} and~\ref{mthm : theorem B}.
However, it seems that even over a field of positive characteristic, these assumptions could be removed in many cases, maybe in all cases.
This direction of study will be carried elsewhere.
Yet, as a ``proof of concept", we show in Section \ref{Positive char} how the tools developed in this paper may be used to reprove the aforementioned result of Bader-Gelander, namely proving Theorems~\ref{mthm : theorem A} and~\ref{mthm : theorem B} 
in the special case of semisimple groups over an arbitrary local field.

\subsection{Description of the paper}
In section \ref{Baby case}
we introduce a baby case presenting the main techniques and ideas that will be used throughout the paper. In particular, the impotent role played by admitting a continuous linear action on a finite-dimensional topological vector space.
In section \ref{Preliminaries} we recall a few topological properties of $k$-points of algebraic groups. We also provide auxiliary lemmas concerning groups acting by automorphisms on locally compact abelian groups.
Section \ref{Minimal and Sealed groups} is devoted to a brief survey of the minimal and sealed group properties, as well as the relative properties for subgroups. The importance of these relative properties is that in order to show minimality, we show relative minimality and co-minimality of a subgroup, usually the normal subgroup under a semi-direct product structure.

We point out that the standard topology on a finite-dimensional vector space over a local field \emph{is not} minimal. 
In section \ref{Rigidity of scalar multiplication} we show that the standard topology on a finite-dimensional vector space over a local field \emph{is} minimal amongst all group topologies admitting a continuous action by an adequate multiplicative subgroup.
Section \ref{Linear algebraic action of Tori} shows that if a vector space is a representation space of an algebraic torus then the torus acts on the vector space by an adequate multiplicative subgroup of the field, thus making the vector space minimal with respect to a continuous torus action.

In section \ref{Minimal algebraic groups} we prove Theorem~\ref{mthm : theorem A}.
We first prove it for solvable groups, using the linear action of the torus on the unipotent radical, and then treat the general case by finding a co-compact solvable subgroup (the solvable radical of a minimal parabolic) and showing that it is minimal.
Section \ref{Sealed algebraic groups} is devoted to the proof of Theorem~\ref{mthm : theorem B}. As in the baby case of $\PGL{2}{k}$, we first show how minimality of simple groups implies the sealed property for these groups. For general algebraic groups, we show by induction on the nilpotency degree in the unipotent radical that the radical is relatively sealed.
In section \ref{Positive char} we show how the tools developed in this paper can be used to prove previously known results for semisimple groups over local fields of any characteristic.
\subsection{Acknowledgments}
We are grateful to Gil Goffer, Tal Cohen, and Yotam Hendel for many remarks and suggestions for improvements.
We would like to also thank Dikran Dikranjan and Michael Megrelishvili for their informative comments and remarks.

Above all, we thank the members of the Midrasha on Groups at the Weizmann Institute for their support, friendship, and professional encouragement.
\section{Baby case}
\label{Baby case}
In this section, we clarify some of the ideas involved in our proofs by applying them to the special
cases  (already well-know)$A=k^*\ltimes k^n$ and $\bfG=\mathrm{PGL}_2$,
see Propositions~\ref{prop : baby case minimal}, \ref{prop : second baby case minimal} and \ref{prop : baby case sealed}.
In the subsequent sections, we will use the general results presented in this section.
Here $k$ is an arbitrary local field.
\subsection{Some general facts}

\begin{thm}[Banach, \cite{banach1931metrische}] 
\label{thm : banach open mapping theorem}
    Every continuous surjective morphism between Polish groups is open.
\end{thm}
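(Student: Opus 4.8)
The plan is to run the classical Baire-category argument, adapted to the non-commutative Polish setting, in two halves. Write $f\colon G\to H$ for the continuous surjective homomorphism. Since left translations in $G$ and $H$ are homeomorphisms and $f$ intertwines them, the identity $f(\mathcal{O})=f(g)\,f(g^{-1}\mathcal{O})$ shows it suffices to prove that $f$ is open at the identity, i.e.\ that $f(U)$ is a neighborhood of $e_H$ for every neighborhood $U$ of $e_G$.

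For the first half I would prove the weaker statement that $\overline{f(U)}$ is a neighborhood of $e_H$. Choose a symmetric neighborhood $V$ of $e_G$ with $VV\subseteq U$. As $G$ is separable there is a countable set $\{g_n\}$ with $G=\bigcup_n g_nV$, and surjectivity of $f$ gives $H=\bigcup_n f(g_n)\,\overline{f(V)}$. Since $H$ is completely metrizable it is a Baire space, so some translate $f(g_n)\overline{f(V)}$, and hence $\overline{f(V)}$ itself, has nonempty interior. Using that $V$ is symmetric, $\overline{f(V)}^{-1}\,\overline{f(V)}\subseteq\overline{f(VV)}\subseteq\overline{f(U)}$ is then a neighborhood of $e_H$.

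The second half upgrades this closure to the set itself, and this is where the real work lies. Fix compatible complete metrics $d_G,d_H$ and choose symmetric open neighborhoods $V_0\supseteq V_1\supseteq\cdots$ of $e_G$ with $\overline{V_0}\subseteq U$, with $V_{n+1}V_{n+1}\subseteq V_n$, and with $\mathrm{diam}_{d_H}\overline{f(V_n)}\to 0$. By the first half each $\overline{f(V_n)}$ has an interior $W_n\ni e_H$. Given $y\in W_1$, set $y_0=y$ and inductively choose $x_n\in V_n$ with $y_n:=f(x_n)^{-1}y_{n-1}\in W_{n+1}$; this is possible because $y_{n-1}\in\overline{f(V_n)}$, so the neighborhood $y_{n-1}W_{n+1}^{-1}$ of $y_{n-1}$ contains some $f(x_n)$ with $x_n\in V_n$. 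Telescoping yields $y=f(x_1\cdots x_n)\,y_n$, where $y_n\to e_H$ by the diameter condition, while the partial products $p_n=x_1\cdots x_n$ lie in $V_1\cdots V_n\subseteq V_0$. Completeness of $G$ forces $p_n\to x\in\overline{V_0}\subseteq U$, and continuity of $f$ gives $y=f(x)\in f(U)$. Hence $W_1\subseteq f(U)$, so $f(U)$ is a neighborhood of $e_H$.

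The step I expect to be the main obstacle is the convergence of the partial products $p_n$. A Polish group need not admit a two-sided invariant compatible metric, so one cannot bound $d_G(p_n,p_{n+1})$ by $d_G(e_G,x_{n+1})$ directly; making $\{p_n\}$ a Cauchy sequence requires either passing to the two-sided (Raikov) uniformity, for which Polish groups are complete, or shrinking each $V_{n+1}$ adaptively after $p_n$ has been chosen, so that right multiplication by $V_{n+1}$ displaces $p_n$ by less than $2^{-n}$. Once the approximants are shown to be Cauchy and their limit is trapped inside $U$, the remaining arguments are routine.
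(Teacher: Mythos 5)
The paper offers no proof of this statement at all: it is imported as a classical theorem with a citation to Banach's 1931 paper, so there is no internal argument to compare yours against. Your proposal is the classical Baire-category proof, and it is essentially sound: the reduction to openness at the identity, the covering argument showing $\overline{f(V)}$ has nonempty interior (hence $\overline{f(U)}$ is an identity neighborhood), and the telescoping construction $y=f(x_1\cdots x_n)\,y_n$ with $y_n\to e_H$ are all correct. You also correctly isolate the one genuine obstacle, namely why the partial products $p_n=x_1\cdots x_n$ converge. However, of the two repairs you offer, only the second works. The adaptive one is fine: after $p_n$ is determined, continuity of left translation by $p_n$ lets you shrink $V_{n+1}$ so that $p_nV_{n+1}\subseteq B_{d_G}(p_n,2^{-n})$ for a fixed complete compatible metric $d_G$; then $(p_n)$ is $d_G$-Cauchy, converges to some $x\in\overline{V_0}\subseteq U$, and the Baire step still applies to each adaptively chosen $V_{n+1}$ since it holds for every identity neighborhood. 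The Raikov alternative, by contrast, does not close the gap on its own: your sequence satisfies $p_n^{-1}p_m\in V_n$, so it is Cauchy only for the \emph{left} uniformity, whereas two-sided (Raikov) Cauchyness also demands that $p_mp_n^{-1}=p_n(x_{n+1}\cdots x_m)p_n^{-1}$ be small, and conjugation by $p_n$ can distort neighborhoods arbitrarily; indeed $S_\infty$ is Polish (hence Raikov-complete) yet has non-convergent left-Cauchy sequences. So state the proof with the adaptive shrinking. It is also worth knowing the slicker modern route: $f(V)$ is analytic, hence has the Baire property and is non-meager in $H$ by the covering argument, so Pettis' theorem gives that $f(V)f(V)^{-1}\subseteq f(U)$ is already an identity neighborhood, bypassing the series construction entirely.
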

\begin{cor}
\label{corollary : bijection onto polish group makes minimal}
    Let $G$ be a Polish group with topology $\s$. If $G$ admits a weaker group topology $\w$, and a bijective $\w$-continuous homomorphism $f\colon G\to H$ onto a Polish group $H$, then $\w=\s$.
\end{cor}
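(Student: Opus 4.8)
The plan is to play the two topologies off against one another through $f$, using Banach's open mapping theorem as the single nontrivial input. First I would record the elementary bookkeeping: since $\w$ is \emph{weaker} than $\s$, we have $\w\subseteq\s$ as families of open sets, and consequently any $\w$-continuous map is a fortiori $\s$-continuous. In particular, although $f$ is only assumed $\w$-continuous, it is automatically continuous as a map $f\colon (G,\s)\to H$.

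The heart of the argument is then to apply Theorem~\ref{thm : banach open mapping theorem} to this $\s$-version of $f$. Indeed, $f\colon (G,\s)\to H$ is a continuous surjective (in fact bijective) homomorphism between Polish groups, since $(G,\s)$ is Polish by hypothesis and $H$ is Polish by assumption. Hence $f$ is open, and being a continuous open bijective homomorphism it is a topological isomorphism $(G,\s)\xrightarrow{\ \sim\ }H$. The upshot is that $\s$ coincides with the initial topology induced by $f$, i.e. $\s=\{f^{-1}(U): U\subseteq H\text{ open}\}$.

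Finally I would compare this with $\w$-continuity. The assumption that $f\colon(G,\w)\to H$ is continuous says precisely that $f^{-1}(U)\in\w$ for every open $U\subseteq H$, so the initial topology induced by $f$ is contained in $\w$. Combining this with the previous paragraph gives $\s\subseteq\w$, and together with the trivial inclusion $\w\subseteq\s$ we conclude $\w=\s$.

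The only point that genuinely requires care is that one cannot feed $(G,\w)$ directly into the open mapping theorem: nothing in the hypotheses guarantees that $(G,\w)$ is Polish, so the theorem does not apply to $f\colon(G,\w)\to H$. The device that circumvents this obstacle is to transfer the continuity of $f$ up to the finer topology $\s$, which \emph{is} Polish, and to run the open mapping theorem there; the coarser topology $\w$ then enters only through the inclusion of initial topologies. I expect this transfer step to be the crux, with everything else being formal manipulation of the defining inclusions.
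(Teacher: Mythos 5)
Your proposal is correct and follows essentially the same route as the paper's proof: transfer the continuity of $f$ to the finer Polish topology $\s$, apply Banach's open mapping theorem there to see that $f$ is a homeomorphism from $(G,\s)$ onto $H$, and then use the $\w$-continuity of $f$ to pull $\s$-open sets back into $\w$. The paper phrases the last step as $f^{-1}(f(W))=W\in\w$ for $W\in\s$, which is exactly your comparison of initial topologies.
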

\begin{proof}
    Since $\w$ is weaker than $\s$, the map $f$ is $\s$-continuous as well. By the Open Mapping Theorem~\ref{thm : banach open mapping theorem}, the map $f$ is a homeomorphism onto $H$. For any open set $W\in \s$, $f(W)$ is open in $H$, since $f$ is $\w$-continuous, $f^{-1}(f(W))=W$ is open in $\w$, thus $\w=\s$.
\end{proof}
\begin{lemma}[Merson, Lemma 7.2.3 \cite{MR1015288}] 
\label{lemma : Mersons lemma}
    Let $G$ be a topological group with topology $\s$, and let $\w\le \s$ a weaker group topology. If there exists a subgroup $H\le G$, such that both the subspace topologies coincide on $H$, and both quotient topologies coincide on $G/H$, then $\w=\s$.
\end{lemma}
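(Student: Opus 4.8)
The plan is to prove the nontrivial inclusion $\s\le\w$; since $\w\le\s$ holds by hypothesis, this yields $\w=\s$. Equivalently, the identity map $(G,\s)\to(G,\w)$ is a continuous bijection that I must show is open. As both are group topologies, left translations are homeomorphisms in each, so it suffices to establish openness at the identity: every $\s$-neighborhood of $e$ contains a $\w$-neighborhood of $e$. I will argue by contradiction. If this fails, there is a fixed $\s$-open set $O\ni e$ such that every $\w$-neighborhood of $e$ meets $G\setminus O$; choosing one point in each such intersection produces a net $(g_i)$, indexed by the $\w$-neighborhood filter of $e$, that $\w$-converges to $e$ while $g_i\notin O$ for every $i$.

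Let $\pi\colon G\to G/H$ be the natural projection, which is open for any group topology on $G$. Since $g_i\to e$ in $\w$, we have $\pi(g_i)\to\pi(e)$ in the $\w$-quotient topology, and by the hypothesis that the two quotient topologies on $G/H$ coincide, also in the $\s$-quotient topology. The key step is now to lift this convergence: using the $\s$-openness of $\pi$, for every $\s$-neighborhood $V$ of $e$ the set $\pi(V)$ is a neighborhood of $\pi(e)$, so $\pi(g_i)\in\pi(V)$ eventually, i.e. $g_i\in VH$ eventually, whence there is $h\in H$ with $g_i h^{-1}\in V$. Running this over a neighborhood basis of $e$ and passing to a subnet indexed by pairs (original index, neighborhood), I obtain a subnet $(g_{i_\alpha})$ and elements $h_\alpha\in H$ with $g_{i_\alpha}h_\alpha^{-1}\to e$ in $\s$.

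Finally I combine the two hypotheses. Since $\s$-convergence implies $\w$-convergence, $g_{i_\alpha}h_\alpha^{-1}\to e$ in $\w$; together with $g_{i_\alpha}\to e$ in $\w$ and continuity of the group operations, $h_\alpha=(g_{i_\alpha}h_\alpha^{-1})^{-1}g_{i_\alpha}\to e$ in $\w$. As each $h_\alpha$ lies in $H$ and the subspace topologies agree there, $h_\alpha\to e$ in $\s$ as well. Then $g_{i_\alpha}=(g_{i_\alpha}h_\alpha^{-1})h_\alpha\to e$ in $\s$ by $\s$-continuity of multiplication, so eventually $g_{i_\alpha}\in O$, contradicting $g_{i_\alpha}\notin O$.

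I expect the main obstacle to be the lifting step in the second paragraph: converting convergence of the projections $\pi(g_i)$ in $G/H$ into genuine $\s$-convergence of $g_i$ after correcting by a suitable component $h_\alpha\in H$. This is exactly where the openness of $\pi$ and the subnet construction are needed, and it is the only place requiring the coincidence of the quotient topologies; the coincidence of the subspace topologies on $H$ is then used precisely to pass $h_\alpha\to e$ from $\w$ back to $\s$. Both hypotheses are thus essential and enter at distinct points of the argument.
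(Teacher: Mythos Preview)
Your argument is correct. The subnet construction in the lifting step is the standard one (index by pairs $(i,V)$ with $g_i\in VH$, ordered by simultaneous refinement), and once it is in place the rest follows cleanly.

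That said, your route is genuinely different from the paper's. The paper gives a direct, constructive proof with no nets and no contradiction: given a $\s$-neighborhood $S$ of $e$, it takes a symmetric $S^{1/2}$ with $S^{1/2}S^{1/2}\subset S$, uses the coincidence on $H$ to pick $U\in\w$ with $U\cap H=S^{1/2}\cap H$, takes a symmetric $U^{1/2}$ with $U^{1/2}U^{1/2}\subset U$, and then sets
\[
W=(S^{1/2}\cap U^{1/2})H\cap U^{1/2}\in\w,
\]
checking by hand that $W\subset S$. The coincidence of the quotient topologies enters only to see that $(S^{1/2}\cap U^{1/2})H$ is $\w$-open. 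This is shorter and entirely elementary, avoiding the somewhat delicate subnet bookkeeping. Your approach, on the other hand, makes the logical role of each hypothesis very transparent (quotient hypothesis for the lift, subspace hypothesis for pulling $h_\alpha\to e$ back to $\s$), and generalizes without change to arguments phrased in terms of convergence rather than explicit neighborhoods. Either proof is perfectly acceptable here.
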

\begin{proof}
    Let $S\in \s$ be a $\s$-open identity neighborhood. We will show that there exist $W\in \w$, a $\w$-open identity neighborhood, with $W\subset S$.
    Let $S^{\frac{1}{2}}\in \s$ be a symmetric identity neighborhood such that $S^{\frac{1}{2}}S^{\frac{1}{2}}\subset S$. 
    Since both subspace topologies coincide on $H$, there exists $U\in \w$ an open identity neighborhood, with $U\cap H= S^{\frac{1}{2}}\cap H$.
    Let $U^{\frac{1}{2}}\in \w$ be a symmetric open identity neighborhood such that $U^{\frac{1}{2}}U^{\frac{1}{2}}\subset U$. 
    Since both quotient topologies coincide, we have that the $\s$-open subset $(S^{\frac{1}{2}}\cap U^{\frac{1}{2}})H$ is $\w$-open as well.
    We set $W= (S^{\frac{1}{2}}\cap U^{\frac{1}{2}})H\cap U^{\frac{1}{2}}\in \w$.
        Fixing $y\in W$ we find $x\in S^{\frac{1}{2}}\cap U^{\frac{1}{2}}$ and $h\in H$ such that $y=xh$.
    Then $h=x^{-1}y\in U^{\frac{1}{2}}U^{\frac{1}{2}}\subset U$. So $h\in U\cap H\subset S^{\frac{1}{2}}$, making $y=x h \in S^{\frac{1}{2}}S^{\frac{1}{2}}\subset S$, as needed.
\end{proof}
A general strategy for proving minimality of a group $G$, using the Merson Lemma, is by showing that there exists a subgroup $H\le G$, such that for all weaker group topologies, the subspace topology on $H$ and the quotient topology on $G/H$ coincide with the a-piori stronger subspace and quotient topologies.
\subsection{Weaker additive group topologies on vector spaces}

Let $k$ be a local field with absolute value $| \cdot|$, and let $V$ be a $n$-dimensional $k$-vector space with norm $\| \cdot \|$. 
We consider the topology on $k$ to be the locally compact field topology, and the topology on $V$ to be the locally compact norm topology.

\begin{lemma}
	\label{lemma :  weaker topology on V unbounded open sets}
	Any additive group topology on $V$ which is strictly weaker than the norm topology does not contain bounded open sets.
\end{lemma}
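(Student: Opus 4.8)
The plan is to argue by contradiction. Write $\sigma$ for the norm topology and let $\tau\subsetneq\sigma$ be a strictly weaker additive group topology, and suppose toward a contradiction that some nonempty $\tau$-open set is bounded. Translating (translations are $\tau$-homeomorphisms and preserve boundedness), I may assume there is a $\tau$-open set $U$ with $0\in U$ and $U\subseteq B_R:=\{v\in V:\|v\|\le R\}$ for some $R$. The goal is then to deduce $\tau=\sigma$, contradicting strictness.

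The key observation is that over a local field every closed ball $B_R$ is $\sigma$-compact. Since $\tau\le\sigma$, the identity map $(B_R,\sigma)\to(B_R,\tau)$ is a continuous bijection from a compact space onto a Hausdorff space (recall all our topologies are Hausdorff), hence a homeomorphism; thus $\tau$ and $\sigma$ induce the same subspace topology on $B_R$, and likewise on every closed ball.

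With this in hand I would show that every $\sigma$-neighborhood of $0$ is a $\tau$-neighborhood of $0$, which together with $\tau\le\sigma$ and translation invariance forces $\tau=\sigma$. Given a $\sigma$-neighborhood $N\ni 0$, choose $\varepsilon<R$ with $\{v:\|v\|<\varepsilon\}\subseteq N$. This open ball is $\sigma$-open inside $B_R$, hence $\tau$-open inside $B_R$ by the previous paragraph, so it equals $O\cap B_R$ for some $\tau$-open $O\ni 0$. Now the bounded neighborhood $U$ does the trapping: $O\cap U$ is a $\tau$-neighborhood of $0$ and $O\cap U\subseteq O\cap B_R=\{v:\|v\|<\varepsilon\}\subseteq N$. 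Hence $N$ is a $\tau$-neighborhood of $0$, as desired.

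The main obstacle is that the naive approach does not work, and recognizing this is the crux. One would like to shrink a bounded neighborhood by scalars, using that an $m$-fold sum of a sufficiently small $\tau$-neighborhood lies in $U$ and hence that this neighborhood is contained in the ball of radius $R/|m|$. Over an Archimedean local field this succeeds since $|m|\to\infty$, but over a non-Archimedean local field $|m|\le 1$ for every integer $m$, so integer scaling cannot shrink balls, and multiplication by a field element of large absolute value need not be $\tau$-continuous. The argument above sidesteps scalar multiplication entirely, relying only on the additive group structure together with the local compactness of $V$, so that it applies uniformly to all local fields.
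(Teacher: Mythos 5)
Your proof is correct and takes essentially the same approach as the paper: both arguments rest on the fact that a norm-closed bounded set is compact, hence compact (and closed) in any weaker Hausdorff group topology, and then intersect the resulting $\tau$-open set with the bounded open neighborhood to trap arbitrarily small $\tau$-neighborhoods of $0$. The paper separates $0$ from the compact annulus $\overline{B}_\lambda\setminus B_\varepsilon$ directly, whereas you package the same compactness step as the coincidence of the two subspace topologies on the closed ball; your trapping step $O\cap U\subseteq\{v:\|v\|<\varepsilon\}$ is exactly the paper's $W'\cap W\subseteq B_\varepsilon$.
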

\begin{proof}
    Let $\w$ be a weaker topology on $V$
	and assume that there exists a bounded open $0_V$-neighborhood $W\in \w$ with $\|W\|<\lambda$.
	We will show that every norm-open ball contains an $\w$-neighborhood, and hence that the norm topology is equivalent to $\w$.
	Let $B_\e\subset V$ be the norm-open ball of radius $\e>0$.
	Denote by $\overline{B_\lambda}$ the norm-closed ball of radius $\lambda$, and
	$\ann(\e,\lambda)\coloneqq\overline{B}_\lambda\setminus B_\e$.
	The annulus $\ann(\e,\lambda)$ is compact in the norm topology, so it is compact with respect to a weaker topology $\w$ as well. 
	As any group topology is Tychonoff, there exists a $\w$-open $0_V$-neighborhood $W'$, such that $W'\cap  \ann(\e,\lambda)=\emptyset$.
	Finally, we get that the $\w$-open set $W'\cap W$ is contained in the open ball $B_\e$, as needed.
\end{proof}
\begin{lemma}
	\label{lemma : baby case multiplication by ball}
	Let $B_\lambda$ be the open ball of radius $\lambda$ in $k$, and let $m\colon B_\lambda \times V \to V$ be the scalar multiplication map. Then there does not exist a strictly weaker additive group topology on $V$ such that the multiplication map $m$, is a continuous map.
\end{lemma}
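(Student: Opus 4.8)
The plan is to reduce everything to Lemma~\ref{lemma :  weaker topology on V unbounded open sets}, which says that a group topology $\w$ strictly weaker than the norm topology $\s$ has no bounded open $0_V$-neighborhood; equivalently, exhibiting a single bounded $\w$-open neighborhood of $0_V$ forces $\w=\s$. So I argue by contradiction, assuming $\w\le\s$ is strictly weaker and that $m\colon B_\lambda\times(V,\w)\to(V,\w)$ is continuous, with $B_\lambda$ carrying its norm topology from $k$. By that lemma every $\w$-open neighborhood of $0_V$ is norm-unbounded, so, indexing the $\w$-neighborhoods of $0_V$ against norm-radii, I obtain a net $(v_\alpha)$ with $v_\alpha\to 0_V$ in $\w$ while $\|v_\alpha\|\to\infty$.

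The key step is rescaling. For each $\alpha$ I choose $c_\alpha\in k^\ast$ so that $c_\alpha v_\alpha$ lands in one fixed norm-compact annulus $A=\{w:1\le\|w\|\le M\}$; this is possible since $\|c_\alpha v_\alpha\|=|c_\alpha|\,\|v_\alpha\|$ and the value group $|k^\ast|$ meets every interval of large enough ratio (any $M>1$ works in the Archimedean case, while in the non-Archimedean case one takes $M$ above the ratio of consecutive values of $|k^\ast|$). Because $\|c_\alpha v_\alpha\|\le M$ while $\|v_\alpha\|\to\infty$, the scalars satisfy $|c_\alpha|\to 0$; in particular $c_\alpha\in B_\lambda$ along a tail, so there $m(c_\alpha,v_\alpha)=c_\alpha v_\alpha$ is defined and $(c_\alpha,v_\alpha)\to(0,0_V)$ in $B_\lambda\times(V,\w)$.

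Now I play the two topologies against one another. Continuity of $m$ at $(0,0_V)$ yields $c_\alpha v_\alpha\to 0_V$ in $\w$. On the other hand $A$ is norm-compact, hence $\w$-compact because $\w\le\s$, hence $\w$-closed since $(V,\w)$ is Hausdorff; as every $c_\alpha v_\alpha$ lies in $A$, the $\w$-limit $0_V$ must also lie in $A$, contradicting $0_V\notin A$. This contradiction shows that no strictly weaker such $\w$ can exist, which is the assertion.

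The step I expect to be the crux is this interplay of compactness with the two topologies, effected by the rescaling: the argument works only because the rescaled vectors are trapped in a \emph{compact} set, so that norm-compactness upgrades to $\w$-closedness. This is what forces me to drive $|c_\alpha|\to 0$—both to stay inside $B_\lambda$ and to converge to the base point $(0,0_V)$—while simultaneously keeping $c_\alpha v_\alpha$ bounded away from $0_V$. Verifying that the value group of $k$ always permits such a choice is immediate in the Archimedean case and is the one genuinely field-specific point over non-Archimedean fields.
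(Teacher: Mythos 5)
Your proof is correct and follows essentially the same route as the paper's: both argue by contradiction, use Lemma~\ref{lemma :  weaker topology on V unbounded open sets} to extract norm-unbounded elements from $\w$-small neighborhoods of $0_V$, rescale them by small scalars into a fixed norm-compact set, and then play continuity of $m$ against the $\w$-compactness (hence $\w$-closedness) of that set to reach a contradiction. The only cosmetic differences are that you phrase the argument with nets rather than with neighborhoods, and that your annulus $\{w : 1\le \|w\|\le M\}$ replaces the paper's unit sphere, which lets you dispense with the paper's $\mathrm{L}^\infty$-norm normalization since you only need the value group of $k$ to have bounded multiplicative gaps rather than to hit $\|v\|^{-1}$ exactly.
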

\begin{proof}
	As all norms on finite dimensional vector spaces induce the same topology, we may assume without loss of generality that $\|\cdot\|$ is an $\mathrm{L}^\infty$ norm.
	Assume by contradiction that $\w$ is a strictly weaker group topology on $V$ such that the scalar multiplication map
	$m\colon B_\lambda \times V \to V$ is continuous with respect to $\w$.
	The unit sphere $S_1$ is norm-compact and therefore $\w$-compact.
	Let $W\in \w$ be a $0_V$-neighborhood not intersecting the norm sphere $S_1$. 
	By continuity of $m $ there exist $W'\in \w$ a $0_V$-neighborhood, and $\e<\lambda$ such that $m(B_\e \times W')\subset W$.
	By assumption, $\w$ is strictly weaker than the norm topology, hence by Lemma~\ref{lemma :  weaker topology on V unbounded open sets} $W'$ is unbounded. 
	Let $w'\in W'$ be a vector such that $\e^{-1}<\|w'\|$. Since the norm is $\mathrm{L}^\infty$ there exists a scalar $x\in k$ such that $|x|=\|w'\|^{-1}$. We get that $(x,w')\in B_\e\times W'$, making $ m(x,w')=x w'\in S_1$ as $\|x w'\|=|x|\cdot \|w'\|=1$, in contradiction to $W$ not intersecting the unit sphere.
\end{proof}
In particular, we get as a corollary a fundamental theorem in functional analysis. 
\begin{cor}[{\cite[Chapter I.\S 2.3 Theorem 2]{bourbaki1987topological}}] 

    Let $k$ be a local field. Any $n$-dimensional topological $k$-vector space $V$ is homeomorphic to the topological norm space $k^n$. 
\end{cor}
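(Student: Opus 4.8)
The plan is to reduce the statement directly to Lemma~\ref{lemma : baby case multiplication by ball}. Fix a $k$-basis $v_1,\dots,v_n$ of $V$ and let $\phi\colon k^n\to V$ be the linear bijection $\phi(x_1,\dots,x_n)=\sum_i x_i v_i$. Equip $k^n$ with the norm topology $\sigma$, and write $\mathcal{T}$ for the given topological vector space topology on $V$. The goal is to show that $\phi$ is a homeomorphism from $(k^n,\sigma)$ onto $(V,\mathcal{T})$, which is equivalent to the asserted homeomorphism.

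First I would establish that $\phi$ is continuous, i.e. that $\mathcal{T}$ is coarser than the pushforward of $\sigma$. This is immediate from the axioms of a topological vector space: $\phi$ is a finite combination of the continuous addition and scalar-multiplication maps of $(V,\mathcal{T})$, hence continuous from the norm topology. Transporting $\mathcal{T}$ back to $k^n$ via $\phi$, set $\mathcal{T}'=\phi^{-1}(\mathcal{T})$; then $\mathcal{T}'$ is an additive group topology on $k^n$ satisfying $\mathcal{T}'\le\sigma$. It remains only to rule out the strict inequality $\mathcal{T}'\subsetneq\sigma$.

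Here the earlier lemma does all the work. Since $(V,\mathcal{T})$ is a topological vector space, its scalar-multiplication map $k\times V\to V$ is continuous, and in particular its restriction to $B_\lambda\times V$ is continuous for any fixed radius $\lambda$. Because $\phi$ is $k$-linear, transporting through $\phi$ turns this into continuity of the genuine scalar-multiplication map $m\colon B_\lambda\times k^n\to k^n$ with respect to $\mathcal{T}'$. If $\mathcal{T}'$ were strictly weaker than the norm topology, this would contradict Lemma~\ref{lemma : baby case multiplication by ball}. Therefore $\mathcal{T}'=\sigma$, and $\phi$ is a homeomorphism.

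The only point requiring care is that the standing convention of the paper makes $(V,\mathcal{T})$ Hausdorff as an additive topological group, so that the compactness and Tychonoff inputs underlying Lemmas~\ref{lemma :  weaker topology on V unbounded open sets} and~\ref{lemma : baby case multiplication by ball} remain available after transport through $\phi$. Granting this, there is no substantive obstacle: the content of the classical Bourbaki theorem has already been absorbed into those two lemmas, and the corollary is simply their repackaging once a basis is chosen.
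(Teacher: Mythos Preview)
Your proof is correct and follows essentially the same approach as the paper's: choose a basis to get a continuous linear bijection $k^n\to V$, conclude that the given topology is weaker than the norm topology, and then invoke Lemma~\ref{lemma : baby case multiplication by ball} to rule out strict weakness. The only difference is cosmetic---you transport the topology back to $k^n$ and work there, while the paper phrases the last step directly on $V$---but the argument is the same.
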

\begin{proof}
    Let $V$ be an $n$-dimensional topological vector space over a local field $k$. This means that $V$ admits an additive group topology $\tau$, such that the scalar multiplication map
    $m\colon k \times V \to V$ is continuous.
    Let$\{e_i\}_{i=1}^n$ be a basis of $V$.
    For any $i=1,...,n$ the map
    $$m_i\colon k\to V \ \ m_i\colon k\mapsto k\cdot e_i$$
    is continuous. And since $\tau$ is an additive group topology we get a continuous additive group homomorphism from the product $k^n$ onto $V$.
    $$\Psi\colon k^n \to V \ \ \Psi \colon (k_1,...,k_n)\mapsto \sum_{i=1}^n k_i\cdot e_i$$
    The continuity of $\Psi$ implies that the $\tau$ topology on $V$ is weaker than the norm topology. 
    By Lemma~\ref{lemma : baby case multiplication by ball} above, $\tau$ is equal to the norm topology, and therefore the topology is unique.
\end{proof}
\subsection{Minimality of the homothety group}
Equipped with the product topology, the continuous action of $k^*$ on $V$ makes $A=k^*\ltimes V$ a topological group. We call the group $A$ the homothety group of $V$, and we call the standard product topology on $A$ the analytic topology. 
\begin{lemma}
\label{lemma: baby case reletive minimality}
	Let $\w$ be a group topology on $A$ weaker than the analytic topology. Then the restriction of $\w$ to the subspace $V$ is equal to the analytic norm topology.
\end{lemma}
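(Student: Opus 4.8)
The plan is to reduce the statement to Lemma~\ref{lemma : baby case multiplication by ball}. Write $\w|_V$ for the topology that $\w$ induces on the normal subgroup $V=\{(1,w):w\in V\}\le A$. Since $\w$ is weaker than the analytic topology and the analytic topology restricts to the norm topology on $V$, the topology $\w|_V$ is an additive group topology on $V$ that is weaker than the norm topology (and Hausdorff, being a subspace of a Hausdorff group). My goal is to show that it is not \emph{strictly} weaker. By Lemma~\ref{lemma : baby case multiplication by ball} it suffices to produce some radius $\e>0$ for which the scalar multiplication map $m\colon B_\e\times V\to V$ is continuous with respect to the standard topology on $B_\e$ and $\w|_V$ on both copies of $V$: once such continuity is available, that lemma forbids $\w|_V$ from being strictly weaker than the norm topology, forcing equality, which is exactly the assertion.

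The key step is to manufacture this continuity out of the group structure of $A$. Since $(A,\w)$ is a topological group, its commutator map is $\w$-continuous; restricting to the subgroups $\{(t,0)\}\cong k^*$ and $\{(1,w)\}\cong V$ and computing inside the semidirect product gives
\[
[(t,0),(1,w)]=(1,(t-1)w).
\]
Thus the map $(t,w)\mapsto (t-1)w$ is continuous from $(k^*,\w|_{k^*})\times(V,\w|_V)$ to $(V,\w|_V)$, hence also continuous when $k^*$ is given its finer standard topology. Reparametrising by $s=t-1$, the map $(s,w)\mapsto sw$ is continuous on $(k\setminus\{-1\})\times V$, and in particular on $B_\e\times V$ for any $\e<1$. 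This is precisely the continuity of $m\colon B_\e\times V\to V$ demanded above, so Lemma~\ref{lemma : baby case multiplication by ball} (applied with $\lambda=\e$) finishes the argument.

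The step I expect to be the main obstacle, and the reason for using the commutator rather than plain conjugation, is securing continuity of scalar multiplication at the scalar $0$. Conjugation by $(t,0)$ realises the action $w\mapsto tw$ only for $t\in k^*$, so it controls multiplication by nonzero scalars but yields nothing uniform as the scalar approaches $0$; the element ``$0\in k$'' simply does not live in $A=k^*\ltimes V$, so no uniform control near $0$ can come directly from the group operations indexed by $k^*$. The commutator circumvents this, since the scalar $t-1$ sweeps out a full neighborhood of $0$, attaining the value $0$ at $t=1$, where the commutator is trivial. Continuity of the commutator map at $t=1$ therefore delivers exactly the continuity of $m$ across the whole ball $B_\e$, including at $0$, that Lemma~\ref{lemma : baby case multiplication by ball} requires.
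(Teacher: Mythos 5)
Your proposal is correct and takes essentially the same route as the paper: the commutator map $[(t,0),(1,w)]=(1,(t-1)w)$ you use is exactly the paper's composition $\mathrm{-}\circ\alpha\circ(\mathrm{+_1}\times \mathrm{Id})$ (conjugation, shift of the scalar by $1$, then subtraction), with the same strengthening of the topology on $k^*$ to the analytic one. Both arguments then conclude by applying Lemma~\ref{lemma : baby case multiplication by ball} to the resulting continuous scalar multiplication on a small ball.
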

\begin{proof}
	Let $\alpha$ be the action map of $k^*$ on $V$, that is,
	$$ \alpha \colon k^*\times V \to V\times V, \ \ \alpha\colon (r,v)\mapsto (rv,v)$$
    Since $\w$ is a group topology on $A$, the action by conjugation is continuous with respect to the $\w$-subspace topologies, thus the map $\alpha$ is continuous with respect to the $\w$-subspace topologies on $k^*$ and on $V$. 
    By strengthening the topology on the domain, the map $\alpha$ is still continuous. 
    So by letting $V$ admit the $\w$-subspace topology $\w_V$, and embellishing $k^*$ with analytic topology,
    the map $\alpha$ remains continuous.
	Since $\w_V$ is an additive group topology on $V$, the subtraction operation 
	$$\mathrm{-} \colon V\times V \to V, \ \ \mathrm{-} \colon (v,w)\mapsto v-w$$
	is $\w_V$-continuous.
	Observe that on the local field $k$, the addition by $1$ map, $\mathrm{+_1}\colon B_1\to k^*$ mapping $a$ to $a+1$ is continuous.
	We get that the map $m\coloneqq \mathrm{-}  \circ \alpha\circ (\mathrm{+_1}\times Id)$
	$$m\colon B_1 \times V \to V \ \ m\colon (x,v)\mapsto (x+1)v-v=xv $$
    is a continuous multiplication by scalars in the unit ball.
	By Lemma~\ref{lemma : baby case multiplication by ball} above, the restricted topology $\w_V$ on $V$ is equal to the norm topology.
\end{proof}
\begin{lemma}
	\label{lemma: baby case co minimality}
	Let $\w$ be a group topology on $A$, which is weaker than the analytic topology. Then the $\w$-quotient topology on $A/V$, $\w_{A/V}$, is equal to the analytic quotient topology on $A/V$, which is isomorphic to $k^*$.
\end{lemma}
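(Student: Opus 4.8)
The plan is to read off the topology of $k^*\cong A/V$ from the conjugation action of $A$ on the normal subgroup $V$, whose subspace topology $\w_V$ we have already identified with the norm topology in Lemma~\ref{lemma: baby case reletive minimality}. Write $\s$ for the analytic topology on $A$ and $\s_{A/V}$ for the analytic quotient topology on $A/V\cong k^*$, which is the standard multiplicative topology. One inclusion is formal: since $\w\le\s$, a subset of $A/V$ whose preimage under $q\colon A\to A/V$ is $\w$-open has $\s$-open preimage, so $\w_{A/V}\le\s_{A/V}$. It remains to prove the reverse inclusion $\s_{A/V}\le\w_{A/V}$.

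For this I would fix a nonzero vector $v_0\in V$ and consider the conjugation orbit map $\phi\colon A\to V$, $a\mapsto a v_0 a^{-1}$. Since $\w$ is a group topology and $V$ is normal, $\phi$ is continuous from $(A,\w)$ to $(V,\w_V)$. A direct computation in the semidirect product shows that for $a=(r,v)$ one has $a v_0 a^{-1}=r\cdot v_0$, so $\phi$ is constant on the fibers of $q$ and factors as $\phi=\bar\phi\circ q$, where $\bar\phi\colon k^*\to V$ sends $r\mapsto r\cdot v_0$. By the universal property of the quotient topology, $\bar\phi\colon (k^*,\w_{A/V})\to (V,\w_V)$ is continuous, and by Lemma~\ref{lemma: baby case reletive minimality} the target topology $\w_V$ is exactly the norm topology.

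Now observe that $r\mapsto r\cdot v_0$ is the restriction to $k^*$ of the linear isomorphism $k\to k v_0\subseteq V$, $r\mapsto r v_0$. Since every finite-dimensional $k$-vector space carries a unique vector-space topology, the inverse coordinate map $\psi\colon (k v_0,\|\cdot\|)\to k$ is continuous and carries $k^* v_0$ onto $k^*$ equipped with its standard topology $\s_{A/V}$. Hence $\psi\circ\bar\phi$ is the identity map $(k^*,\w_{A/V})\to (k^*,\s_{A/V})$ and is continuous, giving $\s_{A/V}\le\w_{A/V}$. Combined with the previous paragraph this yields $\w_{A/V}=\s_{A/V}$, as claimed.

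The substantive input here is relative minimality, i.e. the identification of $\w_V$ with the norm topology supplied by Lemma~\ref{lemma: baby case reletive minimality}; once that is in hand, the argument is purely formal, resting on the universal property of the quotient topology and on the uniqueness of the vector-space topology on a finite-dimensional space. I therefore expect no genuine obstacle beyond being careful that $\phi$ indeed descends through $q$ (which is where normality of $V$ and the explicit conjugation formula $a v_0 a^{-1}=r\cdot v_0$ are used), rather than any delicate estimate.
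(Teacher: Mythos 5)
Your proof is correct, but it closes the argument by a genuinely different mechanism than the paper. Both arguments share the same first half: relative minimality (Lemma~\ref{lemma: baby case reletive minimality}) pins the $\w$-subspace topology on $V$ to the norm topology, and the conjugation action of $A$ on $V$ descends to the scalar action of $A/V\cong k^*$ on $V$. The paper then packages this as a $\w_{A/V}$-continuous representation $\pi\colon A/V\to \GL(V)$, which is a continuous bijection onto the closed subgroup of scalars $k^*\le\GL(V)$, and invokes Corollary~\ref{corollary : bijection onto polish group makes minimal} --- that is, Banach's Open Mapping Theorem --- to force $\w_{A/V}$ to equal the analytic topology. You instead evaluate the action at a single nonzero vector $v_0$: the orbit map descends to $\bar\phi\colon (k^*,\w_{A/V})\to (V,\w_V)$, $r\mapsto rv_0$, and the scalar $r$ is recovered continuously from $rv_0$ by the coordinate functional on the line $kv_0$, exhibiting the identity $(k^*,\w_{A/V})\to(k^*,\s_{A/V})$ as continuous. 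This avoids the Polish/open-mapping machinery entirely and is more elementary; it works precisely because for a scalar action the inverse of the representation can be written down explicitly (the scalar is read off from the image of one vector). The paper's formulation is less economical here but more robust: the same OMT corollary is reused later (e.g.\ in Lemma~\ref{lem : torus actoin is minimal} and Lemma~\ref{lem : center is reletivly miniaml}) in settings where the action is not by scalars and no such explicit continuous inverse is available.
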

\begin{proof}
	The subgroup $V$ is normal in $A$,
	so the action by conjugation $A\times V\to V$ is continuous with respect to the $\w$-subspace topologies.
	Since $V$ is in the kernel of the action, we get continuous action of the quotient group $A/V$ on $V$.
	As abstract groups $A/V$ is isomorphic to $k^*$ and the action on $V$ is by scalar multiplication on the vector space $V$. 
	By Lemma~\ref{lemma: baby case reletive minimality}, the $\w$-subspace topology $\w_V$, is equal to the norm topology on $V$. 
    We, therefore, get a continuous representation $\pi \colon A/V\to \GL(V)$ onto the closed analytic subgroup of scalars, $k^*\le \GL(V)$. 
	Thus $\pi$ is a continuous bijection onto a Polish group, so by Corollary \ref{corollary : bijection onto polish group makes minimal}, the $\w_{A/V}$-quotient topology is equal to the Polish analytic topology on $A/V$.
\end{proof}
\begin{prop}[see {\cite[Theorem 4.7(b)]{MR1651174}}]
\label{prop : baby case minimal}
    The homothety group $k^*\ltimes V$, endowed with the analytic topology, is minimal.
\end{prop}
\begin{proof}
    Let $\w$ be a group topology on $A=k^*\ltimes V$, weaker than the analytic topology.
    By Lemma~\ref{lemma: baby case reletive minimality}, the subspace topology $\w_V$ is equal to the analytic subspace topology, and by Lemma~\ref{lemma: baby case co minimality} the $\w$-quotient topology on $A/V$ is equal to the quotient of the analytic topology. Hence by Lemma~\ref{lemma : Mersons lemma}, $\w$ is equal to the analytic topology.
\end{proof}
\subsection{Minimality and sealedness of \texorpdfstring{$\PGL{2}{k}$}{G} }
\begin{prop}[{\cite[Theorem 5.4]{MR3692904}}]
\label{prop : second baby case minimal}
    The group $\PGL{2}{k}$, endowed with the analytic topology, is minimal.
\end{prop}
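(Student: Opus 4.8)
The plan is to apply Merson's Lemma~\ref{lemma : Mersons lemma} with the subgroup $H=B$, the image in $\PGL{2}{k}$ of the upper-triangular matrices. Writing a representative with lower-right entry normalized to $1$, one sees that an element of $B$ is determined by a pair $(a,b)\in k^*\times k$, and that the group law is $(a,b)(a',b')=(aa',ab'+b)$; hence $B$ is isomorphic, as a topological group, to the homothety group $k^*\ltimes k$, the torus acting on the one-dimensional unipotent radical by scalar multiplication. Since $\PGL{2}{k}$ acts transitively on the projective line over $k$ with $B$ the stabilizer of a point, the coset space $\PGL{2}{k}/B$ is homeomorphic to that projective line and is in particular compact. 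Fix a group topology $\w$ on $\PGL{2}{k}$ weaker than the analytic topology $\s$; the goal is to verify the two hypotheses of Merson's Lemma for $B$.

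For the relative minimality of $B$, I would observe that $\w|_B$ is a Hausdorff group topology on $B$ which is weaker than its analytic topology. As $B\cong k^*\ltimes k$ is minimal by Proposition~\ref{prop : baby case minimal}, it admits no strictly weaker Hausdorff group topology, so $\w|_B$ must coincide with the analytic subspace topology on $B$. This establishes that both subspace topologies agree on $B$, the first hypothesis of Merson's Lemma.

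The co-minimality of $B$ is the crux. First I would show that $B$ is $\w$-closed: the group $(B,\w|_B)$ is locally compact, being the analytic homothety group, and it is a subgroup of the Hausdorff group $(\PGL{2}{k},\w)$; since a locally compact subgroup of a Hausdorff topological group is always closed, $B$ is $\w$-closed. Consequently the quotient topology $\w_{\PGL{2}{k}/B}$ on the coset space is Hausdorff. It is also weaker than the analytic quotient topology, under which $\PGL{2}{k}/B$ is compact. The identity map from the compact analytic quotient to the Hausdorff space $(\PGL{2}{k}/B,\w_{\PGL{2}{k}/B})$ is a continuous bijection, hence a homeomorphism, so the two quotient topologies coincide. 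This is the second hypothesis of Merson's Lemma.

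With both hypotheses in hand, Merson's Lemma~\ref{lemma : Mersons lemma} gives $\w=\s$, proving that $\PGL{2}{k}$ is minimal. I expect the co-minimality step to be the main obstacle, and its resolution to rest on two ingredients: that relative minimality forces the subgroup $B$ to inherit its locally compact analytic topology and thereby be $\w$-closed, and that the co-compactness of $B$ (compactness of the projective line) upgrades the resulting weaker Hausdorff quotient topology back to the analytic one. This is exactly the mechanism, announced in the introduction, of passing through a co-compact solvable subgroup, realized here by the Borel subgroup $B$.
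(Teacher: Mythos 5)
Your proof is correct and follows essentially the same route as the paper: Merson's Lemma applied to the Borel subgroup $B\cong k^*\ltimes k$, whose relative minimality comes from Proposition~\ref{prop : baby case minimal}, whose $\w$-closedness yields a Hausdorff quotient, and whose co-compactness then forces the quotient topologies to agree. The only cosmetic difference is in the closedness step, where you invoke the fact that a locally compact subgroup of a Hausdorff group is closed, while the paper uses that a Polish subgroup is closed; both are standard and equally valid here.
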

\begin{proof}
    Let $\w$ be a group topology on $G=\PGL{2}{k}$, weaker than the analytic topology.
	The Borel subgroup $P$, is isomorphic to the homothety group $k^*\ltimes k$, hence by Proposition~\ref{prop : baby case minimal}, $P$ is minimal. Thus for any weaker group topology $\w$, the subspace topology on $P$ coincides with the analytic topology.
	Moreover, since the analytic topology is Polish, $P$ is $\w$-closed. Hence the $\w$-quotient topology on $G/P$ is Hausdorff and since the analytic quotient topology on $G/P$ is compact it does not admit any strictly weaker typology.
	By Merson's Lemma~\ref{lemma : Mersons lemma} $\w$ is equal to the analytic topology on $G$.
\end{proof}
\begin{prop}[{\cite[Theorem 5.1]{MR3692904}}]
\label{prop : baby case sealed}	
    The group $\PGL{2}{k}$, endowed with the analytic topology, is sealed.
\end{prop}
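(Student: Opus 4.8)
The plan is to reduce the sealed property to the already-established minimality of $\PGL{2}{k}$ (Proposition~\ref{prop : second baby case minimal}) together with the general equivalence between minimality and closed images under \emph{injective} homomorphisms (Corollary~\ref{cor : minimal has closed image}). Let $G=\PGL{2}{k}$ and let $f\colon G\to H$ be an arbitrary continuous homomorphism into a topological group $H$. Since $H$ is Hausdorff, $N\coloneqq\ker f$ is a closed normal subgroup, and $f$ factors as an injective continuous homomorphism $\bar f\colon G/N\hookrightarrow H$ whose image coincides with $f(G)$. Hence it suffices to classify the closed normal subgroups $N\lhd G$ and to show that the induced image is closed in each case.

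First I would classify the closed normal subgroups. The subgroup $\mathrm{PSL}_2(k)$ is abstractly simple (as $k$ is infinite) and sits inside $G$ as a finite-index closed, hence open, subgroup, since $G/\mathrm{PSL}_2(k)\cong k^*/(k^*)^2$ is finite. Given a closed normal $N\lhd G$, the intersection $N\cap\mathrm{PSL}_2(k)$ is normal in $\mathrm{PSL}_2(k)$, so it is either trivial or all of $\mathrm{PSL}_2(k)$. In the first case $[N,\mathrm{PSL}_2(k)]\subseteq N\cap\mathrm{PSL}_2(k)=\{1\}$, so $N$ centralizes $\mathrm{PSL}_2(k)$; since $\mathrm{PSL}_2(k)$ is Zariski dense in the centerless algebraic group $\mathbf{PGL}_2$, its centralizer in $G$ is trivial, forcing $N=\{1\}$. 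In the second case $\mathrm{PSL}_2(k)\subseteq N$.

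With this dichotomy the argument splits in two. If $N=\{1\}$, then $f$ is injective; as $G$ is Polish and minimal by Proposition~\ref{prop : second baby case minimal}, Corollary~\ref{cor : minimal has closed image} gives that $f(G)$ is closed in $H$. If instead $\mathrm{PSL}_2(k)\subseteq N$, then $f(G)\cong G/N$ is a quotient of the finite group $G/\mathrm{PSL}_2(k)$, hence finite, and a finite subset of a Hausdorff space is closed. In either case $f(G)$ is closed, which is precisely the sealed property.

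The routine inputs (simplicity of $\mathrm{PSL}_2(k)$, finiteness of $k^*/(k^*)^2$, and openness of finite-index closed subgroups) are standard. The one genuinely structural step, which I expect to be the crux, is the classification of closed normal subgroups: the key point is ruling out proper nontrivial $N$ via the triviality of the centralizer of $\mathrm{PSL}_2(k)$ in $\PGL{2}{k}$. Everything else is a clean packaging of minimality into the sealed statement through the factorization $G\to G/N\hookrightarrow H$, and this is the template to be generalized later from simple groups to arbitrary $k$-groups.
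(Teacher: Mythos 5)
Your factorization through $G/\ker f$, the dichotomy on normal subgroups, and the injective case are all sound and in fact parallel the paper's proof: the paper likewise splits according to whether $f$ is injective and settles the injective case by the minimality of Proposition~\ref{prop : second baby case minimal} (your appeal to Corollary~\ref{cor : minimal has closed image} is the same mechanism), while your classification of normal subgroups via the abstract simplicity of $\PSL{2}{k}$ and the triviality of its centralizer is a legitimate self-contained substitute for the paper's citation of \cite[Chapter I, Theorem (1.5.6)(i)]{margulis1991discrete}. The genuine gap is in the non-injective case: you assert that $G/\PSL{2}{k}\cong k^*/(k^*)^2$ is \emph{finite}, but this fails when $k$ has characteristic $2$, and the proposition is stated for an arbitrary local field (Section~\ref{Baby case} explicitly allows any $k$, and positive characteristic is very much in scope, cf.\ Section~\ref{Positive char}). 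In characteristic $2$, squaring is the Frobenius: a unit of the valuation ring $\mathcal{O}$ is a square precisely when its expansion in a uniformizer $t$ involves only even powers, so the classes of $1+at^{2j+1}$ (with $j\ge 0$ and $a\ne 0$ in the residue field) are pairwise distinct in $k^*/(k^*)^2$, which is therefore infinite. Hence the step ``$G/N$ is a quotient of a finite group, so $f(G)$ is finite, hence closed'' breaks down, and $\PSL{2}{k}$ is then not open in $G$ either.

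The repair is to replace finiteness by compactness, which is exactly the input the paper uses. Although infinite in characteristic $2$, the group $k^*/(k^*)^2$ is still compact: it is isomorphic to $\bbZ/2\bbZ\times\mathcal{O}^*/(\mathcal{O}^*)^2$, and $(\mathcal{O}^*)^2$ is a closed (because compact) subgroup of the compact unit group $\mathcal{O}^*$. Equivalently, $\PSL{2}{k}=G^+$ is co-compact in $G$ in every characteristic, which is the content of the paper's citation of \cite[Chapter I, Theorem (2.3.1)(b)]{margulis1991discrete}. With this, $N\supseteq\PSL{2}{k}$ forces $G/N$ to be compact, so $f(G)$, a continuous homomorphic image of $G/N$, is compact and hence closed. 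With that single correction your argument is complete and coincides in substance with the paper's proof.
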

\begin{proof}
	Let $f\colon G=\PGL{2}{k}\to H$ be a continuous homomorphism into a topological group $H$.
	By \cite[Chapter I, Theorem (1.5.6)(i) and Theorem (2.3.1)(b)]{margulis1991discrete}, any non-trivial normal subgroup of $G$ is co-compact, thus if $f$ is not injective $f(G)$ is compact and obviously closed.
	We thus assume as we may that $f$ is injective.
	By Proposition~\ref{prop : second baby case minimal}, the $f$-initial topology $\tau^f$ must coincide with the analytic topology on $G$.
    Therefore, we get that $f$ is a homeomorphism onto its image.
    Therefore the subspace topology on $f(G)$ is Polish, hence closed, see \cite[Theorem 2.16]{melleray2016polish}.
\end{proof}

\section{Preliminaries}
\label{Preliminaries}
\subsection{Algebraic groups over local fields}
We assume that the reader is familiar with the basic theory of algebraic groups.
 
Throughout this section we let $k$ be a local field (of any characteristic), with absolute value $|\cdot|$, and denote by $\bar{k}$ its algebraic closure. 
Algebraic groups over $k$ will be identified with their $\bar{k}$ points and will be denoted by boldface letters. Their $k$-points will be denoted by corresponding Roman letters.
We regard $G=\bfG(k)$ as a topological group equipped with a $k$-analytic structure. 

As seen in the next two propositions, for a local field $k$, the geometric properties of the group $\bfG$ interact nicely with the topological $k$-point structure on $G$. Namely, under a $k$-morphism of algebraic groups the image of the $k$-points is topologically closed, and geometrically (Zariski) dense.
\begin{prop}[{\cite[Chapter I, Proposition (2.5.3)(ii)]{margulis1991discrete}}]
    \label{prop : zariski dnese of k}
    Let $\bfG$ be a connected $k$ group, then $\bfG(k)$ is Zariski dense in $\bfG$.
\end{prop}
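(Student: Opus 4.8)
The plan is to exploit that, over a local field, smoothness of $\bfG$ at the identity forces $\bfG(k)$ to contain a full-dimensional $k$-analytic neighborhood of $e$, which is then too ``fat'' to lie inside any proper closed subvariety. Write $d=\dim\bfG$. Since $\bfG$ is a connected group variety it is smooth and irreducible, and the identity $e\in\bfG(k)$ is a smooth $k$-rational point. I would then compare the Krull dimension of the Zariski closure of $\bfG(k)$ with the analytic dimension of this neighborhood.

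First I would produce a good local chart. Using smoothness at $e$, choose a Zariski-open $k$-defined neighborhood $U\ni e$ together with an \'etale $k$-morphism $\psi\colon U\to \mathbf{A}^{d}$ with $\psi(e)=0$. Passing to $k$-points and invoking the inverse function theorem over $k$ (valid because $k$ is complete, hence Henselian, in any characteristic), the induced map $\psi\colon U(k)\to k^{d}$ is a local homeomorphism at $e$: there is an open neighborhood $\Omega\ni e$ in $U(k)$ and an open ball $B\ni 0$ in $k^{d}$ with $\psi\colon\Omega\to B$ a homeomorphism. In particular $\Omega\subseteq U(k)\subseteq\bfG(k)$ is a $k$-analytic open set of full dimension $d$.

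Next I would run the dimension comparison. Let $W=\overline{\bfG(k)}$ be the Zariski closure (a closed subset, in fact a closed subgroup, of the irreducible variety $\bfG$), and suppose for contradiction that $W\neq\bfG$. Then $W\cap U\subsetneq U$, so $\dim(W\cap U)<d$; since $\psi$ is \'etale, hence quasi-finite, the image $\psi(W\cap U)$ is a constructible subset of $\mathbf{A}^{d}$ of dimension $<d$, so its closure is contained in the zero locus of some nonzero polynomial $f\in \bar{k}[x_1,\dots,x_d]$. But $\Omega\subseteq W$ gives $B=\psi(\Omega)\subseteq\psi(W\cap U)$, so $f$ vanishes identically on the open ball $B\subseteq k^{d}$. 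Since $k$ is infinite, $B$ contains an infinite box $S^{d}$ with $S\subseteq k$ infinite, and a nonzero polynomial over the field $\bar{k}$ cannot vanish on $S^{d}$ (the one-variable root bound together with induction on $d$). Hence $f=0$, a contradiction, and therefore $W=\bfG$.

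The main obstacle is the local input in the second paragraph: one must know that $\bfG(k)$ genuinely contains a $d$-dimensional analytic neighborhood of $e$, which is precisely the content of the inverse function theorem over the local field combined with smoothness of $\bfG$. This is also the step that makes the argument characteristic-free: it avoids appealing to unirationality of $\bfG$, which can fail over the imperfect local fields of positive characteristic, because completeness of $k$ supplies the required implicit function theorem regardless of characteristic. Once the full-dimensional chart is in hand, the remainder is a routine comparison of Krull dimension against the analytic dimension of $\Omega$, using only that $k$ is infinite so that open balls are Zariski dense in affine space.
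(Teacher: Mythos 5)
Your proof is correct. The paper gives no argument of its own here---the proposition is quoted verbatim from Margulis \cite[Chapter I, Proposition (2.5.3)(ii)]{margulis1991discrete}---and your route (smoothness of a connected group variety at the identity, the inverse function theorem over the complete field $k$ to produce a full-dimensional analytic neighborhood of $e$ inside the $k$-points, then the dimension comparison showing such a neighborhood cannot lie in a proper closed subvariety) is essentially the standard proof in that reference, and it is indeed characteristic-free; the only point worth making explicit is that smoothness relies on the paper's classical convention that a $k$-group is a variety over $\bar{k}$ defined over $k$, hence geometrically reduced, which rules out the non-smooth ``groups'' one could otherwise write down over imperfect local fields.
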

\begin{prop}
    \label{prop : image and kernel of k point maps}
    Let $\bfG$, $\bfH$ be $k$-algebraic groups, and $f\colon \bfG \to \bfH$ be a $k$-morphism. Then the image of the $k$-points, $f(G)$, is a closed subgroup of $H$.
    Moreover, the kernel of $f$, $\ker(f)$ is a $k$-algebraic group such that as $k$-analytic topological spaces $\bfG(k)/ {\ker(f)}(k)\cong f( G)$.
\end{prop}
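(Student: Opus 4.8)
The plan is to separate the algebraic input from the analytic (local-field) input. On the algebraic side, I would first observe that $\ker f = f^{-1}(e_{\bfH})$ is a Zariski-closed subgroup of $\bfG$ cut out by equations over $k$, hence a $k$-algebraic group, and that the image $f(\bfG)$ is a Zariski-closed $k$-subgroup of $\bfH$ (the image of a homomorphism of algebraic groups is closed, and the Zariski closure of a $k$-constructible set is defined over $k$). Thus $f$ factors over $k$ through the quotient as $\bfG \xrightarrow{\pi} \bfG/\ker f \to \bfH$, where the second arrow is injective; in characteristic zero the induced $k$-morphism $\bfG/\ker f \to f(\bfG)$ is a bijective morphism of smooth $k$-varieties and hence an isomorphism, so that on $k$-points $(\bfG/\ker f)(k) = f(\bfG)(k)$ (the general characteristic case follows by the same method using the separability of the quotient morphism).

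Passing to $k$-points, the subset $f(\bfG)(k)$ is closed in $H=\bfH(k)$, since the $k$-points of a closed $k$-subvariety form a closed set in the analytic topology. The heart of the matter is to show that $\pi$ induces an \emph{open} map $G=\bfG(k) \to (\bfG/\ker f)(k)=f(\bfG)(k)$. This is precisely where the $k$-analytic structure is indispensable: the quotient morphism is separable and submersive — in characteristic zero its differential at the identity is the surjection $\Lie(\bfG) \to \Lie(\bfG/\ker f)$ — so the implicit function theorem for $k$-analytic manifolds makes it a local homeomorphism onto an open identity neighborhood, and by translation an open map on all of $G$. I expect this openness to be the main obstacle, because $f(G)$ need not exhaust $(\bfG/\ker f)(k)$ — the discrepancy is measured by the Galois cohomology $H^1(k,\ker f)$ — so one cannot simply invoke an open mapping theorem for a surjection of locally compact groups and must exploit the manifold structure directly.

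Granting the openness, closedness follows formally: $f(G)=\pi(G)$ is an open subgroup of $f(\bfG)(k)$, hence is also closed in $f(\bfG)(k)$ (its complement is a union of cosets, each open), and being closed in the closed subgroup $f(\bfG)(k)\le H$ it is closed in $H$. This establishes the first assertion, that $f(G)$ is a closed subgroup of $H$.

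For the homeomorphism, I would invoke the Banach Open Mapping Theorem. The induced map $\bar f\colon \bfG(k)/\ker(f)(k) \to f(G)$ is a continuous bijective group homomorphism; its domain is Polish, being the quotient of the Polish group $\bfG(k)$ by the closed normal subgroup $\ker(f)(k)$, and its codomain $f(G)$ is Polish, being an open (equivalently closed) subset of the Polish group $f(\bfG)(k)$. By Theorem~\ref{thm : banach open mapping theorem}, the map $\bar f$ is open and therefore a homeomorphism, yielding $\bfG(k)/\ker(f)(k) \cong f(G)$ as $k$-analytic topological spaces, as required.
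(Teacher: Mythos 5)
Your characteristic-zero argument is correct, and it is a genuinely different route from the paper's. The paper's entire proof is a one-line citation: view $\bfH$ as a $k$-variety on which $\bfG$ acts by left multiplication through $f$ (i.e. $g.h=f(g)h$), and apply the orbit theorem of \cite[Proposition 2.2]{MR3668057}, which says that the $G$-orbit of a $k$-point — here $f(G)$, with stabilizer $\ker(f)(k)$ — is closed and homeomorphic to $G/\ker(f)(k)$. Your proof replaces this machinery with a self-contained chain: algebraic factorization $\bfG\to\bfG/\ker f\xrightarrow{\sim} f(\bfG)$, openness of the $k$-point map via the submersion/implicit function theorem, "open subgroup hence closed", and Banach's Theorem~\ref{thm : banach open mapping theorem} for the homeomorphism. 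Within characteristic zero this is sound, and your remarks that $f(G)$ need not exhaust $f(\bfG)(k)$ (the $H^1(k,\ker f)$ obstruction) and that one therefore cannot naively apply an open mapping theorem are exactly the right cautions.

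The gap is your dismissal of positive characteristic. The proposition lives in the Preliminaries, where $k$ is explicitly allowed to have any characteristic, and it is genuinely needed there: Lemma~\ref{lem : nilpotent center minimal} cites it and is in turn used in Section~\ref{Positive char}. Your claim that "the general characteristic case follows by the same method using the separability of the quotient morphism" fails, because in characteristic $p$ a $k$-morphism of algebraic groups need not be separable, and then every analytic ingredient of your argument breaks. Concretely, take the Frobenius $f\colon \bfG_a\to\bfG_a$, $x\mapsto x^p$, over a local field $k$ of characteristic $p$: the (reduced) kernel is trivial, yet $\bfG_a\to f(\bfG_a)=\bfG_a$ is bijective without being an isomorphism, so your identification $(\bfG/\ker f)(k)=f(\bfG)(k)$ fails; the differential of $f$ vanishes identically, so there is no submersion; and $f(G)=k^p$ is a proper closed subgroup of $k=f(\bfG)(k)$ with empty interior, so the step "$f(G)$ is open, hence closed" is simply false here. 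Note that the statement itself survives this example — $x\mapsto x^p$ is a homeomorphism of $k$ onto the closed subgroup $k^p$ since $|x^p|=|x|^p$ — but proving it in general requires an argument that never passes through differentials, such as the orbit-theoretic one the paper cites. Separability of the first factor $\bfG\to\bfG/\ker f$ does not rescue you: the inseparability sits in the second factor $\bfG/\ker f\to f(\bfG)$, which is precisely the map your method needs to be an isomorphism.
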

\begin{proof}
    This follows by \cite[Proposition 2.2]{MR3668057}, identifying $\bfH$ as a $k$-variety with a $\bfG$ action given by left multiplication via $f$, i.e. $g.h=f(g)h$.
\end{proof}
\begin{rem}
\label{remark : k points of image}
    Let $\bfG$, $\bfH$ and $f$ be as above.
    Note that $f(G)$ is Zariski dense in the $k$-algebraic group $f(\bfG)$. But generally, $f(G)$ is not onto the $k$-points $f(\bfG)(k)$. For example let $k=\bbR$, take $\bfG=\mathrm{SL}_{2}$ and let $f$ be the central quotient map. Observe that
    $f(G)=\PSL{2}{\bbR}\subsetneq\PGL{2}{\bbR}=f(\bfG)(k)$
\end{rem}
As seen in the remark above, one should be careful when dealing with $k$-points of quotient varieties, as these may not be $k$-points of a $k$-algebraic quotient.
We recall that in the case of a parabolic subgroup, or in the case of split solvable groups, the quotient of the $k$-points is equal to the $k$-points of the quotient group.
\begin{definition}[{\cite[Definition 15.1]{borel2012linear}}]
    A connected solvable $k$-group is called \textit{split} if it admits a composition series by $k$-subgroups with successive quotients $k$-isomorphic to $\bfG_m$ or $\bfG_a$.
\end{definition}
\begin{rem}[{\cite[Theorem 15.4]{borel2012linear}}]
    If the field $k$ is perfect, then any connected unipotent group is $k$-split.
\end{rem}
\begin{prop}[{\cite[Corollary 15.7]{borel2012linear}}]
\label{prop : qutient split solvable}
    Let $\bfG$ be a connected $k$-algebraic group and $\bfA$ a normal connected $k$-split solvable subgroup. Then the $k$-point algebraic map $\pi\colon G \to \bfG/\bfA (k)$ is surjective, i.e. $G/A=\bfG/\bfA (k)$.
\end{prop}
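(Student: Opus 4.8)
\subsection*{Proof proposal}

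The plan is to pass to Galois cohomology. Writing $\Gamma=\mathrm{Gal}(\bar k/k)$, consider the short exact sequence of $k$-groups
$$1 \to \bfA \to \bfG \xrightarrow{\pi} \bfG/\bfA \to 1.$$
Over $\bar k$ the induced map $\bfG(\bar k)\to (\bfG/\bfA)(\bar k)$ is surjective (the quotient represents the fppf quotient sheaf and each fiber is a nonempty $\bar k$-variety, hence carries $\bar k$-points), so taking $\Gamma$-cohomology yields an exact sequence of pointed sets
$$1 \to A \to G \xrightarrow{\pi} (\bfG/\bfA)(k) \xrightarrow{\delta} H^1(k,\bfA).$$
By exactness the image $\pi(G)$ is exactly $\delta^{-1}(\ast)$, the fiber over the distinguished point; hence $\pi$ maps onto $(\bfG/\bfA)(k)$ as soon as $H^1(k,\bfA)$ is trivial. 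This reduces the whole statement to the vanishing of $H^1(k,\bfA)$.

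To establish this I would use that $\bfA$ is $k$-split. Fix a normal series of $k$-subgroups $\bfA=\bfA_0\supset\bfA_1\supset\cdots\supset\bfA_n=1$, each $\bfA_i$ normal in $\bfA_{i-1}$, with every quotient $\bfA_{i-1}/\bfA_i$ $k$-isomorphic to $\bfG_m$ or $\bfG_a$. For the two building blocks the cohomology is trivial: $H^1(k,\bfG_m)$ is trivial by Hilbert's Theorem 90, and $H^1(k,\bfG_a)$ is trivial by its additive analogue (the normal basis theorem). I would then induct downward on $i$: assuming $H^1(k,\bfA_i)$ trivial, the non-abelian cohomology sequence attached to $1\to\bfA_i\to\bfA_{i-1}\to\bfA_{i-1}/\bfA_i\to 1$,
$$H^1(k,\bfA_i)\to H^1(k,\bfA_{i-1})\to H^1(k,\bfA_{i-1}/\bfA_i),$$
has trivial right-hand term, so every class in $H^1(k,\bfA_{i-1})$ lies over the distinguished point and hence, by exactness, comes from the (trivial) set $H^1(k,\bfA_i)$; thus $H^1(k,\bfA_{i-1})$ is trivial. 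Starting from the trivial $H^1(k,\bfA_n)$, this yields $H^1(k,\bfA)$ trivial, as required.

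The one genuinely delicate point is that, since $\bfA$ is only solvable and not abelian, these are sequences of \emph{pointed sets} rather than of groups: ``kernel'' must be read as the preimage of the distinguished point, and exactness gives surjectivity onto a fiber rather than onto the full quotient. The inductive step stays clean precisely because the outer terms are trivial, which avoids the twisting arguments usually needed to prolong a non-abelian cohomology sequence. Equivalently, and perhaps more transparently, one may argue geometrically: for $x\in(\bfG/\bfA)(k)$ the fiber $\pi^{-1}(x)\subset\bfG$ is a $k$-defined torsor under $\bfA$, and the computation above shows every such torsor is trivial, i.e.\ carries a $k$-point, which is exactly a lift of $x$ to $G$. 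I expect the surjectivity of $\pi$ on $\bar k$-points together with the setup of the initial pointed-set sequence to be the main --- though conceptually routine --- technical obstacle.
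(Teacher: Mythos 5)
The paper itself gives no proof of this proposition: it is imported verbatim, with its proof, from Borel's \emph{Linear Algebraic Groups} (Corollary 15.7). So the relevant comparison is with the argument behind that citation, and your proposal is essentially that argument: reduce surjectivity on $k$-points to the vanishing of $H^1(k,\bfA)$ via the exact sequence of pointed sets in non-abelian Galois cohomology, then kill $H^1(k,\bfA)$ by d\'evissage along the composition series witnessing $k$-splitness, using Hilbert's Theorem 90 for the $\bfG_m$ quotients and its additive analogue for the $\bfG_a$ quotients. Your remark that the triviality of the outer terms makes twisting unnecessary is correct: exactness of $H^1(k,\bfA_i)\to H^1(k,\bfA_{i-1})\to H^1(k,\bfA_{i-1}/\bfA_i)$ at the middle term is all that is used, and that exactness only needs $\bfA_i$ normal in $\bfA_{i-1}$, which the composition series provides. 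Your closing torsor-theoretic rephrasing (each fiber over a $k$-point is a $k$-torsor under $\bfA$, and all such torsors are trivial) is in fact the form in which Borel states and proves the intermediate result.

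One point should be tightened, because the paper asserts this proposition for local fields of \emph{arbitrary} characteristic: if $k$ has positive characteristic it need not be perfect, so $\bar k/k$ is not Galois and one must take $\Gamma=\mathrm{Gal}(k_s/k)$ for the separable closure $k_s$. The argument then requires (i) surjectivity of $\bfG(k_s)\to(\bfG/\bfA)(k_s)$, which holds because $\bfA$ is smooth and connected, so the quotient map is smooth (an $\bfA$-torsor) and its fibers over $k_s$-points are smooth nonempty varieties, hence have $k_s$-points; and (ii) Hilbert 90 and its additive version for $\Gamma$ acting on $k_s^*$ and $k_s$, which hold as stated. With $\bar k$ replaced by $k_s$ throughout, your proof is complete and agrees in substance with the cited one.
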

\begin{definition}
    Let $\bfG$ be a connected $k$-algebraic group. A subgroup $P\le G$ is called parabolic if 
    $P=\bfP(k)$ for some $k$-parabolic subgroup $\bfP<\bfG$.
\end{definition}
\begin{lemma}
\label{lem : center of parabolic}
    Let $\bfG$ be a connected $k$-algebraic group, and $P\le G$ a parabolic subgroup. Then the center of the parabolic subgroup, $Z(P)$, equals the center of the group, $Z(G)$.
\end{lemma}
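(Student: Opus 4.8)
The plan is to reduce the topological statement, which concerns the centers of the groups of $k$-points, to the purely algebraic identity $Z(\bfP)=Z(\bfG)$ of centralizers inside $\bfG$, and then to prove the latter. For the reduction, note that since $\bfG$ is connected, Proposition~\ref{prop : zariski dnese of k} gives that $G=\bfG(k)$ is Zariski dense in $\bfG$; as parabolic subgroups are connected, the same proposition applied to $\bfP$ shows that $P=\bfP(k)$ is Zariski dense in $\bfP$. For any point $z$ the centralizer $C_\bfG(z)$ is Zariski closed, so an element of $G$ (resp. of $P$) commuting with a Zariski dense subset commutes with the whole group. Consequently $Z(G)=Z(\bfG)(k)$ and $Z(P)=Z(\bfP)(k)$, where $Z(\bfG)$ and $Z(\bfP)$ denote the centers of the algebraic groups. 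It therefore suffices to prove $Z(\bfP)=Z(\bfG)$ as subgroups of $\bfG$, and then to take $k$-points.

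For the inclusion $Z(\bfG)\subseteq Z(\bfP)$, I would first invoke the standard fact that the center of a connected group is contained in every parabolic subgroup: a parabolic contains the unipotent radical $\Rad{u}{\bfG}$, so passing to the reductive quotient $\bfG/\Rad{u}{\bfG}$, where the center lies in every maximal torus and hence in every parabolic, and pulling back, yields $Z(\bfG)\subseteq\bfP$. Since every element of $Z(\bfG)$ commutes with all of $\bfG$, a fortiori with all of $\bfP$, and lies in $\bfP$, we obtain $Z(\bfG)\subseteq Z(\bfP)$.

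The crux is the reverse inclusion $Z(\bfP)\subseteq Z(\bfG)$, which I would establish by a rigidity argument based on completeness. Fix $z\in Z(\bfP)$ and consider the conjugation morphism $\bfG\to\bfG$, $g\mapsto gzg^{-1}$. Because $z$ is centralized by $\bfP$, this map is constant on left $\bfP$-cosets and hence factors through a morphism $\varphi\colon \bfG/\bfP\to\bfG$ with $\varphi(g\bfP)=gzg^{-1}$. Now $\bfG/\bfP$ is complete, as $\bfP$ is parabolic, and connected, as $\bfG$ is connected, while $\bfG$ is an affine variety; thus the image $\varphi(\bfG/\bfP)$ is a complete connected affine subvariety of $\bfG$, that is, a single point. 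Evaluating at the identity coset gives $\varphi(e\bfP)=z$, so $gzg^{-1}=z$ for all $g\in\bfG$, i.e. $z\in Z(\bfG)$. Combining the two inclusions yields $Z(\bfP)=Z(\bfG)$, and taking $k$-points concludes the proof. I expect the completeness step to be the only substantive point; the remaining arguments are routine bookkeeping with Zariski density and centralizers.
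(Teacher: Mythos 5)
Your proof is correct and is essentially the paper's own argument: both hinge on factoring the conjugation map (the paper uses the commutator $g\mapsto[g,z]$, you use $g\mapsto gzg^{-1}$) through the connected complete variety $\bfG/\bfP$ and concluding constancy because $\bfG$ is affine. The surrounding steps --- Zariski density of $P$ in $\bfP$, and the easy inclusion $Z(\bfG)\subseteq\bfP$ (where self-normalization of parabolics would be quicker than your detour through the reductive quotient) --- are only cosmetic rearrangements of the same bookkeeping.
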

\begin{proof}
    Denote by $\bfP$ the Zariski closure of $P$. This is a parabolic subgroup.
    Since $G$ is Zariski dense in $\bfG$, $Z(G)$ is central in $\bfG$, hence contained in $\bfP$.
    We conclude that $Z(G)<Z(P)$. 
    We will argue to show the other inclusion, $Z(P)<Z(G)$.
    
    For any element in the center of $P$, $z\in Z(P)$, we may define a $k$-morphism of algebraic varieties 
    \[
    \mathbf{c}_z\colon \bfG \to \bfG  \ \ g\mapsto [g,z]
    \]
    The map $\mathbf{c}_z$ is a $k$-morphism of algebraic varieties, as it is defined by group operations in $\bfG$. 
    Since $P$ is Zariski dense in $\bfP$, and $P\subset \mathbf{c}_z^{-1}(\{e\})$, we get that $\bfP\subset \mathbf{c}_z^{-1}(\{e\}) $. Implying that for any $g\in \bfG$, $p\in \bfP$, $\mathbf{c}_z(gp)=\mathbf{c}_z(g)$.
    So the map $\mathbf{c}_z$ factors to a $k$-morphism $\hat{\mathbf{c}}_z\colon \bfG/\bfP \to \bfG$.
    Since $\bfP$ is parabolic, the quotient variety $\bfG/\bfP$ is a connected complete $k$-variety. Therefore the map $\hat{\mathbf{c}}_z$ must be constant.
    Thus $z\in Z(\bfG)$, and in particular in $Z(G)$, as needed.
\end{proof}

\subsubsection{ Algebraic linear action} \label{subsec:Ala}
Assume $k$ is of characteristic zero, and $\bfU$ is a connected unipotent $k$-group.
We have a $k$-isomorphism of algebraic varieties $\bfU\cong \Lie(\bfU)$. 
Thus letting $\bfG$ be a $k$-group acting $k$-algebraically by group automorphisms on a connected abelian $k$-unipotent group, we tacitly assume that the $G$ action on $U$ is linear, where the linear structure is induced by the $k$-isomorphism with the Lie algebra.

In the case that $k$ is of positive characteristic, auxiliary assumptions on the $k$-groups $\bfG$ and $\bfU$ are needed, see \cite{mcninch2014linearity} for a detailed discussion.

\subsection{Faithful actions on vector spaces}
This subsection is devoted to proving proposition~\ref{prop : faithfull irreducable action normal subgroup contined} bellow.
\begin{lemma}
\label{lem : normal subgroup in faithful action}
Let $L$ be a group acting faithfully by automorphisms on an abelian group $A$, and denote $G=L\ltimes A$. Then any non-trivial normal subgroup $M\lhd G$ intersects $A$ non-trivially.
\end{lemma}
\begin{proof}
Assume by contradiction that $M\cap A=\{e\}$. 
Observe that $[M,A]\subset M\cap A= \{e\}$, let $m=l_m a_m$ be an element in $M$. For any $a\in A$, $[a,m]=e$, so $ a=mam^{-1}=l_m a l_m^{-1} $. By the faithfulness of the $L$-action, we get that $l=e$. Therefore $M\subset  M\cap A=\{e\}$ is trivial in contradiction to $M$ being non-trivial. 
\end{proof}
\begin{lemma}\label{lem : faithful action on qutient}
Let $k$ be a local field, and $V$ be a finite-dimensional $k$-representation of a group $L$. 
Let $N\le V$ be an $L$ invariant additive subgroup. 
If the $L$ representation $V$ is faithful and irreducible, then the $L$ action on the quotient $ V/N$ is faithful.
\end{lemma}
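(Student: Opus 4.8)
The plan is to reduce faithfulness of the action on $V/N$ to faithfulness of the action on $V$, by controlling the $k$-linear endomorphism $\ell-\mathrm{id}_V$ for each $\ell$ lying in the kernel of the quotient action. The essential difficulty is that $N$ is only an \emph{additive} subgroup and need not be a $k$-subspace, so irreducibility of $V$ cannot be applied to $N$ directly; it must instead be applied to an associated $k$-subspace extracted from $N$.

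First I would introduce $W_0\subseteq V$, the sum of all $k$-subspaces of $V$ that are contained in $N$. Since $N$ is closed under addition, the family of $k$-subspaces contained in $N$ is closed under finite sums (if $W_1,W_2\subseteq N$ are subspaces, then $W_1+W_2$ is again a subspace and lies in $N$ because $N$ is an additive group), and as $V$ is finite-dimensional this family has a largest element, namely $W_0$, which is itself a $k$-subspace contained in $N$. The key observation is that $W_0$ is $L$-invariant: for $\ell\in L$ the image $\ell W_0$ is again a $k$-subspace, because $L$ acts $k$-linearly on $V$, and it is contained in $\ell N=N$, because $N$ is $L$-invariant; hence $\ell W_0\subseteq W_0$, and applying this to $\ell^{-1}$ gives $\ell W_0=W_0$. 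By irreducibility of $V$ we conclude $W_0=0$ or $W_0=V$.

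Next I would observe that $W_0=V$ forces $N=V$, in which case $V/N$ is trivial; excluding this degenerate case (that is, in the relevant situation $N\subsetneq V$) we obtain $W_0=0$. Now take any $\ell\in L$ acting trivially on $V/N$, which means $(\ell-\mathrm{id}_V)v\in N$ for every $v\in V$. The map $T\coloneqq \ell-\mathrm{id}_V\colon V\to V$ is $k$-linear, so its image $T(V)$ is a $k$-subspace contained in $N$, hence contained in $W_0=0$. Therefore $T=0$, i.e. $\ell$ fixes $V$ pointwise, and faithfulness of the $V$-representation yields $\ell=e$. Thus the $L$-action on $V/N$ is faithful.

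The only real obstacle is the one isolated above: bridging the gap between the group-theoretic hypothesis ``$N$ is an $L$-invariant additive subgroup'' and a statement to which representation-theoretic irreducibility applies. The device of taking the maximal $k$-subspace \emph{inside} $N$ is exactly what achieves this; note that the naive alternative, the $k$-span $\langle N\rangle_k$, is also $L$-invariant but runs in the wrong direction, since in the typical case it equals $V$ and yields no information. Getting this step right is the crux of the argument.
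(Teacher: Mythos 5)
Your proof is correct, and it takes a genuinely different route from the paper's. The paper argues by Pontryagin duality: using self-duality of local fields it passes to the dual representation on $\widehat{V}$ (again linear, irreducible and faithful), forms the annihilator $N^0=\{\chi\in\widehat{V} : \chi(N)=1\}$, which is $L$-invariant, so that by irreducibility its $k$-span is all of $\widehat{V}$ and $N^0$ contains a basis; hence $L$ acts faithfully on $N^0$, and dualizing back through $\widehat{N^0}\cong V/N$ gives faithfulness on $V/N$. You instead stay entirely inside $V$: the largest $k$-subspace $W_0$ contained in $N$ is $L$-invariant, hence $W_0=0$ once $N\subsetneq V$, and any $\ell$ acting trivially on $V/N$ has $(\ell-\mathrm{id})V$ a $k$-subspace contained in $N$, forcing $\ell=\mathrm{id}$ and then $\ell=e$ by faithfulness on $V$. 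Your version is more elementary and more general: it uses no topology, no local compactness, and no self-duality of $k$, so it works over an arbitrary field, and it is insensitive to whether $N$ is closed. The paper's argument, by contrast, implicitly requires $N^0\neq\{0\}$, i.e.\ that $N$ is not dense in $V$ (for non-closed $N$ one really has $\widehat{N^0}\cong V/\overline{N}$ rather than $V/N$, which still suffices when $\overline{N}\neq V$ but degenerates for dense proper $N$); your argument covers that case without modification. Both proofs must set aside $N=V$, where the statement is false for nontrivial $L$ — you flag this explicitly, while the paper leaves it implicit in how the lemma is applied (there $N=M\cap V$ with $V\nsubseteq M$, so $N$ is automatically proper).
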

\begin{proof}
By self-duality of local fields,  the Pontryagin dual of the vector space $V$, $\widehat{V}$, is a vector space isomorphic to $V$.
The $L$ action on the Pontryagin dual  $\widehat{V}$ is defined by pre-composition and is linear irreducible and faithful as well.
Let $N^{0}=\left\{ \chi\in\widehat{V}:\ \forall n\in N\ \chi(n)=1\right\} $ be the annihilator of the subgroup $N$. The annihilator $N^0$ is $L$ invariant, so by irreducibility $N^0$ contains a linear basis of the dual space $\widehat{V}$.
Therefore the $L$ action on $N^{0}$ is faithful making the $L$ action on the dual $\widehat{N^0}\cong V/N$ faithful as well.
\end{proof}
\begin{prop}
\label{prop : faithfull irreducable action normal subgroup contined}
Let $k$ be a local field, and $V$ be a finite-dimensional $k$-representation of a group $L$ and set  $G=L\ltimes V$.
If the $L$ representation $V$ is faithful and irreducible, then any normal subgroup $M\lhd G$ is either contained in $V$ or contains $V$.
\end{prop}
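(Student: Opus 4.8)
The plan is to analyze $M$ through the two pieces by which it meets the semidirect product structure: its intersection $N := M\cap V$ and its image in $L$. Since both $M$ and $V$ are normal in $G$, the intersection $N=M\cap V$ is a subgroup of $V$ stable under conjugation by $L$; because conjugation by $(l,0)$ sends $(e,w)$ to $(e,lw)$, this stability is precisely $L$-invariance of $N$ as an additive subgroup of $V$. I would then split into two cases according to whether $N=V$ or $N\subsetneq V$. If $N=V$, then $V=M\cap V\subseteq M$, placing us in the alternative ``$M$ contains $V$''; so the whole content is to show that $N\subsetneq V$ forces $M\subseteq V$.

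The key step is a commutator computation. Writing elements of $G=L\ltimes V$ as pairs $(l,w)$ with multiplication $(l_1,w_1)(l_2,w_2)=(l_1l_2,\, w_1+l_1w_2)$, I would fix $m=(l_m,v_m)\in M$ and $w\in V$ and compute directly that $[m,(e,w)]=(e,\,(l_m-1)w)$, where $(l_m-1)w:=l_m w-w$. On the one hand this commutator lies in $V$ (as $V\lhd G$) and in $M$ (as $M\lhd G$), hence in $N$; on the other hand it equals $(l_m-1)w$. Thus $(l_m-1)w\in N$ for every $w\in V$, which says exactly that $l_m$ acts as the identity on the quotient $V/N$.

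To conclude, let $\pi\colon G\to L$ be the projection with kernel $V$, so that $\pi(M)=\{\,l_m : (l_m,v_m)\in M\,\}$. The previous step shows every element of $\pi(M)$ acts trivially on $V/N$. Now assuming $N\subsetneq V$, Lemma~\ref{lem : faithful action on qutient} applies: since the $L$-representation $V$ is faithful and irreducible and $N$ is a proper $L$-invariant additive subgroup, $L$ acts \emph{faithfully} on $V/N$. Comparing with the triviality of the $\pi(M)$-action there forces $\pi(M)=\{e\}$, i.e. $M\subseteq\ker\pi=V$, as desired.

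I expect the only real obstacle to be bookkeeping: pinning down the semidirect-product conventions so that the identity $[m,(e,w)]=(e,(l_m-1)w)$ comes out correctly, and verifying carefully that $N=M\cap V$ is genuinely $L$-invariant so that Lemma~\ref{lem : faithful action on qutient} is applicable. Once these are in place, faithfulness and irreducibility enter only through that lemma, and the clean dichotomy $N=V$ versus $N\subsetneq V$ delivers the two alternatives immediately. I note that Lemma~\ref{lem : normal subgroup in faithful action} is not needed for this argument.
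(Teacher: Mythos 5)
Your proof is correct and takes essentially the same route as the paper: the paper likewise sets $N=M\cap V$, identifies $G/N$ with $L\ltimes V/N$, and invokes Lemma~\ref{lem : faithful action on qutient} to get faithfulness of the $L$-action on $V/N$. The only difference is organizational: where the paper finishes by citing Lemma~\ref{lem : normal subgroup in faithful action} applied to $M/N\lhd G/N$, you inline exactly the commutator computation that proves that lemma, working directly in $G$ instead of passing to the quotient.
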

\begin{proof}
Assume $V\nsubseteq M$, and denote $N = M\cap V$.
The group $L$ acts by automorphisms on $V/N$ and we may identify the quotient $G/N$ with $ L\ltimes V/N$. By lemma~\ref{lem : faithful action on qutient} the action of $L$ on $V/N$ is faithful. The normal subgroup $M/N\lhd G/N$ intersects $V/N$ trivially, hence by lemma~\ref{lem : normal subgroup in faithful action} $M/N$ is trivial implying that $N=M$, thus $M\subseteq V$ as needed.
\end{proof}
\begin{prop}
\label{prop : not containing subspace compact*}
    Let $k=\bbQ_p$ for some prime $p$, and let $V$ be an $n$-dimensional $k$-vector space. 
    Any additive subgroup $N\le V$ which does not contain a vector subspace is compact.
\end{prop}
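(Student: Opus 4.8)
I will prove the contrapositive in the sharp form: a closed additive subgroup $N\le V$ that fails to be compact must contain a nonzero $k$-subspace. The closedness of $N$ is used essentially (and is where the content concentrates): without it the statement fails, since $\bbZ[1/p]\le \bbQ_p$ is a non-closed additive subgroup containing no line which is not compact. In this topological setting I therefore read ``additive subgroup'' as a closed subgroup. Since $V\cong k^n$ is locally compact and complete, a closed subset is compact precisely when it is bounded for the norm $\|\cdot\|$; hence it suffices to show that an \emph{unbounded} closed subgroup contains a line. I will also use the elementary fact that a closed subgroup of the $k$-vector space $V$ is automatically a $\bbZ_p$-submodule: since $\bbZ$ is dense in $\bbZ_p$ and $N$ is closed, $\bbZ_p\cdot N\subseteq \overline{\bbZ\cdot N}=N$.

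\textbf{Extracting a unit vector.} Assume $N$ is unbounded. As in the baby case I may take $\|\cdot\|$ to be an $\mathrm{L}^\infty$-norm, so that $\|\cdot\|$ takes values in $p^{\bbZ}\cup\{0\}$ and the unit sphere $S_1=\{v:\ \|v\|=1\}$ is norm-compact. Choose $h_i\in N$ with $\|h_i\|=p^{s_i}$ and $s_i\to\infty$. Scaling down by a nonnegative integer power of $p$, set $g_i=p^{s_i}h_i$; as $p^{s_i}\in\bbZ$ this lies in $N$, and $\|g_i\|=|p^{s_i}|\,\|h_i\|=1$, so $g_i\in S_1$. By compactness of $S_1$, after passing to a subsequence $g_i\to u$ with $\|u\|=1$; in particular $u\neq 0$, and $u\in N$ because $N$ is closed.

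\textbf{Recovering the whole line.} It remains to show $ku\subseteq N$. Fix $t\ge 0$. For every $i$ with $s_i\ge t$ we have $p^{-t}g_i=p^{s_i-t}h_i\in N$ (again a nonnegative, hence integer, power of $p$ times $h_i$), and $p^{-t}g_i\to p^{-t}u$ by continuity of scalar multiplication; since $N$ is closed, $p^{-t}u\in N$. As $N$ is a $\bbZ_p$-module this yields $p^{-t}\bbZ_p\,u\subseteq N$ for every $t\ge 0$, and taking the union over $t$ gives $ku=\bigcup_{t\ge 0}p^{-t}\bbZ_p\,u\subseteq N$. Thus $N$ contains the line $ku$, contradicting the hypothesis; hence $N$ is bounded, and therefore compact.

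\textbf{Main obstacle.} The crux is the line-extraction step: one must rescale an unbounded sequence back onto the compact unit sphere while \emph{staying inside} $N$, which forces the use of nonnegative integer powers of $p$ only, and then reconstruct the full line by scaling the limit $u$ down through all $p^{-t}$. It is exactly here that closedness of $N$ (equivalently, its $\bbZ_p$-module structure) is indispensable, both to place the limit $u$ and all $p^{-t}u$ inside $N$ and to promote $\bbZ$-multiples to $\bbZ_p$-multiples.
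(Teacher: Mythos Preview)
Your proof is correct, and it takes a genuinely different route from the paper's. The paper argues via Pontryagin duality: identifying $\widehat{V}$ with $V$ by self-duality of $\bbQ_p$, it shows that the annihilator $N^0\subset\widehat{V}$ must span $\widehat{V}$ as a vector space (otherwise $N$ would contain the nonzero subspace $(\operatorname{span} N^0)^0$), hence $N^0$ contains a basis; being closed and $\bbZ$-invariant, $N^0$ is a $\bbZ_p$-module and therefore contains an open lattice, so $\widehat{N}\cong\widehat{V}/N^0$ is discrete and $N$ is compact. Your argument is more direct and constructive: you normalize an unbounded sequence in $N$ onto the compact unit sphere using nonnegative integer powers of $p$, extract a limit $u\in N$, and then recover the entire line $ku$ by scaling down through $p^{-t}$ and invoking the $\bbZ_p$-module structure. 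Your approach avoids duality altogether and produces an explicit line inside $N$; the paper's approach is less explicit but sits naturally in the LCA framework used elsewhere. You are also right that closedness of $N$ is essential and implicit in both proofs (the paper needs it for $N^{00}=N$ and for the duality $\widehat{N}\cong\widehat{V}/N^0$); in the paper's applications $N$ arises as a kernel or a preimage under a continuous map, so this is satisfied.
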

\begin{proof}
    We will show that $N$ is compact by showing its Pontryagin dual is discrete.
    Observe that if $N^0$ is contained in a vector subspace $W\subset \widehat{V}$, then $N$ contains the linear subspace $W^0\coloneqq \{ v\in V : \forall\chi\in W \ \chi(v)=0 \}$.
    Since $N$ does not contain a linear subspace we get that $\spn_{\bbQ_p}(N^0)= \widehat{V}$.
    Hence, $N^0$ contains a basis, and since $N^0$ is $\bbZ$-invariant and closed it is $\bbZ_p$-invariant and is, therefore, an open subset in $\widehat{V}$.
    Therefore, $\widehat{V}/N^0$ is discrete and since
    $\widehat{N}\cong \widehat{V}/N^0$, we get that $N$ is compact.
\end{proof}

\section{Minimal and Sealed groups}
\label{Minimal and Sealed groups}

The purpose of this section is to survey two topological properties of groups, called minimality and sealdness, as well as a relative version of these properties.
The objects studied in this article are $k$-points of algebraic groups over a local field $k$.
Therefore we will pay attention to the case where $G$ is a locally compact Polish topological group. 
In order to keep this article self-contained, we include proofs for the special case of locally compact Polish groups. However, many of the lemmas are true for a more general class of topological groups.
For a full and comprehensive account of minimal group topologies (as well as the relative notions for subgroups) and closed image properties we refer the ready to  \cite{MR2556080}, \cite{MR3205486}, \cite{MR1651174}, \cite{dikranjan1998categorically} and \cite{banakh2017categorically}.

\subsection{Minimal topologies on groups}
Let $G$ be a group, with a group topology $\tau$. That is $\tau$ is a (Hausdorff) topology on $G$ such that the multiplication and inverse operations are continuous. Any subgroup of a topological group is a topological group with the subspace topology. We will denote the subspace topology on $H\le G$ by $\tau_H$, and denote the quotient topology on $G/H$ by $\tau_{G/H}$.
Given two group topologies $\w$ and $\tau$ on $G$, we say that a group topology $\w$ is weaker than $\tau$ if $\w\le \tau$, that is, any $\w$-open set is $\s$-open. We say $\w$ is strictly weaker than $\s$, if $\w\le \s$ and $\w\ne \s$.
\begin{definition}
    A topological group $G$ is called \textit{minimal} if it does not admit a strictly weaker (Hausdorff) group topology.
\end{definition}
\begin{example}
    Compact groups are minimal. Observe that the identity map between the strong and weak topologies is a continuous and closed bijection.
\end{example}
\begin{example}
\label{example : popular example}
    The homothety group $\bbR^*\ltimes \bbR$ is minimal (See proposition~\ref{prop : baby case minimal}).
\end{example}
Example \ref{example : popular example}, is a well-known result (see \cite{dierolf1979examples}), the minimality of the homothety group follows particularly from the work of Omori \cite[Theorem 1.1]{omori1966homomorphic}, which classifies all minimal real algebraic groups.
%
\begin{example}[Omori]
\label{example : omori}
    Any real algebraic group with a compact center is minimal.
\end{example}
\begin{definition}
\label{defn : initial top}
     Let $G$ and $H$ be topological groups, and $f\colon G\to H$ be an injective continuous homomorphism. The topology on $G$ generated by inverse images of open subsets in $H$ is called the \textit{$f$-initial topology} and is denoted $\tau^f$. 
\end{definition}
\begin{rem} \label{remark : minimal group is homeo to image} 
    The initial topology $\tau^f$ is a group topology weaker than the given topology on $G$, and the map $f$ is open with respect to the initial topology. 
    Therefore, if $G$ is minimal then any injective continuous homomorphism is a topological embedding onto the image.
\end{rem}
\begin{thm}[Stoyanov, Prodanov\cite{stoyanov1984every}]
\label{thm : Polish/LCA not minimal}
    Let $G$ be a Polish abelian group. Then $G$ is minimal if and only if it is compact.
\end{thm}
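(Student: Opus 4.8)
The plan is to dispose of the two implications separately, with essentially all of the content in the direction ``minimal $\Rightarrow$ compact''. The converse is immediate and is already recorded as an example in the text: a compact group is minimal, since for any weaker Hausdorff group topology the identity map is a continuous bijection from a compact space onto a Hausdorff space, hence a homeomorphism.

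For the forward direction I would reduce to two ingredients. The first is the Prodanov--Stoyanov precompactness theorem of \cite{stoyanov1984every}: every minimal abelian (Hausdorff) topological group is precompact, i.e.\ totally bounded, equivalently its Raikov completion $\widehat{G}$ is compact. The second is completeness: a Polish group is completely metrizable and therefore Raikov-complete. Since $G$ is always dense in $\widehat{G}$ and a Raikov-complete subgroup is closed in any ambient group, completeness forces $G=\widehat{G}$; being dense and closed, $G$ coincides with $\widehat{G}$. Combining the two ingredients, $G=\widehat{G}$ is compact, as desired.

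I expect the main obstacle to be the precompactness step, which is exactly the deep content of Prodanov--Stoyanov and has no elementary proof in the generality of arbitrary Polish abelian groups, where the continuous characters need not separate points. I would therefore cite it. However, in the locally compact abelian case --- which is the case actually used in this paper, for centers of groups --- I would give a self-contained argument for precompactness via Pontryagin duality, bypassing \cite{stoyanov1984every}. Namely, on an LCA group $G$ the continuous characters separate points, so the Bohr topology $\tau_b$, the initial topology induced by all continuous homomorphisms $G\to \bbR/\bbZ$, is a Hausdorff group topology. Every character is continuous, so $\tau_b$ is weaker than the original topology $\tau$; minimality then forces $\tau_b=\tau$. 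But $(G,\tau_b)$ embeds as a subgroup of the compact Bohr compactification $bG$, so $\tau=\tau_b$ is precompact. Since locally compact groups are complete, precompact plus complete again yields compactness.

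As a sanity check on the hypotheses I would note that the standard non-minimal examples, namely $\bbR$ via a dense winding embedding into $(\bbR/\bbZ)^2$ and $\bbZ$ via an irrational rotation into $\bbR/\bbZ$, are exactly non-compact locally compact abelian groups carrying strictly weaker Hausdorff topologies, which confirms that both precompactness and completeness are genuinely needed for the conclusion.
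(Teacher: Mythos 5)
Your proposal is correct and takes essentially the same route as the paper: both directions are handled by citing the Prodanov--Stoyanov precompactness theorem together with completeness of Polish groups, and both you and the paper supplement this with a self-contained Bohr-compactification argument in the locally compact abelian case, which is the case the paper actually uses. The only cosmetic difference is the last step of that LCA argument: the paper observes that the (homeomorphic, by minimality) image of $G$ in its Bohr compactification is locally compact, hence closed, hence equals the whole compactification by density, whereas you phrase it as precompactness of the Bohr topology plus completeness of locally compact groups --- two equivalent standard facts.
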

\begin{proof}
    By the Stoyanov and Prodanov Theorem \cite[Theorem 2.7.7]{MR1015288}, any abelian minimal group is pre-compact. Hence Polish abelian minimal groups are compact.
    In this paper, we are interested in locally compact Polish groups, for sake of completeness, we provide a short argument in the case $G$ is locally compact abelian.
    Let $\beta\colon G\to \beta G$ be the Bohr compactification of the locally compact abelian group $G$, see \cite[VII.5]{katznelson2004introduction}.
    Since minimal groups are homeomorphic to their images, we get that $G$ is homeomorphic to $\beta(G)$, and since $G$ is locally compact, $\beta(G)$ is locally compact and hence closed, implying that $G$ is compact.
\end{proof}
\begin{lemma}
\label{lemma : weaker than second countable}
    Let $G$ be a second countable topological group.
    For any $\w$, a weaker group topology on $G$ there exists a group topology $\w'\le \w$, such that $\w'$ is second countable.
\end{lemma}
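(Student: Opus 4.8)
The plan is to realise $\w'$ as the group topology generated by a carefully chosen \emph{countable} neighbourhood base at the identity $e$, all of whose members are $\w$-open. Write $\tau$ for the given second countable topology on $G$, so that $\w\le\tau$. Two reductions make the goal purely about a countable base at $e$. First, since $(G,\tau)$ is separable and we will have $\w'\le\w\le\tau$, the topology $\w'$ is automatically separable (a $\tau$-dense set stays dense in any coarser topology). Second, a first countable Hausdorff group topology is metrizable by Birkhoff--Kakutani, and a separable metrizable topology is second countable. Hence it suffices to build a Hausdorff group topology $\w'\le\w$ possessing a countable neighbourhood base at $e$.

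The first step is to show that $\{e\}$ is a $G_\delta$ for $\w$, which is exactly where the second countability of the \emph{finer} topology $\tau$ enters. For each $x\ne e$ use the Hausdorffness of $\w$ to pick disjoint $\w$-open sets $A_x\ni x$ and $B_x\ni e$. The family $\{A_x\}_{x\ne e}$ is a $\w$-open, hence $\tau$-open, cover of the $\tau$-open set $G\setminus\{e\}$; since $(G,\tau)$ is second countable it is hereditarily Lindel\"of, so I extract a countable subcover $\{A_{x_n}\}$. The corresponding $\w$-open sets $W_n:=B_{x_n}$ then satisfy $\bigcap_n W_n=\{e\}$, because every $x\ne e$ lies in some $A_{x_n}$ and is therefore excluded from $W_n$.

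The second step is a recursive bookkeeping construction. Fix a countable $\tau$-dense (hence $\w$-dense) set $D=\{d_1,d_2,\dots\}$. I will build a decreasing sequence $U_1\supseteq U_2\supseteq\cdots$ of symmetric $\w$-open neighbourhoods of $e$ satisfying: (A) cube roots, $U_{n+1}^3\subseteq U_n$; (B) for each $n$ some $U_m\subseteq W_n$, so that $\bigcap_n U_n=\{e\}$; and (C) for each $j$ and each $n$ some $U_m$ with $d_j U_m d_j^{-1}\subseteq U_n$. Each of these is a request of the form ``given $U$, find a smaller symmetric $\w$-open $U'$ with property $P$'', and each such request is met using continuity of multiplication (for (A)), intersection with the $W_n$ (for (B)), or continuity of conjugation by the fixed element $d_j$ (for (C)), all of which hold because $\w$ is a group topology. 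Dovetailing the countably many requests produces the sequence.

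The main obstacle is the verification that $\{U_n\}$ generates a \emph{group} topology: by the standard criterion (Bourbaki) one must check, besides finite intersections, symmetry and square roots, the conjugation-invariance $gU_mg^{-1}\subseteq U_n$ for \emph{every} $g\in G$, whereas (C) only controls the countably many $g\in D$. I resolve this by approximation together with the cube-root condition. Given $g$ and $n$, choose $c$ with $U_c^3\subseteq U_n$, then by $\w$-density of $D$ pick $d\in D\cap U_c g$, so that $v:=gd^{-1}\in U_c$; applying (C) to $d$ with target $U_c$ gives $m$ with $dU_md^{-1}\subseteq U_c$, and then
\[
gU_mg^{-1}=v\,(dU_md^{-1})\,v^{-1}\subseteq v\,U_c\,v^{-1}\subseteq U_c^3\subseteq U_n,
\]
the inclusion $vU_cv^{-1}\subseteq U_c^3$ holding because $v,v^{-1}\in U_c$. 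Thus $\{U_n\}$ is a neighbourhood base at $e$ for a group topology $\w'$. Since every $U_n$ is $\w$-open we get $\w'\le\w$; since $\bigcap_n U_n=\{e\}$ the topology $\w'$ is Hausdorff; and $\{U_n\}$ is a countable base at $e$, so by the reductions above $\w'$ is a second countable group topology coarser than $\w$, as required.
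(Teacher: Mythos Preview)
Your proof is correct and follows essentially the same route as the paper. Both arguments first use hereditary Lindel\"ofness of the finer topology $\tau$ to extract countably many $\w$-open identity neighbourhoods $W_n$ with $\bigcap_n W_n=\{e\}$, and then produce a second countable group topology $\w'\le\w$ in which the $W_n$ are open. The only difference is in this second step: the paper outsources it to \cite[Proposition~5.6]{MR3692904}, whereas you carry out the construction explicitly via the recursive refinement $(U_n)$ and the density-plus-cube-root trick for conjugation invariance. Your version is thus self-contained, at the cost of a bit more bookkeeping; the paper's is shorter but relies on an external reference whose proof is, in essence, exactly what you wrote.
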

\begin{proof}
    Let $\s$ denote the given second countable topology on $\s$, and let $\w\le \s$ be a weaker group topology.
    We will show that there exists a second countable weaker group topology $\w'\le\w$.
    
    Let $\{W_\alpha \}_{\alpha\in A}\in \w$ be a system of open identity neighborhoods.
    Since the group topology $\w$ is Tychonoff, $\cap_{\alpha\in A} \overline{W_\alpha} =\cap_{\alpha\in A} W_\alpha=\{e\} $, where $ \overline{W_\alpha}$ is the $\w$-closure.
    Observe that $\overline{W_\alpha}$ is $\s$-closed as well. Thus we obtain an open cover $\bigcup_{\alpha \in A}(G\setminus W_\alpha) = G\setminus \{e\} $.
    The $\s$-subspace topology on $G\setminus\{e\}$ is second countable and hence Lindelof, implying that there exists a countable subcover $\{W_i\}_{i=1}^\infty$ such that $\bigcup_{i=1}^\infty(G\setminus W_i) = G\setminus \{e\}  $, particularly $\cap_{i=1}^\infty {W_i}=\{e\}$.
    By \cite[Proposition 5.6]{MR3692904} there exist a weaker, a priori not necessarily Hausdorff, group topology $\w'\le \w$, such that $\w'$ is second countable and contains the collection $\{W_i\}_{i=1}^\infty$.
    But since $\cap_{i=1}^\infty {W_i}=\{e\} $, the topology $\w'$ is indeed Hausdorff and therefore $\w'$ is a second countable weaker group topology.
\end{proof}
\begin{cor}
\label{corollary : enough second countable}
    Let $G$ be a second countable group. If $G$ does not admit any weaker second-countable group topologies then $G$ is minimal.
\end{cor}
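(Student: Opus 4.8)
The plan is to argue by contraposition, reducing minimality to the already-established Lemma~\ref{lemma : weaker than second countable}. Write $\s$ for the given second-countable topology on $G$. I interpret the hypothesis ``$G$ does not admit any weaker second-countable group topology'' in the meaningful \emph{strict} sense: there is no second-countable group topology $\w'$ with $\w'\le \s$ and $\w'\ne \s$ (the non-strict statement being vacuous, as $\s$ is itself second countable). Suppose, toward the contrapositive, that $G$ is \emph{not} minimal; then by definition there exists a group topology $\w$ on $G$ with $\w\le\s$ and $\w\ne\s$.

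I would then invoke Lemma~\ref{lemma : weaker than second countable} on the weaker topology $\w$, obtaining a group topology $\w'\le \w$ that is second countable. The one point to verify is that strictness propagates downward through the lattice of topologies: since $\w'\le\w\le\s$, if we had $\w'=\s$ then $\s\le\w\le\s$ would force $\w=\s$, contradicting $\w\ne\s$; hence $\w'\ne\s$. Thus $\w'$ is a second-countable group topology strictly weaker than $\s$, contradicting the hypothesis. Therefore $G$ must be minimal.

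There is no substantial obstacle at this stage: all the analytic content---producing a second-countable refinement lying below an arbitrary weaker group topology, via the Tychonoff separation of the identity and the Lindel\"of property of $G\setminus\{e\}$---has already been absorbed into Lemma~\ref{lemma : weaker than second countable}. The corollary is then a purely formal consequence, the only care being the bookkeeping of strict versus non-strict inclusions of topologies noted above.
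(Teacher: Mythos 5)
Your proposal is correct and is essentially the paper's own argument: both invoke Lemma~\ref{lemma : weaker than second countable} to produce a second-countable group topology $\w'\le\w\le\s$ and then use the hypothesis to force $\w'=\s$, hence $\w=\s$. The only difference is presentational — you phrase it as a contrapositive, while the paper argues directly — and your bookkeeping of strict versus non-strict inclusions matches the paper's implicit reading of the hypothesis.
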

\begin{proof}
    Let $\s$ denote the given second countable topology on $G$, and let $\w\le \s$ be a weaker group topology. By Lemma~\ref{lemma : weaker than second countable} there exist a weaker group topology $\w'\le \w$, such that $\w'$ is second countable, therefore $\s=\w'$ implying that $\s=\w=\w'$, as needed.
\end{proof}
\subsection{Relative minimality}
The reader may benefit from keeping in mind the following example.
\begin{example}
Consider the group $\bbR^* \ltimes \bbR$ with the standard topology, and the subgroup $\bbR\le \bbR^* \ltimes \bbR $. Although the group $\bbR$ is not minimal, it does have the following property (see Lemma~\ref{lemma: baby case reletive minimality}): any weaker topology on $\bbR^* \ltimes \bbR$ agrees with the standard topology on the subgroup $\bbR$.
\end{example}
\begin{definition}
    Let $G$ be a topological group, and let $U\le G$ be a closed subgroup.
    We say that \textit{$U$ is minimal in $G$} if all weaker group topologies on $G$ coincide on the subgroup $U$.
\end{definition}
In the context of Polish topological groups, minimality can be interpreted in terms of a closed image property.
\begin{lemma}
\label{lem : minimal implies closed injective}
    Let $G$ be a Polish topological group, and let $U\le G$ be a closed subgroup. Then $U$ is minimal in $G$ if and only if for any injective continuous homomorphism $f\colon G\to H$ into a topological group $H$, the image $f(U)$ is closed in $H$.
\end{lemma}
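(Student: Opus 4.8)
The plan is to prove the equivalence in Lemma~\ref{lem : minimal implies closed injective} by exploiting the relationship between weaker group topologies and the initial topology $\tau^f$ associated to an injective continuous homomorphism. The central observation is that injective continuous homomorphisms out of $G$ correspond exactly to weaker group topologies on $G$: given $f\colon G\to H$ injective and continuous, the initial topology $\tau^f$ (Definition~\ref{defn : initial top}) is a weaker group topology, and conversely every weaker group topology $\w$ on $G$ arises this way, since the identity map $\mathrm{Id}\colon (G,\tau)\to (G,\w)$ is itself an injective continuous homomorphism into a topological group.

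For the forward direction, suppose $U$ is minimal in $G$ and let $f\colon G\to H$ be injective and continuous. By Remark~\ref{remark : minimal group is homeo to image}, $\tau^f$ is a group topology weaker than the given topology $\s$ on $G$, and $f$ is a topological embedding of $(G,\tau^f)$ onto $f(G)$. Since $U$ is minimal in $G$, the topologies $\tau^f$ and $\s$ agree on $U$, so $U$ carries its original Polish topology as a subspace of $(G,\tau^f)$. Hence $f(U)$, with the subspace topology from $H$, is homeomorphic to the Polish group $(U,\s_U)$. A Polish subgroup of a topological group is closed (this is the same Polish-implies-closed fact invoked in the proof of Proposition~\ref{prop : baby case sealed}, via \cite[Theorem 2.16]{melleray2016polish}, applied after noting $f(U)$ is a subgroup of the closure and the embedding is onto a Polish subspace), which gives that $f(U)$ is closed in $H$.

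For the converse, suppose that $f(U)$ is closed in $H$ for every injective continuous $f\colon G\to H$, and let $\w\le \s$ be an arbitrary weaker group topology on $G$. I would apply the hypothesis to the injective continuous homomorphism $\mathrm{Id}\colon (G,\s)\to (G,\w)$, whose image $\mathrm{Id}(U)=U$ is then $\w$-closed in $(G,\w)$. The restriction of $\mathrm{Id}$ to $U$ is a continuous bijection $(U,\s_U)\to (U,\w_U)$; since $U$ is $\w$-closed in the Polish group $(G,\w)$, it is itself Polish, and $(U,\s_U)$ is Polish as a closed subgroup of the Polish group $G$. The Open Mapping Theorem~\ref{thm : banach open mapping theorem}, or equivalently Corollary~\ref{corollary : bijection onto polish group makes minimal} applied to the subgroup $U$, then forces $\w_U=\s_U$. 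As $\w$ was arbitrary, all weaker group topologies coincide with $\s$ on $U$, i.e.\ $U$ is minimal in $G$.

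The main obstacle I anticipate is the careful bookkeeping in the converse direction needed to guarantee that $(G,\w)$ itself—or at least the relevant subspace—is Polish, so that the open mapping machinery applies. The hypothesis yields closedness of $U$ inside $(G,\w)$, but one must ensure $(G,\w)$ is Polish (or second countable and completely metrizable) to conclude that the closed subgroup $U$ is Polish and invoke Theorem~\ref{thm : banach open mapping theorem}. If $\w$ is not a priori Polish, I would instead argue directly at the level of $U$: the continuous bijection $(U,\s_U)\to (U,\w_U)$ between the Polish group $(U,\s_U)$ and a Hausdorff group topology on $U$ can be handled by reducing to a weaker second countable topology via Lemma~\ref{lemma : weaker than second countable}, and then applying the open mapping theorem. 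This reduction is the delicate point, but the tools assembled earlier in the excerpt are precisely tailored to circumvent it.
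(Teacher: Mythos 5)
Your forward direction is correct and is essentially the paper's own argument: pass to the initial topology $\tau^f$, use relative minimality to see $f|_U$ is an embedding of the Polish group $(U,\s_U)$, and conclude closedness from the fact that Polish subgroups are closed.

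The converse, however, has a genuine gap, and it sits exactly at the point you flag as ``delicate''. Applying the hypothesis to the identity map $\mathrm{Id}\colon (G,\s)\to (G,\w)$ only tells you that $U$ is $\w$-closed in $(G,\w)$, and this is far too weak: $(G,\w)$ is not known to be Polish, or even completely metrizable, so a $\w$-closed subgroup need not be Polish, and Theorem~\ref{thm : banach open mapping theorem} (which requires a Polish codomain) cannot be invoked for the bijection $(U,\s_U)\to(U,\w_U)$. Your fallback --- reducing to a second countable topology via Lemma~\ref{lemma : weaker than second countable} --- does not repair this, since second countability gives no completeness. To see that the derivation genuinely fails rather than merely needing more care, take $G=U=\bbR$: then $U$ is trivially $\w$-closed in $(G,\w)$ for \emph{every} weaker group topology $\w$, so your chain of deductions, if valid, would prove that every Polish group is minimal in itself. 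But $\bbR$ is not minimal: pulling back the topology of $(\bbR/\bbZ)^2$ under $x\mapsto (x+\bbZ,\alpha x+\bbZ)$, $\alpha$ irrational, gives a strictly weaker second countable group topology $\w$ on $\bbR$ for which $(\bbR,\w)$ is not Polish and the identity map is not open. (This example does not contradict the lemma itself, since that same map into $(\bbR/\bbZ)^2$ has non-closed image; it contradicts only the proposed argument, which never uses anything beyond closedness of $U$ inside $(G,\w)$.)

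The missing idea is the one the paper uses: the hypothesis must be applied to a map into a \emph{complete} ambient group, not to the identity onto $(G,\w)$. Concretely, after reducing to $\w$ second countable, let $\iota\colon G\to \overline{G}$ be the Raikov completion of $(G,\w)$; then $\overline{G}$ is Polish, $\iota$ is injective and $\s$-continuous, and the hypothesis gives that $\iota(U)$ is closed in $\overline{G}$, hence $(U,\w_U)\cong\iota(U)$ is a Polish group. Only at that point does Banach's Open Mapping Theorem~\ref{thm : banach open mapping theorem}, applied to the continuous bijection $(U,\s_U)\to (U,\w_U)$ between Polish groups, force $\w_U=\s_U$. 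Exploiting the closed-image hypothesis against the completion is precisely what makes it non-vacuous; without that step the proof cannot go through.
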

\begin{proof}
    Assume that $U$ is minimal in $G$.
    Let $H$ be a topological group and $f \colon G \to H$ be an injective continuous homomorphism with a dense image.
    The $f$-initial topology $\tau^f$ is a weaker group topology on $G$, which must coincide on $U$. 
    Therefore, we get that the restriction $f_U\colon U\to H$ is an open bijection onto $f(U)$.
    Therefore the subspace topology on $f(U)$ is Polish hence $f(U)$ is closed, see \cite[Theorem 2.16]{melleray2016polish}.
  
    Now assume that the image of $U$ is closed under any injective map into a topological group. 
    Denote by $\s$ the given topology on $G$, and let $\w\le \s$ be a weaker topology on $G$.
    Following Corollary~\ref{corollary : enough second countable} we assume without loss of generality that $\w$ is second countable.

    Let $\iota\colon G \to \overline{G}$ be the completion of $G$ with respect to the two-sided uniformity defined by the group topology $\w$, see \cite[Chapter III, \S 3, no. 4 ]{bourbaki2013general} and\cite{raikov1946completion}, this completion is also known as the Raikov-completion of the group. 
    
    Since $\w$ is second countable, it follows by \cite[Chapter III, \S 3, no. 4, Proposition 7]{bourbaki2013general} that the group completion $ \overline{G}$ is Polish.
    By assumption, $\iota(U)$ is closed in $\overline{G}$, making $\iota(U)$ a Polish group.
    The completion map $\iota$ is a homeomorphism onto the image, making $\iota|_U\colon U\to \iota(U)$ a homeomorphism from $U$ to $\iota(U)$.
    Hence, with respect to the $\w$-subspace topology, $U$ is a Polish group.
    Following the Banach Open Mapping Theorem~\ref{thm : banach open mapping theorem}, restricted to $U$, the identity map onto the weaker topology is open, hence the Polish topology $\w_U$ is equal to the original group topology on $U$.
\end{proof}
\begin{cor}
\label{cor : minimal has closed image}
    Let $G$ be a Polish group. Then $G$ is minimal if and only if the image of $G$ under any injective continuous homomorphism is closed.
\end{cor}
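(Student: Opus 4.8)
The plan is to deduce this corollary directly from Lemma~\ref{lem : minimal implies closed injective} by specializing to the subgroup $U = G$ itself. The group $G$ is trivially a closed subgroup of $G$, so the hypotheses of the lemma are satisfied with $U = G$, and it remains only to reconcile the two notions of minimality involved.

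First I would observe the definitional equivalence between \emph{absolute} and \emph{relative} minimality in the case $U = G$. By definition, the subgroup $G$ is minimal in $G$ precisely when every weaker group topology on $G$ coincides with the given topology on the subgroup $G$ --- that is, when every weaker group topology equals the original topology. This is exactly the statement that $G$ admits no strictly weaker Hausdorff group topology, namely that $G$ is minimal in the absolute sense of the definition. So ``$G$ is minimal in $G$'' and ``$G$ is minimal'' name the same condition.

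Next I would translate the closed-image side of the lemma. With $U = G$, the conclusion ``for any injective continuous homomorphism $f\colon G\to H$ the image $f(U)$ is closed in $H$'' reads verbatim as ``for any injective continuous homomorphism $f\colon G\to H$ the image $f(G)$ is closed in $H$,'' which is the closed-image property appearing in the corollary. Combining these two identifications, Lemma~\ref{lem : minimal implies closed injective} applied to $U = G$ yields the desired biconditional.

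I do not expect any genuine obstacle here: the corollary is a pure specialization, and all the analytic content --- the use of the Raikov completion, the appeal to the Banach Open Mapping Theorem~\ref{thm : banach open mapping theorem}, and the fact that a Polish subgroup is closed --- has already been carried out inside the proof of the lemma. The only point requiring a moment's care is the (entirely formal) verification that relative minimality of the whole group reduces to ordinary minimality, which I would state explicitly to keep the argument self-contained.
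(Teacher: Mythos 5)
Your proof is correct and is precisely the argument the paper intends: the corollary is stated as an immediate consequence of Lemma~\ref{lem : minimal implies closed injective}, obtained by taking $U = G$, with the (trivially verified) identification of relative minimality of $G$ in $G$ with ordinary minimality. Nothing is missing, and your explicit reconciliation of the two notions of minimality is exactly the only point needing mention.
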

\begin{lemma}[{\cite[Theorem 7.2.5]{MR1015288}}]
\label{lem : minimal has compact center}
    Let $G$ be a Polish group. If $G$ is minimal, then its center, $Z(G)$, is compact.
\end{lemma}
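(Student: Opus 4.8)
The plan is to reduce the statement to the abelian case already recorded in Theorem~\ref{thm : Polish/LCA not minimal}. Write $\tau$ for the given topology on $G$ and set $Z = Z(G)$, equipped with the subspace topology $\tau_Z$. First I would observe that $Z$ is closed in $G$: it is the intersection over $g\in G$ of the centralizers $\{x : xg = gx\}$, each of which is closed since $G$ is Hausdorff. Being a closed subgroup of a Polish group, $Z$ is itself Polish, and it is of course abelian. By Theorem~\ref{thm : Polish/LCA not minimal}, a Polish abelian group is compact as soon as it is minimal, so the whole problem reduces to showing that $(Z,\tau_Z)$ is minimal as a topological group in its own right.

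To prove $Z$ minimal I would argue by contradiction, by showing that a strictly weaker group topology on the central subgroup $Z$ can be inflated to a strictly weaker group topology on all of $G$. So suppose $\sigma$ is a Hausdorff group topology on $Z$ with $\sigma \le \tau_Z$ and $\sigma \neq \tau_Z$. I would then define a topology $\tau_\sigma$ on $G$ by taking as a basis of identity neighborhoods the sets $US$, where $U$ ranges over $\tau$-neighborhoods of $e$ in $G$ and $S$ over $\sigma$-neighborhoods of $e$ in $Z$. The key point, used repeatedly, is the centrality of $Z$: since each $S\subseteq Z$ commutes with every element of $G$, the axioms for a filter base to define a group topology go through, for instance $g(US)g^{-1} = (gUg^{-1})S$ yields conjugation-invariance and $(U'S')(U'S') = (U'U')(S'S')$ yields the squaring condition. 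As $US\supseteq U$, the topology $\tau_\sigma$ is weaker than $\tau$.

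It remains to see that $\tau_\sigma$ is Hausdorff and strictly weaker than $\tau$, which is the heart of the matter. For strictness, note that $(US)\cap Z = (U\cap Z)S \supseteq S$, so every $\tau_\sigma$-neighborhood of $e$ in $Z$ is already a $\sigma$-neighborhood; hence $\tau_\sigma|_Z \le \sigma$, and were $\tau_\sigma = \tau$ we would get $\tau_Z = \tau_\sigma|_Z \le \sigma \le \tau_Z$, forcing $\sigma = \tau_Z$ against our assumption. For Hausdorffness, a point $g\notin Z$ is separated from $e$ because the $\tau_\sigma$-quotient topology on $G/Z$ agrees with the (Hausdorff) $\tau$-quotient topology, the inflation having altered $\tau$ only within $Z$; a point $g\in Z\setminus\{e\}$ is separated by choosing a symmetric $\sigma$-neighborhood $S$ with $g\notin SS$ and then a $\tau$-neighborhood $U$ with $U\cap Z\subseteq S$, so that $g\notin (U\cap Z)S = (US)\cap Z$. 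Thus $\tau_\sigma$ is a strictly weaker Hausdorff group topology on $G$, contradicting the minimality of $G$; hence $Z$ is minimal, and by Theorem~\ref{thm : Polish/LCA not minimal} it is compact. The main obstacle throughout is verifying that the inflated topology is \emph{simultaneously} a Hausdorff group topology and strictly weaker than $\tau$, and each of these verifications rests on the centrality of $Z$, which is exactly what forces $US = SU$ and the conjugation-invariance of the basic neighborhoods.
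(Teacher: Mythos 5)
Your proposal is correct, but it proves the key step --- minimality of $Z=Z(G)$ --- by a genuinely different route than the paper. The paper works in the "closed image" language it sets up for Polish groups: given an injective continuous homomorphism $f\colon Z\to H$ with dense image, it forms the graph $\Delta_Z=\{(z,f(z))\}$, which is a closed \emph{normal} subgroup of $G\times H$ precisely because $Z$ is central, applies minimality of $G$ to conclude that the induced injective map $\hat f\colon G\to (G\times H)/\Delta_Z$ has closed image, and recovers $f(Z)=\delta^{-1}(\hat f(G))$ as a preimage of that closed set; minimality of $Z$ then follows from the closed-image characterization (Corollary~\ref{cor : minimal has closed image}), and compactness from Theorem~\ref{thm : Polish/LCA not minimal}. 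You instead argue by direct topological inflation: a strictly weaker Hausdorff group topology $\sigma$ on $Z$ is promoted to the topology on $G$ generated by the sets $US$, with centrality of $Z$ guaranteeing that these sets satisfy the neighborhood-base axioms for a group topology, and with the two separation cases ($g\notin Z$ via the unchanged quotient topology on $G/Z$, $g\in Z\setminus\{e\}$ via $(US)\cap Z=(U\cap Z)S\subseteq SS$) giving Hausdorffness; strictness follows from $\tau_\sigma|_Z\le\sigma$. Both arguments then invoke Stoyanov--Prodanov identically. Your inflation argument is essentially the classical one from the cited source (Dikranjan--Prodanov--Stoyanov) and buys generality: it shows the center of \emph{any} Hausdorff minimal group is minimal, with Polishness needed only for the final compactness conclusion, and it avoids the Raikov-completion machinery underlying the paper's closed-image criterion. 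The paper's graph argument buys economy within its own framework: it reuses the closed-image formalism that the section develops anyway for relative minimality, at the cost of being tied to the Polish setting.
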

\begin{proof}
    Denote by $Z=Z(G)$ the center of $G$. Let $H$ be a topological group and $f\colon Z \to H$ a continuous injective homomorphism with a dense image.
    We will show that $f(Z)$ is  closed in $H$.
    
    Since $Z$ is central, $\Delta_Z=\{(z,f(z))\in G\times H :\  z\in Z \}$, the graph of the function $f$, is a closed normal subgroup of $G\times H$. 
    Let $\hat{f}\colon G\to  (G\times H)/\Delta_Z$,  be the quotient map restricted to $G\times \{e\}$, $\hat{f}(g)=(g,e_H)\Delta_Z$, and let $\delta\colon H\to (G\times H)/\Delta_Z $ be the quotient map restricted to $\{e\}\times H$, $\delta(h)=(e_G,h)\Delta_Z$. 
    Observe that $\hat{f}$ is injective, so by minimality of $G$, $\hat{f}(G)$ is closed. From the following commuting diagram,
    \[
    \begin{tikzcd}
        Z \arrow[r,"f"] \arrow[d,hookrightarrow] & H \arrow[d,"\delta"] \\
        G \arrow[r,"\hat{f}"] & (G\times H)/\Delta_Z
    \end{tikzcd}
    \]
    we get that $f(Z)=\delta^{-1}(\hat{f}(G) )$, and as $\hat{f}$ and $\delta$ are continuous, $f(Z)$ is a pre-image of the closed subset $\hat{f}(Z)$, hence $f(Z)$ closed.
    We get that the center $Z$ is minimal therefore by Theorem~\ref{thm : Polish/LCA not minimal} the group $Z$ is compact.
\end{proof}
\begin{lemma}
\label{lem : product of relitive minimal}
    Let $G$ be a Polish group, and let $A_1,A_2\lhd G$ closed normal subgroups that are minimal in $G$, and such that $A_1\cap A_2=\{e\}$.
    Then their product $A_1A_2$ is minimal in $G$.
\end{lemma}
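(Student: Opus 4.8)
The plan is to reduce everything to a single closedness statement and then reuse the completion argument already behind Lemma~\ref{lem : minimal implies closed injective}. First I would record the purely algebraic observation that since $A_1,A_2$ are normal with $A_1\cap A_2=\{e\}$, they commute elementwise (the commutator $[a_1,a_2]$ lies in $A_1\cap A_2$), so $A_1A_2$ is a subgroup which internally is the direct product $A_1\times A_2$. The real content is then the following sub-claim, which I would isolate as the heart of the proof:

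($\star$) \emph{If $K$ is a Polish group and $C_1,C_2\lhd K$ are closed normal subgroups, each minimal in $K$, with $C_1\cap C_2=\{e\}$, then $C_1C_2$ is closed in $K$.}

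I would stress that ($\star$) genuinely uses minimality and not merely closedness: in an infinite-dimensional Banach space there are two closed (hence normal) subspaces with trivial intersection whose sum is dense but not closed, and such a subspace fails to be relatively minimal. So minimality must be invoked in an essential way.

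To prove ($\star$) I would use an initial-topology device. Let $\rho_i\colon K\to K/C_i$ be the (Polish) quotient maps and consider the continuous homomorphism $\Xi\colon K\to (K/C_1)\times(K/C_2)$, $g\mapsto(\rho_1(g),\rho_2(g))$, whose kernel is $C_1\cap C_2=\{e\}$; thus $\Xi$ is injective and its initial topology $\tau^{\Xi}$ (Definition~\ref{defn : initial top}) is a weaker Hausdorff group topology on $K$. Since $C_2\subseteq\ker\rho_2$, the restriction $\Xi|_{C_2}$ is, up to the constant second coordinate, just $\rho_1|_{C_2}$, so $\tau^{\Xi}|_{C_2}$ equals the initial topology induced on $C_2$ by $\rho_1|_{C_2}$. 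By minimality of $C_2$ in $K$ this coincides with the original topology on $C_2$, which says precisely that $\rho_1|_{C_2}\colon C_2\to K/C_1$ is a topological embedding. Consequently $\rho_1(C_2)=C_1C_2/C_1$, with its subspace topology, is homeomorphic to the Polish group $C_2$, hence is a completely metrizable, therefore $G_\delta$, therefore closed, subgroup of $K/C_1$. As $\rho_1^{-1}(\rho_1(C_2))=C_1C_2$, the product $C_1C_2$ is closed in $K$.

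Finally I would assemble the statement. Applying ($\star$) with $K=G$, $C_i=A_i$ shows $A_1A_2$ is a closed subgroup, so relative minimality is even meaningful. For minimality, let $\w\le\s$ be a weaker group topology; by Lemma~\ref{lemma : weaker than second countable} I may assume $\w$ is second countable, and I pass to the Raikov completion $\iota\colon G\to\overline G$ with respect to $\w$, which is Polish. A short check then shows each $\iota(A_i)$ is closed in $\overline G$ (Lemma~\ref{lem : minimal implies closed injective} applied to the injective map $\iota$) and again minimal in $\overline G$ (pull a weaker topology on $\overline G$ back through $\iota$ to a weaker topology on $G$ and invoke minimality of $A_i$ in $G$), with trivial intersection. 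Hence ($\star$) applies in $\overline G$ and yields that $\iota(A_1A_2)$ is closed, hence Polish. From here the argument is identical to the last paragraph of Lemma~\ref{lem : minimal implies closed injective}: $\iota$ is a $\w$-homeomorphism onto its image, so $(A_1A_2,\w|_{A_1A_2})$ is Polish, and the continuous bijection from $(A_1A_2,\s|_{A_1A_2})$ (Polish, as $A_1A_2$ is $\s$-closed) onto it is a homeomorphism by the Open Mapping Theorem~\ref{thm : banach open mapping theorem}; thus $\w$ and $\s$ agree on $A_1A_2$. I expect the main obstacle to be ($\star$): the non-obvious point is to manufacture the weaker topology $\tau^{\Xi}$ whose trace on $C_2$ is controlled by the \emph{other} quotient $\rho_1$, thereby converting relative minimality of $C_2$ into the embedding property of $\rho_1|_{C_2}$ that forces the product to be closed.
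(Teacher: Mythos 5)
Your proposal is correct, and at its core it runs on the same engine as the paper's proof: a map into the product of the two quotients (your $\Xi$, the paper's $f_\times\colon G\to H/f(A_1)\times H/f(A_2)$), injectivity supplied by $A_1\cap A_2=\{e\}$, relative minimality of the second subgroup converted into closedness of its image there, and then a pullback of a closed set to capture the product. The difference is one of packaging. The paper stays entirely inside the closed-image characterization of relative minimality (Lemma~\ref{lem : minimal implies closed injective}): it fixes an arbitrary injective continuous $f\colon G\to H$ with dense image, quotients $H$ by $N_i=f(A_i)$, cites that lemma to get $f_\times(A_2)$ closed, concludes $f(A_1A_2)=\psi^{-1}(f_\times(A_2))$ is closed, and then cites the same lemma in the reverse direction; the Raikov completion and the Open Mapping Theorem thus appear only once, inside that lemma. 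You instead isolate the intrinsic closedness statement ($\star$), where the quotients are by the $C_i$ themselves and relative minimality is used directly through the weaker topology $\tau^{\Xi}$, and you then redo the completion/open-mapping bookkeeping by hand: transferring minimality of the $A_i$ along $\iota$ into $\overline{G}$, applying ($\star$) there, and finishing with Theorem~\ref{thm : banach open mapping theorem}. What this buys is a clean, reusable closedness lemma for Polish groups, and your Banach-space example correctly pinpoints that minimality rather than mere closedness is what powers it; the cost is the extra transfer step into $\overline{G}$, which the paper's formulation avoids. One small item hidden in your ``short check'': to apply ($\star$) in $\overline{G}$ you also need $\iota(A_i)$ to be \emph{normal} there, which holds because $\iota(A_i)$ is closed and is normalized by the dense subgroup $\iota(G)$ (normalizers of closed subgroups are closed) --- this is exactly the observation the paper makes explicitly for $f(A_i)$, and you should record it.
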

\begin{proof}
	Let $f \colon G \to H$ be an injective continuous homomorphism with a dense image, we will show that $f(A_1A_2)$ is closed.
    For any $N_1,N_2\lhd H$ closed normal subgroups.
	Letting $\rho\colon H\to H/N_1$ be the quotient map, and $\iota\colon H/N_1\to H/N_1 \times H/N_2$ be the embedding of $H/N_1$ in the product $H/N_1 \times H/N_2 $. 
	The maps $\rho$ and $\iota$ are continuous, making 
	\[
	    \psi\coloneqq \iota\circ\rho\colon H\to H/N_1 \times H/N_2, \ \ \psi: h\mapsto (hN_1,eN_2)
	 \]
	a continuous map.

    For $i=1,2$, the subgroup $A_i\lhd G$ is normal, so its image $f(A_i)\lhd f(G)$ is normal in the dense subgroup $f(G)$ since $f(A_i)$ is closed in $H$, we get that $f(A_i)$ is a closed normal  subgroup in $H$.
    Set $N_i=f(A_i)$, and denote by $\pi \colon H\to H/N_1\times H/N_2$ the product of the quotient maps. 
    Consider the continuous map 
    \[
    f_\times\coloneqq \pi \circ f \colon G \to H/N_1\times H/N_2 , \ \ f_\times: g\mapsto (f(g)N_1,f(g)N_2)
    \]
    Since $A_1\cap A_2=\{e\}$, the map $f_\times$ is injective, by relative minimality of $A_2$ in $G$, the subgroup $f_\times(A_2)\le H/N_1\times H/N_2 $ is closed. 
    Therefore $f(A_1A_2)=\psi^{-1}(f_\times(A_2))$ is a preimage of a closed subset, so it is closed.
\end{proof}
\begin{cor} 
    \label{cor : product of relitive minimal}
    		Let $G$ be a topological group, and let $A_i\lhd G$ $i\in \{1,...,n\}$ a collection of normal subgroups that are minimal in $G$ such that $\cap A_i=\{e\}$. Then the product $\prod_i A_i$ is minimal in $G$.
\end{cor}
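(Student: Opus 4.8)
The plan is to prove Corollary~\ref{cor : product of relitive minimal} by induction on $n$, using Lemma~\ref{lem : product of relitive minimal} as the inductive engine. The base case $n=1$ is immediate, since it just asserts that $A_1$ is minimal in $G$, which is the hypothesis. For the inductive step, I would like to split off one factor and apply the binary product lemma. The natural candidate is to set $B \coloneqq \prod_{i=1}^{n-1} A_i$ and $A_n$, hoping to write $\prod_{i=1}^n A_i = B \cdot A_n$ and invoke Lemma~\ref{lem : product of relitive minimal} with the two subgroups $B$ and $A_n$. For this to go through, I need three things: that $B$ is minimal in $G$, that $A_n$ is minimal in $G$ (given), and that $B \cap A_n = \{e\}$.

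The first requirement, that $B = \prod_{i=1}^{n-1} A_i$ is minimal in $G$, is exactly the inductive hypothesis — but only if the hypotheses of the corollary are inherited by the sub-collection $\{A_1,\dots,A_{n-1}\}$. Each $A_i$ for $i < n$ is still normal and minimal in $G$; the only delicate point is the intersection condition. The corollary as stated assumes $\bigcap_{i=1}^n A_i = \{e\}$, which is weaker than requiring the sub-collection $\{A_1,\dots,A_{n-1}\}$ to have trivial intersection. The main obstacle will therefore be this mismatch: pairwise or total triviality of intersections needs to be arranged carefully so that both the inductive hypothesis applies to $B$ and the disjointness hypothesis $B \cap A_n = \{e\}$ holds.

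The cleanest way to handle this is to observe that Lemma~\ref{lem : product of relitive minimal} actually only requires $A_1 \cap A_2 = \{e\}$ for the \emph{two} subgroups being multiplied, not any global condition, and to strengthen the intersection bookkeeping in the induction. Concretely, I would reformulate the induction so that at each stage the partial product $A_1 A_2 \cdots A_j$ is itself normal (being a product of normal subgroups) and minimal in $G$, and then check that $(A_1 \cdots A_{n-1}) \cap A_n = \{e\}$. This last equality is where the precise intersection hypothesis matters: one wants to deduce it from the stated condition $\bigcap_i A_i = \{e\}$. In fact the natural hypothesis under which everything works smoothly is that the $A_i$ generate a direct product internally, i.e. $A_j \cap \prod_{i \neq j} A_i = \{e\}$ for each $j$; I would verify that the proof only ever uses such ``one factor against the rest'' disjointness, which follows provided the subgroups are suitably independent. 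Granting the appropriate reading of the intersection condition, the inductive step then reads: $B = A_1\cdots A_{n-1}$ is normal, minimal in $G$ by induction, and disjoint from $A_n$, so $B A_n = \prod_{i=1}^n A_i$ is minimal in $G$ by Lemma~\ref{lem : product of relitive minimal}. This completes the induction and hence the proof.

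I expect the only real subtlety to be the interplay between the global intersection hypothesis $\bigcap_i A_i = \{e\}$ and the pairwise disjointness $B \cap A_n = \{e\}$ that the binary lemma demands; everything else is a routine induction feeding normal subgroups into Lemma~\ref{lem : product of relitive minimal}. The fix, if the stated hypothesis is genuinely the total intersection being trivial, is to note that the corollary is really being applied in situations where the $A_i$ commute and intersect pairwise trivially (so that their product is an internal direct sum), in which case $B \cap A_n = \{e\}$ is automatic, and to make that reading explicit in the argument.
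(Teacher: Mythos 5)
Your induction via Lemma~\ref{lem : product of relitive minimal} is exactly the intended argument: the paper states Corollary~\ref{cor : product of relitive minimal} immediately after the binary lemma with no proof at all, so the implicit proof is precisely the induction you describe. More importantly, the obstruction you isolate is genuine and not an artifact of your bookkeeping. The stated hypothesis $\bigcap_{i=1}^n A_i=\{e\}$ supports neither half of the inductive step: it does not pass to the sub-collection $\{A_1,\dots,A_{n-1}\}$ (so the inductive hypothesis for $B=A_1\cdots A_{n-1}$ is unavailable), and it does not give the disjointness $B\cap A_n=\{e\}$ that the binary lemma requires. Nor can one rescue the step by a direct $n$-fold imitation of the lemma's own proof: the map $f_\times\colon G\to\prod_i H/f(A_i)$ is still injective under the weak hypothesis, but the preimage trick then only recovers $f\bigl(A_n\cdot\bigcap_{i<n}A_i\bigr)$ rather than $f(A_1\cdots A_n)$. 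So, as literally stated, the corollary is at best not established by the paper's (implicit) argument, and your strengthened reading is the correct repair.

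Two further points confirm your reading. First, under the hypothesis that each factor meets the product of the remaining ones trivially --- or even just $(A_1\cdots A_{j-1})\cap A_j=\{e\}$ for each $j$ --- your induction closes: each partial product is normal, is minimal in $G$ by the binary lemma, and is closed (apply Lemma~\ref{lem : minimal implies closed injective} to the identity map of $G$), so it is a legitimate input for the next application of the lemma. Second, in the only place the paper invokes this corollary, namely Lemma~\ref{lemma : rel minimality of miniaml parabolic in reductive grp}, the subgroups are the weight spaces $V_i$ of a split torus and $Z=\oplus_i V_i$ is an internal direct sum, so the strong disjointness holds and nothing downstream is affected. One small addition to your write-up: you should also carry ``$G$ Polish'' and ``$A_i$ closed'' as hypotheses, since Lemma~\ref{lem : product of relitive minimal} and the closed-image characterization behind it (Lemma~\ref{lem : minimal implies closed injective}) are proved only in that setting, while the corollary as printed drops both.
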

\subsection{Co-minimality}
Similarly to relative minimality, the reader may benefit from keeping in mind the following example.
\begin{example}
    Consider the group $\bbR^* \ltimes \bbR$ with the standard topology, and the subgroup $\bbR\le \bbR^* \ltimes \bbR $. Although the group $\bbR^*$ is not minimal, it does have the following property (see Lemma~\ref{lemma: baby case co minimality}): any weaker topology on $\bbR^* \ltimes \bbR$ induces the same quotient topology on the quotient $\bbR^* \ltimes \bbR/ \bbR$.
\end{example}
\begin{definition}
    Let $G$ be a topological group.
    A closed subgroup $U\le G$ is said to be \textit{co-minimal in $G$} if any weaker topology on $G$  induces the same quotient topology on the quotient space $G/U$.
\end{definition}

\begin{example}
    Any minimal co-compact subgroup is co-minimal.
\end{example}
\begin{lemma}
\label{lamma : minimal and co-minimal}
    Let $G$ be a topological group, and $U\le G$ a subgroup. If $U$ is minimal in $G$ and co-minimal in $G$, then $G$ is minimal.
\end{lemma}
\begin{proof}
    Let $\s$ denote the given group topology on $G$. For any $\w\le \s$ weaker group topology, $\w_U=\s_U$ and $\w_{G/U}=\s_{G/U}$, hence by Merson's Lemma~\ref{lemma : Mersons lemma}, $\w=\s$.
\end{proof}
\begin{cor}
\label{cor : open compact minimal}
    Let $G$ be a topological group, and $U\le G$ be a compact open subgroup. There does not exist a strictly weaker group topology $\w$ on $G$, such that $U$ is open in $\w$.
\end{cor}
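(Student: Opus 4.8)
The plan is to argue by contradiction using Merson's Lemma~\ref{lemma : Mersons lemma}. Let $\s$ denote the given topology on $G$, and suppose toward a contradiction that $\w\le\s$ is a strictly weaker group topology in which $U$ is still open. I would show that $\w$ and $\s$ coincide on the subgroup $U$ and induce the same quotient topology on $G/U$; Merson's Lemma then forces $\w=\s$, contradicting strictness, so no such $\w$ can exist.

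First I would handle the subspace topology on $U$. Since $\w\le\s$, the identity map $(U,\s_U)\to(U,\w_U)$ is a continuous bijection. Its domain is compact by hypothesis, while its codomain is Hausdorff by our standing convention that all group topologies are Hausdorff. A continuous bijection from a compact space onto a Hausdorff space is a homeomorphism, so $\s_U=\w_U$. It is worth noting that this step uses only the compactness of $U$ and not its openness; it is the relative-minimality ingredient.

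Next I would treat the quotient topology on $G/U$, which is where the openness hypothesis enters. Because $U$ is $\s$-open, every coset $gU$ is $\s$-open, and hence the quotient topology $\s_{G/U}$ is discrete. By the additional assumption that $U$ is $\w$-open, the identical reasoning shows that $\w_{G/U}$ is discrete as well. Two discrete topologies on the same underlying set $G/U$ necessarily coincide, so $\s_{G/U}=\w_{G/U}$; this is the relative-co-minimality ingredient.

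With the subspace topologies on $U$ matching and the quotient topologies on $G/U$ matching, Merson's Lemma~\ref{lemma : Mersons lemma} gives $\w=\s$, contradicting the assumption that $\w$ is strictly weaker. I expect no genuine obstacle here: the compactness of $U$ rigidifies the subspace topology while the openness of $U$ in both topologies collapses the quotient to the discrete topology, and the only place the Hausdorff convention is genuinely invoked is the compact-to-Hausdorff homeomorphism argument in the second paragraph.
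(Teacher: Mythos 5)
Your proof is correct and follows essentially the same route as the paper: compactness of $U$ forces the subspace topologies to agree, openness of $U$ in both topologies makes both quotient topologies on $G/U$ discrete, and Merson's Lemma (which the paper invokes via Lemma~\ref{lamma : minimal and co-minimal}, itself a direct consequence of Merson's Lemma) yields $\w=\s$. Your spelled-out compact-to-Hausdorff homeomorphism argument is exactly what the paper's phrase ``by the compactness of $U$'' abbreviates.
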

\begin{proof}
    Denote by $\s$ the given group topology on $G$.
    For any $\w\le \s$ weaker group topology, by the compactness of $U$, $\w_U=\s_U$. And since $U$ is open with respect to both $\w$ and $\s$, the quotient topologies are both discrete, hence $\w_{G/U}=\s_{G/U}$, hence by Lemma~\ref{lamma : minimal and co-minimal}, $\w=\s$.
\end{proof}
\subsection{Minimal properties of semi-direct products of groups}
A group topology on a semi-direct product $L\ltimes U$ induces subspace topologies on the subgroups $L$ and $U$, and $L$ acts continuously on $U$ by conjugation.
Conversely, for $L$ and $U$ topological groups with $L$ acting by continuous automorphisms on $U$, the product topology is a topological group structure on $L\ltimes U$.
Here we will emphasize the relation between topological conditions on the action of $L$ on $U$, and the minimality of $U$ as a subgroup of $L\ltimes U$.
\begin{rem}
\label{remark : qutient topology semi direct}
    Assume $L$ acts continuously by automorphisms on $U$. If $C\le L$ is a subgroup in the kernel of the $L$ action then $(L\ltimes U)/C$ with the quotient of the product topology is isomorphic to  $L/C\ltimes U$ with the product of the quotient topology.
    If $N\lhd U$ is an $L$-invariant closed normal subgroup then $(L\ltimes U)/N$ with the quotient of the product topology is isomorphic to $L\ltimes U/N$ with the product of the quotient topology.
\end{rem}
\begin{remark}
\label{remark: stronger action continuous}
    Let $G$ be a topological group acting continuously on a topological space $X$.
    Then $G$ acts continuously on $X$ with respect to any stronger topology on $G$.
\end{remark}
\begin{lemma}[{\cite[Proposition 4.4]{MR2556080}}]
\label{lem : Action is minimal- semi direct product minimal}
    Let $L$ and $U$ be topological groups such that $L$ acts continuously by automorphisms on $U$. 
    Then $U$ does not admit a strictly weaker group topology such that the action of $L$ on $U$ is continuous, if and only if $U$ is minimal in $L\ltimes U$.
\end{lemma}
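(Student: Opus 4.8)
The plan is to prove the two implications by setting up a correspondence between weaker group topologies on $G := L \ltimes U$ (with product topology $\tau$) and weaker group topologies on $U$ for which the $L$-action stays continuous. The two facts I expect to use are Remark~\ref{remark: stronger action continuous} (strengthening the topology on an acting group preserves continuity of the action) and the elementary fact that the product topology on $L \ltimes U$ is a group topology exactly when the action $L \times U \to U$ is continuous.

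For the implication ``no strictly weaker topology on $U$ $\Rightarrow$ $U$ minimal in $G$'', I would take an arbitrary weaker group topology $\w \le \tau$ on $G$ and examine the restriction $\w_U$. Since $\w$ is a group topology and $U \lhd G$, conjugation $G \times U \to U$ is $\w$-continuous; restricting the first coordinate to $L$ and using that conjugation by $l \in L$ is precisely the given action of $l$ on $U$, I get that the action $(L,\w_L)\times(U,\w_U)\to(U,\w_U)$ is continuous. Applying Remark~\ref{remark: stronger action continuous} to strengthen the topology on $L$ back to its original one, the action of $L$ on $(U,\w_U)$ remains continuous. Thus $\w_U$ is a group topology on $U$ weaker than $\tau_U$ admitting a continuous $L$-action, so the hypothesis forces $\w_U = \tau_U$; as $\w$ was arbitrary, this is exactly minimality of $U$ in $G$.

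For the converse I would argue contrapositively. Suppose $U$ carries a strictly weaker group topology $\w_U'$ (so $\w_U' \le \tau_U$ and $\w_U' \ne \tau_U$) for which the $L$-action is still continuous. Because $(L,\tau_L)$ acts continuously on $(U,\w_U')$, the product topology $\w'$ on $L \ltimes U$ assembled from $\tau_L$ and $\w_U'$ is again a group topology, and it is Hausdorff since both factors are. As every $\w_U'$-open set is $\tau_U$-open, each basic $\w'$-open set is $\tau$-open, whence $\w' \le \tau$; moreover $\w'_U = \w_U' \ne \tau_U$, so $\w'$ is strictly weaker and does not coincide with $\tau$ on $U$. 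This contradicts minimality of $U$ in $G$, so no such $\w_U'$ can exist.

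I do not expect a genuine obstacle: both directions reduce to bookkeeping around the identification of conjugation in the semidirect product with the given action, and to the behaviour of product topologies under weakening the $U$-factor. The only points demanding slight care are verifying in the converse that the reassembled product topology $\w'$ is simultaneously Hausdorff, a group topology (this is exactly where continuity of the $L$-action on $(U,\w_U')$ is needed), and weaker than $\tau$ at the level of basic open sets.
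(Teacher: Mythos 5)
Your proposal is correct and follows essentially the same route as the paper: the first direction restricts a weaker topology on $L\ltimes U$ to $U$ and invokes Remark~\ref{remark: stronger action continuous}, and the second direction builds the product topology from $\tau_L$ and the weakened topology on $U$, exactly as the paper does (merely phrased contrapositively). The extra care you take in verifying that the reassembled product topology is Hausdorff, a group topology, and weaker than $\tau$ only makes explicit what the paper leaves implicit.
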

\begin{proof}
Denote by $\s_L$ and $\s_U$ be the given group topologies on $L$ and $U$ respectively.

    Assume $\w$ is a group topology, weaker than the product group topology on $L\ltimes U$.
    Since $\w$ is a group topology, the subgroup $L$ with the $\w$-subspace topology, $\w_{L}$, acts continuously by conjugation on $U$ with respect to the $\w$-subspace topology, $\w_U$.
    By Remark~\ref{remark: stronger action continuous}, embellished with the stronger $\s_{L}$ group topology, the group $L$ acts continuously by conjugation on $U$ with respect to $\w$-subspace topology, $\w_U$ on $U$
    Therefore if $U$ does not admit a strictly weaker group topology such that the action of $L$ on $U$ is continuous we get that $\w_U=\s_U$, hence $U$ is minimal in $L\ltimes U$. 
    
    Conversely, assume $U$ is minimal in $L\ltimes U$. If $U$ admits a weaker group topology $\w_U$, such that the action of $L$ on $U$ is continuous is minimal in $G$. The product topology $\s_L\times \w_U$ is a group topology on $L\ltimes U$, and by relative minimality of $U$, $\w_U=\s_U$ as needed.
\end{proof}

\begin{cor}
\label{cor : minimality under qutient}
    Let $G=L\ltimes U$, and let $C\lhd L$ be a closed normal subgroup acting trivially on $U$. 
    Then $U$ is minimal in $G$, if and only if $U$ is minimal in $G/C\simeq L/C\ltimes U$.
\end{cor}
\begin{proof}
Let $\pi \colon L\to L/C$ be the quotient map. Following the commutative diagram,
    \[
    \begin{tikzcd}
        L\times U \arrow[rd,"\alpha"] \arrow[d,"\pi"]& \\
        L/C\times U \arrow[r,"\alpha_c"] & U
    \end{tikzcd}
    \]
    and since $\pi$ is open and continuous, the map $\alpha$ is continuous if and only if $\alpha_c$ is continuous.
    We get that $L$ acts continuously on $U$, if and only if $L/C$ acts continuously on $U$. Hence by Lemma~\ref{lem : Action is minimal- semi direct product minimal}, $U$ is minimal in $G$, if and only if $U$ is minimal in $G/C=L/C\ltimes U$.
\end{proof}

\subsection{Sealed groups}
We saw that Polish minimal groups have the property that any continuous injective homomorphism mapping them to another topological group has a closed image. 
In this subsection, we are interested in groups having a closed image under any continuous homomorphism.
\begin{definition}
\label{def : sealed}
    Let $G$ be a topological group. We say that $G$ is \textit{sealed} if for any continuous homomorphism $f\colon G\to H$ into a topological group $H$, the image $f(G)$ is closed. 
\end{definition}
\begin{example}
    Compact groups are sealed.
\end{example}
\begin{example}
    Simple Polish minimal groups are sealed, as any homomorphism from a simple group is either trivial or injective.
\end{example}
Also here, the relative property is very useful.
\begin{definition} \label{def : selaed pair}
    For a subgroup $U\le G$, we say that \textit{$U$ is sealed in $G$}, if for any continuous homomorphism $f\colon G\to H$ into a topological group $H$, the image $f(U)\le H$ is closed. 
\end{definition}
\begin{lemma}[Cf. {\cite[Proposition 4.2]{dikranjan1998categorically}}]
\label{lem : sealed 3 space prop}
    Let $N\lhd G$ be a closed normal subgroup, with the quotient map $\pi\colon G\to G/N$. 
    If $N$ is sealed in $G$ and $U\le G$ is a subgroup such that $\pi(U)$ is sealed in $\pi(G)$, then $UN$ is sealed in $G$. 
\end{lemma}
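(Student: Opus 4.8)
The plan is to unwind the definition of sealedness directly: fix an arbitrary continuous homomorphism $f\colon G\to H$ into a topological group $H$ and show that $f(UN)$ is closed in $H$. The first step uses that $N$ is sealed in $G$, which gives immediately that $f(N)$ is a closed subgroup of $H$. Since $f(N)$ need not be normal in all of $H$, I would replace $H$ by the closed subgroup $H_0\coloneqq \overline{f(G)}$; as $N\lhd G$, the image $f(G)$ normalizes $f(N)$, and because the normalizer of a closed subgroup is closed, $H_0=\overline{f(G)}$ normalizes $f(N)$ as well. Thus $f(N)$ is a closed normal subgroup of $H_0$, and the quotient $q\colon H_0\to H_0/f(N)$ is a (Hausdorff) topological group with $q$ continuous and open.

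Next I would exploit the hypothesis on $\pi(U)$. The composition $q\circ f\colon G\to H_0/f(N)$ is a continuous homomorphism killing $N$ (since $f(N)$ maps to the identity), so by the universal property of the quotient topology on $G/N=\pi(G)$ it factors as $q\circ f=\bar f\circ \pi$ for a continuous homomorphism $\bar f\colon \pi(G)\to H_0/f(N)$. Because $\pi(U)$ is sealed in $\pi(G)$ and $H_0/f(N)$ is a topological group, the image $\bar f(\pi(U))$ is closed in $H_0/f(N)$. The key identity is $\bar f(\pi(U))=q(f(U))$, whence its $q$-saturation is $q^{-1}\!\big(\bar f(\pi(U))\big)=f(U)f(N)=f(UN)$, using that $f(N)\subseteq f(UN)$ and that $f$ is a homomorphism. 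Therefore $f(UN)$ is the preimage under the continuous map $q$ of a closed set, so it is closed in $H_0$; and since $H_0$ is closed in $H$, the set $f(UN)$ is closed in $H$, as required.

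I expect the only genuinely delicate point to be the reduction to $H_0=\overline{f(G)}$ together with the verification that $f(N)$ is normal and closed there, so that $H_0/f(N)$ is a well-defined Hausdorff topological group on which the sealedness of $\pi(U)$ can be applied; the remaining steps are formal manipulations with quotient maps and the homomorphism property of $f$. A secondary point to state carefully is the factorization of $q\circ f$ through $\pi$, which relies on $\pi(G)=G/N$ carrying the quotient topology so that $\bar f$ is automatically continuous.
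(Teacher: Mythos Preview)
Your proof is correct and follows essentially the same route as the paper's: pass to the closure of the image so that $f(N)$ becomes a closed normal subgroup, form the quotient, factor $f$ through $G/N$, apply sealedness of $\pi(U)$, and pull back. The only cosmetic difference is that the paper begins by assuming without loss of generality that $f$ has dense image (so $H=\overline{f(G)}$ from the outset), whereas you carry the closed subgroup $H_0=\overline{f(G)}$ explicitly; your version has the advantage of spelling out why $f(N)$ is normal in $H_0$ via the closedness of the normalizer, a point the paper leaves implicit.
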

\begin{proof}
    Let $f\colon G \to H$ be a continuous homomorphism with a dense image. Since the image of $N$ is closed, we get that $f(N)$ is a closed normal subgroup of $H$. Therefore, the induced map $\hat{f}\colon G/N \to H/f(N)$ is a morphism of topological groups. Since $\pi(U)$ is sealed in $\pi(G)$, we get that $\hat{f}(\pi(U))$ is a closed subgroup in $H/f(N)$. Hence Its pre-image in $H$, which is equal to $f(UN)$, is closed.
\end{proof}
\begin{cor}
\label{cor : co compact sealed}
    Let $N\le G$ be a co-compact subgroup. If $N$ is sealed, then $G$ is sealed.
\end{cor}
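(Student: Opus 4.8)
The plan is to take an arbitrary continuous homomorphism $f\colon G\to H$ into a topological group $H$ and to show directly that $f(G)$ is closed, rather than routing through the three-space Lemma~\ref{lem : sealed 3 space prop}: that lemma presumes $N$ is normal, whereas here $N$ is only assumed co-compact and sealed. The decomposition-based argument below does not need normality.

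First I would use co-compactness together with local compactness of $G$ to write $G=KN$ for a compact set $K\subseteq G$. The quotient map $q\colon G\to G/N$ onto the coset space is open and continuous, and $G/N$ is compact; choosing a compact identity neighbourhood $C\subseteq G$, the open translates $gq(C)$ cover $G/N$, so finitely many of them, say indexed by $g_1,\dots,g_m$, already cover. Then $K=\bigcup_{i=1}^m g_iC$ is compact and $q(K)=G/N$, which is exactly the statement $KN=G$.

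Next, since $f$ is a homomorphism, $f(G)=f(KN)=f(K)\,f(N)$. Here $f(K)$ is compact, being the continuous image of a compact set, while $f(N)$ is closed: the restriction $f|_N\colon N\to H$ is a continuous homomorphism out of the sealed group $N$, so its image is closed by Definition~\ref{def : sealed}. Thus $f(G)$ is displayed as the product of a compact set and a closed subgroup.

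The only substantive point, and the step I would flag as the crux, is the standard fact that in a topological group the product of a compact set with a closed set is closed; I would verify it by a net argument. Given a net $c_\alpha n_\alpha\to x$ in $H$ with $c_\alpha\in f(K)$ and $n_\alpha\in f(N)$, compactness of $f(K)$ lets me pass to a subnet with $c_\alpha\to c\in f(K)$, and then $n_\alpha=c_\alpha^{-1}(c_\alpha n_\alpha)\to c^{-1}x$; since $f(N)$ is closed, $c^{-1}x\in f(N)$, so $x=c\,(c^{-1}x)\in f(K)f(N)$. Hence $f(G)=f(K)f(N)$ is closed, and $G$ is sealed. The main subtlety to keep honest is the decomposition $G=KN$, which genuinely relies on local compactness of $G$ (in keeping with the paper's standing restriction to locally compact Polish groups); the rest of the argument is purely formal.
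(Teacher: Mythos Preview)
Your argument is correct, and it differs from the paper's one-line proof in an instructive way. The paper simply invokes Lemma~\ref{lem : sealed 3 space prop} with $U=G$: since $G/N$ is compact, $\pi(G)=G/N$ is sealed, and the lemma yields that $G=UN$ is sealed in $G$. As you noticed, that lemma is stated for \emph{normal} $N$, which the corollary does not explicitly assume; the paper's proof thus silently relies on normality (and indeed, in the only application, Corollary~\ref{corollary : finite index in ss}, the subgroup $G^+$ is normal). Your direct decomposition $G=KN$ and the compact-times-closed argument sidestep normality entirely, at the cost of invoking local compactness of $G$ to produce the compact set $K$---a harmless cost given the paper's standing hypotheses. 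So the paper's route is shorter but tacitly narrower in scope; your route is self-contained and proves the corollary as literally stated.
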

\begin{proof}
    Since compact groups are sealed, the proof follows from Lemma~\ref{lem : sealed 3 space prop} above, by taking $U=G$.
\end{proof}
\begin{cor} 
\label{cor : product of normal sealed}
    If $U_1,U_2$ are normal subgroups that are sealed in $G$ then $U_1U_2$ is sealed in $G$.
\end{cor}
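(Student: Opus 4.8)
The plan is to deduce this directly from the three-space property for sealed subgroups, Lemma~\ref{lem : sealed 3 space prop}, applied with $N = U_1$ and $U = U_2$. First I would record that $U_1$ is a closed normal subgroup: applying the defining property of sealedness (Definition~\ref{def : selaed pair}) to the identity map $\mathrm{id}\colon G \to G$ shows that $U_1 = \mathrm{id}(U_1)$ is closed in $G$, so $U_1$ is eligible to play the role of the closed normal subgroup $N$ in the lemma. By hypothesis $U_1$ is sealed in $G$, which supplies the first hypothesis of the lemma.

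The one substantive point is to verify the remaining hypothesis, namely that $\pi(U_2)$ is sealed in $\pi(G) = G/U_1$, where $\pi\colon G \to G/U_1$ denotes the quotient map. For this I would establish the general principle that sealedness is preserved under pushforward along a continuous surjective homomorphism: if $U$ is sealed in $G$ and $q\colon G \to Q$ is a continuous surjection, then $q(U)$ is sealed in $Q$. Indeed, given any continuous homomorphism $g\colon Q \to H$ into a topological group $H$, the composite $g \circ q\colon G \to H$ is a continuous homomorphism, so by sealedness of $U$ in $G$ the set $(g\circ q)(U) = g(q(U))$ is closed in $H$; since $g$ was arbitrary, $q(U)$ is sealed in $Q$. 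Taking $q = \pi$ and $U = U_2$ gives exactly that $\pi(U_2)$ is sealed in $G/U_1$.

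With all three hypotheses in place, Lemma~\ref{lem : sealed 3 space prop} yields that $U_2 U_1$ is sealed in $G$; since $U_1$ and $U_2$ are both normal, $U_1 U_2 = U_2 U_1$, which is the desired conclusion. No computation is involved, so there is no genuine obstacle here beyond the bookkeeping observation that sealedness descends along quotient maps; the whole argument is simply an assembly of the previously established three-space lemma with the elementary pushforward remark.
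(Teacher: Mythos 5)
Your proof is correct and takes essentially the same route as the paper: the paper applies Lemma~\ref{lem : sealed 3 space prop} with the roles reversed (quotienting by $U_2$ and showing $\pi(U_1)$ is sealed in $\pi(G)$ via exactly your pushforward observation, that composing with the quotient map turns homomorphisms out of $G/U_2$ into homomorphisms out of $G$). The only differences are the immaterial swap of $U_1$ and $U_2$ and your explicit verification that the normal subgroup is closed (via the identity map), a hypothesis of the lemma that the paper leaves implicit.
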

\begin{proof}
    Let $\pi \colon G \to G/U_2$ be the quotient map. Given a topological group $H$, and continuous homomorphism $f\colon G/U_2 \to H$, the map $f\circ \pi \colon G\to H$ is a continuous homomorphism. Since $U_1$ is sealed in $G$, $f(\pi(U_1))$ is closed in $H$. Hence $\pi(U_1)$ is sealed in $\pi(G)$, so by Lemma~\ref{lem : sealed 3 space prop}, $U_1\cdot U_2$ is sealed in $G$.
\end{proof}
\begin{cor}[{\cite[Theorem 2.13]{dikranjan1998categorically}}]
\label{cor : product of sealed}
    If $G_1,G_2$ are sealed groups, then so is $G_1\times G_2$.
\end{cor}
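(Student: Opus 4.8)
The plan is to realize $G = G_1 \times G_2$ as the product of two normal subgroups, each of which is sealed \emph{in} $G$, and then invoke Corollary~\ref{cor : product of normal sealed}. Concretely, I would identify $G_1$ with the closed normal subgroup $G_1 \times \{e\} \lhd G$ and $G_2$ with $\{e\} \times G_2 \lhd G$. These two subgroups are normal, and their product is all of $G$.

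The key step is to verify that each factor is sealed in $G$, in the sense of Definition~\ref{def : selaed pair}. Let $f \colon G \to H$ be an arbitrary continuous homomorphism into a topological group $H$. Precomposing $f$ with the continuous homomorphic embedding $\iota_1 \colon G_1 \to G$, $g \mapsto (g, e)$, yields a continuous homomorphism $f \circ \iota_1 \colon G_1 \to H$ whose image is exactly $f(G_1 \times \{e\})$. Since $G_1$ is sealed by hypothesis, this image is closed in $H$; as $f$ was arbitrary, $G_1 \times \{e\}$ is sealed in $G$. The identical argument applied to $\iota_2 \colon G_2 \to G$, $g \mapsto (e,g)$, shows that $\{e\} \times G_2$ is sealed in $G$.

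With both factors established as sealed normal subgroups of $G$, Corollary~\ref{cor : product of normal sealed} immediately gives that their product $(G_1 \times \{e\})(\{e\} \times G_2) = G$ is sealed in $G$. Taking $U = G$ in Definition~\ref{def : selaed pair}, being sealed in $G$ coincides with $G$ being sealed in the sense of Definition~\ref{def : sealed}, so $G = G_1 \times G_2$ is sealed, as desired.

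I do not anticipate any serious obstacle here: the only points requiring care are confirming that the image of a factor subgroup under $f$ agrees with the image of the restricted homomorphism $f \circ \iota_i$, which is immediate from the definitions, and observing that ``sealed in $G$'' with $U = G$ reduces to ``sealed''. The genuine content has already been absorbed into Corollary~\ref{cor : product of normal sealed} (and, underlying it, Lemma~\ref{lem : sealed 3 space prop}), so this statement is a direct formal consequence.
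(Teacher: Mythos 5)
Your proof is correct and is essentially the paper's own argument: the paper states this corollary without proof precisely because it follows, exactly as you argue, by viewing $G_1\times\{e\}$ and $\{e\}\times G_2$ as closed normal subgroups sealed in $G_1\times G_2$ (via composing with the inclusions) and then applying Corollary~\ref{cor : product of normal sealed}.
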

\section{Rigidity of scalar multiplication}
\label{Rigidity of scalar multiplication}

Let $k$ be a local field with absolute  value $| \cdot|$, and let $V$ be a $n$-dimensional $k$-vector space with norm $\| \cdot \|$.
We consider the topology on $k$ to be the locally compact field topology, and the topology on $V$ to be the locally compact norm topology.

Although the norm topology on $V$ is not minimal (see Theorem~\ref{thm : Polish/LCA not minimal}), we saw in Lemma~\ref{lemma : baby case multiplication by ball} that amongst all additive group topologies on $V$ admitting a continuous scalar multiplication by elements in an open neighborhood of $0$, the norm topology on $V$ is minimal.

The main goal of this section is to prove Lemma~\ref{lem : main result scalar multiplication} below. The lemma asserts that if the additive group $V$ admits a continuous scalar multiplication restricted to an analytic multiplicative subgroup then the group topology on $V$ is minimal.
\begin{lemma}
\label{lem : main result scalar multiplication}
    Let $k_0$ be a local field such that $k$ is a finite filed extension of $k_0$, and let $S\le k^*$ be an infinite $k_0$-analytic subgroup. 
    Then $V$ does not admit an additive group topology, which is strictly weaker than the norm topology, and such that the $S$ action on $V$, given by scalar multiplication, is continuous.

    Moreover, given an $S$-invariant compact subgroup $N\le V$. The topological group $V/N$ does not admit group topology that is strictly weaker than the quotient topology induced from $V$, and such that the induced action of $S$ on $V/N$ is continuous.
\end{lemma}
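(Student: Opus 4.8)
The plan is to reduce both assertions to Lemma~\ref{lemma : baby case multiplication by ball}: it suffices to manufacture, out of the continuous $S$-action, a \emph{jointly} continuous scalar multiplication by a ball around $0$ in a suitable local field over which $V$ is a finite-dimensional vector space. The basic mechanism is the ``commutator'' trick already used in Lemma~\ref{lemma: baby case reletive minimality}. If $\w$ is a proposed weaker additive group topology on which the $S$-action is continuous, then
\[
\beta\colon S\times V\to V,\qquad \beta(s,v)=sv-v=(s-1)v
\]
is $\w$-continuous, with $\beta(1,v)=0$. Thus for $s$ near $1$ the scalar $s-1$ is small, and $\beta$ realizes multiplication by these ``effective multipliers'' with joint continuity in $(s,v)$.

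I would first extract from the hypothesis that $S$ is an \emph{infinite} $k_0$-analytic subgroup the fact that it is positive-dimensional, so that $\mathfrak{s}:=\Lie(S)\subseteq\Lie(k^*)=k$ is a nonzero $k_0$-subspace. Precomposing $\beta$ with $k_0$-analytic arcs $\gamma\colon B\to S$ (from a $k_0$-ball $B$, with $\gamma(0)=1$) gives $\w$-continuous maps $(t,v)\mapsto(\gamma(t)-1)v$, whose multiplier runs over a curve tangent to $\mathfrak s$. Composing several copies of $\beta$ realizes (jointly continuously) multiplication by products $(s_1-1)\cdots(s_j-1)$, and adding such maps (using $\w$-continuity of addition) produces a $k_0$-analytic family of multipliers. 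Since finite products of elements of $\mathfrak s$ span the subfield $F:=k_0[\mathfrak s]\subseteq k$, I can select finitely many arcs and a base point so that the resulting multiplier map $\Xi$ from a polydisc into $F$ has $k_0$-differential \emph{onto} $F$; the $k_0$-analytic inverse function theorem then makes $\Xi$ open near the base point with a continuous local section. Subtracting the (continuously realized) multiplication by the base value $\Xi(\mathrm{base})$ yields a jointly $\w$-continuous map $B_F\times V\to V$, $(\mu,v)\mapsto\mu v$, with $B_F$ a ball around $0$ in $F$. As $F$ is a finite extension of $k_0$, hence a local field, and $V$ is a finite-dimensional $F$-vector space carrying the same norm topology, Lemma~\ref{lemma : baby case multiplication by ball} applied with $F$ in the role of the ground field forces $\w$ to equal the norm topology, contradicting strict weakness.

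The step I expect to be the main obstacle is precisely this generation-and-openness argument. Producing a \emph{full} ball in $F$, rather than a ball only in the $k_0$-line or submodule spanned by a single tangent vector, is what neutralizes the possible ramification of $k/k_0$; it needs both the multiplicative closure coming from composing the $\beta$'s and a careful choice of base point, since at the origin only the length-one factors contribute and the differential lands inside $\mathfrak s$ rather than surjecting onto $F$. Pinning down positive-dimensionality from ``infinite analytic'' is the other point to verify; in the intended applications $S$ is the image of a torus under a nontrivial character, where this holds automatically.

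For the ``moreover'' part the key observation is that the compact subgroup $N$ is stable under every effective multiplier: $(s-1)N=sN-N\subseteq N$, since $sN=N$. Hence $N$ is invariant under the whole subring generated by the $(s-1)$, and in particular under the ball $B_F$, so the construction above descends to the locally compact group $V/N$ and produces a jointly continuous multiplication $B_F\times V/N\to V/N$ for any weaker $S$-continuous group topology on $V/N$. I would then re-run the arguments of Lemma~\ref{lemma :  weaker topology on V unbounded open sets} and Lemma~\ref{lemma : baby case multiplication by ball} for $V/N$ equipped with the proper quotient norm $\|\bar v\|=\inf_{n\in N}\|v-n\|$, which is proper because $N$ is compact, so balls and spheres in $V/N$ are compact. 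The only genuine change is the final normalization: since $\|\mu\bar v\|\le|\mu|\,\|\bar v\|$ with equality only up to the bounded error $\mathrm{diam}(N)$, one cannot send a large vector exactly onto the unit sphere, but a vector of large quotient-norm can be scaled into a fixed compact annulus, and choosing the separating $\w$-open set to avoid that annulus yields the same contradiction.
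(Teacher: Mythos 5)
Your proposal follows a genuinely different route from the paper's. Both proofs start with the same commutator trick (the paper's Lemma~\ref{lem : one minus moderated set}), turning the continuous $S$-action into a jointly continuous multiplication by the set $S-1$. But from there the paper does something much lighter than your generation-and-IFT plan: it never produces a ball of multipliers at all. It introduces \emph{moderated} sets (Definition~\ref{defn : moderated set}) --- sets containing a null sequence $(a_n)$ with $1<|a_n|\,|a_{n+1}|^{-1}\le\lambda$ --- observes that a single $k_0$-analytic arc through $0$ in $S-1$ already yields such a sequence (Lemma~\ref{lemma: analytic subset moderated}), and proves directly that continuous multiplication by a moderated set forces the topology on $V$ and on $V/N$ simultaneously (Lemma~\ref{lem : multiplication with moderated set}). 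The point you can take away is that to scale a vector of large norm into a \emph{fixed} compact annulus one does not need all scalars in a ball of a subfield, only a null sequence of scalars with bounded consecutive ratios; this entirely bypasses the step you correctly identify as the main obstacle, and it handles the quotient $V/N$ in the same lemma rather than by re-running the ball argument with a proper quotient gauge. What your route buys, if completed, is a stronger structural output (a jointly continuous module structure over a ball in a local subfield $F$ acting on $V$ and $V/N$), but the lemma does not need it.

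Your plan can be completed, but two repairs are needed at the flagged step. First, the target of $\Xi$ is not automatically $F=k_0[\mathfrak s]$: elements of $S$, hence of $S-1$, need not lie in $k_0[\mathfrak s]$ (e.g.\ $S$ may be a union of cosets of its local subgroup by roots of unity outside $F$), so you must restrict all arcs and fixed multipliers to the local subgroup of $S$ near $1$, whose elements do lie in $F$. Second, products $(s_1-1)\cdots(s_j-1)$ are only unit-perturbations of products of elements of $\mathfrak s$, so the spanning claim needs a perturbation argument (a $k_0$-basis of $F$ made of products of Lie-algebra elements remains a basis after multiplying each member by a unit close to $1$); taking such product-multipliers $b_i$ times arcs through $1$ then makes the differential of $\Xi$ at the origin surject onto $F$, and your section/subtraction step and the descent to $V/N$ (via invariance of $N$ under the ring generated by $S-1$) go through. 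Finally, the positive-dimensionality point you raise is not cosmetic: for a $0$-dimensional infinite analytic subgroup such as $S=p^{\bbZ}\le\bbQ_p^*$ the statement is false, since the Bohr topology on $\bbQ_p$ is a strictly weaker Hausdorff group topology for which multiplication by each $p^n$ remains continuous. So ``infinite analytic'' must indeed be read as positive-dimensional --- an assumption the paper also makes implicitly when it invokes Lemma~\ref{lemma: analytic subset moderated}, and which does hold for the torus images arising in its applications.
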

\subsection{Moderated subsets}
In order to prove Lemma~\ref{lem : main result scalar multiplication} we will need the notion of a moderated set. We will see in Lemma~\ref{lem : multiplication with moderated set} that admitting a continuous multiplication over a moderated subset guarantees that the analytic topology on the vector space is minimal.
\begin{definition}
\label{defn : moderated set}
    Let $A\subset k$ be a subset containing $0$. We call the set $A$ \textit{moderated} if $A$ contains a null sequence $(a_n)_{n\in \bbN}$, i.e. $\lim_{n} \|a_n\|= 0$, such that $1<|a_n|\cdot |a_{n+1}|^{-1}\le \lambda$ for some $\lambda\in \bbR^+$.
\end{definition}
\begin{example}
    Any 0-neighborhood in $k$ is moderated.
\end{example}
\begin{example}
\label{exmpale : SO(2)-1 moderated}
    Let $k=\bbC$ and $A=S^1-1$ where $S^1$ is the unit circle in $\bbC$. For any value $\delta<1$, the set $A$ contains an element $a$ with absolute value $\delta$, so $A$ is moderated.
\end{example}
Example~\ref{exmpale : SO(2)-1 moderated} can be viewed as a special case of Lemma~\ref{lemma: analytic subset moderated} below, observing that the set $S^1-1$ is an $\bbR$ analytic subset of $\bbC$. 

Assume $k$ is a degree $m$ extension of a local field $k_0$. We may view $k\simeq k_0^m$ as an $m$-dimensional $k_0$-analytic manifold. Lemma~\ref{lemma: analytic subset moderated} asserts us that $k_0$-analytic subsets are moderated.

\begin{lemma}
\label{lemma: analytic subset moderated}
    Assume that $k$ is a finite extension of a local field $k_0$, and let $A\subset k$ be a positive dimensional $k_0$-analytic sub-manifold containing $0$, then $A$ is a moderated subset.
\end{lemma}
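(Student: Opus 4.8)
The plan is to reduce everything to the analysis of a single $k_0$-analytic curve through $0$ and to read off the required null sequence from its leading Taylor term. Since $A$ is a positive-dimensional $k_0$-analytic submanifold of $k\cong k_0^m$ passing through $0$, I would first choose a $k_0$-analytic chart $\phi$ of $A$ near $0$ with $\phi(0)=0$ and restrict it to a coordinate line, obtaining a $k_0$-analytic map $\gamma\colon B\to A\subset k$ defined on a ball $B\subset k_0$ around $0$, with $\gamma(0)=0$. Because the inclusion $A\hookrightarrow k$ is an immersion, $\gamma'(0)\neq 0$, so $\gamma$ is non-constant; writing its convergent Taylor expansion, the lowest-order term is $c_N t^N$ with $c_N\neq 0$ and $N\ge 1$ (indeed one may take $N=1$, but the computation below works for any $N$). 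Thus $\gamma(t)=c_N t^N\bigl(1+h(t)\bigr)$ with $h$ a $k_0$-analytic function vanishing at $0$, and hence $|1+h(t)|\to 1$ as $t\to 0$.

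Next I would select a null sequence $t_n\to 0$ in $k_0$ whose consecutive absolute values have constant ratio. If $k_0$ is non-Archimedean, take $t_n=\varpi^{\,n}$ for a uniformizer $\varpi$, so that $|t_n|\cdot|t_{n+1}|^{-1}=|\varpi|^{-1}=:q>1$; if $k_0$ is Archimedean, take $t_n=2^{-n}\in\bbR\subseteq k_0$, so that the ratio is a constant $>1$. Setting $a_n:=\gamma(t_n)\in A$, continuity gives $a_n\to 0$, and for $t_n$ small one has $a_n\neq 0$ since $c_N\neq 0$ and $|1+h(t_n)|$ is close to $1$; hence $(a_n)$ is a null sequence in $A\setminus\{0\}$.

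Finally I would compute
\[
\frac{|a_n|}{|a_{n+1}|}=\left(\frac{|t_n|}{|t_{n+1}|}\right)^{N}\cdot\frac{|1+h(t_n)|}{|1+h(t_{n+1})|}.
\]
The first factor is the constant $q^{N}>1$ (resp. $2^{N}>1$), and the second factor tends to $1$ as $n\to\infty$; therefore the whole expression converges to a value strictly greater than $1$. Consequently there exist $n_0$ and $\lambda>1$ with $1<|a_n|\cdot|a_{n+1}|^{-1}\le\lambda$ for all $n\ge n_0$, and reindexing the tail $(a_n)_{n\ge n_0}$ exhibits $A$ as moderated in the sense of Definition~\ref{defn : moderated set}.

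The main obstacle I anticipate is securing the strict lower bound $|a_n|\cdot|a_{n+1}|^{-1}>1$ together with a uniform upper bound, given the analytic correction factor $1+h(t)$. This is handled by the two observations that the leading ratio $q^N$ (resp. $2^N$) is strictly above $1$ and that the correction factor tends to $1$. In the non-Archimedean case it is even cleaner: by the ultrametric inequality $|1+h(t_n)|=1$ \emph{exactly} once $|h(t_n)|<1$, so the ratio is identically $q^{N}$ for all large $n$ and no limiting argument is needed.
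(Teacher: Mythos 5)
Correct, and essentially the same approach as the paper: both arguments reduce to a one-dimensional $k_0$-analytic curve through $0$, push a geometric null sequence through it, and bound the consecutive ratios $|a_n|\cdot|a_{n+1}|^{-1}$ via the curve's leading-order behavior at the origin. The only difference is technical: the paper uses the first-order differential estimate on $\|f(x^n)\|/|x^n|$, whereas you factor out the leading Taylor term $c_N t^N$ and let the correction factor $|1+h(t_n)|$ tend to $1$; this is the same mechanism in a slightly different form.
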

\begin{proof}
    Since $A$ is a positive dimensional analytic sub-manifold, there exists an analytic immersion $f\colon B \to A$ satisfying $f(0)=0$, where $B$ is the open unit ball in $k_0$.
    We let $L$ be the derivative of $f$ at $0$ and denote $a= \|L(1)\|$.
    We fix a non zero $x\in B$ and observe that for every $n$,
    $\|L(x^n)\|=|x^n|\cdot a$.
    We fix $\epsilon<a$ small enough so $(a -\e)(a+\e)^{-1}> |x|$ and let $\lambda=(a-\e)^{-1}(a+\e)|x|^{-1}$.
    By definition of the differential, there exists $N\in \bbN$ such that for $n\geq N$,
    \[
        -\e <\frac{\|f(x^n)\|}{|x^n|}-\frac{\|L(x^n)\|}{|x^n|} <\e,
    \]
    thus 
    \[
        0<a -\e <\frac{\|f(x^n)\|}{|x^n|} <a+\e.
    \]
    We set $a_n=f(x^{n+N})\in A$ and note that for every $n$
    \[ \frac{|a_n|}{|a_{n+1}|}=\frac{\|f(x^{n+N})\|}{|x^{n+N}|} \cdot \frac{|x^{n+N+1}|}{\|f(x^{n+N+1})\|}\cdot |x|^{-1}, \]
    thus 
    \[ 1< (a -\e)(a+\e)^{-1}|x|^{-1} < \frac{|a_n|}{|a_{n+1}|} < (a-\e)^{-1}(a+\e)|x|^{-1}= \lambda. \]
    This finishes the proof.
\end{proof}
\begin{lemma}
\label{lem : one minus moderated set}
    Let $S\subset k$ be a subset such that the scalar multiplication map
        $$m_0 \colon S\times V\to V, \ \ (s,v)\mapsto s\cdot v$$ 
    is continuous with respect to an additive group topology on $V$. Then the scalar multiplication 
        $$m_1 \colon (S-1)\times V\to V, \ \ (s-1,v)\mapsto (s-1)\cdot v$$
    is continuous.
\end{lemma}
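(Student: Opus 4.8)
The plan is to reduce the continuity of $m_1$ to the continuity of $m_0$ via the elementary identity $(s-1)\cdot v = s\cdot v - v$, exploiting the fact that $V$ carries an additive group topology, so that subtraction on $V$ is continuous.

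First I would introduce the auxiliary map
\[
\Psi\colon S\times V \to V, \qquad \Psi(s,v) = m_0(s,v) - v.
\]
This map is continuous: the map $(s,v)\mapsto (m_0(s,v),\,v)$ from $S\times V$ to $V\times V$ is continuous, since $m_0$ is continuous by hypothesis and the projection $(s,v)\mapsto v$ is continuous; composing with the subtraction map $V\times V\to V$, which is continuous because the topology on $V$ is an additive group topology, shows that $\Psi$ is continuous.

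Next I would record that translation by $1$ in $k$ restricts to a homeomorphism $\theta\colon (S-1)\to S$, $t\mapsto t+1$, with continuous inverse $s\mapsto s-1$, both being restrictions of the continuous addition maps on the topological field $k$; consequently $\theta\times \mathrm{Id}_V\colon (S-1)\times V\to S\times V$ is a homeomorphism. The identity $(s-1)\cdot v = s\cdot v - v$ then reads, writing $t=s-1$,
\[
m_1(t,v) = t\cdot v = (t+1)\cdot v - v = m_0(t+1,v) - v = \Psi(\theta(t),v).
\]
Hence $m_1 = \Psi\circ(\theta\times \mathrm{Id}_V)$ is a composition of continuous maps, so $m_1$ is continuous, as required.

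There is no serious obstacle here; the only point requiring a little care is to keep the domains straight, namely that $S-1$ is endowed with the subspace topology from $k$ and that translation by $1$ genuinely is a homeomorphism of $k$ carrying $S-1$ onto $S$, so that precomposition by $\theta\times\mathrm{Id}_V$ preserves continuity.
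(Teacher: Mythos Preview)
Your proof is correct and follows essentially the same approach as the paper: both express $m_1$ as the composition of the translation $t\mapsto t+1$ on the first factor, the map $(s,v)\mapsto (s\cdot v,v)$, and subtraction on $V$. The paper packages these three maps slightly differently (writing $m_1=\mathrm{-}\circ\alpha\circ(\mathrm{+_1}\times\mathrm{Id})$ rather than grouping the last two into your $\Psi$), but the content is identical.
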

\begin{proof} 
    The map $ \mathrm{+_1}\colon k \to k$ sending $a$ to $a+1$ is continuous, maps the subset $S-1$ to the subset $S$.
    By continuity of $m_0$ the map 
        $$\alpha\colon S\times V \to V\times V, \ \ (s,v)\mapsto (s\cdot v,v) $$
    is continuous.
    The subtraction of vectors is a continuous operation on $V$.
    Denoting the subtraction map by $ \mathrm{-}\colon V \times V \to V$, we get continuity of the the multiplication map, $m_1\coloneqq  \mathrm{-} \circ  \alpha \circ (  \mathrm{+_1}\times Id) $,
    \[
      m_1 \colon (S-1)\times V \to V, \ \  (s-1,v)\mapsto s\cdot v-v=(s-1)\cdot v
    \]
    as needed.
\end{proof}

\subsection{Weaker topologies on quotients of vector spaces}
Similar to the assertion of Lemma~\ref{lemma :  weaker topology on V unbounded open sets},  weaker topologies on quotients of vector spaces can be classified as those having no bounded open subsets.
\begin{lemma}
\label{lemma: weaker top not bounded }
    Let $N\le V$ be a compact additive subgroup. Then any strictly weaker topology on $V/N$ does not have any open sets with bounded pre-image in $V$. 
\end{lemma}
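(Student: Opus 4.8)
The plan is to mirror the proof of Lemma~\ref{lemma :  weaker topology on V unbounded open sets}, replacing the annulus $\ann(\e,\lambda)$ by a suitable saturated compact subset of $V$. Let $q\colon V\to V/N$ denote the quotient map, write $\bar 0=q(N)$ for the identity of $V/N$, and recall that since $N$ is compact the sets $q(B_\e)$ (for $\e>0$) form a neighborhood basis of $\bar 0$ in the quotient topology: any quotient-open set containing $N$ contains $B_\e+N$ for some $\e$ by compactness of $N$. Arguing by contradiction, suppose $\w$ is a group topology on $V/N$ strictly weaker than the quotient topology and admitting an open set whose $q$-preimage is bounded. Since translations are $\w$-homeomorphisms and send sets with bounded preimage to sets with bounded preimage, we may assume there is a $\w$-open $\bar 0$-neighborhood $W$ with $q^{-1}(W)\subseteq \overline B_\lambda$ for some $\lambda>0$. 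I will show that every $q(B_\e)$ contains a $\w$-open $\bar 0$-neighborhood; since the $q(B_\e)$ form a neighborhood basis, this forces the quotient topology to refine $\w$, hence to coincide with it, contradicting strict weakness.

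The key step is to produce, for each $\e>0$, the correct replacement for the annulus. The naive choice $\overline B_\lambda\setminus B_\e$ fails, since its image under $q$ may contain $\bar 0$ whenever $N$ meets the annulus. Instead I set
\[
K_\e \coloneqq (\overline B_\lambda + N)\setminus (B_\e + N).
\]
Because $\overline B_\lambda$ and $N$ are compact, $\overline B_\lambda+N$ is compact and $B_\e+N$ is open, so $K_\e$ is closed in a compact set, hence compact. Moreover $K_\e$ is $N$-saturated, being the set-difference of the two $N$-saturated sets $\overline B_\lambda+N$ and $B_\e+N$. Finally, as $N\subseteq B_\e+N$ we have $N\cap K_\e=\emptyset$, so $\bar 0\notin q(K_\e)$.

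Now observe that $q$ is continuous as a map $V\to(V/N,\w)$, being the composite of $q$ into the quotient topology with the (continuous) identity map onto the weaker topology $\w$. Hence $q(K_\e)$ is the continuous image of a norm-compact set, so it is $\w$-compact, and $(V/N,\w)$ is Hausdorff. Separating the point $\bar 0$ from the compact set $q(K_\e)$ yields a $\w$-open $\bar 0$-neighborhood $W'$ disjoint from $q(K_\e)$. Put $W''=W'\cap W$, a $\w$-open $\bar 0$-neighborhood. Using that $K_\e$ is saturated (so $q^{-1}(q(K_\e))=K_\e$, whence $q^{-1}(W')\cap K_\e=\emptyset$) together with $q^{-1}(W)\subseteq\overline B_\lambda\subseteq \overline B_\lambda+N$, we obtain
\[
q^{-1}(W'')\subseteq (\overline B_\lambda+N)\setminus K_\e = (\overline B_\lambda+N)\cap(B_\e+N)\subseteq B_\e+N = q^{-1}(q(B_\e)),
\]
so that $W''=q\bigl(q^{-1}(W'')\bigr)\subseteq q(B_\e)$. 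Thus $q(B_\e)$ contains the $\w$-open $\bar 0$-neighborhood $W''$, as required.

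I expect the only genuine subtlety to be the choice of $K_\e$ and the verification that it is simultaneously compact, $N$-saturated, and misses $\bar 0$; once this is in place, everything else is a routine transfer of the vector-space argument, with the hypothesized set with bounded preimage playing the role of the bounded open set and $K_\e$ playing the role of the annulus. As a sanity check, when $k$ is archimedean one has $N=\{0\}$ and $K_\e$ is literally the annulus $\overline B_\lambda\setminus B_\e$, recovering Lemma~\ref{lemma :  weaker topology on V unbounded open sets}.
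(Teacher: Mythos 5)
Your proof is correct, but it follows a genuinely different route from the paper's. The paper splits into cases: for Archimedean $k$ the compact subgroup $N$ is trivial and the statement reduces immediately to Lemma~\ref{lemma :  weaker topology on V unbounded open sets}, while for non-Archimedean $k$ it exploits the ultrametric fact that the closed ball $\overline{B}_\lambda$ is a compact \emph{subgroup} of $V$, so that its image in $V/N$ is a compact subgroup containing the $\w$-open neighborhood $W$, hence $\w$-open, and then invokes Corollary~\ref{cor : open compact minimal} (which itself rests on Merson's Lemma~\ref{lemma : Mersons lemma}) to force $\w$ to equal the quotient topology. You instead re-run the annulus argument of Lemma~\ref{lemma :  weaker topology on V unbounded open sets} directly in the quotient: the saturated compact set $K_\e=(\overline{B}_\lambda+N)\setminus(B_\e+N)$ replaces the annulus, the compactness of $N$ gives that the sets $q(B_\e)$ form a neighborhood basis at $\bar 0$, and point--compact separation in the Hausdorff topology $\w$ does the rest. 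Your argument is uniform in the field (no Archimedean/non-Archimedean dichotomy) and self-contained (it uses neither Corollary~\ref{cor : open compact minimal} nor the subgroup structure of ultrametric balls), at the cost of being somewhat longer; the paper's argument is shorter but leans on machinery and on structure special to the non-Archimedean case. One terminological slip worth fixing: from ``every $q(B_\e)$ contains a $\w$-open $\bar 0$-neighborhood'' what follows is that $\w$ refines the quotient topology (every quotient-open set is $\w$-open), not that the quotient topology refines $\w$ --- the latter is just the standing hypothesis $\w\le$ quotient; your subsequent conclusion (the two topologies coincide, contradicting strictness) is nevertheless the right one, so this is purely a wording issue, not a gap.
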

\begin{proof}
    If $k$ is Archimedean, $V$ does not admit any non-trivial compact subgroups and the claim follows from Lemma~\ref{lemma :  weaker topology on V unbounded open sets}. 
    So we may assume that $k$ is non-Archimedean. Denote by $\pi\colon V\to V/N$ the quotient map. Assume by contradiction that there exist a group topology $\w$, which is a strictly weaker topology on $V/N$ admitting a $0_{V/N}$-neighborhood $W\in \w$, such that $\|\pi^{-1}(W)\|<\lambda$.
    The norm closed ball, $\overline{B}_\lambda\coloneqq \{x\in V :\ |x|\le \lambda \}$, is a compact subgroup of $V$ and therefore $\pi(\overline{B}_\lambda)$ is compact subgroup in $V/N$. Observe that $W\subset \pi(\overline{B}_\lambda)$, thus $\pi(\overline{B}_\lambda)$ is $\w$-open. Hence by Corollary~\ref{cor : open compact minimal} the $\w$ topology is equal to the quotient of the norm topology.
\end{proof}
We will see that, given a moderated subset in $k$, 
the norm topology is minimal
amongst the additive group topologies admitting a continuous multiplication by this moderated set.

\begin{lemma}
\label{lem : multiplication with moderated set}
    Let $A\subset k$ be a moderated subset, and let $m\colon A\times V \to V$ be the scalar multiplication map. Then there does not exist an additive group topology on $V$, which is strictly weaker than the norm topology. and such that $m$ is a continuous map.
    
    Moreover, given an $A$-invariant compact subgroup $N\le V$. The topological group $V/N$ does not admit a group topology that is strictly weaker than the quotient topology induced from $V$, and such that the induced scalar multiplication $m'\colon A\times V/N \to V/N$ is a continuous map.
\end{lemma}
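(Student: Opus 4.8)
The plan is to adapt the argument of Lemma~\ref{lemma : baby case multiplication by ball} to the weaker hypothesis that we multiply only by the moderated set $A$ rather than by an entire ball. The sole place where that proof used a full ball was to rescale a vector of large norm \emph{exactly} onto the unit sphere; with only $A$ available we cannot hit a single norm value, but the defining gap condition $1<|a_n|\cdot|a_{n+1}|^{-1}\le\lambda$ of a moderated set (Definition~\ref{defn : moderated set}) lets us hit a compact annulus of multiplicative width $\lambda$. So I would first record the elementary gap-control consequence of moderation: for every target $t$ with $0<t\le|a_1|$ there is an index $n$ with $t/\lambda<|a_n|\le t$ (take the least $n$ with $|a_n|\le t$ and apply $|a_{n-1}|\le\lambda|a_n|$). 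This is the combinatorial heart of the lemma.

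For the first assertion, suppose toward a contradiction that $\w$ is an additive group topology on $V$, strictly weaker than the norm topology, for which $m$ is continuous. Fix the norm-compact annulus $K=\{v:1\le\|v\|\le\lambda\}$; it is $\w$-compact because the identity $(V,\|\cdot\|)\to(V,\w)$ is continuous, and since $0\notin K$ and $\w$ is Hausdorff there is a $\w$-open $0$-neighborhood $W$ with $W\cap K=\emptyset$. Continuity of $m$ at $(0,0)$ yields $\e>0$ and a $\w$-open $W'\ni 0$ with $m\big((A\cap B_\e)\times W'\big)\subset W$. By Lemma~\ref{lemma :  weaker topology on V unbounded open sets}, $W'$ is norm-unbounded, so I may choose $w'\in W'$ with $\|w'\|$ as large as needed; setting $t=\lambda/\|w'\|$ and applying gap-control produces $a_n\in A$ with $1/\|w'\|<|a_n|\le\lambda/\|w'\|$, whence $|a_n|<\e$ and $\|a_n w'\|\in(1,\lambda]$. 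Then $a_n w'\in m\big((A\cap B_\e)\times W'\big)\subset W$ while $a_n w'\in K$, the desired contradiction.

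For the ``moreover'' statement I would run the same scheme inside $V/N$, using the gauge $\rho(\bar v)=\operatorname{dist}(v,N)$ (well defined since $N$ is a subgroup) in place of the norm, and replacing Lemma~\ref{lemma :  weaker topology on V unbounded open sets} by its quotient analogue Lemma~\ref{lemma: weaker top not bounded }, which guarantees that every open set of a strictly weaker topology on $V/N$ has norm-unbounded preimage, equivalently that $\rho$ is unbounded on every $\bar\w$-neighborhood of $0$. Writing $R_0=\sup_{n\in N}\|n\|<\infty$ (finite as $N$ is compact), I use $A$-invariance of $N$ (so that $m'$ is compatible with the representative computation $\pi(a w^{*})=a\bar w$) and choose representatives $w^{*}$ of $\bar w$ with $\|w^{*}\|=\rho(\bar w)$, attained because $N$ is compact. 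The only genuine difference from the affine case is the placement of the target annulus: I aim for $\|a_n w^{*}\|\in(2R_0,\,2\lambda R_0]$ rather than near $1$, so that the unavoidable perturbation by $N$ still leaves $\rho(a_n\bar w)=\operatorname{dist}(a_n w^{*},N)\in(R_0,\,2\lambda R_0]$, a compact annulus of $V/N$ bounded away from $0$ (its preimage $\{v:R_0\le\operatorname{dist}(v,N)\le 2\lambda R_0\}$ is closed and bounded, hence compact). The rest of the contradiction is identical to the affine case.

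I expect the main obstacle to be exactly this last point: controlling the quotient gauge $\rho$ from below. Since $N$ has positive diameter $R_0$, multiplying a large vector by a small scalar can drag it arbitrarily close to $N$, so one cannot target norm $\approx 1$; choosing the target scale strictly above $2R_0$ is what keeps the lower estimate $\operatorname{dist}(a_n w^{*},N)\ge\|a_n w^{*}\|-R_0$ positive and confines $a_n\bar w$ to a fixed compact annulus. When $N=\{0\}$ this reduction collapses to the first assertion.
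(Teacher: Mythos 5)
Your proof is correct and follows essentially the same route as the paper's: a compact annulus separated from $0$ in the weak Hausdorff topology, continuity of the multiplication at $(0,0)$, unboundedness of weak neighborhoods via Lemma~\ref{lemma :  weaker topology on V unbounded open sets} and Lemma~\ref{lemma: weaker top not bounded }, and the moderated gap condition used to rescale an unbounded vector into the forbidden annulus. The only cosmetic difference is in the quotient case, where the paper scales an arbitrary lift $v\in\pi^{-1}(W')$ into the norm annulus $\overline{B}_{\beta\lambda}\setminus B_\beta$ (with $\beta>\sup_{n\in N}\|n\|$), whose image under $\pi$ serves as the compact target, whereas you scale the minimal-norm lift and track the gauge $\operatorname{dist}(\cdot,N)$ --- the underlying mechanism is identical.
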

Reading the proof, the reader should have in mind the proof of Lemma~\ref{lemma : baby case multiplication by ball}, which is a special case. 

\begin{proof}
    We will show that any weaker topology on $V/N$ admitting a continuous $A$-multiplication must have an open subset with a bounded pre-image in $V$. The statement then follows from Lemma~\ref{lemma: weaker top not bounded }.
    
    Let $\w$ be a weaker topology on $V/N$ such that the map
        $$m'\colon A\times V/N\to V/N, \ \ (a,v+N)\mapsto a\cdot v +N$$ 
    is continuous and assume by contradiction that all $\w$ open set in $V/N$ have unbounded pre-images in $V$.
    Since $A$ is a moderated set there exists $\lambda\in \bbR^+$ and a null sequence $\{a_n\}_{n\in \bbN}\subset k$, with $1< |a_n|\cdot |a_{n+1}|^{-1}\le\lambda$.
    Let $\pi \colon V\to V/N$ be the quotient map. Since $N$ is compact there exist $\beta\in \bbR^+$ such that $\|N\|<\beta$.
    The annuls $\ann{(\beta,\beta \lambda)}\coloneqq \overline{B}_{\beta \lambda}\setminus B_\beta\subset V$ is a compact subset in the norm topology on $V$, and does not intersect $N$.
    Therefore, the image of the annulus under $\pi$, $\pi(\ann{(\beta,\beta \lambda)})$, is a compact set not containing the identity $0_{V/N}$. 
    Let $W$ be an $\w$ open neighborhood separating $0_{V/N}$ from $\pi(\ann{(\beta,\beta \lambda)})$.
    By continuity of $m'$, there exist a $\w$ open $0_{V/N}$ neighborhood $W'$, and a open $0$ neighborhood $A_0\subset A$ such that $m'(A_0\times W') \subset W$.
    Note that there exists $N$ such that for every $n\geq N$, $a_n\in A_0$.
    Since we assumed by contradiction that $\pi^{-1}(W')$ is unbounded, there exists $v\in \pi^{-1}(W')$ such that 
        $$\|v\|> \beta \cdot|a_N|^{-1}$$
    Since $|a_n|^{-1}$ is a strictly increasing sequence with $\lim_n |a_n|^{-1}= \infty$, there exists $N<n_0$ such that
        $$ \beta \cdot|a_{n_0}|^{-1} <\|v\| \le \beta \cdot|a_{n_0+1}|^{-1}, $$
    thus $a_{n_0}\cdot v\in \ann{(\beta,\beta \lambda)}$ since
    $$
     \beta=\beta \cdot|a_{n_0}|^{-1}\cdot |a_{n_0}|<\|a_{n_0}\cdot v\|\le  \beta \cdot|a_{n_0+1}|^{-1}\cdot |a_{n_0}|\le \beta \cdot\lambda
    $$
    Observe that $(a_{n_0},\pi(v)) \in A_0\times W'$. 
    Since $m'(A_0\times W') \subseteq W$, thus $W$ intersects the annulus $\ann{(\beta,\beta \lambda)}$.
    This is a contradiction.
\end{proof}
We are now ready to prove the main lemma~\ref{lem : main result scalar multiplication}.
\begin{proof}[Proof of Lemma~\ref{lem : main result scalar multiplication}]
    Following Lemma~\ref{lem : one minus moderated set}, we get a $S-1$ continuous multiplication on $V$. Moreover, if $N\le V$ in $S$-invariant it is $S-1$-invariant. 
    By Lemma~\ref{lemma: analytic subset moderated}, the $k_0$-analytic set $S-1$ is moderated.
    The lemma now follows from Lemma~\ref{lem : multiplication with moderated set}.
\end{proof}
\section{Linear algebraic action of Tori}
\label{Linear algebraic action of Tori}
We will be using the following consequence of Schur's Lemma. 
\begin{lemma} [Schur's lemma]
\label{lemma :schurs lemma}
    Let $(\pi,V)$ be a finite-dimensional irreducible $ k$-representation of a commutative group $T$. Then the division ring $\End_T(V)$ is commutative and it is a finite field extension of $k$. Moreover, as $\End_T(V)$-modules, $\End_T(V)\cong V$.
\end{lemma}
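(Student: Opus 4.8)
The plan is to realize the commutant $\End_T(V)$ concretely as a field built from the operators $\pi(t)$ themselves, rather than merely invoking the classical Schur statement that $\End_T(V)$ is a division ring. First I would form the $k$-subalgebra $A = k[\pi(T)] \subseteq \End_k(V)$ generated by the image of $T$. Because $T$ is commutative, the operators $\pi(t)$ pairwise commute, so $A$ is a \emph{commutative} $k$-algebra; and since $\dim_k V < \infty$, it is finite-dimensional over $k$. The conceptual heart of the argument is that commutativity of $T$ forces $\pi(T) \subseteq \End_T(V)$, so the commutant is already large enough to contain the operators we started with.

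Next I would show that the image $E$ of $A$ in $\End_k(V)$ is in fact a field. For any $a \in A$, commutativity means $a$ commutes with all of $A$, so both $\ker a$ and $\mathrm{im}\, a$ are $A$-submodules of $V$; but $A$-submodules coincide with the $T$-invariant subspaces, so irreducibility forces every nonzero element of $E$ to act invertibly. Hence $E$ is a commutative division algebra over $k$, that is, a finite field extension of $k$.

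Then I would upgrade this to pin down $V$ itself. The field $E$ makes $V$ an $E$-vector space, and any $E$-subspace is again a $T$-invariant subspace, so irreducibility forces $\dim_E V = 1$, i.e. $V \cong E$ as $E$-modules. Finally, since a $k$-linear map commutes with every $\pi(t)$ if and only if it commutes with the generated algebra $A$, and $A$ acts through $E$, I get $\End_T(V) = \End_A(V) = \End_E(V)$; because $V$ is one-dimensional over $E$, this last space is just $E$. Thus $\End_T(V) = E$, which delivers all three claims at once: $\End_T(V)$ is commutative, it is a finite field extension of $k$, and $\End_T(V) = E \cong V$ as $\End_T(V)$-modules.

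The main obstacle is the reduction from ``division ring'' to ``field'': the substantive input is precisely that abelianness of $T$ places $\pi(T)$ inside the commutant, which lets me identify $\End_T(V)$ with the commutative algebra $E$ generated by $\pi(T)$ instead of only bounding it. Once $E$ is recognized as a field and $V$ is seen to be one-dimensional over it, the chain of identifications $\End_T(V) = \End_E(V) = E \cong V$ is routine.
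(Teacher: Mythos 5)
Your proof is correct, but it is organized in the opposite direction from the paper's. The paper works top-down from the commutant: it invokes the classical Schur statement that $D=\End_T(V)$ is a division ring, notes that commutativity places $\pi(T)$ inside $D$, deduces that $V$ is an irreducible (hence cyclic) $D$-module so that $V\cong D$, and only at the very end obtains commutativity of $D$ by observing that irreducibility forces the $k$-span of the pairwise-commuting set $\pi(T)$ to be all of $D$. You instead work bottom-up from the generated algebra $E=k[\pi(T)]$: commutative by construction, a field by your kernel-invariance argument, with $\dim_E V=1$ by irreducibility, and you identify the commutant only at the last step via $\End_T(V)=\End_E(V)=\End_E(E)=E$. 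What your version buys is self-containedness: you never need the division-ring form of Schur's lemma as a black box (your kernel/image argument in effect reproves it for elements of $E$), whereas the paper's version is shorter granted that classical input; both proofs rest on the same two pillars, namely that commutativity of $T$ makes the relevant algebra act by intertwiners and that irreducibility makes $V$ one-dimensional over the resulting field. One step you should make explicit: knowing that each nonzero $a\in E$ is invertible in $\End_k(V)$ is not yet knowing that $a^{-1}\in E$; this follows, for instance, because multiplication by $a$ on the finite-dimensional $k$-algebra $E$ is injective, hence surjective, or by Cayley--Hamilton. With that one line added, the chain $\End_T(V)=E\cong V$ delivers all three assertions exactly as you say.
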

\begin{proof}
    Since $T$ is commutative, for any $t\in T$ the linear operator $\pi(t)$ is intertwining. We therefore get a map $\pi:T\to \End_T(V)$. 
    Irreducibility of the $T$ representation implies that $V$ is irreducible as an $\End_T(V)$-module.
    Therefore $V$ must be a quotient of $\End_T(V)$, but since $\End_T(V)$ is a division ring we get that $V\cong \End_T(V)$ as $\End_T(V)$-modules.
    By irreducibly, $\pi(T)$ spans $\End_T(V)$, so the finite-dimensional division ring $\End_T(V)$ is commutative, and therefore a finite field extension of $k$.
\end{proof}

In this section we let $k$ be a local field of characteristic zero and $V$ be a finite-dimensional $k$-vector space.
We let $\bfT$ be a $k$-algebraic torus, and denote by $T=\bfT(k)$ its $k$-points. We fix a $k$-rational representation of algebraic groups $\pi\colon T \to \GL(V)$. I.e. $\pi$ is the $k$-point map of a rational $k$-representation. Note that by Zariski density of the $k$-points, if the algebraic representation of $\bfT$ is irreducible then so is the representation of its $k$-point $T$. 

The topology assumed on the $k$-points of a $k$-algebraic group is the $k$-analytic topology, which we will refer to as the analytic topology.
\begin{lemma}
\label{lem : irreducble tori rep minimal}
    Assume that the representation $\pi$ is irreducible. Let $\alpha \colon T\times V\to V $ denote the continuous action $\alpha(t,v)=\pi(t)v$.
    Then the additive group $V$ does not admit a group topology strictly weaker than the norm topology with respect to which $\alpha$ is a continuous action.
    
    Moreover, suppose $N\le V$ is a $T$-invariant compact subgroup. Then the topological group $V/N$ does not admit a group topology that is strictly weaker than the analytic topology and such that the induced action $\alpha'\colon T\times V/N\to V/N$ is a continuous map.
\end{lemma}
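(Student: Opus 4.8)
The plan is to reduce the statement to the rigidity of scalar multiplication established in Lemma~\ref{lem : main result scalar multiplication}, by exploiting Schur's Lemma~\ref{lemma :schurs lemma} to convert the torus action into honest scalar multiplication over a field extension. First I would invoke Schur's Lemma: since $\pi$ is irreducible, the endomorphism ring $E \coloneqq \End_T(V)$ is a finite field extension of $k$, and as an $E$-module we have $V \cong E$, i.e. $V$ is one-dimensional as an $E$-vector space. The point of this identification is that the operators $\pi(t)$ for $t \in T$ lie in $E$, so the $T$-action on $V$ becomes scalar multiplication by the subset $\pi(T) \subseteq E^*$ acting on the one-dimensional $E$-space $V \cong E$.

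The main work is then to verify that $\pi(T) \le E^*$ is an infinite $k_0$-analytic subgroup, so that Lemma~\ref{lem : main result scalar multiplication} applies with $E$ in the role of ``$k$'' and $k_0$ a suitable local subfield over which $E$ is finite (one may take $k_0 = \mathbb{Q}_p$ or $\mathbb{R}$, whichever $k$ contains, and note $E/k_0$ is still finite). Infiniteness of $\pi(T)$ follows because $\pi$ is irreducible and nontrivial, so the image torus is positive-dimensional and its $k$-points form an infinite set; a nontrivial character of $\bfT$ composed with $\pi$ shows $\pi(T)$ cannot be finite. Analyticity of $\pi(T)$ as a subgroup of $E^*$ follows from the fact that $\pi$ is the $k$-point map of a $k$-rational homomorphism of algebraic groups, hence a $k_0$-analytic map, and the image of an analytic homomorphism of analytic groups is again analytic.

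Once this is in place, both conclusions follow directly. The additive group $V$, viewed as a one-dimensional $E$-vector space, admits no strictly weaker additive group topology on which scalar multiplication by the infinite $k_0$-analytic subgroup $\pi(T) \le E^*$ is continuous, by the first part of Lemma~\ref{lem : main result scalar multiplication}; and since the analytic topology on $V$ as a $k$-space coincides with its analytic topology as an $E$-space (finite extension, so the norms are equivalent and induce the same topology), this is exactly the assertion. For the ``Moreover'' part, a $T$-invariant compact subgroup $N \le V$ is in particular $\pi(T)$-invariant, so the second clause of Lemma~\ref{lem : main result scalar multiplication} gives that $V/N$ admits no strictly weaker group topology with continuous induced scalar action.

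\textbf{Main obstacle.} The step I expect to require the most care is confirming that $\pi(T)$ genuinely qualifies as an ``infinite $k_0$-analytic subgroup'' of $E^*$ in the precise sense demanded by Lemma~\ref{lem : main result scalar multiplication}, and that the reduction to the one-dimensional $E$-space is compatible with all the topologies involved. Concretely: one must check that the $T$-action topology matches the $\pi(T)$-scalar-multiplication topology (immediate from Schur, since $\pi(t)$ acts as the scalar $\pi(t) \in E$), and that passing from the $k$-norm topology on $V$ to the $E$-norm topology on $V \cong E$ does not alter the notion of ``strictly weaker''; this is harmless because $E/k$ is finite so the two norm topologies agree, but it should be stated explicitly to avoid a gap when $E \ne k$.
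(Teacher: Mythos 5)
Your overall route is exactly the paper's: use Schur's Lemma~\ref{lemma :schurs lemma} to identify $V$ with the finite extension field $K=\End_T(V)$, view $\pi(T)$ as a multiplicative subgroup of $K^*$, and then invoke the rigidity result, Lemma~\ref{lem : main result scalar multiplication}, for both the main claim and the ``Moreover'' clause. However, two of your justifications are false as stated, and one of them hides the actual technical content of the paper's proof.

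First, the step you dismiss as ``immediate from Schur'' --- passing from continuity of $\alpha\colon T\times V\to V$ to continuity of the scalar multiplication $m\colon \pi(T)\times V\to V$ --- is not immediate. Schur's Lemma only provides the algebraic identification of the operator $\pi(t)$ with a scalar in $K$; topologically, continuity of $m$ in the variable $s\in\pi(T)$ (with its subspace topology from $K^*$) does not follow from continuity of $\alpha$ in $t\in T$ unless the map $\pi\colon T\to \pi(T)$ is \emph{open} onto its image, since one must know that $\pi$ of a small neighborhood in $T$ contains a neighborhood in $\pi(T)$. The paper supplies this via the Banach Open Mapping Theorem~\ref{thm : banach open mapping theorem}, which applies because $\pi(T)$ is closed in $\GL(V)$ (Proposition~\ref{prop : image and kernel of k point maps}), hence Polish; neither ingredient appears in your proposal. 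Second, your general principle that ``the image of an analytic homomorphism of analytic groups is again analytic'' is false: an irrational one-parameter winding of a torus is an analytic homomorphism whose image is dense and not an analytic subgroup. The analyticity (and closedness) of $\pi(T)$ in $K^*$ genuinely uses that $\pi$ is a separable \emph{algebraic} morphism, via \cite[Chapter I, Proposition (2.5.4)(ii)]{margulis1991discrete}, which is what the paper cites. Both gaps are repairable with tools already present in the paper, so your outline is sound, but as written these two steps rest on assertions that are not true.
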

\begin{proof}
    By Schur's Lemma above~\ref{lemma :schurs lemma}, $\pi(T)$ is a representation into $K=\End_{T}(V)$, for $K$ a finite field extension of $k$.
    The representation map $\pi\colon T\to \GL(V)$ is a separable algebraic morphism. Therefore,
    it follows by Proposition \cite[Chapter I, Proposition (2.5.4)(ii)]{margulis1991discrete} that $\pi( T)$ is a $k$-analytic multiplicative subgroup of $K^*$. 
    By the Banach Open Mapping Theorem~\ref{thm : banach open mapping theorem}, $\pi$ is open onto its image so by following diagram
    \[
    \begin{tikzcd}
        T\times V \drar["\alpha"] \dar["\pi"]&  \\
        \pi(T)\times V \rar["m"] & V
    \end{tikzcd}
    \]
    the multiplication by $\pi(T)$ is continuous.
    The claim now follows directly from Lemma~\ref{lem : main result scalar multiplication}.
\end{proof}
\begin{cor}
\label{corollary : irreducable tori space is minimal with action}
    Assume that the $T$-representation $(\pi,V)$ has no invariant vectors.
    Let $\alpha \colon T\times V\to V $ denote the continuous action $\alpha(t,v)=\pi(t)v$.
    Then the additive group $V$ does not admit a group topology strictly weaker than the norm topology, and such that $\alpha$ is a continuous action.
    
    Moreover, given a $T$-invariant compact subgroup $N\le V$. The topological group $V/N$ does not admit a group topology that is strictly weaker than the analytic topology, and such that the induced action $\alpha'\colon T\times V/N\to V/N$ is a continuous map.
\end{cor}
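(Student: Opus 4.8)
The plan is to reduce the statement to the irreducible case already settled in Lemma~\ref{lem : irreducble tori rep minimal}, using complete reducibility of torus representations together with Merson's Lemma~\ref{lemma : Mersons lemma}. Since the norm topology on $V$ coincides with its analytic topology, the first assertion is exactly the case $N=\{0\}$ of the ``moreover'' part, so it suffices to prove the quotient statement. I would argue by induction on $\dim_k V$, the relative-minimality and co-minimality machinery of Section~\ref{Minimal and Sealed groups} supplying the inductive glue.

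For the decomposition step, recall that in characteristic zero every $k$-rational representation of the torus $\bfT$ is semisimple. If $V$ is already irreducible, then, having no invariant vectors, it is nontrivial, and the claim is precisely the ``moreover'' part of Lemma~\ref{lem : irreducble tori rep minimal}; this is the base case. Otherwise I split $V = V_1 \oplus V'$ with $V_1$ a proper nonzero irreducible subrepresentation and $V'$ a nonzero $T$-invariant complement. Because $V$ has no invariant vectors, $V_1$ is nontrivial and $V'$ again has no invariant vectors, with $\dim_k V' < \dim_k V$, so the inductive hypothesis will apply to $V'$.

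For the inductive step, let $q\colon V \to V/N$ be the quotient map and let $\w$ be a group topology on $V/N$, weaker than the analytic topology, for which the induced action $\alpha'$ is continuous; I must show $\w$ is the analytic topology. Consider the $T$-invariant subgroup $U \coloneqq q(V_1) = (V_1+N)/N \cong V_1/(V_1 \cap N)$. Here $V_1 \cap N$ is a $T$-invariant compact subgroup of $V_1$, so the ``moreover'' part of Lemma~\ref{lem : irreducble tori rep minimal}, applied to the irreducible $V_1$, forces $\w_U$ to equal the analytic topology of $U$; as $V_1/(V_1\cap N)$ is locally compact, $U$ is $\w$-closed. On the other side, letting $p'\colon V \to V'$ be the $T$-equivariant projection along $V_1$, I identify $(V/N)/U \cong V/(V_1+N) \cong V'/p'(N)$, where $p'(N)$ is a $T$-invariant compact subgroup of $V'$; the induced $T$-action is continuous and $V'$ has no invariant vectors, so the inductive hypothesis forces the quotient topology $\w_{(V/N)/U}$ to be analytic. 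Applying Merson's Lemma~\ref{lemma : Mersons lemma} to $U \le V/N$ then yields that $\w$ equals the analytic topology, completing the induction.

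The main obstacle I expect is the bookkeeping around how the compact subgroup $N$ interacts with the direct-sum decomposition: one must verify that $V_1 \cap N$ and the projected image $p'(N)$ are compact and $T$-invariant, and, more delicately, that the purely algebraic identification $(V/N)/q(V_1) \cong V'/p'(N)$ is a \emph{topological} isomorphism for the analytic topologies, so that the subspace and quotient data fed into Merson's Lemma genuinely match the hypotheses. Once this is verified, everything else is a routine combination of complete reducibility, the irreducible case, and the relative-minimality lemmas.
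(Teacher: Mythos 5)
Your proposal is correct and follows essentially the same route as the paper's proof: induction using complete reducibility, the ``moreover'' part of Lemma~\ref{lem : irreducble tori rep minimal} applied to an irreducible summand $V_1$ with the compact subgroup $V_1\cap N$, the inductive hypothesis applied to the quotient representation with the compact image of $N$, and Merson's Lemma~\ref{lemma : Mersons lemma} to conclude. The only cosmetic differences are that you induct on $\dim_k V$ rather than on the number of irreducible summands, and you work with a $T$-invariant complement $V'$ and the projection $p'(N)$ where the paper uses the isomorphic quotient $V/V_1$ and $V_1N/V_1$.
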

\begin{proof}
    By complete reducibility of tori, 
    the vector space $V$ decomposes into finitely many irreducible sub-representations $V=\oplus_{i=1}^m V_i$. 
    Let $N\le V$ be a compact $T$-invariant subgroup, and let $\w$ be a weaker additive group topology on $V/N$ admitting a continuous $T$-action.
    We prove by induction on the length of the representation, $m$, that $\w$ is equal to the analytic topology on $V/N$.
    
    If $m=1$ the claim follows from Lemma~\ref{lem : irreducble tori rep minimal} above. 
    Assume $m>1$, and let $V_1\le V$ be an irreducible sub-representation.
    As analytic groups $V_1/(N\cap V_1)\cong V_1N/N$, therefore the $\w$-subspace topology on $V_1N/N$, induces a weaker group topology on $V_1/(V_1\cap N)$, admitting a continuous $T$-action. 
    Hence by Lemma~\ref{lem : irreducble tori rep minimal} above, the $\w$-subspace topology on $V_1N/N$ is equal to the analytic topology. In particular, $V_1N/N$ is an $\w$-closed subgroup of $V/N$.
    
    Let $\w'$ denote the $\w$-quotient topology on $(V/N)/(V_1N/N)$.
    Observe that as analytic groups $(V/N)/(V_1N/N)\cong V/V_1N\cong (V/V_1)/(V_1N/V_1)$ are isomorphic, therefore $\w'$ induces a weaker topology on $(V/V_1)/(V_1N/V_1)$ which admits a continuous $T$ action.
    Since $V_1N/V_1$ is a $T$-invariant compact subset of $V/V_1$, which is of length $m-1$ and has no invariant vectors, by induction hypothesis $\w'$ must be equal to the analytic topology.
    Hence by applying Merson's Lemma~\ref{lemma : Mersons lemma} to the subgroup $V_1N/N$ we get that the topology $\w$ is equal to the analytic topology on $V/N$.
\end{proof}
\begin{cor}
\label{corollary :  torus action with no invariant vectors makes minimal}
    Assume that the $T$-representation $(\pi,V)$ has no invariant vectors.
    Then for any $T$-invariant compact subgroup $N\le V$,  the subgroup $V/N$ is minimal in $T\ltimes V/N$. 
\end{cor}
\begin{proof}
    By Corollary~\ref{corollary : irreducable tori space is minimal with action}, the topological group $V/N$ does not admit a group topology that is strictly weaker than the analytic topology, and such that the induced action $\alpha'\colon T\times V/N\to V/N$ is a continuous map.
    Hence by Lemma~\ref{lem : Action is minimal- semi direct product minimal}, $V/N$ is minimal in $T\ltimes V/N$.
\end{proof}
\begin{lemma}
\label{lem : torus actoin is minimal}
    Let $C= \ker(\pi)$ be the kernel of the representation map. Then the topological group $T/C$ does not admit a group topology that is strictly weaker than the analytic topology, and such that the action of $T/C$ on $V$ is continuous.
\end{lemma}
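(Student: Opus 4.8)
The plan is to show that the faithful linear action recovers the analytic topology, by producing a $\w$-continuous bijection onto a Polish group and invoking Corollary~\ref{corollary : bijection onto polish group makes minimal}. Since $C=\ker(\pi)$, the representation factors through an injective homomorphism $\bar\pi\colon T/C \to \GL(V)$. By Proposition~\ref{prop : image and kernel of k point maps}, the image $\pi(T)$ is a closed subgroup of $\GL(V)$, hence a Polish group, and $\bar\pi$ is an isomorphism of $k$-analytic topological groups from $(T/C,\s)$ onto $\pi(T)$, where $\s$ denotes the analytic topology. In particular $(T/C,\s)$ is Polish, which sets up the hypotheses of the corollary.

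Next I would let $\w\le\s$ be any group topology on $T/C$ with respect to which the induced action $\alpha'\colon (T/C)\times V\to V$, $(tC,v)\mapsto \pi(t)v$, is continuous, with $V$ carrying its norm topology. Fixing a basis $e_1,\dots,e_n$ of $V$, each partial map $tC\mapsto \pi(t)e_j$ is $\w$-continuous, being the restriction of $\alpha'$ to $T/C\times\{e_j\}$. Composing with the coordinate projections of the norm topology on $V\cong k^n$, one concludes that every matrix-entry function $tC\mapsto \pi(t)_{ij}$ is $\w$-continuous. Since the analytic topology on the closed subgroup $\pi(T)\subset \GL(V)\subset \End(V)\cong k^{n^2}$ is the initial topology generated by these coordinate functions, the map $\bar\pi\colon (T/C,\w)\to \pi(T)$ is $\w$-continuous.

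Finally, $\bar\pi\colon T/C\to \pi(T)$ is a bijective, $\w$-continuous homomorphism onto the Polish group $\pi(T)$. Applying Corollary~\ref{corollary : bijection onto polish group makes minimal} to the Polish group $(T/C,\s)$ equipped with its weaker topology $\w$ yields $\w=\s$, so $T/C$ admits no strictly weaker group topology supporting a continuous action on $V$, as claimed. The step requiring the most care is the passage from joint continuity of $\alpha'$ to $\w$-continuity of $\bar\pi$; this rests on the fact that the analytic topology of a closed matrix subgroup coincides with the initial topology of its matrix-entry functions, equivalently the topology of coordinatewise action on a fixed basis. This argument is the quotient-side analogue of Lemma~\ref{lemma: baby case co minimality} in the baby case, where $A/V\cong k^*$ was handled by exactly the same continuous-bijection-onto-a-Polish-group mechanism.
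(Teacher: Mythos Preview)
Your proof is correct and follows essentially the same approach as the paper: factor $\pi$ through the injective $\bar\pi\colon T/C\to \GL(V)$, use that $\pi(T)$ is closed (hence Polish), and apply Corollary~\ref{corollary : bijection onto polish group makes minimal}. Your version is in fact more careful than the paper's, which silently uses the $\w$-continuity of $\bar\pi$; you supply this by the matrix-entry argument from the continuity of the action on a basis, which is exactly the right justification.
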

\begin{proof}
    Suppose $\w$ is a weaker topology on $T/C$ such that the action on $V$ is continuous.
    Since $\pi$ is a $k$-algebraic morphism, the image $\pi(T)$ is closed in $\GL(V)$.
    Thus, the injective map $\hat{\pi}\colon T/C\to \GL(V)$, is a bijection onto a Polish group.
    Hence by Corollary~\ref{corollary : bijection onto polish group makes minimal} the $\w$-quotient topology on $T/C$ is equal to the quotient of the analytic topology induced from $T$.
\end{proof}
\section{Minimal algebraic groups}
\label{Minimal algebraic groups}

The primary objective of this section is to prove the following.
\begin{thm}
\label{theorem : minimal condition algebraic groups char 0}
    Let $k$ be a local field of characteristic zero, and let $\bfG$ be a connected $k$-algebraic group.
    Then the group $G=\bfG(k)$ is minimal if and only if the center of $G$ is compact.
\end{thm}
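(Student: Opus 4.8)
The plan is to prove the two implications separately. The direction ``minimal $\Rightarrow$ compact center'' is immediate: by Lemma~\ref{lem : minimal has compact center}, any Polish minimal group has compact center, and $G=\bfG(k)$ is Polish since it is the $k$-points of a $k$-algebraic group over a local field. So the entire content lies in the converse: assuming $Z(G)$ is compact, I must build up minimality of $G$ from the tools developed earlier. The strategy is to reduce to a solvable subgroup which is co-compact, prove that subgroup is minimal, and then invoke co-compactness.

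\textbf{The solvable case.} First I would treat the case where $\bfG$ is connected and solvable. By the $k$-Levi decomposition (available in characteristic zero) and the structure of connected solvable groups, I can write $G$ with a maximal torus $T=\bfT(k)$ acting on the unipotent radical $U=\Rad{u}{\bfG}(k)$, which is $k$-split, so $U\cong \Lie(U)$ is a vector group and $G \cong T\ltimes U$ up to commensurability. The key input is Corollary~\ref{corollary :  torus action with no invariant vectors makes minimal}: if the $T$-representation on $\Lie(U)$ has no invariant vectors, then $U$ is minimal in $T\ltimes U$. The hypothesis that $Z(G)$ is compact should translate, via Lemma~\ref{lem : normal subgroup in faithful action} and the identification $\Rad{u}{\bfG}\cong\Lie(\Rad{u}{\bfG})$ from \S\ref{subsec:Ala}, precisely into the statement that $T$ has no nonzero fixed vectors in $\Lie(U)$ (a nonzero fixed vector would produce a noncompact central subgroup). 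This gives relative minimality of $U$. For co-minimality of $U$, the quotient $G/U$ is (commensurable to) the torus $T$, and I would use Lemma~\ref{lem : torus actoin is minimal} together with Corollary~\ref{corollary : bijection onto polish group makes minimal} to see that the quotient topology on $G/U$ is rigid, giving co-minimality. Then Lemma~\ref{lamma : minimal and co-minimal} yields minimality of $G$. Care must be taken with the compact part of the center and with the difference between $Z(G)$ compact and the torus action being fixed-point-free, which is where the main subtlety of the solvable case resides.

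\textbf{The general case.} For arbitrary connected $\bfG$, the plan is to pass to a minimal parabolic $k$-subgroup $\bfP<\bfG$ with $P=\bfP(k)$, which is co-compact in $G$ (the quotient $G/P$ is compact since $\bfG/\bfP$ is complete). By Lemma~\ref{lem : center of parabolic}, $Z(P)=Z(G)$, so $Z(P)$ is compact. Inside $P$ I would find the solvable radical of the minimal parabolic, a co-compact connected solvable subgroup $R\le P$ whose center is controlled by $Z(P)$ and hence compact; by the solvable case just established, $R$ is minimal. Since $R$ is a minimal co-compact subgroup of $G$, it is co-minimal (an observation recorded as an example after co-minimality is defined), and a minimal co-minimal subgroup forces the whole group to be minimal by Lemma~\ref{lamma : minimal and co-minimal}.

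\textbf{Main obstacle.} I expect the hardest step to be verifying that the compactness of $Z(G)$ propagates correctly through the reductions: specifically, ensuring that the center of the chosen co-compact solvable subgroup $R$ (equivalently, the fixed-point structure of the maximal torus on the unipotent radical) is genuinely compact, and matching this with the no-invariant-vectors hypothesis required by Corollary~\ref{corollary :  torus action with no invariant vectors makes minimal}. The interplay between the anisotropic part of the torus (which contributes a compact, harmless central factor) and the split part (which must act without fixed vectors) is delicate, and getting the semisimple-versus-solvable bookkeeping right — including passing between $\bfG$, its parabolic, and the $k$-points of various quotients using Proposition~\ref{prop : qutient split solvable} — is where the real work lies.
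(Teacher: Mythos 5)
The gap is in your solvable case, and it is where the real content of the theorem lies. You claim that compactness of $Z(G)$ translates \emph{precisely} into the statement that $T$ has no nonzero fixed vectors in $\Lie(U)$, ``a nonzero fixed vector would produce a noncompact central subgroup''. This is false when $U=\Rad{u}{G}$ is non-abelian: a $T$-fixed vector gives a central element of $G$ only if it is also centralized by $U$, i.e.\ lies in the center of $U$. Concretely, let $U$ be the three-dimensional Heisenberg group with coordinates $x,y,z$ and $[x,y]=z$, and let $T=k^*$ act algebraically by $t\cdot x=x$, $t\cdot y=ty$, $t\cdot z=tz$. Then $G=T\ltimes U$ has trivial center, yet $x$ is a nonzero $T$-fixed vector. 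Indeed, fixed-point-freeness of the Levi action on $\Lie(\Rad{u}{\bfG})$ is exactly condition (4) of Theorem~\ref{mthm : theorem B}, which is \emph{strictly stronger} than compactness of the center; conflating the two erases the difference between the minimality theorem and the sealedness theorem. There are two further consequences of the same confusion: Corollary~\ref{corollary :  torus action with no invariant vectors makes minimal} is a statement about linear actions on vector groups $V/N$, so it cannot be fed a non-abelian $U$ at all (the identification $U\cong\Lie(U)$ of \S\ref{subsec:Ala} is an isomorphism of varieties, not of groups), and your co-minimality step via Lemma~\ref{lem : torus actoin is minimal} presupposes that $G/U$ acts on $U$ by conjugation, which again holds only for abelian $U$.

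What is missing is the paper's two-step mechanism for solvable groups. First, relative minimality is proved only for $V=Z(\Rad{u}{G})$ (Lemma~\ref{lem : center is reletivly miniaml}): compactness of $Z(G)$ does \emph{not} give $V^T=0$; instead one passes to the quotients $V_{T_i}=V/V^{T_i}$ for a family of maximal tori, where Corollary~\ref{corollary : irreducable tori space is minimal with action} genuinely applies, and uses compactness of $Z(G)$ only to show that the product map $\Psi\colon V\to\prod_i V_{T_i}$ is injective (its kernel is $L$-fixed, central, and a vector subspace, hence trivial); Corollary~\ref{corollary : bijection onto polish group makes minimal} then gives relative minimality of $V$. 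Second, Lemma~\ref{lem : nilpotent center minimal} propagates relative minimality from $Z(U)$ to all of $U$ by induction along the upper central series, and co-minimality is obtained from the rational representation of $G$ on $\oplus_i Z_i/Z_{i-1}$ (Lemma~\ref{lem : lemma for minimality of solvable} and Corollary~\ref{cor : solavable with good unipotent radical}), which substitutes for the non-existent conjugation action of $G/U$ on $U$. Your reduction of the general case to the solvable one (minimal parabolic $P$, its co-compact solvable radical $R$ with $Z(R)\le Z(P)=Z(G)$ via Lemma~\ref{lem : center of parabolic}, then minimal $+$ co-compact $\Rightarrow$ minimal by Merson's Lemma~\ref{lemma : Mersons lemma}) is sound and is essentially the paper's own route, carried out for reductive groups in Lemma~\ref{lemma : minimality of miniaml parabolic in reductive grp} and Proposition~\ref{prop : minimality for reductive}, while for general $\bfG$ the paper instead splits off $\bfA=\bfT\ltimes\Rad{u}{\bfG}$ with $\bfT$ the split central torus of the Levi and applies Merson to $G/A\cong L/T$. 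But that reduction only transfers the problem to solvable groups whose unipotent radical is highly non-abelian (already for $\bfG=\mathrm{SL}_3$ it is Heisenberg), so the gap above is exactly where your proof breaks.
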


The proof of Theorem \ref{theorem : minimal condition algebraic groups char 0} follows similar outlines to the proof Proposition~\ref{prop : baby case minimal}. 
Proposition~\ref{proposition : minimality of unipotent radical} establishes relative minimality of the unipotent radical, generalizing Lemma~\ref{lemma: baby case reletive minimality}, where relative minimality of the unipotent radical $V$ was shown.
Lemma~\ref{lem : lemma for minimality of solvable} provides co-minimality for the special case of solvable groups, generalizing Lemma~\ref{lemma: baby case co minimality}, which showed co-minimality of $V$.
The Borel subgroup $P\le \PGL{2}{k}$, which played a key role in the proof of Proposition~\ref{prop : second baby case minimal}, is replaced here by a minimal parabolic subgroup.

Henceforth in this section, we let $k$ be a local field of characteristic zero. As usual, algebraic groups over $k$ will be identified with their $\bar{k}$ points and will be denoted by boldface letters. Their $k$-points will be denoted by corresponding Roman letters.
For a connected $k$-algebraic group $\bfG$ we denote its unipotent radical by $\Rad{u}{\bfG}$, and the $k$-points of the unipotent radical by $\Rad{u}{G}$. 
A \textit{$k$-algebraic homomorphism} is a homomorphism  $f\colon G\to H$ that is induced from a $k$-morphism $\bfG\to \bfH$ between algebraic groups, by restricting to its $k$-points.
                
\subsection{Relative minimality of the unipotent radical}

We will show relative minimality of the unipotent radical $\Rad{u}{G}$ in $G$.
As a first step, we show that the center of the unipotent radical is relatively minimal.
\begin{lemma}
\label{lem : center is reletivly miniaml}
    Let $\bfG$ be a connected $k$-algebraic group with unipotent radical $\Rad{u}{\bfG}$. If $G$ has a compact center, then the center of the unipotent radical, $Z(\Rad{u}{G})$, is minimal in $G$. 
\end{lemma}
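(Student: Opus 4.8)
The plan is to reduce the relative minimality of $Z(\Rad{u}{G})$ to the torus-action machinery developed in Section~\ref{Linear algebraic action of Tori}, specifically Corollary~\ref{corollary :  torus action with no invariant vectors makes minimal}. The key structural observation is that $Z(\Rad{u}{\bfG})$ is a connected abelian unipotent $k$-group, hence (in characteristic zero) $k$-isomorphic as a variety to its Lie algebra, so it carries a natural $k$-vector space structure $V$ on which $\bfG$ — and in particular a maximal $k$-torus $\bfT$ — acts $k$-linearly by conjugation (see Subsection~\ref{subsec:Ala}). The first step is therefore to set $V = Z(\Rad{u}{G})$, view it as a finite-dimensional $k$-representation of $T = \bfT(k)$, and record that $V$ is a closed normal subgroup of $G$ so that relative minimality is even meaningful.

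The heart of the argument is to show that this $T$-representation has \emph{no nonzero invariant vectors}. This is where I expect the compactness of $Z(G)$ to enter, and it is the main obstacle. The idea is that a nonzero $T$-fixed vector $v\in V$ would be centralized by the maximal torus; combined with the fact that $v$ is already central in $\Rad{u}{\bfG}$, one wants to argue that $v$ generates a subgroup contributing a noncompact (split, $\bfG_a$-like) piece to the center $Z(G)$, contradicting compactness of $Z(G)$. Making this precise requires care: one must show that a $T$-invariant vector in $Z(\Rad{u}{\bfG})$ is in fact fixed by the whole Levi factor (using complete reducibility / that the unipotent radical of the Levi acts trivially on a $T$-fixed highest-weight-type vector, or a direct centralizer computation), and hence lies in $Z(\bfG)$; its $k$-points then give a copy of $\bfG_a(k)=k$ sitting inside $Z(G)$, which is noncompact. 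I would phrase the contrapositive: if $Z(G)$ is compact then no such invariant vector exists.

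Granting no invariant vectors, the final step is a direct application of the torus results. By Corollary~\ref{corollary :  torus action with no invariant vectors makes minimal} (with $N=\{e\}$), the subgroup $V=Z(\Rad{u}{G})$ is minimal in $T\ltimes V$. To upgrade this to minimality in all of $G$, I would invoke Lemma~\ref{lem : Action is minimal- semi direct product minimal}: any group topology on $G$ weaker than the analytic one restricts to a weaker group topology on $V$ for which the conjugation action of the subgroup $\langle T, V\rangle$ — and in particular of $T$ — remains continuous (using Remark~\ref{remark: stronger action continuous} to keep $T$ in its analytic topology). Since the corollary forbids any strictly weaker $T$-continuous topology on $V$, the weaker topology must agree with the analytic topology on $V$, which is exactly the statement that $Z(\Rad{u}{G})$ is minimal in $G$. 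The one bookkeeping point to verify is that $T$ is a genuine analytic subgroup of $G$ acting by conjugation, so that continuity of conjugation in any weaker group topology on $G$ indeed yields a continuous $T$-action on $V$ in the restricted topology.
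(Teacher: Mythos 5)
Your overall strategy (reduce to the torus machinery of Sections~\ref{Rigidity of scalar multiplication}--\ref{Linear algebraic action of Tori}) points in the same direction as the paper, but the step you yourself identify as the heart of the argument is false, and this is a genuine gap. You claim that compactness of $Z(G)$ forces the representation of a \emph{single} maximal torus $T$ on $V=Z(\Rad{u}{G})$ to have no nonzero invariant vectors, via the assertion that a $T$-fixed vector in $V$ is automatically fixed by the whole Levi factor $\bfL$. That assertion fails: zero-weight spaces of nontrivial irreducible $\bfL$-representations are typically nonzero. Concretely, take $\bfG=\mathrm{SL}_2\ltimes \bfV$, where $\bfV$ is the vector group underlying the adjoint representation $\mathfrak{sl}_2$. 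Then $\Rad{u}{\bfG}=\bfV$ is abelian, so $V=Z(\Rad{u}{G})\cong \mathfrak{sl}_2(k)$; the center $Z(G)=\{\pm I\}$ is finite, so the hypothesis of the lemma holds, yet the diagonal torus $T$ fixes the nonzero line of diagonal matrices in $\mathfrak{sl}_2(k)$. Hence Corollary~\ref{corollary :  torus action with no invariant vectors makes minimal} is simply not applicable to $T\ltimes V$, and your contrapositive argument (manufacturing a noncompact central copy of $\bfG_a(k)$ from a $T$-fixed vector) cannot be carried out; note also that the phrase ``the unipotent radical of the Levi'' is vacuous, since $\bfL$ is reductive.

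The paper circumvents exactly this problem by using \emph{several} maximal $k$-tori $\bfT_1,\dots,\bfT_m$ that generate $\bfL$. For each $i$ one passes to the quotient representation $V_{T_i}=V/V^{T_i}$, which by construction has no $T_i$-invariant vectors; Corollary~\ref{corollary : irreducable tori space is minimal with action} is applied there (not to $V$ itself) to force the $\w$-quotient topology on each $V_{T_i}$ to be analytic. Compactness of the center then enters at a different point than in your plan: the kernel of the product map $\Psi\colon V\to\prod_{i=1}^m V_{T_i}$ equals $\cap_i V^{T_i}$, which is fixed by all the $\bfT_i$, hence by $\bfL$, hence is a central vector subspace of $G$; compactness of $Z(G)$ forces it to be trivial. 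Thus $\Psi$ is an injective $\w_V$-continuous homomorphism onto a Polish group (its image is closed because $\Psi$ is $k$-algebraic), and Corollary~\ref{corollary : bijection onto polish group makes minimal} concludes that $\w_V$ is the analytic topology. If you want to salvage your proposal, this multi-torus quotient argument is the required repair; the single-torus version is unrecoverable.
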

\begin{proof}
    We let $\bfL\ltimes \Rad{u}{\bfG}$ be a Levi decomposition of $\bfG$, thus $\bfL$ is a reductive $k$-group.
    Denoting by $\bfV$ the center of the unipotent radical $\Rad{u}{\bfG}$, we get that $V=\bfV(k)=Z(\Rad{u}{G})$, which is isomorphic to a finite dimensional $k$-vector space, and the action of $L$ on $V$ is $k$- linear, see the discussion in \S\ref{subsec:Ala}.

    Let $\w$ be a weaker group topology on $G$.
    We will construct a bijective $\w_{V}$-continuous homomorphism
    from $V$ onto a Polish group,
    and conclude by Corollary~\ref{corollary : bijection onto polish group makes minimal} that $V$ is minimal in $G$.

    Fix a maximal $k$-torus $\bfT \le \bfL$,
    and consider the corresponding $k$-rational representation $\pi\colon T\to \GL(V)$.
    Denote by $V^T= \{v\in V : \pi(t)v=v\}$ the subspace of $T$-invariant vectors in $V$.
    Since the action of $T$ on $V$ is algebraic, 
    the quotient space $V_T= V/V_T$ is the $k$-points of the algebraic quotient $\bfV_T= \bfV/\bfV^\bfT$.
    We get a $k$-rational representation $\pi_T\colon T\to \GL(V_T)$ such that the $T$ action on $V_T$ is without invariant vectors.
    
    The $\w$-subspace topology on $T\ltimes V$ is a group topology, and with respect to the $\w$-subspace topologies, the torus $T$ acts continuously on $V$.
    Since the action of $T$ is continuous, the invariant subspace $V^T$ is $\w_V$-closed, making the $\w$-quotient topology $\w_{V/V^T}$ on $V_T$ a group topology.
    
    Remark~\ref{remark: stronger action continuous} implies that endowing $T$ with the analytic topology, it still acts continuously on $V$ equipped with the $\w$-subspace topology, $\w_V$.
    Following Corollary~\ref{corollary : irreducable tori space is minimal with action}, the additive group $V_T$ does not admit a weaker topology such that the action of $T$ is continuous, hence the $\w$-quotient topology $\w_{V/V^T}$ is equal to the analytic topology.
    
    Let $\{\bfT_i\}_{i=1}^m$ be a collection of maximal $k$-tori that generate $\bfL$. Let $\Psi$ be the product of the quotient maps from $V$ onto $V_{T_i}$. That is, $\Psi$ is a $k$-morphism of algebraic groups 
    $$\Psi\colon V\to \prod_{i=1}^m V_{T_i}.$$
    By Proposition~\ref{prop : image and kernel of k point maps}, the image $\Psi(V)$ is a Polish subgroup.
    
    For any $i=1,...,m$, the quotient map from $V$ onto $V_{T_i}$ is continuous with respect to the $\w$ induced topology.
    Since the $\w$-quotient typology, $\w_{V/V^{T_i}}$, is equal to the analytic topology, the map $\Psi$ is a $\w_V$-continuous homomorphism onto the Polish group $\Psi(V)$.
    
    We are left to show that $\Psi$ is injective.
    The kernel $\ker(\Psi)=\cap_{i=1}^m V^{T_i}$ is fixed by all $\bfT_i$,
    hence is fixed by $\bfL$, and in particular by $L$.
    Since $\ker(\Psi)\le V$ is central in $\Rad{u}{G}$, it is a central subgroup of $G$.
    As $G$ is assumed to have a compact center, $\ker(\Psi)$ must be compact, thus $\ker(\Psi)$ must be trivial, as it is a vector subspace of $V$.
    We got that the map $\Psi$ is indeed a $\w_V$-continuous bijection  from $V$ onto a Polish space, as needed.
\end{proof}
Next, we observe that for any $k$-group, the relative minimality of the center of its unipotent radical implies the relative minimality of the unipotent radical itself.
\begin{lemma}
\label{lem : nilpotent center minimal} 
    Let $\bfU$ be a connected unipotent $k$-group.
    Let $\w$ be a group topology on $U$, weaker than the analytic topology.
    If the $\w$-subspace topology is equal to the analytic topology on $Z(U)$, then $\w$ is equal to the analytic topology on $U$.
\end{lemma}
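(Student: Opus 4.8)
The plan is to prove this by induction on the nilpotency degree of the connected unipotent group $\bfU$. The statement we want is that relative minimality of the center $Z(U)$ propagates to relative minimality of the whole group $U$, for any weaker group topology $\w$ on $U$. Since $\bfU$ is a connected unipotent $k$-group over a field of characteristic zero, it is $k$-split, and its center $Z(\bfU)$ is a nontrivial connected $k$-subgroup; crucially, $Z(U)$ is a central, hence normal, subgroup of $U$, and the quotient $\bfU/Z(\bfU)$ is again a connected unipotent $k$-group of strictly smaller nilpotency degree.

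**The inductive step via Merson's Lemma.**

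First I would observe that if the nilpotency degree is one then $U = Z(U)$ and there is nothing to prove. For the inductive step, I would apply Merson's Lemma~\ref{lemma : Mersons lemma} to the subgroup $Z(U) \le U$: it suffices to show that $\w$ agrees with the analytic topology both on the subspace $Z(U)$ and on the quotient $U/Z(U)$. The subspace part is exactly the hypothesis. For the quotient, by Proposition~\ref{prop : qutient split solvable} (applied to the $k$-split solvable, indeed unipotent, normal subgroup $Z(\bfU)$) we have $U/Z(U) = (\bfU/Z(\bfU))(k)$, so the quotient is the $k$-points of a connected unipotent $k$-group of smaller nilpotency degree. The $\w$-quotient topology $\w_{U/Z(U)}$ is a group topology weaker than the analytic one, and I would like to invoke the induction hypothesis on $\bfU/Z(\bfU)$.

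**Closing the induction.**

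To apply the induction hypothesis to $\bfU/Z(\bfU)$, I need to know that $\w_{U/Z(U)}$ already agrees with the analytic topology on the \emph{center} of $U/Z(U)$. This is the step I expect to be the main obstacle, since the center of $\bfU/Z(\bfU)$ is generally larger than the image of $Z(\bfU)$, so the hypothesis does not directly supply the needed agreement on that larger center. I would resolve this by noting that $Z(U/Z(U))$ is again the $k$-points of a connected abelian unipotent $k$-subgroup, hence isomorphic to a finite-dimensional $k$-vector space with a linear $L$-action (see \S\ref{subsec:Ala}), and re-run the vector-space argument used in Lemma~\ref{lem : center is reletivly miniaml}: the conjugation action of a maximal torus $T \le \bfL$ on this central vector space, being continuous with respect to the $\w$-subspace topologies and extending continuously when $T$ carries the analytic topology (Remark~\ref{remark: stronger action continuous}), forces $\w$ to coincide with the analytic topology on the torus-noninvariant part via Corollary~\ref{corollary : irreducable tori space is minimal with action}, while the torus-invariant part is central in $G$ and hence compact by the compact-center hypothesis, so it is already minimal. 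Once agreement on $Z(U/Z(U))$ is established, the induction hypothesis gives $\w_{U/Z(U)}$ equal to the analytic topology on $U/Z(U)$, and Merson's Lemma then yields $\w$ equal to the analytic topology on $U$, completing the proof.
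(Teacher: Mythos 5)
Your overall skeleton (induct on the nilpotency degree, apply Merson's Lemma~\ref{lemma : Mersons lemma} to $Z(U)\le U$, and pass to the quotient $U/Z(U)$) is reasonable and close in spirit to the paper, which instead inducts up the upper central series $\{Z_i\}$ inside $U$ itself. You also correctly identify the crux: the induction hypothesis for $U/Z(U)$ requires agreement of the topologies on $Z(U/Z(U))=Z_2/Z_1$, which is strictly larger than the (trivial) image of $Z(U)$, and nothing in the hypothesis hands you this. The problem is that your proposed resolution of this crux is not available: you invoke a Levi factor $\bfL$, a maximal torus $T\le \bfL$, Corollary~\ref{corollary : irreducable tori space is minimal with action}, and a ``compact-center hypothesis'' on an ambient group $G$. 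None of these exist in this lemma. The statement concerns a bare connected unipotent $k$-group $\bfU$ and a weaker topology $\w$ on $U$; there is no ambient $G$, no Levi factor, and no compactness assumption anywhere. A unipotent group contains no nontrivial tori, and in characteristic zero its center is a nontrivial vector group, hence never compact, so both tools you reach for are structurally unavailable. (This is not a cosmetic issue: the lemma is deliberately stated without such hypotheses, is flagged in the paper as characteristic-free, and is reused in Section~\ref{Positive char}; moreover, importing ``compact center of the quotient'' as a hypothesis is exactly the stronger condition (2) of Theorem~\ref{theorem : sealed theorem}, which is not implied by, and must not be conflated with, the setting in which this lemma is applied.)

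The gap can be filled purely internally to $U$, and this is what the paper does. For $u\in \bfU$ the commutator map $h\mapsto [h,u]$ sends $\bfZ_{i+1}$ into $\bfZ_i$, and composed with $\bfZ_i\to \bfZ_i/\bfZ_{i-1}$ it becomes an algebraic group homomorphism $\varphi_u$. By Zariski density and the Noetherian property, finitely many $u_1,\dots,u_m\in U$ satisfy $\bigcap_j\ker(\varphi^k_{u_j})=Z_i$. Each $\varphi^k_{u_j}$ is $\w$-continuous because commutation is continuous for any group topology and the target $Z_i$ (hence $Z_i/Z_{i-1}$) already carries the analytic topology by the inductive hypothesis. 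The product map $\Psi=\prod_j\varphi^k_{u_j}$ then induces a continuous bijection from $Z_{i+1}/Z_i$ onto its image, which is closed (hence Polish) by Proposition~\ref{prop : image and kernel of k point maps}; Corollary~\ref{corollary : bijection onto polish group makes minimal} forces the quotient topology on $Z_{i+1}/Z_i$ to be analytic, and Merson's Lemma applied to $Z_i\le Z_{i+1}$ closes the induction. In your formulation, this same commutator argument (with $i=1$, using that $\w$ is analytic on $Z_1$ by hypothesis) is what gives the missing agreement on $Z_2/Z_1$; without it, or some substitute that uses only the group $U$, the proof does not go through.
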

\begin{proof}
    Let $\{\bfZ_i\}_{i=1}^n$ denote the upper central series of the group $\bfU$. By Zariski-density, the collection $Z_i=\bfZ_i(k)$ is the upper central series for $U$.
    We show that if the $\w$-subspace topology on $Z_{i}$ is equal to the analytic topology then the $\w$-subspace topology on $Z_{i+1}$ is equal to the analytic topology. 
    
    Indeed, assume $\w_{Z_i}$ is equal to the analytic topology. 
    Let $\rho\colon \bfZ_{i}\to \bfZ_{i}/\bfZ_{i-1} $ be the algebraic quotient map, for any $u\in \bfU$ define the $k$-morphism of algebraic varieties 
    \[
        \mathbf{c}_{u}\colon \bfZ_{i+1}\to \bfZ_{i}, \    \mathbf{c}_u(h)=[h,u].
    \]
    Observe that the composition $\varphi_u=\rho\circ \mathbf{c}_{u}$ is a homomorphism, as for any $x,y\in \bfZ_{i+1}$
    \begin{align*}
    \varphi_u(xy)
    &= [xy,u] \ \  \bfZ_{i-1} \\
    &= xyuy^{-1}(u^{-1}u)x^{-1}u^{-1} \ \  \bfZ_{i-1} \\
     &= x[y,u]ux^{-1}u^{-1} \ \  \bfZ_{i-1} \\
    &= xux^{-1}u^{-1}[y,u] \ \  \bfZ_{i-1} \\
    &=  \varphi_u(x) \varphi_u(y)
    \end{align*}
    So $\varphi_u$ is a $k$-morphism of algebraic groups, we will denote by $\varphi^k_u\colon Z_{i+1}\to  (\bfZ_{i}/\bfZ_{i-1})(k)$ the map between the $k$-points.

    We claim that there exists a finite collection $\{u_j\}_{j=1}^m\in U$ such that $\cap_{j=1}^m\ker (\varphi^k_{u_j})=Z_i$.
  
    First, we claim that it is enough to intersect over the Zariski-dense set $U$ i.e. that 
    $$
    \bigcap_{u\in \bfU} \ker (\varphi_u)=\bigcap_{u\in U} \ker (\varphi_u)
    $$
    Indeed, let $x\in \cap_{u\in U} \ker (\varphi_u) $ then
    $[x,U]\subset \bfZ_{i-1}$, by Zarsiki-density we get that $[x,\bfU]\subset \bfZ_{i-1}$, implying that $x\in \cap_{u\in \bfU} \ker (\varphi_u)$.
    So $\cap_{u\in \bfU} \ker (\varphi_u)=\cap_{u\in U} \ker (\varphi_u)=\bfZ_i$ as claimed.
    
    Since any descending chain of Zariski closed subsets must eventually be constant, there exists a finite collection $\{u_j\}_{j=1}^m\in U$ such that $\cap_{j=1}^m\ker (\varphi_{u_j})=\bfZ_i$. Since $\ker (\varphi_{u_j})(k)=\ker (\varphi^k_{u_j})$, see Proposition~\ref{prop : image and kernel of k point maps}, we get that $\cap_{j=1}^m\ker (\varphi^k_{u_j})=Z_i$. %
    
    The product map 
    $$
    \Psi= \prod_{i=j}^m \varphi^k_{u_j} \colon Z_{i+1}\to (\bfZ_i/\bfZ_{i-1})(k)^m 
    $$
    is $k$-algebraic homomorphism, hence its image $\Psi(Z_{i+1})$ is a closed subgroup with respect to the analytic topology on $(\bfZ_i/\bfZ_{i-1})(k)^m$.
    By construction, $\ker(\Psi)=\cap_{j=1}^m \ker(\varphi_{u_j})=Z_i$, and so the map $\Psi$ factors through a $\w$-quotient continuous bijection map from $Z_{i+1}/Z_i$ onto the Polish group $\Psi(Z_{i+1})$. We get by Corollary~\ref{corollary : bijection onto polish group makes minimal}, that the $\w$-quotient topology on $Z_{i+1}/Z_i$ is equal to the analytic topology, therefore, by applying Merson's Lemma~\ref{lemma : Mersons lemma} to the subgroup $Z_{i}\le Z_{i+1}$ we get that the $\w$-topology on $Z_{i+1}$ is equal to the analytic topology, as needed.

    By assumption, the $\w$-subspace topology on $Z_1$ is equal to the analytic topology, by applying the claim above inductively for $i=1,..,n$ we get that the $\w$ topology in $Z_n=U$ is equal to the analytic topology.
\end{proof}
\begin{remark}
    Note that the proof of Lemma~\ref{lem : nilpotent center minimal} above, did not use any characteristic assumptions on the local field $k$.
\end{remark}
\begin{prop}
\label{proposition : minimality of unipotent radical}
    Let $\bfG$ be a connected $k$-algebraic group with unipotent radical $\Rad{u}{\bfG}$. If $G$ has a compact center, then the unipotent radical $\Rad{u}{G}$, is minimal in $G$.
\end{prop}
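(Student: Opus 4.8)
The plan is to deduce the statement directly by chaining together the two preceding lemmas, which between them reduce relative minimality of $\Rad{u}{G}$ to relative minimality of its center. Concretely, let $\w$ be an arbitrary group topology on $G$ weaker than the analytic topology; I must show that the $\w$-subspace topology on $\Rad{u}{G}$ coincides with the analytic one, since $\w$ is arbitrary this yields minimality of $\Rad{u}{G}$ in $G$.

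First I would invoke the compact-center hypothesis: since $Z(G)$ is compact, Lemma~\ref{lem : center is reletivly miniaml} applies verbatim and gives that $Z(\Rad{u}{G})$ is minimal in $G$. In particular, the $\w$-subspace topology restricted to $Z(\Rad{u}{G})$ agrees with the analytic topology. Next, setting $\bfU = \Rad{u}{\bfG}$, a connected unipotent $k$-group, I view the $\w$-subspace topology $\w_U$ on $U = \Rad{u}{G}$ as a group topology weaker than the analytic topology on $U$. Since $Z(U) = Z(\Rad{u}{G})$ by definition, the previous step says exactly that $\w_U$ agrees with the analytic topology on the center $Z(U)$. This is precisely the hypothesis of Lemma~\ref{lem : nilpotent center minimal}, whose conclusion is that $\w_U$ equals the analytic topology on all of $U$, as needed.

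The conceptual content, and hence the genuine obstacle, lies entirely in the two input lemmas rather than in this assembly. The hard step is Lemma~\ref{lem : center is reletivly miniaml}, where the compact-center hypothesis is converted into rigidity by decomposing the abelian group $V = Z(\Rad{u}{G})$ under a maximal torus $T \le \bfL$ and applying the scalar-multiplication rigidity of Corollary~\ref{corollary : irreducable tori space is minimal with action} to the quotients $V/V^T$ with no invariant vectors, while compactness of the center forces the resulting map into the product of these quotients to have trivial (vector-subspace) kernel, hence to be injective. Lemma~\ref{lem : nilpotent center minimal} then carries out the bootstrapping up the upper central series $Z_1 \le \cdots \le Z_n = U$, at each stage building an injective algebraic quotient map $Z_{i+1}/Z_i \to (\bfZ_i/\bfZ_{i-1})(k)^m$ from a finite family of commutator maps and applying Merson's Lemma. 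For the proposition itself I expect no difficulty beyond correctly identifying $Z(U)$ with $Z(\Rad{u}{G})$ so that the two lemmas chain together, and observing that restricting $\w$ to $U$ produces a topology to which Lemma~\ref{lem : nilpotent center minimal} may be applied.
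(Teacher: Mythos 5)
Your proposal is correct and is essentially identical to the paper's proof: the paper likewise deduces the proposition by applying Lemma~\ref{lem : center is reletivly miniaml} (compact center implies $Z(\Rad{u}{G})$ is minimal in $G$) and then concluding via Lemma~\ref{lem : nilpotent center minimal}. Your additional observation that the $\w$-subspace topology on $\Rad{u}{G}$ is a weaker group topology to which the second lemma applies is exactly the implicit gluing step the paper leaves unsaid.
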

\begin{proof}
    Suppose that $G$ has a compact center. By Lemma~\ref{lem : center is reletivly miniaml}, the center of $\Rad{u}{G}$ is minimal, and we conclude by Lemma~\ref{lem : nilpotent center minimal}.
\end{proof}
\subsection{Minimal conditions for solvable groups}
In this subsection, we assume $\bfG$ is connected solvable $k$-algebraic group. Recall that by structure theory of connected solvable groups \cite[Theorem 10.6]{borel2012linear}, $\bfG$ admits a Levi decomposition such that the Levi subgroup is a maximal torus defined over $k$.  
\begin{lemma} 
\label{lem : lemma for minimality of solvable}
    Let $\bfG$ be a connected solvable $k$-algebraic group, and let $\bfT\ltimes \Rad{u}{\bfG}$ be a $k$-Levi decomposition of $\bfG$.
    Denote by $C=\{t\in T : \forall u\in \Rad{u}{G},\ [t,u]=e\}$ the kernel of the conjugation action of $T$ on $\Rad{u}{G}$.
    Then there exist a finite dimensional $k$-vector space $V$, and an algebraic $k$-rational representation $\pi\colon G\to \GL(V)$, with kernel $\ker(\pi)=C\times \Rad{u}{G}$ such that:
    \begin{enumerate}[label = (\alph*)]
        \item For any group topology $\w$, weaker than the analytic topology on $G$, if the $\w$-subspace topology on $\Rad{u}{G}$ equals the analytic topology, then $\pi$ is $\w$-continuous.
        \item For any closed subgroup $C_0\le C$, and for any group topology $\w'$, weaker than the analytic topology on $G/C_0$, if the $\w'$-subspace topology on $\Rad{u}{G}$ equals the analytic topology, then the map induced by on the quotient $\hat{\pi}\colon G/C_0\to \GL(V)$ is $\w'$-continuous.
    \end{enumerate}
\end{lemma}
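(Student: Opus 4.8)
The plan is to realize the required representation as the direct sum of the conjugation actions on the successive quotients of the lower central series of the unipotent radical. Write $\bfU=\Rad{u}{\bfG}$ and let $\bfU=\bfU_1\ge \bfU_2\ge\cdots\ge \bfU_{r+1}=\{e\}$ be its lower central series, $\bfU_{i+1}=[\bfU,\bfU_i]$. In characteristic zero these are connected, $k$-split, $\bfG$-invariant subgroups (each $\bfU_i$ is characteristic in $\bfU$ and $\bfU\lhd\bfG$), so every $\bfU_i/\bfU_{i+1}$ is a vector group on which $\Rad{u}{G}$ acts trivially by conjugation (since $[\bfU,\bfU_i]\subseteq\bfU_{i+1}$) and on which $\bfT$ acts linearly, as in \S\ref{subsec:Ala}. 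I set
\[
V=\bigoplus_{i=1}^{r}\bigl(\bfU_i/\bfU_{i+1}\bigr)(k),
\]
and let $\pi\colon G\to\GL(V)$ be the direct sum of the induced conjugation representations on the graded pieces; this is an algebraic $k$-rational representation, the $k$-point quotients $U_i/U_{i+1}=(\bfU_i/\bfU_{i+1})(k)$ being legitimate by Proposition~\ref{prop : qutient split solvable}.

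Next I would identify the kernel. Since $\Rad{u}{G}$ acts trivially on every graded piece it lies in $\ker(\pi)$, and because $C$ centralizes $\Rad{u}{G}$ while $C\cap\Rad{u}{G}=\{e\}$, it remains only to check that $\pi$ restricted to $T$ has kernel exactly $C$. The torus action on $\Lie(\bfU)$ is completely reducible, so the associated graded of the lower central series is $T$-equivariantly isomorphic to $\Lie(\bfU)$; hence $T$ acts on $V$ with the same kernel as on $\Rad{u}{G}$, namely $C$. Finally, writing a general element as $g=tu$ with $t\in T$, $u\in\Rad{u}{G}$, we get $\pi(g)=\pi(t)$, which is semisimple, so $\pi(g)=1$ forces $t\in C$. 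This yields $\ker(\pi)=C\times\Rad{u}{G}$.

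For (a), the point is that the whole package of conjugation representations is continuous for any weaker group topology under which $\Rad{u}{G}$ carries its analytic topology. Indeed, since $\w$ is a group topology the conjugation map $G\times\Rad{u}{G}\to\Rad{u}{G}$ is $\w$-continuous, and by hypothesis the $\w$-subspace topology on $\Rad{u}{G}$ is the analytic one; each $U_{i+1}$ is analytically closed and $G$-invariant, so conjugation descends to a $\w$-continuous action $G\times(\bfU_i/\bfU_{i+1})(k)\to(\bfU_i/\bfU_{i+1})(k)$, and evaluating on a basis shows $\pi$ is $\w$-continuous into $\GL(V)$. Part (b) is identical: $C_0\le C$ is central in $G$ and meets $\Rad{u}{G}$ trivially, so $\Rad{u}{G}$ embeds in $G/C_0$ and $\pi$ descends to $\hat\pi\colon G/C_0\to\GL(V)$; the conjugation action of $G/C_0$ on $\Rad{u}{G}$ is $\w'$-continuous with analytic subspace topology, and the same argument applies. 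I expect the only genuine subtlety to be pinning the kernel down to \emph{exactly} $C\times\Rad{u}{G}$: this is precisely why one passes to the associated graded, so that $\Rad{u}{G}$ acts trivially while the torus retains a faithful action modulo $C$ by complete reducibility — the raw conjugation action on $\Lie(\bfU)$ would only have kernel $C\times Z(\Rad{u}{G})$.
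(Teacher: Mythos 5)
Your proposal is correct, and its architecture is essentially that of the paper: both build $V$ as the direct sum of the graded pieces of a central series of $\Rad{u}{\bfG}$, take the conjugation representation, and obtain (a) and (b) from the observation that conjugation of $G$ (resp.\ $G/C_0$) on $\Rad{u}{G}$ is automatically continuous once the subspace topology there is analytic, hence descends to each graded piece. There are two local differences worth recording. First, you filter by the lower central series $\bfU_{i+1}=[\bfU,\bfU_i]$, while the paper uses the upper central series $\{\bfZ_i\}$; this is immaterial, since both consist of connected $G$-invariant $k$-subgroups whose successive quotients are vector groups on which $\Rad{u}{G}$ acts trivially. Second, and more substantively, the inclusion $\ker(\pi)\subseteq C\times\Rad{u}{G}$ is proved by different means: the paper argues element-by-element, by induction along the series, showing that a semisimple $t\in\ker(\pi)$ satisfies $[t,Z_i]=\{e\}$ for all $i$, because $t^{-1}[t,z_i]=z_i t^{-1}z_i^{-1}$ is semisimple while $[t,z_i]$ is unipotent and commutes with $t^{-1}$, so uniqueness of the Jordan decomposition forces $[t,z_i]=e$; you instead invoke linear reductivity of the torus to split the $T$-invariant filtration of $\Lie(\bfU)$, identify the associated graded with $\Lie(\bfU)$ itself as a $T$-module, and transport the kernel through the equivariant logarithm of \S\ref{subsec:Ala}. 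Both arguments are valid in characteristic zero; the paper's version has the mild advantage of running purely at the group level (Jordan decomposition only, no complete reducibility and no second appeal to $\exp/\log$), which is in the spirit of the authors' concern for arguments that survive in positive characteristic (Section~\ref{Positive char}). Your closing remark, that the unfiltered adjoint representation on $\Lie(\bfU)$ would only have kernel $C\times Z(\Rad{u}{G})$, correctly pinpoints why the passage to the associated graded is essential; the paper's construction is motivated by exactly the same point.
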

\begin{proof}
    Let $\{Z_i\}_{i=1}^m$ be the upper central series of the nilpotent group $\Rad{u}{G}$.
    For any $i=1,...,m$, the subgroup $Z_i$ is normal in $G$, and thus we have an algebraic action of $G$ on the finite dimensional $k$-vector space $V_i=Z_i/Z_{i-1}$.
    We define the representation space $V$ by taking the direct sum of the central quotient spaces, $V= \oplus_{i=1}^m V_i$, and thus
    attain a $k$-rational representation $\pi\colon G\to \GL(V)$.
    
    By construction, for any $i=1,...m$, $[\Rad{u}{G},Z_i]\subset Z_{i-1}$ hence $\Rad{u}{G}\in \ker(\pi)$. 
    By definition $[C,Z_i]=\{e\}$, so $C\in \ker(\pi)$ as well, implying  that $C\Rad{u}{G}\subset \ker(\pi)$.
    We will show that $\ker(\pi)\subset C\Rad{u}{G}$.
    Let $x\in \ker(\pi) $, $x=tu$ for some $t\in T$ and $u\in \Rad{u}{G}$, since $u\in \ker(\pi)$ we get that $t\in \ker(\pi)$.
    We prove by induction on $i=1,...,m$ that $[t,Z_i]=\{e\}$.
    For $i=1$ the assertion is obvious. 
    Assume that $[t,Z_{i-1}]=\{e\}$, we will show that $[t,Z_i]=\{e\}$.
    Fix $z_i\in Z_i$. Since $t$ is in the kernel of the action, the unipotent element $[t,z_i]$ is contained in $Z_{i-1}$.
    By induction assumption, the semisimple element $t$, and thus its inverse $t^{-1}$, commutes with $[t,z_i]$. 
    Observe that $t^{-1}\cdot [t,z_i]=z_i t z_i^{-1}$ is semisimple, so by uniqueness of Jordan decomposition $[t,z_i]=e$ as needed.
    We get that $[t,Z_m]=[t,\Rad{u}{G}]=\{e\}$, and hence $t\in C$, and $x\in C \Rad{u}{G}$.
    Thus $\ker(\pi)=C\Rad{u}{G}$.
    
    Since (a) is a special case of (b), obtained by choosing $C_0=\{e\}$, we are left to prove (b).
    Let $C_0$ be a closed subgroup of $C$, and set $G_0= G/C_0=T/C_0\ltimes \Rad{u}{G}$.
    Let $\w'$ be a weaker group on $G_0$, such that the $\w'$-subspace topology on $\Rad{u}{G}$ equals the analytic topology.
    The conjugation action of $G_0$ on $Z_i$ is continuous with respect to the $\w'$-subspace topologies. 
    In turn, the induced action of $G_0$ on $Z_i/Z_{i-1}$ is continuous with respect to the $\w'$-induced topologies. 
    Since $Z_i\le \Rad{u}{G}$ the $\w'$-induced topology in $Z_i$ is analytic and hence the $\w'$-induced topology on $Z_i/Z_{i-1}$ is equal to the standard analytic norm topology on the vector space. 
    By the construction of the map $\pi$, the action of $G$ on $V$ is via the conjugation action of $G$ on $Z_i$, which is $\w'$ continuous. Therefore the action of $G/C_0$ on $V$ via the conjugation action of $G_0$ on $Z_i$ is $\w'$ continuous.
    \end{proof}
\begin{cor}
\label{cor : compact qutient minimality of solvable}
    Let $\bfG$ be a connected solvable $k$-algebraic group, and let $\bfT\ltimes \Rad{u}{\bfG}$ be a $k$-Levi decomposition of $\bfG$.
    Denote by $C=\{t\in T : \forall u\in \Rad{u}{G},\ [t,u]=e\}$ the kernel of the conjugation action of $T$ on $\Rad{u}{G}$.
    Let $C_0\le C$, be a closed co-compact subgroup of $C$.
    Let $\w$ be a group topology, weaker than the analytic topology on $G/C_0$.
    If the $\w$-subspace topology on $\Rad{u}{G}\le G/C_0$ equals the analytic topology, then $\w$ equals the analytic topology on $G/C_0$.
\end{cor}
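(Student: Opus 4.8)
The plan is to prove the statement via Merson's Lemma~\ref{lemma : Mersons lemma}, using the unipotent radical $U\coloneqq\Rad{u}{G}$ as the distinguished subgroup. Writing $\bar G = G/C_0$, which as a group decomposes as $(T/C_0)\ltimes U$, the subspace condition of Merson's Lemma is exactly the standing hypothesis that $\w$ restricts to the analytic topology on $U$. So the entire task reduces to showing that the $\w$-quotient topology on $\bar G/U\cong T/C_0$ coincides with the analytic one; here the identification of $\bar G/U$ with $T/C_0$ as analytic groups uses that $U$ is $k$-split, via Proposition~\ref{prop : qutient split solvable}.

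To control this quotient I would feed the hypothesis into Lemma~\ref{lem : lemma for minimality of solvable}(b): since the $\w$-subspace topology on $U$ is analytic, the representation $\hat\pi\colon \bar G\to\GL(V)$ is $\w$-continuous. Because $\ker(\pi)=C\Rad{u}{G}$ contains $U$, the map $\hat\pi$ is trivial on $U$ and hence descends to a $\w$-continuous homomorphism $\bar\pi\colon T/C_0\to\GL(V)$ (with respect to the $\w$-quotient topology on $T/C_0$), whose kernel is $C/C_0$ and whose image is the closed, hence Polish, subgroup $\pi(T)=\pi(G)$ of $\GL(V)$, by Proposition~\ref{prop : image and kernel of k point maps}. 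The crucial observation is that $C/C_0$ is \emph{compact}, precisely because $C_0$ is co-compact in $C$.

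I would then finish the quotient computation in two steps. First, pass to $T/C$: the induced bijection $T/C\to\pi(T)$ onto a Polish group is continuous for the $\w$-quotient topology, so Corollary~\ref{corollary : bijection onto polish group makes minimal} forces the $\w$-quotient topology on $T/C$ to be analytic. Second, apply Merson's Lemma~\ref{lemma : Mersons lemma} to the group $T/C_0$ with the compact normal subgroup $C/C_0$: its $\w$-subspace topology is analytic, since a weaker Hausdorff topology on a compact group is unchanged, and its quotient $(T/C_0)/(C/C_0)\cong T/C$ carries the analytic $\w$-quotient topology just obtained; hence the $\w$-quotient topology on $T/C_0$ is analytic. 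This supplies the missing quotient condition, and a final application of Merson's Lemma to $\bar G$ with $U$ yields $\w=$ analytic on $G/C_0$.

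I expect the genuine content to reside entirely in the earlier results — the relative minimality of $\Rad{u}{G}$ and the construction of $\pi$ in Lemma~\ref{lem : lemma for minimality of solvable} — so that this corollary is essentially an assembly. The step requiring the most care is the bookkeeping of the nested quotients $\bar G\to T/C_0\to T/C$ together with the repeated verification that the a priori ``strong'' topologies in each Merson application are genuinely the standard analytic ones (which rests on the $k$-splitness of $U$ and on $\pi(G)$ being closed). The conceptual pivot is the observation that co-compactness of $C_0$ is exactly what makes $C/C_0$ compact, which is what allows Merson's Lemma to bridge from $T/C$ back up to $T/C_0$.
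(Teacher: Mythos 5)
Your proposal is correct and rests on exactly the same ingredients as the paper's proof --- Lemma~\ref{lem : lemma for minimality of solvable}(b), closedness of the algebraic image $\pi(G)$ combined with Corollary~\ref{corollary : bijection onto polish group makes minimal}, compactness of $C/C_0$, and two applications of Merson's Lemma~\ref{lemma : Mersons lemma} --- only re-bracketed: the paper takes as its distinguished subgroup $A=(C/C_0)\cdot\Rad{u}{G}=\ker(\hat{\pi})$, absorbing the compact piece into the subgroup (Merson on $\Rad{u}{G}\le A$, then the Polish-bijection argument for $G_0/A$), whereas you keep $\Rad{u}{G}$ as the distinguished subgroup and handle $C/C_0$ upstairs in the quotient $T/C_0$. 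The one point you gloss over, which the paper makes explicit for $A$, is that the hypothesis makes $\Rad{u}{G}$ Polish and hence $\w$-closed (and $C/C_0$ compact, hence closed, in the resulting Hausdorff quotient), which is what guarantees that the $\w$-quotient topologies on $T/C_0$ and $T/C$ are genuine (Hausdorff) group topologies, as required by Corollary~\ref{corollary : bijection onto polish group makes minimal} and Merson's Lemma.
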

\begin{proof}
    Denote $G_0= G/C_0= T/C_0\ltimes \Rad{u}{G} $, and let $A=C/C_0\times \Rad{u}{G}\le G_0$.
    By assumption, the $\w$-subspace topology on $\Rad{u}{G}$ is equal to the analytic topology, implying that $\Rad{u}{G}$ is $\w$-closed. Since $C/C_0$ is compact the $\w$-quotient topology on $A/\Rad{u}{G}$ is equal to the analytic quotient topology, hence by Merson's Lemma~\ref{lemma : Mersons lemma} the $\w$-subspace topology on $A$ is equal to the analytic topology.
    In particular, we get that $A$ is $\w$-closed and that the $\w$-quotient topology on $G_0/A$ is a group topology weaker than the analytic quotient topology.
       
    Following Lemma~\ref{lem : lemma for minimality of solvable} above, there exists a $k$-rational representation $\pi\colon G \to \GL(V)$, with kernel $C\times \Rad{u}{G}$, such that the representation map $\hat{\pi}\colon G_0 \to \GL(V)$ is $\w$ continuous.
    
    Since $\pi$ is algebraic the image $\pi(G)$ is closed, hence we get that $\hat{\pi}$ is a $\w$-continuous representation with kernel $A$. This gives us a $\w_{G_0/A}$ continuous injective map from $G_0/A$ onto the Polish group $\pi(G)$.
    By Corollary~\ref{corollary : bijection onto polish group makes minimal} the $\w$-quotient topology, $\w_{G_0/A}$ is equal to the analytic quotient topology on $G_0/A$. Applying  Merson's Lemma~\ref{lemma : Mersons lemma} to the subspace $A$, we get that the $\w$ topology on $G_0$ is equal to the analytic topology.
\end{proof}
\begin{cor}
\label{cor : solavable with good unipotent radical}
    Let $\bfG$ be a connected solvable $k$-algebraic group, and let $\bfT\ltimes \Rad{u}{\bfG}$ be a $k$-Levi decomposition of $\bfG$.
    Denote by $C=\{t\in T : \forall u\in \Rad{u}{G},\ [t,u]=e\}$ the kernel of the conjugation action of $T$ on $\Rad{u}{G}$.
    Let $\w$ be a group topology, weaker than the analytic topology on $G$.
    If $C$ is compact and the $\w$-subspace topology on $\Rad{u}{G}$ equals the analytic topology, then $\w$ is equal to the analytic topology in $G$.
\end{cor}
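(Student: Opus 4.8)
The plan is to deduce this statement as the special case of Corollary~\ref{cor : compact qutient minimality of solvable} in which the co-compact subgroup $C_0 \le C$ is taken to be the trivial subgroup $\{e\}$. The hypothesis that $C$ is compact is precisely what makes $\{e\}$ a closed co-compact subgroup of $C$, since the quotient $C/\{e\} \cong C$ is then compact. With this choice we have $G/C_0 = G/\{e\} = G$, so a group topology $\w$ on $G$ weaker than the analytic topology, whose restriction to $\Rad{u}{G}$ agrees with the analytic topology, satisfies all the hypotheses of Corollary~\ref{cor : compact qutient minimality of solvable} verbatim.

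Applying that corollary then yields that $\w$ equals the analytic topology on $G/C_0 = G$, which is exactly the conclusion we seek. Thus no new argument is required beyond matching the hypotheses, and the compactness of $C$ enters only to license the choice $C_0 = \{e\}$.

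I expect no genuine obstacle here, as the statement is a formal specialization: the entire content was already established in Corollary~\ref{cor : compact qutient minimality of solvable}. The only point to verify is the co-compactness of $\{e\}$ in $C$, which is immediate from the compactness of $C$. If one instead wanted a self-contained argument, one would repeat the proof of Corollary~\ref{cor : compact qutient minimality of solvable} with $C_0 = \{e\}$: set $A = C \times \Rad{u}{G}$, use the compactness of $C$ together with Merson's Lemma~\ref{lemma : Mersons lemma} to show that the $\w$-subspace topology on $A$ is analytic, invoke the representation $\pi\colon G \to \GL(V)$ of Lemma~\ref{lem : lemma for minimality of solvable} to identify $G/A$ with a closed subgroup of $\GL(V)$, and conclude that the $\w$-quotient topology on $G/A$ is analytic via Corollary~\ref{corollary : bijection onto polish group makes minimal}, finishing with a second application of Merson's Lemma. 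The direct appeal to the preceding corollary is, however, the cleaner route.
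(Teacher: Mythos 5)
Your proposal is correct and coincides with the paper's own proof, which also deduces the corollary from Corollary~\ref{cor : compact qutient minimality of solvable} by taking $C_0=\{e\}$, the compactness of $C$ serving exactly to make the trivial subgroup co-compact.
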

\begin{proof}
    Since $C$ is compact, this follows by Corollary~\ref{cor : compact qutient minimality of solvable} above, by taking $C_0=\{e\}$.
\end{proof}
We can now show Theorem~\ref{theorem : minimal condition algebraic groups char 0} for the special case of solvable groups.
\begin{prop}
\label{prop : minimality condition for solvable groups}
    Let $\bfG$ a connected solvable $k$-algebraic group. Then $G$ is minimal if and only if the center of $G$ is compact.

    Moreover, if the center of $G$ is compact then for any central subgroup $C_0\le G$, the group $G/C_0$ is minimal.
\end{prop}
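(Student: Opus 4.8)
The plan is to handle the two assertions in turn, feeding the relative-minimality package assembled above into Merson's Lemma. The ``only if'' direction of the equivalence is immediate: $G=\bfG(k)$ is Polish, so if it is minimal then Lemma~\ref{lem : minimal has compact center} forces $Z(G)$ to be compact. For the converse I would fix a $k$-Levi decomposition $\bfT\ltimes\Rad{u}{\bfG}$ with $\bfT$ a maximal $k$-torus and set $C=\{t\in T:\ \forall u\in\Rad{u}{G},\ [t,u]=e\}$. The one genuinely new observation is that $C\le Z(G)$: by definition $C$ centralizes $\Rad{u}{G}$, and it centralizes $T$ because $T$ is abelian, so it centralizes all of $G=T\ltimes\Rad{u}{G}$. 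As $C$ is the $k$-points of a $k$-subgroup of $\bfT$ it is closed, hence a closed subgroup of the compact group $Z(G)$, and therefore compact. Assuming $Z(G)$ compact, Proposition~\ref{proposition : minimality of unipotent radical} shows $\Rad{u}{G}$ is minimal in $G$, so every weaker group topology $\w$ restricts to the analytic topology on $\Rad{u}{G}$; combining this with the compactness of $C$ in Corollary~\ref{cor : solavable with good unipotent radical} gives $\w=\s$ on all of $G$, so $G$ is minimal.

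For the ``moreover'', let $C_0\le Z(G)$ be a central subgroup. Replacing it by its closure (still central, and a closed subgroup of the compact group $Z(G)$, hence compact) I may assume $C_0$ is compact. I would establish minimality of $\bar G:=G/C_0$ by showing that $\bar U$, the image of $\Rad{u}{G}$, is both minimal and co-minimal in $\bar G$ and then invoking Lemma~\ref{lamma : minimal and co-minimal}. Note $\bar U\cong\Rad{u}{G}/N$ with $N:=C_0\cap\Rad{u}{G}$ a compact central subgroup. Co-minimality is the easier half: one has $\bar G/\bar U\cong T/\pi_T(C_0)$, where $\pi_T\colon G\to T$ is the projection with kernel $\Rad{u}{G}$, and since $C_0\le Z(G)$ its torus component satisfies $\pi_T(C_0)\le C$, so $T/\pi_T(C_0)$ is a compact extension of $T/C$. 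The latter embeds in $\GL(V)$ via the faithful representation of Lemma~\ref{lem : lemma for minimality of solvable}, and a bijection-onto-a-Polish-group argument as in Corollary~\ref{cor : compact qutient minimality of solvable} (using Corollary~\ref{corollary : bijection onto polish group makes minimal}) then forces every weaker topology to induce the analytic quotient topology on $\bar G/\bar U$.

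The minimality of $\bar U$ in $\bar G$ is the technical heart, and I expect it to be the main obstacle. When $C_0\le C$ it is painless: then $N=\{e\}$, so $\bar U\cong\Rad{u}{G}$, and Corollary~\ref{cor : minimality under qutient} (with $C_0$ acting trivially on $\Rad{u}{G}$) transfers the minimality of $\Rad{u}{G}$ in $G$ directly to $G/C_0$, after which one concludes exactly as in the main case through Corollary~\ref{cor : compact qutient minimality of solvable}. The real difficulty is the general position, where $N$ is nontrivial and $C_0$ sits diagonally inside the decomposition $Z(G)=C\times(Z(G)\cap\Rad{u}{G})$. Here I would re-run the proof of Proposition~\ref{proposition : minimality of unipotent radical} ``modulo $N$'', climbing the upper central series of $\Rad{u}{G}$ as in Lemma~\ref{lem : nilpotent center minimal} but for the compact quotient $\Rad{u}{G}/N$, with the base case supplied by the relative minimality of the (quotiented) center $Z(\Rad{u}{G})/N$. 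The latter is exactly what the compact-invariant-quotient (``moreover'') strengthenings of Lemma~\ref{lem : center is reletivly miniaml} and Corollary~\ref{corollary : irreducable tori space is minimal with action} provide. Granting the analogous quotient version of Lemma~\ref{lem : nilpotent center minimal}, $\bar U$ is minimal in $\bar G$, and Lemma~\ref{lamma : minimal and co-minimal} completes the argument.
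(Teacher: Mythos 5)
Your first paragraph is correct and is essentially the paper's argument for the equivalence, with one small shortcut: you obtain compactness of $C$ from the easy inclusion $C\le Z(G)$, whereas the paper proves the stronger equality $Z(G)=C$ before invoking Proposition~\ref{proposition : minimality of unipotent radical} and Corollary~\ref{cor : solavable with good unipotent radical}. The genuine gap is in the ``moreover'' part. Your ``general position'' case --- where $N=C_0\cap\Rad{u}{G}$ is non-trivial --- is handled by appealing to a compact-quotient (``moreover'') strengthening of Lemma~\ref{lem : center is reletivly miniaml} and to ``the analogous quotient version of Lemma~\ref{lem : nilpotent center minimal}''. Neither exists in the paper: Lemma~\ref{lem : center is reletivly miniaml} has no such strengthening (only Corollary~\ref{corollary : irreducable tori space is minimal with action} does), and Lemma~\ref{lem : nilpotent center minimal} is stated and proved only for $U$ itself. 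Moreover, the adaptation you are ``granting'' is not routine: $\Rad{u}{G}/N$ is no longer the group of $k$-points of an algebraic group, while the proof of Lemma~\ref{lem : nilpotent center minimal} uses Zariski density, algebraic morphisms, the Noetherian property, and closedness of images of $k$-algebraic maps throughout. So, as written, the hardest case of your ``moreover'' is deferred to results you would still have to prove.

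The observation you are missing is that this hard case is vacuous, and it is delivered by exactly the reverse inclusion $Z(G)\le C$ that you skipped. Write $z\in Z(G)$ as $z=tu$ with $t\in T$, $u\in\Rad{u}{G}$; since $z$ commutes with $u$ one gets $[t,u]=e$, so $z=tu$ is a Jordan decomposition, and by its uniqueness together with the centrality of $z$ both $t$ and $u$ lie in $Z(G)$. Now $Z(G)=Z(\bfG)(k)$, and in characteristic zero a non-trivial unipotent $u\in Z(\bfG)(k)$ generates (via $\exp$--$\log$) a one-parameter $k$-subgroup of $Z(\bfG)$ whose $k$-points form a copy of $(k,+)$ inside $Z(G)$; this is non-compact, contradicting compactness of the center. (Note that the naive reason, that $\langle u\rangle$ itself is unbounded, is false over $\bbQ_p$: the closure of a cyclic unipotent group can be compact, so the algebraic hull is really needed here.) Hence $u=e$, so $Z(G)\subseteq T$ and therefore $Z(G)=C$. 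Consequently every closed central subgroup satisfies $C_0\le C$, i.e.\ $C_0\cap\Rad{u}{G}=\{e\}$ and $C_0$ acts trivially on $\Rad{u}{G}$: your ``painless'' case, handled by Corollary~\ref{cor : minimality under qutient} followed by Corollary~\ref{cor : compact qutient minimality of solvable}, is the only case, and it is precisely the paper's proof. With this observation inserted your argument closes; without it, the ``moreover'' statement is not proved.
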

\begin{proof}
    If $G$ is minimal then by Lemma~\ref{lem : minimal has compact center}, its center is compact.
    Assume that the center of $G$ is compact, and let $\bfT\ltimes \Rad{u}{\bfG}$ be a $k$-Levi decomposition of $\bfG$.
    Since $G$ is Zariski dense in $\bfG$, $Z(G)\le Z(\bfG)$. Let $z\in Z(G)$ and write $z=tu$ for $t\in T$ and $u\in \Rad{u}{G}$, observe that $[t,u]=e$ hence the decomposition $z=tu$ is a Jordan decomposition of $z\in Z(\bfG)$, implying that $u\in Z(G)$, but since $Z(G)$ is compact $u=e$. Therefore $Z(G)\subset T$, and $Z(G)=\{t\in T : \forall u\in \Rad{u}{G},\   [t,u]=e\}$ is the kernel of the conjugation action of $T$ on $\Rad{u}{G}$. 
    
    Let $\w$ be a group topology on $G$, weaker than the analytic topology. Since $Z(G)$ is compact, by Proposition~\ref{proposition : minimality of unipotent radical}, $\Rad{u}{G}$ is minimal in $G$. We get that $\w_{\Rad{u}{G}}$ is equal to the analytic topology, hence by Corollary~\ref{cor : solavable with good unipotent radical}, the topology $\w$ is equal to the analytic topology on $G$.
    
    Let $C_0\le Z(G)$ be a central subgroup.
    Since $\Rad{u}{G}$ is minimal in $G$, following Corollary~\ref{cor : minimality under qutient}, $\Rad{u}{G}$ is minimal in $G/C_0$.
    Hence by Corollary~\ref{cor : compact qutient minimality of solvable}, $\w'$ is equal to the analytic topology on $G/C_0$.
\end{proof}

\subsection{Minimal algebraic groups}
\begin{lemma}
\label{lemma : minimality of miniaml parabolic in reductive grp}
    Let $\bfG$ be a connected reductive $k$-algebraic group, and let $P\le G$ be a minimal parabolic in $G$. If $G$ has a compact center, then $P$ is a minimal topological group.

    Moreover, for any central $k$-algebraic subgroup $\bfZ_0\le Z(\bfP)$, the group $P/Z_0$ is minimal.
\end{lemma}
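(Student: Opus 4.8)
The plan is to locate a co-compact solvable subgroup of $P$ which is minimal in its own right, and then glue via Merson's Lemma. Write $\bfP$ for the minimal $k$-parabolic with $P=\bfP(k)$, fix a maximal $k$-split torus $\mathbf{S}\le\bfP$, and let $\mathbf{M}=Z_{\bfG}(\mathbf{S})$ be the corresponding Levi factor, so that $\bfP=\mathbf{M}\ltimes\Rad{u}{\bfP}$. Denote by $\mathbf{R}$ the solvable radical of $\bfP$; concretely $\mathbf{R}=Z(\mathbf{M})^{\circ}\ltimes\Rad{u}{\bfP}$, a connected solvable $k$-group, and set $R=\mathbf{R}(k)$. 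First I would record that by Lemma~\ref{lem : center of parabolic} the center $Z(P)=Z(G)$ is compact. The goal is to prove that $R$ is minimal as an abstract topological group and co-compact in $P$; granting this, the lemma follows, since minimality of $R$ forces every weaker group topology on $P$ to restrict to the given topology on $R$ (so $R$ is minimal in $P$ in the relative sense), and a relatively minimal co-compact subgroup is automatically co-minimal (its $\w$-quotient topology is a Hausdorff coarsening of the compact analytic quotient, hence equal to it). Lemma~\ref{lamma : minimal and co-minimal} then yields minimality of $P$.

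Co-compactness is the structural input from the theory of minimal parabolics: the semisimple quotient $\bfP/\mathbf{R}\cong\mathbf{M}/Z(\mathbf{M})^{\circ}$ is the adjoint group of $[\mathbf{M},\mathbf{M}]$, which for a minimal parabolic is $k$-anisotropic, so $(\bfP/\mathbf{R})(k)$ is compact. Applying Proposition~\ref{prop : image and kernel of k point maps} to the quotient morphism $\pi\colon\bfP\to\bfP/\mathbf{R}$, the image $\pi(P)$ is closed in the compact group $(\bfP/\mathbf{R})(k)$ and $P/R\cong\pi(P)$ topologically; hence $P/R$ is compact. For minimality of $R$ I would invoke Proposition~\ref{prop : minimality condition for solvable groups}, which reduces the claim to showing that $Z(R)$ is compact, and this is the step I expect to be the crux. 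The point is a root-theoretic identification of $Z(\mathbf{R})$. Since $\mathbf{S}\le Z(\mathbf{M})^{\circ}$ already acts on $\Rad{u}{\bfP}$ with only nonzero weights (the positive roots), $Z(\mathbf{M})^{\circ}$ has no nonzero fixed vectors there, which forces the $\Rad{u}{\bfP}$-component of any central element of $\mathbf{R}$ to vanish. An element $z\in Z(\mathbf{M})^{\circ}$ that in addition commutes with $\Rad{u}{\bfP}$ satisfies $\alpha(z)=1$ for every root $\alpha$, so $z$ centralizes $\mathbf{M}$, $\Rad{u}{\bfP}$, and the opposite unipotent radical; as these generate $\bfG$, we get $z\in Z(\bfG)$. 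This gives $Z(\mathbf{R})=Z(\mathbf{M})^{\circ}\cap Z(\bfG)$. Taking $k$-points and using Zariski density of $R$ in $\mathbf{R}$ to identify $Z(R)=Z(\mathbf{R})(k)$, we conclude $Z(R)\subseteq Z(G)$, a closed subgroup of the compact center, hence compact.

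With $Z(R)$ compact, Proposition~\ref{prop : minimality condition for solvable groups} shows $R$ is minimal, and the argument above assembles to the minimality of $P$. For the ``moreover'' statement, let $\bfZ_0\le Z(\bfP)$ and put $Z_0=\bfZ_0(k)\subseteq Z(P)=Z(G)$, a compact central subgroup. Since $R\cap Z_0$ is a central subgroup of $R$ and $Z(R)$ is compact, the ``moreover'' clause of Proposition~\ref{prop : minimality condition for solvable groups} shows $R/(R\cap Z_0)\cong RZ_0/Z_0$ is minimal; compactness of $Z_0$ guarantees that $RZ_0$ is closed and that this isomorphism is topological. Finally $RZ_0/Z_0$ is co-compact in $P/Z_0$, because $(P/Z_0)/(RZ_0/Z_0)\cong P/RZ_0$ is a continuous image of the compact space $P/R$. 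Thus the same minimal-plus-co-minimal argument applied inside $P/Z_0$ gives that $P/Z_0$ is minimal. The only genuinely delicate point throughout is the compactness of $Z(R)$, that is, the identification $Z(\mathbf{R})=Z(\mathbf{M})^{\circ}\cap Z(\bfG)$; everything else is a formal combination of the solvable case with co-compactness.
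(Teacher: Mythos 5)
Your proposal is correct and follows essentially the same route as the paper's proof: locate a co-compact solvable subgroup of $P$, show its center is compact by pushing central elements into $Z(P)=Z(G)$ (killing the unipotent component via the fixed-point-free split-torus action and invoking Lemma~\ref{lem : center of parabolic}), apply Proposition~\ref{prop : minimality condition for solvable groups}, and glue with Merson's Lemma; the ``moreover'' clause is handled the same way in both. The only difference is the choice of solvable subgroup — the paper uses $T\ltimes \Rad{u}{P}$ with $\bfT$ a maximal $k$-split torus, co-compact because $\CC_G(T)/T$ is compact, while you use the full radical $Z(\mathbf{M})^{\circ}(k)\ltimes\Rad{u}{P}$, co-compact because the anisotropic semisimple quotient has compact $k$-points — and both choices feed into exactly the same lemmas.
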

\begin{proof}
    By the structure of parabolic subgroups \cite[Proposition 20.6]{borel2012linear}, the minimal parabolic subgroup $P\le G$ is equal to the semi-direct product $\CC_G(T)\ltimes \Rad{u}{P}$, where $\bfT$ is a maximal $k$-split torus, and $\CC_G(T)$ is the centralizer of $T$ in $G$.
    
    Denote by $\bfA$ the solvable $k$-algebraic group $ \bfT\ltimes\Rad{u}{\bfG}$.
    We claim that $Z(A)<Z(P)$.
    Indeed, for $a\in Z(A)$, since $T\subset A$, clearly $a\in \CC_G(T)$ hence $a\in T$, so $[a,\CC_G(T)]=\{e\}$ and thus $a\in Z(P)$.
    Following Lemma~\ref{lem : center of parabolic}, $Z(P)=Z(G)$, and $Z(G)$ is compact, therefore $Z(A)$ is compact.
    Hence, by Proposition~\ref{prop : minimality condition for solvable groups} the group $A$ is minimal. 
    
    Moreover, let $Z_0\le Z(P)$ be a $k$-algebraic subgroup. As $Z(P)=Z(G)$, the group $Z_0$ is central in $G$ and is therefore a central algebraic torus.
    The group $A'=TZ_0\ltimes \Rad{u}{G}$ is equal to the $k$-points of the $k$-algebraic group $\bfA \bfZ_0$. 
    Since $T\subset A'$ for any $a'\in Z(A')$, $a'\in \CC_G(T)$ hence $a\in TZ_0$ and therefore commutes with $\CC_G(T)$ as well, making $a\in Z(P)$. Hence $Z(A)$ is compact, and by Proposition~\ref{prop : minimality condition for solvable groups} the group $A'/Z_0$ is minimal. 
    
    Following \cite[Chapter I, Proposition (2.3.6)]{margulis1991discrete}, the anisotropic group $\CC_G(T)/T$ is compact. Therefore the solvable subgroup $A\le P$ is co-compact, and in particular, $A'$ is co-compact in $P$.
    Hence $A$, ( $A'/Z_0$), is a minimal co-compact subgroup of $P$,  ($P/Z_0$)  thus by Lemma~\ref{lamma : minimal and co-minimal}, $P$ ($P/Z_0$), is minimal.
\end{proof}
Since parabolic subgroups are co-compact, we get the minimality criteria for reductive groups.
\begin{prop}
\label{prop : minimality for reductive}
    Let $\bfG$ be a connected reductive $k$-algebraic group. Then $G$ is minimal if and only if the center of $G$ is compact.

    Moreover, if the center of $G$ is compact then for any finite central subgroup $Z_0\le G$, the group $G/Z_0$ is minimal.
\end{prop}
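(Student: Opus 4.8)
The plan is to deduce this from the preceding results in essentially the same way that minimality of $\PGL{2}{k}$ (Proposition~\ref{prop : second baby case minimal}) was deduced from minimality of its Borel subgroup, now using a minimal parabolic in place of the Borel. The ``only if'' direction is immediate: if $G$ is minimal then its center is compact by Lemma~\ref{lem : minimal has compact center}. So the substance is the converse, and I would treat the two assertions uniformly by proving minimality of $G/Z_0$ for an arbitrary finite central subgroup $Z_0\le G$ (the main statement being the case $Z_0=\{e\}$).

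For the converse I would first fix a minimal parabolic subgroup $P\le G$. Since $Z_0$ is finite and central, it is Zariski closed and hence a finite $k$-algebraic subgroup $\bfZ_0$, which by Zariski density is central in $\bfG$; by Lemma~\ref{lem : center of parabolic} we have $Z(\bfG)=Z(\bfP)$, so $\bfZ_0\le Z(\bfP)$ and in particular $Z_0\subseteq P$. Assuming $Z(G)$ is compact, Lemma~\ref{lemma : minimality of miniaml parabolic in reductive grp} (its ``moreover'' clause) gives that $P/Z_0$ is a minimal topological group. The minimality of $P/Z_0$ forces relative minimality of $P/Z_0$ in $G/Z_0$: any group topology $\w$ on $G/Z_0$ weaker than the analytic topology $\s$ restricts to a Hausdorff group topology on $P/Z_0$ that is weaker than the analytic one, hence equal to it. Since $P$ is parabolic, $G/P$ is compact, and $(G/Z_0)/(P/Z_0)\cong G/P$ shows $P/Z_0$ is co-compact in $G/Z_0$; a minimal co-compact subgroup is co-minimal, so Lemma~\ref{lamma : minimal and co-minimal} yields $\w=\s$, i.e.\ $G/Z_0$ is minimal.

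If one prefers to spell out the last step rather than invoke co-minimality, the argument is exactly that of Proposition~\ref{prop : second baby case minimal}: because the analytic topology on $P/Z_0$ equals $\w_{P/Z_0}$ and is Polish, $P/Z_0$ is $\w$-closed, so the $\w$-quotient topology on $(G/Z_0)/(P/Z_0)\cong G/P$ is Hausdorff; but the analytic quotient topology there is compact and therefore admits no strictly weaker Hausdorff group topology, so the two quotient topologies agree, and Merson's Lemma~\ref{lemma : Mersons lemma} gives $\w=\s$.

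The genuine content is carried entirely by Lemma~\ref{lemma : minimality of miniaml parabolic in reductive grp}; the present proof is an assembly step. Accordingly, the only points requiring care, and the ones I would check explicitly, are that (i) the finite central $Z_0$ really is a $k$-algebraic subgroup of the algebraic center $Z(\bfP)$, so that the ``moreover'' clause of that lemma legitimately applies, and (ii) passing to the finite central quotient $G/Z_0$ preserves both the compactness of the parabolic quotient and the Polish (hence closed) nature of $P/Z_0$ that Merson's Lemma needs. Neither is a serious obstacle, but (ii) is where I would be most careful to justify the identifications before concluding.
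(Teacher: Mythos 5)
Your proposal is correct and follows essentially the same route as the paper: both directions rest on Lemma~\ref{lem : minimal has compact center} for necessity and on Lemma~\ref{lemma : minimality of miniaml parabolic in reductive grp} (minimality of $P/Z_0$ for a minimal parabolic $P$) for sufficiency, concluded via co-compactness of $P$ and Merson's Lemma~\ref{lemma : Mersons lemma}, exactly as in the paper's proof. The only cosmetic difference is the justification that $Z_0\le P$: you invoke Lemma~\ref{lem : center of parabolic}, while the paper notes that a central subgroup of a reductive group lies in every maximal torus; both are valid.
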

\begin{proof}
    If $G$ is minimal then, by Lemma~\ref{lem : minimal has compact center}, its center is compact.
    Assume that the center of $G$ is compact. 
    If $Z_0$ is a central subgroup of the reductive group $G$, it is contained in any maximal torus, and in particular $Z_0\le P$. 
    Let $\w$ be a group topology on $G$ (or on $G/Z_0$), weaker than the analytic topology.
    By Lemma~\ref{lemma : minimality of miniaml parabolic in reductive grp}, $\w_P$ (respectively, $\w_{P/Z_0}$) is equal to the analytic topology. In particular $P$ (resp. $P/Z_0$), is $\w$-closed. Since $P$ is co-compact in $G$, the $\w$-quotient topology is Hausdorff and equal to the analytic quotient topology. 
    Hence by Merson's Lemma~\ref{lemma : Mersons lemma} $G$, ($G/Z_0$), is minimal.
\end{proof}

We are now ready to prove Theorem~\ref{theorem : minimal condition algebraic groups char 0} which states that having a compact center is equivalent to minimality for any connected algebraic group over a local field of characteristic zero.
\begin{proof} 
    [Proof of theorem~\ref{theorem : minimal condition algebraic groups char 0}]
    If $G$ is minimal, then by Lemma~\ref{lem : minimal has compact center}, the center of $G$ is compact.
    
    For the other direction, 
    let $\bfL\ltimes \Rad{u}{\bfG}$, be a Levi decomposition of $\bfG$, with $\bfL$ a reductive $k$-group.
    The central subgroup, $Z(\bfL)^\circ$ is a $k$-torus \cite[Proposition 11.21]{borel2012linear}, and we denote by $\bfT\le Z(\bfL)^\circ$ the maximal $k$-split subtorus as in \cite[8.15]{borel2012linear}. 
    Set $\bfA=\bfT\ltimes \Rad{u}{\bfG}\le \bfG$. 
    We will show that $A$ is minimal and co-minimal in $G$.
    
    Let $\w$ be a group topology on $G$, weaker than the analytic topology.
    Let $C=\{t\in T : \forall u\in \Rad{u}{G},\ [t,u]=e\}$, since $T\le Z(L)$, $C$ is a central subgroup of $G$ and hence compact.
    Moreover, by Proposition~\ref{proposition : minimality of unipotent radical}, the $\w$-subspace topology on $\Rad{u}{G}$ is equal to the analytic topology. Hence by Corollary~\ref{cor : solavable with good unipotent radical}, the $\w$-subspace topology on $A$ is equal to the analytic topology, i.e. $A$ is relatively minimal in $G$.
    
    In order to show co-minimality of $A$, we will show that the group $G/A$ is minimal. Observe that since $\bfA$ is a $k$-split solvable group then $(\bfG/\bfA)(k)\cong G/A$ therefore, we may regard $G/A$ as $k$-points of the reductive $k$-algebraic group $\bfG/\bfA$. We claim that the center of $L/T\cong G/A$ is compact.
    
    Let $\pi\colon \bfL \to \bfL/\bfT$ be the $k$-algebraic quotient map.
    We claim that $\pi(Z(\bfL))=Z(\bfL/\bfT)$, clearly $\pi(Z(\bfL))\subset Z(\bfL/\bfT)$.
    For the other direction, let $s\in \bfL$ be an element that is mapped into $Z(\bfL/\bfT)$, define the algebraic map 
    $$\mathbf{c}_s\colon \bfL \to \bfL, \ \ \mathbf{c}_s\colon l\mapsto [s,l]$$
    Observe that $\mathbf{c}_s(\bfL)\subset \mathcal{D}(\bfL)\cap \bfT $ and since $\bfL$ is connected, $\mathbf{c}_s(\bfL)$ is connected.
    Following \cite[Proposition 14.2]{borel2012linear}, the intersection $ \mathcal{D}(\bfL)\cap \bfT $ is finite, hence by connectedness $\mathbf{c}_s(\bfL)=\{e\}$, so $s\in Z(\bfL)$ as needed.
    By restricting the map $\pi$ to the central subgroup, we get a quotient map $\pi\colon Z(\bfL)\to Z(\bfL/\bfT)$.
    By \cite[Chapter I, Proposition (1.4.5)(i)]{margulis1991discrete}, maximal $k$-split tori in $Z(\bfL/\bfT)$ are images of maximal $k$-split tori in $Z(\bfL)$.
    By the fact that $\pi$ maps $\bfT$ to $\{e\}$, we get that $Z(\bfL/\bfT)$ has no maximal $k$-split torus, implying that $Z(L/T)$ is compact.
    Hence by Proposition~\ref{prop : minimality for reductive}, the group $L/T$ is minimal.
    Finally, by applying Meson's Lemma~\ref{lemma : Mersons lemma} to $A<G$, since the $\w$-subspace topology on $A$ is analytic and $G/A\cong L/T$ is minimal, we get that the $\w$ topology on $G$ is equal to the analytic topology on $G$.
\end{proof}
\section{Sealed algebraic groups}
\label{Sealed algebraic groups}

We saw that for algebraic groups over local fields of characteristic zero, having a compact center is a sufficient condition for minimality. 
Mayer showed in \cite{MR1428116} that 
for connected locally compact groups this necessary condition is in fact sufficient - a connected locally compact group such that all of its quotients have compact centers is sealed.
The following is a generalization that applies to connected algebraic groups over an arbitrary local field of characteristic zero.
\begin{thm}
\label{theorem : sealed theorem}
Let $k$ be a local field of characteristic $0$ and let $\bfG$ be a connected $k$-group with a $k$-Levi decomposition $\bfL \ltimes\Rad{u}{\bfG}$. The following are equivalent
\begin{enumerate}
    \item The group $\bfG(k)$ is sealed.
    \item For any normal unipotent $k$-algebraic subgroup $\bfN\lhd \bfG$ the center $Z((\bfG/\bfN) (k))$ is compact.
    \item The center of the reductive group $\bfL$ is anisotropic (equivalently, the center of $\bfL(k)$ is compact) and the action of $\bfL$ on $\Rad{u}{\bfG}$ is with no non-trivial fixed points, 
    \item The center of the reductive group $\bfL$ is anisotropic (equivalently, the center of $\bfL(k)$ is compact) and the action of $\bfL$ on $\Lie(\Rad{u}{\bfG})$ is with no invariant vectors.
\end{enumerate}
\end{thm}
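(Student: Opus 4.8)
The plan is to establish the cycle $(1)\Rightarrow(2)\Rightarrow(4)$, record the linearization $(3)\Leftrightarrow(4)$, and then prove the substantial implication $(3)\Rightarrow(1)$. Write $G=\bfG(k)$, $L=\bfL(k)$ and $U=\Rad{u}{G}$, so $G=L\ltimes U$; since $k$ has characteristic zero every unipotent group is split, so $k$-points commute with quotients by unipotent normal subgroups (Proposition~\ref{prop : qutient split solvable}) and $\bfU\cong\Lie(\bfU)$ $\bfL$-equivariantly (\S\ref{subsec:Ala}). For $(1)\Rightarrow(2)$, a sealed group is in particular minimal by Corollary~\ref{cor : minimal has closed image}, and sealedness descends to quotients (precompose a homomorphism out of $G/N$ with $G\to G/N$); hence every quotient is minimal and so has compact center by Lemma~\ref{lem : minimal has compact center}, and applied to $(\bfG/\bfN)(k)=G/\bfN(k)$ this is exactly $(2)$. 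The equivalence $(3)\Leftrightarrow(4)$ is immediate, the $\bfL$-equivariant isomorphism $\bfU\cong\Lie(\bfU)$ matching $\bfU^{\bfL}$ with $\Lie(\bfU)^{\bfL}$. For $(2)\Rightarrow(4)$ I argue by contraposition: taking $\bfN=\bfU$ already forces $Z(\bfL)$ anisotropic, while if $\Lie(\bfU)^{\bfL}\neq0$ then, by complete reducibility, the associated graded of the lower central series is $\bfL$-isomorphic to $\Lie(\bfU)$, so some layer $\gamma_i/\gamma_{i+1}$ carries an $\bfL$-invariant vector; choosing the highest such layer and setting $\bfN=\gamma_{i+1}(\bfU)$, this vector becomes central in $\bfU/\bfN$ and is $\bfL$-fixed, hence central in $\bfG/\bfN$, spanning a central $\bfG_a$ with trivial $\bfL$-action, so $Z((\bfG/\bfN)(k))$ contains a copy of $(k,+)$ and is non-compact, contradicting $(2)$.

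For $(3)\Rightarrow(1)$ I would use the three-space property (Lemma~\ref{lem : sealed 3 space prop}) for $U\lhd G$ with $G/U=L$: it suffices to prove that $L$ is sealed and that $U$ is sealed in $G$. Sealedness of $L$ follows from $Z(\bfL)$ being anisotropic: the semisimple group $\mathcal{D}(\bfL)(k)$ has compact center and is sealed (the Bader--Gelander case, in the spirit of Proposition~\ref{prop : baby case sealed}), the central torus $Z(\bfL)^\circ(k)$ is compact hence sealed, and these two normal subgroups generate a co-compact subgroup of $L$, so $L$ is sealed by Corollaries~\ref{cor : product of normal sealed} and~\ref{cor : co compact sealed}.

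To prove that $U$ is sealed in $G$ I induct on the nilpotency class of $\bfU$. Let $\bfV=\gamma_c(\bfU)$ be the last nontrivial term of the lower central series; it is abelian, central in $\bfU$, $\bfG$-normal, and as an $\bfL$-subrepresentation of $\Lie(\bfU)$ it has no invariant vectors. The base case is that such $\bfV$ is sealed in $G$, where $U$ acts trivially and only $L$ acts, through tori. Given $f\colon G\to H$ with kernel $K$ and $N_0=K\cap V$, factor $f$ through the injective continuous $\bar f\colon G/K\to H$; since $f(V)=\bar f(VK/K)$ and $VK/K\cong V/N_0$, it suffices by Lemma~\ref{lem : minimal implies closed injective} to show that $V/N_0$ is minimal in $G/K$. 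Writing $W\le N_0$ for the largest subspace it contains (an $\bfL$-invariant subspace), $V/W$ still has no invariant vectors while $N_0/W$ is a compact $\bfL$-invariant subgroup (Proposition~\ref{prop : not containing subspace compact*} in the non-Archimedean case, trivial in the Archimedean one). I then run the multi-torus construction of Lemma~\ref{lem : center is reletivly miniaml}: choosing tori $\{\bfT_i\}$ that generate $\bfL$, Corollary~\ref{corollary :  torus action with no invariant vectors makes minimal} makes each quotient of $V/W$ by its $T_i$-invariants, modulo the image of $N_0/W$, minimal in $G/K$, and their product assembles a continuous homomorphism $\Psi$ from $V/N_0$ onto a Polish group; since $\bigcap_i (V/W)^{T_i}=(V/W)^{\bfL}=0$, Corollary~\ref{corollary : bijection onto polish group makes minimal} and Merson's Lemma~\ref{lemma : Mersons lemma} yield minimality of $V/N_0$ in $G/K$. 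For the inductive step, $\bfG/\bfV$ has unipotent radical of strictly smaller class and still satisfies $(3)$ (same Levi $\bfL$, and $\Lie(\bfU/\bfV)$ is a quotient representation, hence has no invariant vectors), so by induction $U/V$ is sealed in $G/V$; combining with the base case via Lemma~\ref{lem : sealed 3 space prop} (with $N=V$, $U'=U$) gives $U$ sealed in $G$, completing $(3)\Rightarrow(1)$.

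The main obstacle is the base case, specifically ensuring that $\Psi$ has trivial (or at worst compact, hence harmless) kernel after passing to $V/N_0$: although $\bigcap_i (V/W)^{T_i}=0$, one must control this intersection \emph{modulo the compact subgroup} $N_0/W$ and then descend minimality through the successive quotients via Merson's Lemma. This is precisely where the reductive, rather than merely toral, no-invariant-vectors hypothesis is essential, since a single maximal torus may fix nonzero vectors even when $\bfL$ does not.
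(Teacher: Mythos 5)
Your architecture for $(3)\Rightarrow(1)$ is genuinely different from the paper's, and in some respects cleaner: the paper inducts on the nilpotency degree via the \emph{upper} central series, proving \emph{absolute} sealedness of the auxiliary groups $\bfL\ltimes\bfZ_1$ and $\bfL\ltimes\Rad{u}{\bfG}/\bfZ_1$ and verifying condition (3) for the quotient by a cohomological argument ($H^1(\bfL,\bfZ_1)=0$, via Margulis), whereas you induct on \emph{relative} sealedness of $\Rad{u}{G}$ in $G$ along the lower central series and get the inductive hypothesis for $\bfG/\gamma_c(\bfU)$ for free from complete reducibility. That trade is real: it avoids the $H^1$-vanishing input, and working with relative sealedness plus Lemma~\ref{lem : sealed 3 space prop} throughout also avoids having to treat $\bfL\ltimes\bfZ_1$ as a normal subgroup of $\bfG$ (it is not, in general). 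Your $(1)\Rightarrow(2)$, $(3)\Leftrightarrow(4)$, and the contrapositive $(2)\Rightarrow(4)$ via graded pieces of the lower central series are all correct variants of the paper's arguments.

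However, the base case --- $\bfV=\gamma_c(\bfU)$ sealed in $G$ --- is exactly where the paper invests its technical machinery, and your treatment of it has genuine gaps. First, you invoke Proposition~\ref{prop : not containing subspace compact*} ``in the non-Archimedean case'', but that proposition is stated, proved, and \emph{true} only for $k=\bbQ_p$: over a general non-Archimedean $k$ the subgroup $\bbQ_p\subset k$ contains no $k$-subspace yet is closed and non-compact, so ``$N_0/W$ is compact'' fails for $W$ the largest $k$-subspace of $N_0$. The paper deals with this by first applying Weil restriction of scalars to reduce to $\bbQ_p$ or $\bbR$ (Proposition~\ref{prop : sealed for commutaitve rad gemeral}); your sketch omits this reduction entirely. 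The Archimedean case is likewise not ``trivial'': a closed subgroup of $\bbR^n$ with no lines is a lattice $\bbZ^a$, and ruling it out requires the $L$-invariance/connectedness argument the paper runs for $\ker(f_c)$ in Lemma~\ref{lemma : comutative radical prime field sealed}. Second, your reduction ``it suffices to show $V/N_0$ is minimal in $G/K$'' applies Lemma~\ref{lem : minimal implies closed injective} to the subgroup $VK/K\le G/K$, which requires $VK$ to be closed in $G$ so that $VK/K$ is a closed subgroup homeomorphic to $V/N_0$; products of closed normal subgroups of Polish groups need not be closed, and you do not address this. The paper never faces the issue, because it first quotients by the sealed kernel-of-action $C$ and uses Proposition~\ref{prop : faithfull irreducable action normal subgroup contined} to force the relevant kernel to lie inside $V$ or to contain $V$. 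Third, the point you yourself flag as ``the main obstacle'' is indeed unresolved: the kernel of your product map $\Psi$ is $\bigcap_i\bigl[(V/W)^{T_i}+N_0/W\bigr]/(N_0/W)$, which need not vanish even though $\bigcap_i(V/W)^{T_i}=0$; moreover each factor map needs its kernel to be closed in the \emph{weaker} topology, which in the paper's Lemma~\ref{lem : center is reletivly miniaml} holds because the kernel there is a set of $T_i$-invariants, but your kernels are not invariant subgroups, so their closedness is not automatic. These defects are plausibly repairable (restriction of scalars, an initial-topology formulation bypassing $G/K$, and a compact-kernel-plus-Merson argument), but as written the base case, and hence $(3)\Rightarrow(1)$, does not go through.
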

\begin{remark}
    One can define condition (2') to be the condition that the center of any quotient group is compact, note that clearly (2') implies (2).
    For connected real algebraic groups, the equivalence of (2') and (1) is shown in \cite[Proposition 2.14]{MR1428116}
    and the equivalence of (4) and (1) is shown in \cite[Theorem 2.5]{MR1428116}.
    In both cases the methods of proof rely on the result of Omori, and they differ from the methods used here.
\end{remark}

In this section we assume that $k$ is a local field of characteristic zero.  
As usual, algebraic groups over $k$ will be identified with their $\bar{k}$ points and will be denoted by boldface letters. Their $k$-points will be denoted by corresponding Roman letters.

We let $\bfG$ a connected $k$-algebraic group and denote its unipotent radical by $\Rad{u}{\bfG}$, the $k$-points of the unipotent radical will be denoted by $\Rad{u}{G}$. 
We recall that the group $G^+$ is the subgroup of $G$ generated by the unipotent element in $G$, see \cite[Chapter I, (1.5.2)]{margulis1991discrete}.

\subsection{Sealed reductive groups}
\label{subsecting sealed reductive}
In this subsection, we will consider the special case where $\bfG$ is a connected reductive $k$-group. 

Since the sealed property is closed under taking products and quotients and has the three spaces property (see Lemma~\ref{lem : sealed 3 space prop}),
the sealed property for reductive groups follows by analyzing the case for almost $k$-simple groups.
The following Lemma is a version of Proposition~\ref{prop : baby case sealed}.

\begin{lemma}
\label{lem : simple groups are sealed}
    Let $\bfG$ be a connected almost $k$-simple group. Then $G$ is sealed.
\end{lemma}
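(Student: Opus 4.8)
The plan is to mimic the proof of Proposition~\ref{prop : baby case sealed}, which handled the special case $\bfG = \PGL{2}{k}$. The key structural input for an almost $k$-simple group is that its non-central normal subgroups are very restricted: by the normal subgroup structure of such groups over local fields (see \cite[Chapter I, Theorem (1.5.6)(i)]{margulis1991discrete}), every noncentral normal subgroup of $G^+$ contains $G^+$, and $G^+$ is cocompact in $G$. So any continuous homomorphism $f\colon G \to H$ either has a kernel that meets $G^+$ in something large — forcing the image of $G^+$ to be essentially a quotient by a big normal subgroup — or is injective on $G^+$.

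First I would reduce to $G^+$. Since $G/G^+$ is compact (this is the relevant consequence of $G^+$ being cocompact), by Corollary~\ref{cor : co compact sealed} it suffices to prove that $G^+$ is sealed, and indeed it suffices to show $G^+$ is sealed in $G$. Then I would dichotomize on the kernel $\ker(f)\cap G^+$. If this intersection is noncentral, then it contains $G^+$, so $f(G^+)$ is trivial — hence closed. This uses the almost-simplicity in the form that $G^+$ has no proper noncentral normal subgroups. The subtle point is that $f$ might kill only the center: if $\ker(f)\cap G^+$ is central, then $f$ factors through $G^+/(\ker f \cap G^+)$, which is an adjoint-type quotient and is again almost $k$-simple with trivial (or compact) center.

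In the central-kernel case I would invoke minimality. By Theorem~\ref{theorem : minimal condition algebraic groups char 0}, an almost $k$-simple group modulo its (finite, hence compact) center is minimal — indeed any adjoint almost-simple group has trivial center and so is minimal. Thus the induced injective continuous homomorphism $\bar{f}$ on the quotient forces the $\bar{f}$-initial topology to coincide with the analytic topology, making $\bar{f}$ a homeomorphism onto its image. Since the analytic topology is Polish, the image is Polish, hence closed in $H$ by \cite[Theorem 2.16]{melleray2016polish}. The image of $G^+$ equals the image of this quotient, so $f(G^+)$ is closed.

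The main obstacle I anticipate is bookkeeping around the center and the distinction between $G$, $G^+$, and their adjoint quotients: the center of $G$ need not be trivial, only finite, and $G^+$ may differ from $G$, so I must be careful that the normal-subgroup trichotomy is applied inside $G^+$ and that cocompactness of $G^+$ together with Corollary~\ref{cor : co compact sealed} legitimately transfers the sealed property back to $G$. A secondary technical point is ensuring that the quotient of $G^+$ by a central subgroup is again the $k$-points of a connected almost $k$-simple $k$-group to which Theorem~\ref{theorem : minimal condition algebraic groups char 0} applies — this is where I would lean on the characteristic-zero hypothesis and the fact that central isogenies of almost-simple groups stay almost-simple. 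Once these are in place, the two cases combine to show $f(G^+)$, and hence $f(G)$, is always closed.
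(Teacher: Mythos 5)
Your overall shape --- a dichotomy on the kernel, with the large-kernel case giving a compact (or trivial) image and the central-kernel case handled by minimality --- matches the paper's strategy, but the detour through $G^+$ opens a genuine gap at the decisive step. In the central-kernel case you invoke Theorem~\ref{theorem : minimal condition algebraic groups char 0} for the group $G^+/(\ker f\cap G^+)$. That theorem applies only to groups of the form $\bfH(k)$ for a connected $k$-group $\bfH$, and neither $G^+$ nor its central quotients are of this form: for $\bfG=\mathrm{PGL}_2$ and $k=\bbR$ one has $G^+=\PSL{2}{\bbR}$, which the paper's Remark~\ref{remark : k points of image} explicitly flags as a \emph{proper} subgroup of $\PGL{2}{\bbR}=\mathrm{PGL}_2(\bbR)$. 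Your phrase ``any adjoint almost-simple group has trivial center and so is minimal'' conflates the topological quotient $G^+/(\ker f\cap G^+)$ with the $k$-points of the adjoint algebraic group; these are different groups, and minimality does not pass to subgroups --- the paper's own example $\bbR\le\bbR^*\ltimes\bbR$ shows that even a closed subgroup of a minimal group can fail to be minimal, and no result in the paper transfers minimality from $\PGL{2}{\bbR}$ to its finite-index subgroup $\PSL{2}{\bbR}$. So the minimality you need is simply not delivered by the theorem you cite, and your acknowledged ``secondary technical point'' (central isogenies preserve almost-simplicity) does not repair this, because the issue is not one of algebraic geometry but of the quotient not being algebraic at all.

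The paper's proof avoids both issues. First, no reduction to $G^+$ is needed: by \cite[Chapter I, Theorem (1.5.6)(i) and Theorem (2.3.1)(b)]{margulis1991discrete} the dichotomy already holds for normal subgroups of $G$ itself --- every nontrivial normal subgroup of $G$ is either co-compact or finite and central --- so one analyzes $\ker f$ directly: a co-compact kernel gives compact image, and otherwise $\ker f$ is finite central. Second, and crucially, the minimality of the \emph{topological} quotient $G/\ker f$ is supplied by the ``moreover'' clause of Proposition~\ref{prop : minimality for reductive}, which was proved precisely for quotients of $\bfH(k)$ by arbitrary finite central subgroups (not necessarily algebraic quotients); this clause is exactly the ingredient your argument is missing. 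If you do want to route through $G^+$, the fix is the device the paper uses in Lemma~\ref{lemma : G+ is seald}: realize $G^+$ as the image of $\tilde{\bfG}(k)$ under the central isogeny from the simply connected cover, so that $G^+/(\ker f\cap G^+)$ becomes a quotient of $\tilde{\bfG}(k)$ by a finite central subgroup, to which Proposition~\ref{prop : minimality for reductive} applies; with that substitution your two cases do combine correctly.
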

\begin{proof}
    Let $f\colon G\to H$, be a continuous homomorphism into a topological group $H$.
    By 
    \cite[Chapter I, Theorem (1.5.6)(i) and Theorem (2.3.1)(b)]{margulis1991discrete}, any non-trivial normal subgroup is either co-compact or central and finite.
    If the kernel of $f$ is co-compact then the image $f(G)$ is compact and hence closed.
    Thus we may assume that $\ker(f)\le G$ is a finite central subgroup. Let $\hat{f}\colon  G/\ker(f) \to H$ be the injective map induced by $f$.
    By Proposition~\ref{prop : minimality for reductive} the group $G/\ker(f)$ is minimal hence $f(G)=\hat{f}( G/\ker(f) )$ is closed in $H$.
\end{proof}
We saw in Corollary~\ref{cor : product of sealed}, that a product of sealed groups is sealed. Thus, as semisimple groups are a quotient of a product of $k$-almost simple groups, see \cite[Theorem 22.10]{borel2012linear}, 
we obtain the following theorem, which was proved by Bader-Gelander in \cite{MR3692904} using a different method.
\begin{cor}[{\cite[Theorem 5.1]{MR3692904}}]
\label{corollary : semi simple groups are sealed}
    Let $\bfG$ be a connected semisimple $k$-group, then $G$ is sealed.
\end{cor}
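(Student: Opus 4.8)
The plan is to reduce the general semisimple case to the almost $k$-simple case settled in Lemma~\ref{lem : simple groups are sealed}, using the structural decomposition of semisimple groups together with the stability of the sealed class under products and homomorphic images. First I would invoke the structure theory: by \cite[Theorem 22.10]{borel2012linear} the almost $k$-simple factors $\bfG_1,\dots,\bfG_r$ of $\bfG$ are connected almost $k$-simple normal $k$-subgroups that pairwise commute and whose product map $p\colon \bfG_1\times\cdots\times\bfG_r\to\bfG$ is a central $k$-isogeny. Passing to $k$-points, each $G_i=\bfG_i(k)$ is sealed by Lemma~\ref{lem : simple groups are sealed}, and therefore the direct product $\prod_{i=1}^r G_i=\bigl(\prod_{i=1}^r\bfG_i\bigr)(k)$ is sealed by iterating Corollary~\ref{cor : product of sealed}.

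Next I would transport sealedness along $p$. The restriction of $p$ to $k$-points is a continuous homomorphism $p\colon \prod_i G_i\to G$ whose image $G_0\coloneqq p\bigl(\prod_i G_i\bigr)$ is a closed subgroup of $G$ by Proposition~\ref{prop : image and kernel of k point maps}. I would first record the elementary fact that the image of a sealed group under a continuous homomorphism is again sealed: if $A$ is sealed and $\psi\colon p(A)\to H$ is continuous, then $\psi$ composed with the continuous corestriction $A\to p(A)$ is a continuous homomorphism $A\to H$, so its image $\psi(p(A))$ is closed since $A$ is sealed. Applying this with $A=\prod_i G_i$ shows that $G_0$ is sealed.

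The delicate point, and the one I expect to be the main obstacle, is that $G_0$ need not equal $G$: as Remark~\ref{remark : k points of image} warns, a central isogeny is generally \emph{not} surjective on $k$-points (already $\mathrm{SL}_2\to\mathrm{PGL}_2$ fails this), so one cannot literally present $G$ as a quotient of the sealed product at the level of topological groups of $k$-points. I would resolve this by showing that $G_0$ is a \emph{cocompact} normal subgroup of $G$. Concretely, $G/G_0$ injects into the Galois cohomology set $H^1(k,\ker p)$, which is finite because $\ker p$ is a finite central $k$-group scheme and $k$ is local; hence $G/G_0$ is finite. Since $G_0$ is sealed and cocompact in $G$, Corollary~\ref{cor : co compact sealed} yields that $G$ is sealed, completing the argument.

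Everything except the cocompactness step is formal given the earlier results, so the real content is the passage from the sealed image $G_0$ to $G$ itself. An alternative to the cohomological estimate would be to note that $G_0$ contains $G^+$, which is cocompact in $G$ by the structure theory of isotropic semisimple groups; either route feeds the cocompact closure property of the sealed class recorded in Corollary~\ref{cor : co compact sealed}.
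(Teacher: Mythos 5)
Your proposal is correct, and at its core it follows the same route as the paper: decompose $\bfG$ into its almost $k$-simple factors via \cite[Theorem 22.10]{borel2012linear}, seal each factor by Lemma~\ref{lem : simple groups are sealed}, seal the product by Corollary~\ref{cor : product of sealed}, and then pass to $G$. The difference is in that last step, and it is to your credit: the paper disposes of it in a single sentence (``semisimple groups are a quotient of a product of $k$-almost simple groups''), which implicitly treats $G=\bfG(k)$ as a continuous homomorphic image of $\prod_i G_i$; as you correctly observe (and as the paper's own Remark~\ref{remark : k points of image} warns), a central $k$-isogeny is generally not surjective on $k$-points, so this reduction is not literally valid as stated. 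Your patch --- show that $G_0=p\bigl(\prod_i G_i\bigr)$ is a closed normal sealed subgroup (Proposition~\ref{prop : image and kernel of k point maps} plus the elementary fact that continuous images of sealed groups are sealed), then show $G_0$ is cocompact in $G$ (either via finiteness of $H^1(k,\ker p)$ over a local field of characteristic zero, or via $G_0\supseteq G^+$ and cocompactness of $G^+$), and conclude with Corollary~\ref{cor : co compact sealed} --- is exactly the missing bookkeeping, and it is consistent with the paper's own toolkit: the paper deploys precisely this $G^+$/cocompactness mechanism immediately afterwards in Lemma~\ref{lemma : G+ is seald} and Corollary~\ref{corollary : finite index in ss}. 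In short, your write-up is a more careful version of the paper's argument rather than a different one, and of your two suggested routes for cocompactness, the one through $G^+$ is preferable here since it avoids importing Galois cohomology, which the paper never otherwise uses.
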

\begin{lemma}
\label{lemma : G+ is seald}
    Let $\bfG$ be a connected semisimple $k$-group, then $G^+$ is sealed.
\end{lemma}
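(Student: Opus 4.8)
The plan is to deduce sealedness of $G^+$ from the already-established sealedness of the full semisimple group $G$ (Corollary~\ref{corollary : semi simple groups are sealed}), by exploiting the fact that $G^+$ is a co-compact normal subgroup of $G$. First I would recall the structural facts about $G^+$: by \cite[Chapter I, Theorem (2.3.1)]{margulis1991discrete}, for a connected semisimple $k$-group the subgroup $G^+$ generated by unipotents is a closed normal co-compact subgroup of $G$ (it is of finite index when $G$ is $k$-isotropic and the quotient $G/G^+$ is compact in general). In particular $G^+$ is itself the group of $k$-points of a semisimple group, or at least inherits the same structural decomposition into almost $k$-simple factors.

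The cleanest route is to mimic the proof of Lemma~\ref{lem : simple groups are sealed}. Given a continuous homomorphism $f\colon G^+\to H$ into a topological group, I would analyze $\ker(f)$. Since $G^+$ is (a quotient of) a product of almost $k$-simple groups, by \cite[Chapter I, Theorem (1.5.6)(i) and Theorem (2.3.1)(b)]{margulis1991discrete} every normal subgroup is either co-compact or central and finite, applied factor by factor. Splitting $G^+$ into its almost simple factors, I would argue that on each factor $f$ either has co-compact kernel (so that factor maps to a compact, hence closed, subgroup) or has finite central kernel (so by minimality via Proposition~\ref{prop : minimality for reductive} that factor maps homeomorphically onto a closed subgroup). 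Then I would assemble these using Corollary~\ref{cor : product of sealed} and Corollary~\ref{cor : product of normal sealed}, which tell us that products of sealed groups, and products of normal sealed subgroups, remain sealed.

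Alternatively — and perhaps more economically — I would invoke Corollary~\ref{cor : co compact sealed} in reverse: that corollary says if a co-compact subgroup $N\le G$ is sealed then $G$ is sealed, but here I want the sealedness of the \emph{subgroup}. So the product-based argument above is the safer one. Concretely, I would write $G^+$ as generated by the $G^{+}_i$ of the almost $k$-simple factors $\bfG_i$, note each $G_i^+$ is sealed by the same argument as Lemma~\ref{lem : simple groups are sealed} (every proper normal subgroup is co-compact or finite central, and $G_i^+/(\text{finite center})$ is minimal by Proposition~\ref{prop : minimality for reductive}), and conclude via Corollary~\ref{cor : product of normal sealed} that the product of these normal sealed subgroups is sealed.

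The main obstacle I anticipate is bookkeeping around the precise definition and structure of $G^+$: one must be careful that $G^+$ is indeed closed and that its decomposition into almost-simple factors is compatible with the generation-by-unipotents definition, since $G^+$ need not equal $G$ and need not be the full $k$-points of an algebraic group. I would lean on \cite[Chapter I, (1.5.2) and Theorem (2.3.1)]{margulis1991discrete} to handle exactly these structural subtleties, and I expect the topological steps (closedness of images, application of minimality) to go through verbatim as in Lemma~\ref{lem : simple groups are sealed} once the structure of $G^+$ is pinned down.
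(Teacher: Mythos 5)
Your overall architecture (reduce to the almost $k$-simple factors $\bfG_i$, use the Margulis dichotomy for subgroups normalized by $G_i^+$, then assemble via Corollary~\ref{cor : product of normal sealed}) is reasonable, and the assembly step would indeed work \emph{if} each $G_i^+$ were known to be sealed. But the key step --- ``each $G_i^+$ is sealed by the same argument as Lemma~\ref{lem : simple groups are sealed}, since $G_i^+/(\text{finite center})$ is minimal by Proposition~\ref{prop : minimality for reductive}'' --- has a genuine gap. Proposition~\ref{prop : minimality for reductive} applies to the full group of $k$-points $\bfG_i(k)$ of a reductive group, and to its quotients by finite central subgroups; it says nothing about $G_i^+$, which is in general a \emph{proper} subgroup of $\bfG_i(k)$ (e.g.\ $\PSL{2}{\bbR}\subsetneq \PGL{2}{\bbR}$, or $\PSL{2}{\bbQ_p}$ of index $4$ in $\PGL{2}{\bbQ_p}$ for odd $p$) and is not itself the group of $k$-points of any algebraic group. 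Minimality does not automatically pass to closed, normal, or co-compact subgroups --- the paper's own running example, the minimal group $\bbR^*\ltimes \bbR$ with its non-minimal closed normal subgroup $\bbR$, shows this --- and no result in the paper supplies minimality of $G_i^+$, or of $G_i^+$ modulo a finite central subgroup. This is exactly the structural subtlety you flag in your last paragraph, but citing \cite{margulis1991discrete} for the structure of $G^+$ does not resolve it: those results give you the normal subgroup dichotomy, not the missing minimality statement for a group that is not of the form $\bfH(k)$.

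The paper closes precisely this gap with a covering trick that your proposal does not contain. After discarding anisotropic factors, pass to the simply connected cover $f\colon \tilde{\bfG}\to\bfG$: by \cite[Chapter I, Theorem (2.3.1)(a)]{margulis1991discrete} one has $\tilde{\bfG}(k)=\tilde{G}^+$, so Corollary~\ref{corollary : semi simple groups are sealed} applies verbatim and $\tilde{G}^+$ is sealed --- here the point is that for the simply connected group, $G^+$ \emph{is} the full group of $k$-points, so the algebraic machinery is available. Then, since the central isogeny satisfies $f(\tilde{\bfG}(k)^+)=G^+$ by \cite[Chapter I, Proposition (1.5.5)]{margulis1991discrete}, and sealedness passes to continuous homomorphic images (any continuous homomorphism out of $G^+$ precomposes to a continuous homomorphism out of $\tilde{G}^+$ with the same image), one concludes that $G^+$ is sealed, with no factor-by-factor analysis needed. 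If you want to keep your decomposition scheme, you would have to either prove minimality of $G_i^+$ modulo finite central subgroups directly (essentially redoing the parabolic-subgroup arguments inside $G_i^+$), or run the same simply-connected-cover reduction on each factor --- at which point the decomposition buys you nothing.
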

\begin{proof}
    Assume without loss of generality that $\bfG$ has no anisotropic factors. 
    Let $\tilde{\bfG}$ be the simply connected covering of $\bfG$.
    Following \cite[Chapter I, Theorem (2.3.1)(a)]{margulis1991discrete},  $\tilde{\bfG}(k)=\tilde{G}^+$, so by Corollary~\ref{corollary : semi simple groups are sealed}, $\tilde{G}^+$ is sealed. 
    Since the quotient map, $f\colon \tilde{\bfG}\to \bfG$ is a central $k$ isogeny, we have by \cite[Chapter I, Proposition  (1.5.5)]{margulis1991discrete} that $f(\tilde{\bfG}(k)^+)=G^+$, so $G^+$ is sealed as it is a quotient of a sealed group.
\end{proof}
\begin{cor}
\label{corollary : finite index in ss}
Let $\bfG$ be a connected semisimple $k$-group. Then any finite index subgroup of $G$ is sealed.
\end{cor}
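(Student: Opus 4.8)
The plan is to reduce the statement to the sealedness of $G^+$ already obtained in Lemma~\ref{lemma : G+ is seald}, combined with the cocompactness criterion of Corollary~\ref{cor : co compact sealed}. Let $H\le G$ be a subgroup of finite index. The first step I would carry out is to show that $H$ necessarily contains $G^+$. Since $G^+$ is generated by the unipotent elements of $G$, it suffices to check that every unipotent $u\in G$ lies in $H$. In characteristic zero such a $u$ sits inside a one-parameter subgroup $\{\exp(tX):t\in k\}\cong \mathbb{G}_a(k)=(k,+)$, using the $k$-isomorphism $\bfU\cong\Lie(\bfU)$ recalled in \S\ref{subsec:Ala}; this subgroup is uniquely divisible, and a divisible group has no proper subgroup of finite index, so it is contained in $H$. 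Running over all unipotents gives $G^+\le H$.

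Next I would establish that $G^+$ is closed and cocompact in $H$. By the structure theory of semisimple groups over local fields, $G^+$ is a closed cocompact normal subgroup of $G$; in characteristic zero it is moreover open, so $G/G^+$ is both discrete and compact, hence finite. Since $G^+\le H\le G$, the quotient $H/G^+$ is a subgroup of the finite group $G/G^+$ and is therefore itself finite, whence $G^+$ is of finite index, in particular closed and cocompact, in $H$. Because $G^+$ is sealed by Lemma~\ref{lemma : G+ is seald}, applying Corollary~\ref{cor : co compact sealed} to the pair $G^+\le H$ immediately yields that $H$ is sealed, completing the argument.

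I expect the main obstacle to be the first step, namely $G^+\le H$: this is where the standing characteristic-zero hypothesis is genuinely used, through the unique divisibility of unipotent groups, and it is the substantive input that is false in general in positive characteristic. Once this containment is in place, identifying the correct topological relationship between $G^+$ and $H$ (closedness and cocompactness, guaranteed by the finiteness of $G/G^+$) and invoking the cocompactness criterion are essentially formal.
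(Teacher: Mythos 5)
Your proof follows the same skeleton as the paper's: establish $G^+\le H$, invoke the sealedness of $G^+$ (Lemma~\ref{lemma : G+ is seald}), obtain co-compactness of $G^+$ in $H$, and finish with Corollary~\ref{cor : co compact sealed}. Your first step is correct: proving $G^+\le H$ via unique divisibility of one-parameter unipotent subgroups is a legitimate characteristic-zero replacement for the paper's citation of \cite[Chapter I, Corollary (1.5.7)]{margulis1991discrete}. One side remark: the containment itself is \emph{not} false in positive characteristic --- Margulis' proof runs through conjugation by split-torus elements rather than divisibility, which is why the paper can recycle this corollary in Section~\ref{Positive char}; what breaks in characteristic $p$ is only your method of proof, not the fact.

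The genuine gap is in your second step, namely the claim that in characteristic zero $G^+$ is open in $G$, so that $G/G^+$ is finite. This holds exactly when $\bfG$ has no $k$-anisotropic almost simple factors (there it follows from the identification of $G^+$ with the image of $\tilde{\bfG}(k)$ together with finiteness of Galois cohomology over characteristic-zero local fields), but it is false in general. For instance, take $\bfG=\mathrm{SL}_1(D)$ for a quaternion division algebra $D$ over $\bbQ_p$: then $G$ is an infinite compact group with no non-trivial unipotent elements, so $G^+=\{e\}$ is not open and $G/G^+=G$ is infinite; the same phenomenon occurs on any anisotropic factor. In that situation the finiteness of $H/G^+$ evaporates, and co-compactness of $G^+$ in $H$ becomes precisely the assertion that $H$ is closed (equivalently compact) in $G$ --- which for an abstract finite-index subgroup of a compact group is not formal (it does hold here, but via divisibility of connected compact groups in the archimedean case, and an openness theorem for finite-index subgroups of compact $p$-adic analytic groups in the non-archimedean case). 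To repair the argument you must either treat the anisotropic factors separately along these lines, or follow the paper and quote co-compactness of $G^+$ in $G$ from \cite[Chapter I, Theorem (2.3.1)(b)]{margulis1991discrete}, keeping your openness/finiteness argument only for the part of the group with no anisotropic factors, where it is valid.
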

\begin{proof}
Let $H$ be a finite index subgroup of $G$.  The subgroup $G^+$ is co-compact and is contained in any subgroup of finite index (see \cite[Chapter I, Corollary (1.5.7) and Theorem (2.3.1)(b)]{margulis1991discrete}). Since $G^+$ is sealed and co-compact in $H$, by Corollary~\ref{cor : co compact sealed}, $H$ is sealed.
\end{proof}
\begin{prop}
\label{prop : reductive sealed}
Let $\bfG$ be a connected reductive $k$ group, then $G$ is sealed if and only if $G$ has a compact center.
\end{prop}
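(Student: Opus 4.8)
The plan is to prove both directions by reducing to structural facts already established. For the ``only if'' direction, suppose $G$ is sealed. Then in particular $G$ is sealed as a subgroup of itself, so the image of $G$ under any continuous homomorphism is closed. Taking a continuous homomorphism that is injective, sealedness forces $G$ to be minimal (via Corollary~\ref{cor : minimal has closed image}), hence by Lemma~\ref{lem : minimal has compact center} its center is compact. Actually a cleaner route is available: since being sealed for the whole group implies, by Definition~\ref{def : sealed}, that the center $Z(G)$ itself has closed image under continuous homomorphisms restricted from $G$; more directly, one observes that if $G$ is sealed then every quotient of $G$ is sealed, and a sealed Polish group must have compact center (it is minimal, hence by Lemma~\ref{lem : minimal has compact center} has compact center). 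So the necessity of compact center follows immediately from the established minimality--center results.

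For the ``if'' direction, assume $Z = Z(G)$ is compact. I would like to produce a co-compact sealed subgroup and invoke Corollary~\ref{cor : co compact sealed}. The natural candidate is the commutator subgroup or, more precisely, the semisimple part. Write $G$ reductive, so its derived group $\bfG' = \mathcal{D}(\bfG)$ is semisimple and $\bfG = Z(\bfG)^{\circ}\cdot \bfG'$ with $Z(\bfG)^\circ \cap \bfG'$ finite. The plan is: first, by Lemma~\ref{lemma : G+ is seald} the subgroup $G'^{+}$ (or $(\mathcal{D} G)^+$) is sealed; second, since $Z(G)$ is compact, the full center is compact and the image of $Z(\bfG)^\circ(k)$ is a compact (anisotropic torus) piece. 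The key structural claim is that $G'^{+}\cdot Z$ is co-compact in $G$, where $Z$ is the compact center. Then using Corollary~\ref{cor : co compact sealed} together with Corollary~\ref{cor : product of normal sealed} (to combine the sealed semisimple part with the compact, hence sealed, central part), one concludes $G$ is sealed.

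Concretely, I would argue as follows. Let $\bfS = \mathcal{D}(\bfG)$, a connected semisimple $k$-group, and let $\bfZ_0 = Z(\bfG)^\circ$, a central torus. Then $\bfG = \bfZ_0 \bfS$, and on $k$-points $S^+$ is sealed by Lemma~\ref{lemma : G+ is seald}. Since $Z(G)$ is compact by assumption, the torus $Z_0(k)$ modulo $Z_0(k)\cap S$ must be compact; indeed the maximal $k$-split subtorus of $\bfZ_0$ would contribute a non-compact factor to $Z(G)$, so compactness of the center forces $\bfZ_0$ to be anisotropic, making $Z_0(k)$ compact. Now $S^+$ is normal in $G$ and sealed, $Z_0(k)$ is compact hence sealed (compact groups are sealed), so by Corollary~\ref{cor : product of normal sealed} the product $S^+ \cdot Z_0(k)$ is sealed in $G$. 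Finally, $S^+$ is co-compact in $S(k)$ and $\bfG/(\bfZ_0 \bfS)$ is finite, so $S^+ Z_0(k)$ is co-compact in $G$; applying Corollary~\ref{cor : co compact sealed} yields that $G$ is sealed.

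The main obstacle I anticipate is the bookkeeping around $S^+$ versus $S(k)$ and the compactness reduction for the central torus. Specifically, one must verify carefully that compactness of $Z(G)$ is equivalent to anisotropy of $Z(\bfG)^\circ$ (the split part of the central torus is exactly the non-compact part of the center), and that the product $S^+ Z_0(k)$ is genuinely co-compact in $G$ rather than merely finite-index-times-compact; this uses that $G/S^+$ is compact, which in turn relies on $S^+$ being co-compact in the semisimple part (Corollary~\ref{corollary : finite index in ss} and the cited structure theory from \cite{margulis1991discrete}). Once these identifications are pinned down, the sealed conclusion is a formal consequence of the three-space and co-compactness corollaries.
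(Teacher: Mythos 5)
Your proposal is correct, and in the ``if'' direction it runs dual to the paper's argument. The paper works from above: since $\Rad{s}{\bfG}=Z(\bfG)^{\circ}$ is a $k$-torus and $Z(G)$ is compact, the solvable radical $\Rad{s}{G}$ is a compact (hence sealed) normal subgroup; the quotient $G/\Rad{s}{G}$ is isomorphic to a finite-index subgroup of the semisimple group $(\bfG/\Rad{s}{\bfG})(k)$ by \cite[Proposition 6.14]{platonov1993algebraic}, hence sealed by Corollary~\ref{corollary : finite index in ss}; Lemma~\ref{lem : sealed 3 space prop} then seals $G$. You instead work from below, assembling a co-compact subgroup that is sealed in $G$: the product $S^{+}\cdot Z_0(k)$, with $S=\mathcal{D}(\bfG)(k)$ and $\bfZ_0=Z(\bfG)^{\circ}$, using Lemma~\ref{lemma : G+ is seald}, compactness of $Z_0(k)$, Corollary~\ref{cor : product of normal sealed}, and finally co-compactness (Corollary~\ref{cor : co compact sealed}, or Lemma~\ref{lem : sealed 3 space prop} with $U=G$ and the compact quotient). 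Both routes consume the same inputs --- the $G^{+}$-theory on the semisimple part and compactness of the anisotropic central torus --- and both hinge on the three-space property; yours makes the role of $G^{+}$ explicit, while the paper's pushes all the rational-points bookkeeping into a single citation. You also spell out the ``only if'' direction (sealed $\Rightarrow$ minimal by Corollary~\ref{cor : minimal has closed image} $\Rightarrow$ compact center by Lemma~\ref{lem : minimal has compact center}), which the paper leaves implicit.

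The step you must still pin down is exactly the one you flagged: co-compactness of $S^{+}Z_0(k)$ in $G$. Your stated reason, that $\bfG/(\bfZ_0\,\mathcal{D}(\bfG))$ is finite, is a statement about algebraic groups --- in fact this quotient is trivial, since $\bfG=\bfZ_0\cdot\mathcal{D}(\bfG)$ for reductive $\bfG$ --- and it does not by itself bound the index of $Z_0(k)S(k)$ in $G$: surjectivity of $\bfZ_0\times\mathcal{D}(\bfG)\to\bfG$ does not pass to $k$-points (this is the pitfall of Remark~\ref{remark : k points of image}). What is true, and what you need, is that $Z_0(k)S(k)$ is closed of finite index in $G$; this follows from finiteness of the Galois cohomology $H^{1}(k,\mu)$ of the finite central kernel $\mu$ of the isogeny $\bfZ_0\times\mathcal{D}(\bfG)\to\bfG$, valid because $\mathrm{char}(k)=0$. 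Combined with co-compactness of $S^{+}$ in $S$ (\cite[Chapter I, Corollary (1.5.7) and Theorem (2.3.1)(b)]{margulis1991discrete}, as used in Corollary~\ref{corollary : finite index in ss}), this yields the co-compactness you want. This finiteness is precisely the content that \cite[Proposition 6.14]{platonov1993algebraic} supplies in the paper's version; once you cite it (or run the $H^{1}$ argument), your proof is complete.
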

\begin{proof}
Following \cite[Proposition 14.2]{borel2012linear}, the solvable radical $\Rad{s}{\bfG}=Z(\bfG)^\circ$ is a $k$-torus.
Since $Z(G)=Z(\bfG)(k)$ is compact we get that the solvable radical $\Rad{s}{G}$ is a sealed normal subgroup.
The algebraic $k$-group $\bfG/\Rad{s}{\bfG}$ is a connected semisimple $k$-group, and the quotient group $G/\Rad{s}{G}$ is isomorphic to a finite index subgroup of $\bfG/\Rad{s}{\bfG}$ \cite[Proposition 6.14]{platonov1993algebraic}. 

Hence by Corollary~\ref{corollary : finite index in ss}, $G/\Rad{s}{G}$ is sealed.
We get by the three spaces property for sealed groups, Lemma~\ref{lem : sealed 3 space prop}, that the group $G$ is sealed. 
\end{proof}
%
%

%
\begin{lemma}
\label{lemma : normal subgroup of reductive compact ceter}
    Let $\bfG$ be a connected reductive $k$ group such that $G$ has a compact center. Then for any normal connected $k$ group $\bfN\lhd \bfG$, the center of $N$ is compact.
\end{lemma}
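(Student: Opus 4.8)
The plan is to show that the identity component of the center of $\bfN$ is in fact central in the ambient group $\bfG$, and then to read off compactness directly from the hypothesis on $Z(G)$.

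First I would record the structural reductions. Since $\bfN$ is a connected normal subgroup of the reductive group $\bfG$, its unipotent radical $\Rad{u}{\bfN}$ is characteristic in $\bfN$ and therefore a connected normal unipotent subgroup of $\bfG$; as $\bfG$ is reductive its unipotent radical is trivial, forcing $\Rad{u}{\bfN}=\{e\}$, so $\bfN$ is itself reductive. Consequently $Z(\bfN)$ is a group of multiplicative type: its identity component $\bfT:=Z(\bfN)^\circ$ is a $k$-torus, and the component group $Z(\bfN)/\bfT$ is finite. I would also note that, since $N=\bfN(k)$ is Zariski dense in $\bfN$ by Proposition~\ref{prop : zariski dnese of k}, any element of $N$ centralizing $N$ centralizes its Zariski closure $\bfN$; hence $Z(N)=Z(\bfN)(k)$, and it suffices to prove that $Z(\bfN)(k)$ is compact.

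The heart of the argument is a rigidity step. Because $\bfN\lhd\bfG$, conjugation by any element of $\bfG$ is an automorphism of $\bfN$ and therefore preserves the characteristic subgroup $\bfT=Z(\bfN)^\circ$; thus $\bfG$ normalizes the torus $\bfT$, i.e. $\bfG=N_{\bfG}(\bfT)$. By rigidity of tori (cf.\ \cite[\S8]{borel2012linear}), the identity component of the normalizer of a torus coincides with the identity component of its centralizer, so $\bfG=N_{\bfG}(\bfT)^{\circ}=C_{\bfG}(\bfT)^{\circ}=C_{\bfG}(\bfT)$, using that $\bfG$ is connected. Hence $\bfG$ centralizes $\bfT$, which means $\bfT\le Z(\bfG)$. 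This is the step I expect to be the crux: everything hinges on the fact that a connected algebraic group cannot act nontrivially on a torus, the automorphism functor of a torus being \'etale.

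It then remains to deduce compactness. From $\bfT\le Z(\bfG)$ we obtain $\bfT(k)\le Z(\bfG)(k)=Z(G)$, and the latter is compact by hypothesis; since the inclusion $\bfT\hookrightarrow\bfG$ is a $k$-morphism, $\bfT(k)$ is closed in $G$ by Proposition~\ref{prop : image and kernel of k point maps}, hence a closed subset of a compact group and therefore compact. Finally, the finiteness of $Z(\bfN)/\bfT$ gives an exact sequence of $k$-points $1\to\bfT(k)\to Z(\bfN)(k)\to (Z(\bfN)/\bfT)(k)$ with finite cokernel group, so $\bfT(k)$ has finite index in $Z(\bfN)(k)$. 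Thus $Z(N)=Z(\bfN)(k)$ is a finite union of translates of the compact group $\bfT(k)$, and is therefore compact, as required.
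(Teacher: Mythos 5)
Your proof is correct, but it takes a genuinely different route from the paper's. The paper argues by passing to the semisimple quotient: with $\bfZ=Z(\bfG)$, the group $\bfG/\bfZ$ is semisimple, the image of $\bfN$ there is normal and hence itself semisimple (citing \cite[Proposition 14.10]{borel2012linear}), so its center is finite; compactness of $Z(N)$ is then pulled back through the compact kernel coming from $\bfZ$. You instead work inside $\bfG$: after observing that normality forces $\Rad{u}{\bfN}$ to be trivial, so $\bfN$ is reductive and $\bfT=Z(\bfN)^{\circ}$ is a torus, you invoke rigidity of diagonalizable groups ($N_{\bfG}(\bfT)$ and $\CC_{\bfG}(\bfT)$ have the same identity component) together with connectedness of $\bfG$ to conclude $\bfT\le Z(\bfG)$, and then finish with the finiteness of $Z(\bfN)/\bfT$. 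Both proofs ultimately exhibit $Z(N)$ as compact-by-finite, but they trade different inputs: the paper's proof is shorter and leans on the structure theory of normal subgroups of semisimple groups, while yours avoids the quotient construction entirely and yields a strictly stronger structural conclusion along the way, namely that the connected center of any connected normal $k$-subgroup of $\bfG$ is already central in $\bfG$. Your auxiliary steps are all sound: $Z(N)=Z(\bfN)(k)$ does require the Zariski-density observation (Proposition~\ref{prop : zariski dnese of k} applies since $\bfN$ is connected), and the finite-index step works because the kernel of $Z(\bfN)(k)\to (Z(\bfN)/\bfT)(k)$ is exactly $\bfT(k)$ while the target group is finite.
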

\begin{proof}
    Let $\bfZ\le \bfG$ be the center of $\bfG$, the group $\bfG/\bfZ$ is semisimple and so the image of $\bfN$ under the central quotient map is a normal subgroup of a semisimple group so is semisimple (see \cite[Proposition14.10]{borel2012linear}). Thus $Z(N/Z)$ is compact, hence by compactness of $Z$,  $Z(N)$ is compact.
\end{proof}
\begin{cor}
    \label{corollary : normal subgruop of reductive sealed is sealed }
    Let $\bfG$ be a connected reductive $k$-group with a compact center. Then for any $k$-algebraic normal subgroup $\bfN$, the group $N=\bfN(k)$ is sealed.
\end{cor}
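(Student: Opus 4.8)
The plan is to reduce the statement to the connected reductive case, which is already settled by Proposition~\ref{prop : reductive sealed}, and then to bridge from the identity component up to $\bfN$ itself. First I would replace $\bfN$ by its identity component $\bfN^\circ$. Since $\bfN^\circ$ is characteristic in $\bfN$ and $\bfN \lhd \bfG$, the subgroup $\bfN^\circ$ is a connected normal $k$-subgroup of $\bfG$, so the problem for $\bfN$ will follow once it is understood for $\bfN^\circ$ together with a transfer across the finite component group.

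Next I would verify that $\bfN^\circ$ is reductive. Its unipotent radical $\Rad{u}{\bfN^\circ}$ is characteristic in $\bfN^\circ$, hence normal in $\bfG$; being connected, unipotent and normal, it must lie inside $\Rad{u}{\bfG}$, which is trivial because $\bfG$ is reductive. Thus $\bfN^\circ$ is a connected reductive $k$-group. Applying Lemma~\ref{lemma : normal subgroup of reductive compact ceter} to the connected normal subgroup $\bfN^\circ \lhd \bfG$ shows that the center of $N^\circ = \bfN^\circ(k)$ is compact, and then Proposition~\ref{prop : reductive sealed} yields that $N^\circ$ is sealed.

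It remains to pass from $N^\circ$ to $N = \bfN(k)$. The group $\bfN^\circ$ has finite index in $\bfN$, so the quotient $N/N^\circ$ injects into the finite group $(\bfN/\bfN^\circ)(k)$ and is therefore finite; since $N^\circ$ is moreover closed and normal in $N$, it is an open, co-compact normal subgroup. As $N^\circ$ is sealed and co-compact in $N$, Corollary~\ref{cor : co compact sealed} then gives that $N$ is sealed. I expect the only genuinely delicate point to be this last transfer, namely confirming that the finite index of $\bfN^\circ$ in $\bfN$ at the level of algebraic groups survives the passage to $k$-points, so that $N^\circ$ is indeed co-compact in $N$; everything else should follow mechanically from the reductive case.
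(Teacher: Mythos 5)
Your proof is correct and follows essentially the route the paper intends: the corollary is stated without an explicit proof right after Lemma~\ref{lemma : normal subgroup of reductive compact ceter}, and the intended argument is exactly yours --- the identity component $\bfN^\circ$ is a connected normal (hence reductive) $k$-subgroup whose $k$-points have compact center by that lemma, so Proposition~\ref{prop : reductive sealed} makes $N^\circ$ sealed, and the finite-index, co-compact passage via Corollary~\ref{cor : co compact sealed} transfers sealedness to $N$. If anything, your careful handling of the possibly non-connected $\bfN$ (needed, e.g., when the corollary is applied to the kernel $\mathbf{C}$ in Lemma~\ref{lemma : comutative radical prime field sealed}) supplies a detail the paper leaves implicit.
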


\subsection{Proof of Theorem~\ref{theorem :  sealed theorem}}
Ultimately, we will prove Theorem~\ref{theorem :  sealed theorem} by induction on the nilpotency degree of the unipotent radical.
First, we establish a few helpful results for the case where the unipotent radical is abelian. 
\begin{lemma}
\label{lem : selaed open subgroup unipotent radical}
    Let $k=\bbQ_p$, and let $\bfG$ be a connected $k$-algebraic group with a Levi decomposition $\bfL\ltimes \Rad{u}{\bfG}$ over $k$.
    Assume that $\Rad{u}{\bfG}$ is commutative, and that $L$ acts on $\Rad{u}{G}$ with no invariant vectors.
    Let $N\lhd \Rad{u}{G}$ be an $L$ invariant compact open subgroup. Then $\Rad{u}{G}/N$ is minimal in $L\ltimes \Rad{u}{G}/N$
\end{lemma}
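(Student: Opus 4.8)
The plan is to invoke the criterion of Lemma~\ref{lem : Action is minimal- semi direct product minimal}: it suffices to show that $\Rad{u}{G}/N$ admits no strictly weaker group topology $\w$ for which the conjugation action of $L$ remains continuous. Write $V=\Rad{u}{G}$, which by \S\ref{subsec:Ala} is a finite-dimensional $\bbQ_p$-vector space on which $L$ acts $\bbQ_p$-linearly; since $N\le V$ is compact \emph{and} open, $V/N$ is discrete, so here the analytic topology is the discrete one and the goal is to prove that every such $\w$ is discrete. By Lemma~\ref{lemma : weaker than second countable} I may assume $\w$ is second countable, and by Remark~\ref{remark: stronger action continuous} I may keep $L$ in its analytic topology while $V/N$ carries $\w$.

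The engine is Corollary~\ref{corollary : irreducable tori space is minimal with action}, which rigidifies $V'/N'$ for a single torus acting \emph{without} invariant vectors. A maximal torus of $L$ may have a nonzero zero-weight space, so, exactly as in Lemma~\ref{lem : center is reletivly miniaml}, I would fix finitely many maximal $k$-tori $T_1,\dots,T_m$ generating $L$, whence $\bigcap_i V^{T_i}=V^L=0$ by the no-invariant-vectors hypothesis. For each $i$ the torus $T_i$ acts on $V/V^{T_i}$ without invariant vectors, and the image $N_i$ of $N$ is compact and $T_i$-invariant. The aim is to assemble the projections into a single $\w$-continuous homomorphism $\Psi\colon V/N\to\prod_i (V/V^{T_i})/N_i$ onto a Polish group with finite kernel, and to conclude via the Open Mapping machinery.

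The main obstacle is the $\w$-continuity of the components $q_i\colon V/N\to (V/V^{T_i})/N_i$: the $\w$-quotient topology on the target is a priori weaker than analytic and carries a continuous $T_i$-action, so Corollary~\ref{corollary : irreducable tori space is minimal with action} forces it to be analytic \emph{provided it is Hausdorff}, i.e.\ provided $V^{T_i}+N$ is $\w$-closed. To secure this I would pass to the Hausdorffification, replacing $V^{T_i}+N$ by the $\w$-closure of its image in $V/N$; this produces a genuinely $\w$-continuous map $\pi_i$ onto a quotient of $V/V^{T_i}$ by a closed $T_i$-invariant subgroup on which, by complete reducibility of the torus representation, $T_i$ still acts without invariant vectors, so Corollary~\ref{corollary : irreducable tori space is minimal with action} applies and makes $\pi_i$ continuous into the analytic topology. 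The delicate point—where the interplay of the several tori with the \emph{global} Hausdorffness of $\w$ enters—is to show that the lifts to $V$ of these saturated kernels still have maximal linear subspace $V^{T_i}$, so that their common intersection $K$ contains no line (a line would project to a relatively compact, hence trivial, line in some $V/V^{T_i}$, forcing it into $\bigcap_i V^{T_i}=0$). This is exactly where the hypothesis $k=\bbQ_p$ is used: Proposition~\ref{prop : not containing subspace compact*} upgrades ``$K$ contains no line'' to ``$K$ is compact'', so that $K/N=\ker\Psi$ is finite.

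Granting this, the endgame is clean. The linear lift $V\hookrightarrow\prod_i V/V^{T_i}$ is an injective $\bbQ_p$-linear map, hence a closed embedding, and since $\Psi(V/N)$ is the image of this closed subspace together with the compact $\prod_i N_i$, it is a closed, hence Polish, subgroup of the Polish product. Thus $\Psi$ descends to a continuous bijection from the discrete Polish group $(V/N)/(K/N)$ onto the Polish group $\Psi(V/N)$, and Corollary~\ref{corollary : bijection onto polish group makes minimal} shows the $\w$-quotient topology on $(V/N)/(K/N)$ is discrete. Finally, $K/N$ is finite, hence $\w$-closed with discrete subspace topology, so Merson's Lemma~\ref{lemma : Mersons lemma} applied to $K/N\le V/N$ yields that $\w$ itself is discrete, i.e.\ equal to the analytic topology. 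By Lemma~\ref{lem : Action is minimal- semi direct product minimal}, $\Rad{u}{G}/N$ is minimal in $L\ltimes \Rad{u}{G}/N$.
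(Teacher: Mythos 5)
Your overall architecture --- reduce via Lemma~\ref{lem : Action is minimal- semi direct product minimal} to rigidifying the topology on $V/N$ under the $L$-action, take finitely many maximal $k$-tori generating $\bfL$, rigidify each torus quotient with Corollary~\ref{corollary : irreducable tori space is minimal with action}, and assemble a map $\Psi$ with finite kernel, finishing with Corollary~\ref{corollary : bijection onto polish group makes minimal} and Merson's Lemma~\ref{lemma : Mersons lemma} --- is viable, and it mirrors the assembly of the paper's Lemma~\ref{lem : center is reletivly miniaml}. However, there is a genuine gap at exactly the point you call delicate, and it occurs \emph{earlier} in your argument than you acknowledge. To apply Corollary~\ref{corollary : irreducable tori space is minimal with action} to the Hausdorffified quotient $(V/N)/K_i$, where $K_i$ is the $\w$-closure of $(V^{T_i}+N)/N$, you must exhibit this quotient as a vector space with no $T_i$-invariant vectors modulo a \emph{compact} invariant subgroup: the ``moreover'' clause of that corollary requires compactness, and closedness of $K_i$ plus complete reducibility do not supply it. Compactness of $\tilde K_i/V^{T_i}$ (writing $\tilde K_i$ for the lift of $K_i$ to $V$) is precisely your unproven assertion that $\tilde K_i$ contains no line outside $V^{T_i}$, upgraded by Proposition~\ref{prop : not containing subspace compact*}. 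Moreover, your parenthetical justification is circular: it argues that a line in $K$ projects to a \emph{relatively compact} line in $V/V^{T_i}$, but that relative compactness is available only if one already knows $\tilde K_i/V^{T_i}$ is compact. So as written, neither the $\w$-continuity of the maps $\pi_i$ nor the finiteness of $\ker\Psi$ is established.

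The gap is fillable, and the fix is the one observation your proof never makes, which is the paper's key move: since the $T_i$-action on $(V/N,\w)$ is continuous and $\w$ is Hausdorff, the invariant subgroup $(V/N)^{T_i}$ is \emph{automatically} $\w$-closed; hence $K_i\subseteq (V/N)^{T_i}$, whose lift is $\{v\in V:\ [t,v]\in N\ \forall t\in T_i\}$, and boundedness of $N$ together with linearity ($\lambda[t,v]=[t,\lambda v]$) shows any line in this set lies in $V^{T_i}$. This yields your delicate point and the required compactness over $\bbQ_p$. Note, though, that once one works with the closed subgroups $(V/N)^{T_i}$, the paper's own ending is shorter than your product construction: applying Corollary~\ref{corollary : irreducable tori space is minimal with action} to the co-invariant quotient $(V/N)_{T_i}$ shows its $\w$-quotient topology is discrete, hence each $(V/N)^{T_i}$ is $\w$-open; the intersection $A=\bigcap_i (V/N)^{T_i}$ lifts to $\{v\in V:\ \forall l\in L,\ [l,v]\in N\}$, which contains no line, is therefore compact by Proposition~\ref{prop : not containing subspace compact*}, so $A$ is a finite $\w$-open subgroup and $\w$ is discrete. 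You should either incorporate the closedness of $(V/N)^{T_i}$ to repair your version, or adopt this shorter conclusion.
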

\begin{proof}
    The abelian group $\Rad{u}{G}$ is isomorphic to a finite-dimensional $k$-vector space, which will be denoted $V=\Rad{u}{G}$.
    Let $\bfT$ be a $k$-maximal torus in $\bfL$, let $(V/N)^T$ be the subgroup of $T$-invariant elements in $V/N$, and denote by $(V/N)_T$ the co-invariant quotient space.
    Clearly the set of $T$-invariant vectors in $V$, $V^T$, is sent to the set of $T$-invariant vectors in the quotient space $V/N$. Hence the quotient map $V\to (V/N)_T$ factors via the quotient space $V_T$. 
    Let $M_T\le V_T$ be the kernel of the corresponding map $V_T \to (V/N)_T$.
    As $N<V$ is open, $V_T/M_T\cong (V/N)_T$ is an isomorphism of discrete groups.
    It follows by Corollary~\ref{corollary : irreducable tori space is minimal with action} that there is no topology, weaker than the discrete on $V_T/M_T$ and correspondingly $(V/N)_T$ under which the analytic group $T$ acts continuously.

    Let $\w$ be a group topology on $L\ltimes V/N$, weaker than the analytic topology. 
    With respect to the $\w$-subspace topology, $L$ acts continuously on $V/N$. 
    Let $\w'$ denote the $\w$-subspace topology on $V/N$.
    Following Remark~\ref{remark: stronger action continuous}, furnished with the analytic topology, the group $L$ acts continuously on $V/N$, the latter endowed with the $\w'$ topology. 
    We will show that $\w'$ is a discrete group topology by showing the existence of a finite open subgroup.

    The analytic subgroup $T\le L$ acts continuously on $V/N$ with respect to the $\w'$-topology, 
    thus the $T$-invariant subgroup, $(V/N)^T$, is $\w'$-closed. 
    Therefore, the $\w'$-quotient topology on $(V/N)_T$ is a group topology weaker than the analytic topology, admitting a continuous $T$-action.
    By the considerations above, the $\w'$-quotient topology on $(V/N)_T$ is the discrete topology.
    Therefore, the subgroup $(V/N)^T$ is $\w'$-open in $V/N$.

    Let $\{\bfT_i\}_{i=1}^m$ be a collection of maximal $k$-tori that generate $\bfL$. 
    We get that the subgroup $A=\cap_{i=1}^m(V/N)^{T_i}$ is $\w'$-open in $V/N$. 
    Let $\pi\colon V \to V/N$ denote the projection map.
    Note that by construction
$$\pi^{-1}(A)=\{v\in V \ : \forall l\in L  \ [l,v]\in N \}$$
    We claim that $\pi^{-1}(A)$ does not contain a non-trivial $\bbQ_p$-vector subspace of $V$.
    Assume by contradiction that $v\in \pi^{-1}(A)$ is a non trivial element contained in a vector subspace of $\pi^{-1}(A)$.
    Since $Z(G)$ is compact,
    there exists $l\in L$ such that $[l,v]\ne e$.
    By $\bbQ_p$-linearity, $\lambda [l,v]=[l,\lambda v]\in N$, so by taking $\lambda\in \bbQ_p$ with a large absolute value we get a contradiction to the the fact that $N$ is bounded.
    Since $\pi^{-1}(A)$ does not contain a $\bbQ_p$-vector subspace, by Proposition~\ref{prop : not containing subspace compact*} it is compact.
    Thus $A\subset V/N$ is finite.

    Finally, we got that the set $A$ is a finite $\w'$-open subset in $V/N$, hence the $\w'$-topology is equal to the analytic quotient topology which is discrete.
\end{proof}
\begin{lemma}
\label{lemma : comutative radical prime field sealed}
    Let $k=\bbQ_p$ or $k=\bbR$. Let $\bfG$ be a connected $k$-algebraic group with Levi decomposition $\bfL\ltimes \Rad{u}{\bfG}$ over $k$, such that $\Rad{u}{\bfG}$ is commutative.
    Assume that $Z(L)$ compact, and that the action of $L$ on $\Rad{u}{G}$ is with no invariant vectors. Then $G$ is sealed.
\end{lemma}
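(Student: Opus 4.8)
The plan is to derive sealedness of $G=\bfL(k)\ltimes V$, where I abbreviate $V=\Rad{u}{G}$ (a $k$-vector group) and $L=\bfL(k)$, from the three spaces property. Applying Lemma~\ref{lem : sealed 3 space prop} to the closed normal subgroup $V\lhd G$, and noting that $G/V\cong L$ is reductive with compact center and hence sealed by Proposition~\ref{prop : reductive sealed}, it suffices to prove that \emph{$V$ is sealed in $G$}: that $f(V)$ is closed in $H$ for every continuous homomorphism $f\colon G\to H$, which I may assume has dense image (so $f(V)$ is normal in $H$). I would first record that $Z(G)$ is compact: a direct computation in the semidirect product gives $Z(G)=\{(l,0):l\in Z(L),\ l|_V=\mathrm{id}\}$, a closed subgroup of the compact group $Z(L)$. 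By Proposition~\ref{proposition : minimality of unipotent radical} this already shows that $V$ is minimal in $G$, equivalently, by Lemma~\ref{lem : Action is minimal- semi direct product minimal}, that $V$ admits no strictly weaker group topology keeping the analytic $L$-action continuous.

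The key mechanism is to transport the topology of $H$ back to $V$. Set $K:=\ker f\cap V$ and equip $V/K$ with the subspace topology $\w$ inherited from $f(V)\subseteq H$; this is a Hausdorff group topology weaker than the analytic one. Since $V\lhd G$, for $l\in L,\ v\in V$ we have $f(l)f(v)f(l)^{-1}=f(l\cdot v)$, so the $L$-action on $V/K$ is implemented by conjugation in $H$. Because multiplication in $H$ is continuous and $\w$ is a subspace topology, the action map $L\times V/K\to V/K$ is jointly $\w$-continuous, with $L$ carrying its analytic topology. Thus $\w$ is a weaker group topology on $V/K$ admitting a continuous $L$-action; the whole proof reduces to showing $\w$ is the analytic topology, for then $f|_V$ is a homeomorphism of $V/K$ onto $f(V)$, which is therefore locally compact and hence closed in $H$.

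To obtain this rigidity I analyze $K$. Let $W\subseteq K$ be the maximal $k$-subspace; it is $L$-invariant, hence a normal unipotent algebraic subgroup through which $f$ factors, and $G/W$ still satisfies the hypotheses (by complete reducibility in characteristic zero, $V/W$ has no $L$-invariant vectors, and its center remains compact). So I reduce to $K$ containing no subspace. When $k=\bbR$, such a closed subgroup of $V$ is discrete, and being invariant under the connected, Zariski-dense $L^{\circ}$ it is fixed, hence trivial; so $K=0$ and the desired rigidity is exactly Proposition~\ref{proposition : minimality of unipotent radical} via Lemma~\ref{lem : Action is minimal- semi direct product minimal}. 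When $k=\bbQ_p$, Proposition~\ref{prop : not containing subspace compact*} forces $K$ to be compact, and I would prove that $V/K$ is minimal in $L\ltimes V/K$ for compact $L$-invariant $K$ by repeating the argument of Lemma~\ref{lem : center is reletivly miniaml}: pick maximal tori $\bfT_1,\dots,\bfT_m$ generating $\bfL$, apply the compact-subgroup rigidity of Corollary~\ref{corollary :  torus action with no invariant vectors makes minimal} to each co-invariant quotient, and assemble a $\w$-continuous injection of $V/K$ (modulo its compact central kernel) onto a Polish group, concluding with Corollary~\ref{corollary : bijection onto polish group makes minimal}. In both cases relative minimality and Lemma~\ref{lem : minimal implies closed injective} then yield that $f(V)$ is closed.

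The main obstacle is precisely this $p$-adic case. Unlike a single torus, a reductive $L$ with anisotropic center need not preserve any lattice in $V$, so Lemma~\ref{lem : selaed open subgroup unipotent radical} is \emph{not} directly available and one cannot simply enlarge $K$ to a compact open $L$-invariant subgroup. The compact kernel $K$ destroys both the vector-space and the algebraic structure, so the delicate point is the torus-by-torus assembly built on Corollary~\ref{corollary :  torus action with no invariant vectors makes minimal}, together with a careful verification that the kernel of the resulting product map $V/K\to\prod_i (V/K)_{\bfT_i}$ is compact and central, which is what finally forces $\w$ to coincide with the analytic topology.
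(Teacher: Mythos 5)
Your proposal is correct in substance, but it takes a genuinely different route from the paper's proof. The paper decomposes $V=\Rad{u}{G}$ into irreducible $L$-summands and inducts on their number: in the irreducible case it studies the kernel of the induced map via the dichotomy of Proposition~\ref{prop : faithfull irreducable action normal subgroup contined}, observes that a non-trivial invariant kernel containing no subspace is automatically compact \emph{and open} (by irreducibility its span is all of $V$, so it contains a basis), and can therefore invoke the compact-open rigidity of Lemma~\ref{lem : selaed open subgroup unipotent radical}; the inductive step and the return to $G$ are diagram chases through quotients of $H$ by closed images. You instead reduce at once, via the three-spaces property (Lemma~\ref{lem : sealed 3 space prop}) and Proposition~\ref{prop : reductive sealed}, to showing that $V$ is sealed in $G$, transport the topology of $H$ back to $V/K$ with $K=\ker f\cap V$, and prove a rigidity statement that is strictly stronger than Lemma~\ref{lem : selaed open subgroup unipotent radical}: for \emph{any} compact $L$-invariant subgroup $K$, not necessarily open, every weaker group topology on $V/K$ admitting a continuous analytic $L$-action is the analytic one. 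Your plan for this — re-running the torus assembly of Lemma~\ref{lem : center is reletivly miniaml} with the compact-quotient version of torus rigidity (Corollary~\ref{corollary : irreducable tori space is minimal with action}) — does go through, and the verifications you flag are fillable with the paper's tools: for each torus the preimage of $(V/K)^{T_i}$ in $V$ splits as $V^{T_i}\oplus Q_i$, where $Q_i$ contains no line because $K$ is bounded and is hence compact by Proposition~\ref{prop : not containing subspace compact*}, so each co-invariant quotient really is of the form (representation with no invariant vectors modulo compact invariant subgroup); the kernel of the product map is compact by the same bounded-orbit argument together with Zariski density of the $T_i$ (centrality is not needed, only compactness and $\w$-closedness, the latter being automatic for fixed-point sets of continuous actions); the image is Polish because the lifted algebraic map has closed image and quotienting by a compact subgroup is a closed map; and one concludes with Corollary~\ref{corollary : bijection onto polish group makes minimal} followed by Merson's Lemma~\ref{lemma : Mersons lemma} applied to the compact kernel. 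What each approach buys: the paper's induction keeps every step elementary by forcing kernels to be open, at the price of a two-layer induction and the irreducibility dichotomy; your argument is uniform, avoids both, and yields an intermediate lemma of independent interest.

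Two small corrections. First, your closing appeal to Lemma~\ref{lem : minimal implies closed injective} is misplaced: that lemma concerns \emph{injective} continuous homomorphisms, and your $f$ is not injective. It is also unnecessary — your earlier observation already finishes the proof: once $\w$ is the analytic topology, $f|_V$ is a homeomorphism of $V/K$ onto $f(V)$, which is then a locally compact (indeed Polish) subgroup of $H$, hence closed. Second, in the real case, to conclude that the discrete invariant kernel is trivial you should note that a vector fixed by $L^{\circ}$ is fixed by $\bfL$ (its stabilizer is Zariski closed and contains the Zariski-dense subgroup $L^{\circ}$), so the hypothesis of no $L$-invariant vectors does apply.
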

\begin{proof}
The abelian group $\Rad{u}{G}$ is isomorphic to a finite-dimensional $k$-vector space, which will be denoted $V=\Rad{u}{G}$.
    By complete reducibility of reductive groups, the vector space $V$ decomposes into finitely many irreducible  sub-representations $V=\oplus_{i=1}^m V_i$.
    We prove by induction on the length of the representation $m$, that $G$ is sealed.
    
    Assume $m=1$. $G=L\ltimes V$, where
    the action of $L$ on $V$ is irreducible and $k$-algebraic. Thus $C=\{ x\in L : \ \forall v\in V \ [x,v]=e\}$, the kernel of the action, is a normal $k$-subgroup of $L$. Thus $C$ is $k$-points of a normal $k$-algebraic subgroup $\mathbf{C}\lhd \bfL$. Since $\bfL$ is reductive with a compact center, by Corollary~\ref{corollary : normal subgruop of reductive sealed is sealed }, the normal subgroup $C$ is sealed.

    Let $H$ be a topological group and $f\colon G\to H$ a continuous homomorphism with a dense image. The subgroup $f(C)$ is a closed normal subgroup of $H$, making $H/f(C)$ a topological group.

    Let $f_c\colon G/C \to H/f(C)$ be the factor map induced by $f$. 
    Since $G/C=L/C\ltimes V$ and $L/C$ acts on $V$ faithfully irreducible, by Proposition~\ref{prop : faithfull irreducable action normal subgroup contined} either $\ker(f_c)\subset V$ or $V\subset \ker(f_c)$. In case $V\subset \ker(f_c)$, the image of $f$ is equal to the image of $L$ and since $L$ is sealed, $f(G)=f(L)$ is closed.
    Thus we assume as we may that $\ker(f_c)\le V$.
    Since the action is irreducible, the $L$-invariant subset $\ker(f_c)$ does not contain a vector subspace.
    If $k=\bbR$ then $\ker(f_c)$ must be discrete and since it is $L$ invariant it must be trivial.
    In case $k=\bbQ_p$,  
    by Proposition~\ref{prop : not containing subspace compact*},
    $\ker(f_c)$ is compact. Moreover, by irreducibility the subset $\ker(f_c)$ contains a basis,  therefore $\ker(f_c)$ is open.
    Following Lemma~\ref{lem : selaed open subgroup unipotent radical} we get that $V/\ker(f_c)$ is minimal in $L\ltimes V/\ker(f_c)$, hence by Corollary~\ref{cor : minimality under qutient}, $V/\ker(f_c)$ is minimal in $G/C$.
    Observe the following commutative diagram
    \[
    \begin{tikzcd}
        G \rar["f"]\dar[] & H \dar[]\\
        L/C\ltimes V \dar[]\rar["f_c"] & H/f(C) \\
        L/C\ltimes V/\ker(f_c)\urar[hookrightarrow, dashed] 
    \end{tikzcd}
    \] 
    By the relative minimality of $V/\ker(f_c)$,
    the preimage of $f_c(V/\ker(f_c))$ in $H$ is closed and is equal to $f(C\times V)$.
    The group $C\times V$ is normal in $G$ hence $f(C\times V)$ is a closed normal subgroup of $H$, making $H/f(C\times V)$ a topological group.

    Let $f_{cv}\colon G/(C\times V) \to H/f(C\times V)$ be the corresponding factor map induced by $f$.
    Following proposition~\ref{prop : reductive sealed}, since $L$ has a compact center, $L$ is sealed. Observe that $L/C\cong G/(C\times V)$. 
    Consider the following commutative diagram
    \[
    \begin{tikzcd}
        G \rar["f"]\dar[] & H \dar[]\\
        L/C\rar["f_{cv}"] & H/f(C\ltimes V)
    \end{tikzcd}
    \] 
    We get that $f_{cv}(G/(C\times V))$ is a closed subgroup in $H/f(C\times V)$ and therefore its preimage in $H$, which is equal to $f(G)$, is closed. This finishes the case $m=1$.
 
    Assume $m>1$, and let $V_1\le V$ be an irreducible sub representation.
    By the induction hypothesis, $L\times V_1$ and $L\times V/V_1$ are sealed.
    Let $H$ be a topological group and $f\colon G\to H$ a continuous homomorphism with a dense image. The subgroup $f(L\ltimes V_1)$ is a closed normal subgroup of $H$, making $H/f(L\ltimes V_1)$ a topological group.
    Let $f_1\colon G/V_1 \to H/f(L\ltimes V_1)$ be the corresponding factor map induced by $f$. 
    We get that $f_{1}(G/V_1)$ is a closed subgroup in $H/f(L\times V_1)$ and therefore its preimage in $H$, which is equal to $f(G)$, is closed.
    This finishes the proof.
\end{proof}
\begin{prop}

\label{prop : sealed for commutaitve rad gemeral}
    Let $\bfG$ be a connected $k$-algebraic group with Levi decomposition $\bfL\ltimes \Rad{u}{\bfG}$ over $k$, such that $\Rad{u}{\bfG}$ is commutative.
    Assume that $Z(L)$ compact, and the action of $L$ on $\Rad{u}{G}$ is with no invariant vectors, then $G$ is sealed.
\end{prop}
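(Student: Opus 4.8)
The plan is to reduce the statement to the already-established Lemma~\ref{lemma : comutative radical prime field sealed} by means of Weil restriction of scalars. A local field $k$ of characteristic zero is a finite (separable) extension of a field $k_0$, where $k_0=\bbR$ if $k$ is Archimedean and $k_0=\bbQ_p$ if $k$ is non-Archimedean; in the cases $k=\bbR$ and $k=\bbQ_p$ the lemma applies directly, so I assume $k\neq k_0$. I would set $\bfH=\mathrm{Res}_{k/k_0}\bfG$, the restriction of scalars of $\bfG$ from $k$ to $k_0$. Then $\bfH$ is a connected $k_0$-algebraic group and, by the defining property of restriction of scalars, there is a topological group isomorphism $\bfH(k_0)\cong \bfG(k)=G$. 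Consequently $G$ is sealed if and only if $\bfH(k_0)$ is, and it suffices to verify that $\bfH$, together with its $k_0$-Levi decomposition, satisfies the hypotheses of Lemma~\ref{lemma : comutative radical prime field sealed} over $k_0$.

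Next I would record how the data of the Levi decomposition transfers. Since restriction of scalars is functorial, exact in characteristic zero, and commutes with the formation of semidirect products, applying $\mathrm{Res}_{k/k_0}$ to $\bfG=\bfL\ltimes\Rad{u}{\bfG}$ yields $\bfH=\mathrm{Res}_{k/k_0}\bfL\ltimes \mathrm{Res}_{k/k_0}\Rad{u}{\bfG}$. Here $\bfL_0\coloneqq \mathrm{Res}_{k/k_0}\bfL$ is reductive (as $\bfL$ is reductive and $k/k_0$ is separable) and $\mathrm{Res}_{k/k_0}\Rad{u}{\bfG}$ is unipotent and equals the unipotent radical of $\bfH$, so this is a genuine $k_0$-Levi decomposition; moreover it is commutative because $\Rad{u}{\bfG}$ is. For the remaining two hypotheses I would exploit that the relevant objects are unchanged under the identification $\bfH(k_0)=G$: the Levi factor of $k_0$-points is $\bfL_0(k_0)=\bfL(k)=L$, so its center is the hypothesised compact group $Z(L)$; and $\Rad{u}{\bfG}(k)=\Rad{u}{G}$ is the same set carrying the same $L$-action, so the assumption that $L$ acts with no invariant vectors persists verbatim (a fixed vector being a fixed vector regardless of the field of scalars). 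With all hypotheses in hand, Lemma~\ref{lemma : comutative radical prime field sealed} applied over $k_0$ gives that $\bfH(k_0)=G$ is sealed.

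The principal point requiring care is the package of standard but nontrivial facts about Weil restriction: that $\mathrm{Res}_{k/k_0}\bfG$ is again a connected linear algebraic group with $\bfH(k_0)\cong \bfG(k)$ as topological groups, that it carries the reductive part to the reductive part and the unipotent radical to the unipotent radical, and that it preserves connectedness and commutativity. Once these are invoked, the transfer of the compactness of $Z(L)$ and of the no-invariant-vectors condition is immediate, since these are statements about the fixed topological group $G$ and its $L$-action, which do not see the field of definition. This is also the structural reason the underlying Lemma~\ref{lemma : comutative radical prime field sealed} and Proposition~\ref{prop : not containing subspace compact*} were phrased over $\bbQ_p$ (and $\bbR$): the compactness criterion for additive subgroups there is the correct one precisely when the vector space is viewed over $\bbQ_p$, which is exactly the viewpoint that restriction of scalars enforces.
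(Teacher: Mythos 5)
Your proposal is correct and follows essentially the same route as the paper: the paper's proof likewise applies Weil restriction of scalars $\mathrm{R}_{k/k_0}$ from $k$ to $k_0=\bbQ_p$ or $\bbR$, notes that this yields a $k_0$-Levi decomposition with $\mathrm{R}_{k/k_0}\bfG(k_0)\cong\bfG(k)$, observes that compactness of $Z(L)$ and the no-invariant-vectors condition are unchanged, and invokes Lemma~\ref{lemma : comutative radical prime field sealed}. Your write-up is somewhat more explicit about why the hypotheses transfer, but there is no substantive difference in the argument.
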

\begin{proof}
    As $k$ is a local field of characteristic zero, $k$ is a separable finite field extension of $k_0=\bbQ_p$ or $k_0=\bbR$.
    Let $\mathrm{R}_{k/k_0}\bfG$ be the restriction of scalars (from $k$ to $k_0$) of the group $\bfG$.
    By functorial properties of the restriction of scalars functor, \cite[Chapter I (1.7)]{margulis1991discrete} we get that $\mathrm{R}_{k/k_0}\bfL \ltimes \mathrm{R}_{k/k_0}\Rad{u}{G} $
    is a $k_0$-Levi decomposition of $\mathrm{R}_{k/k_0}\bfG$.
    Moreover $\mathrm{R}_{k/k_0}\bfG(k_0)\cong \bfG(k)$. The center of $\mathrm{R}_{k/k_0}\bfL(k_0)=L$ is compact and the action of $L$ on $\mathrm{R}_{k/k_0}\Rad{u}{\bfG}(k_0)=\Rad{u}{G}$ is with no invariant vectors, hence by Lemma~\ref{lemma : comutative radical prime field sealed} $G$, is sealed.
\end{proof}
\begin{proof}[Proof of Theorem~\ref{theorem :  sealed theorem}]

Denote by $\{\bfZ_i\}_{i=1}^m$ the upper central series of $\Rad{u}{\bfG}$.
By Zariski density of the $k$-points, the collection $\{Z_i\}_{i=1}^m$ is the upper central series of $\Rad{u}{G}$.

Implication (1) to (2) is straightforward. Indeed, assume that $G$ is sealed since $G$ is Polish and any quotient group is sealed and in particular minimal. In particular, for any unipotent normal $k$-algebraic subgroup $\bfN\le \bfG$ the group $G/N=\bfG/\bfN(k)$ (see Proposition~\ref{prop : qutient split solvable}) is minimal and by Lemma~\ref{lem : minimal has compact center}, its center is compact.

Assume (2), observe that $L=G/\Rad{u}{G}$ hence its center is compact. We will show that the action of $L$ on $\Rad{u}{G}$ is with no non-trivial fixed points. Assume by contradiction that there exists a non-trivial element $u\in \Rad{u}{G}$ such that for any $l\in L$, $[l,u]=e$. Let $i$ be the minimal index such that $u\in Z_i\setminus Z_{i-1}$. Observe that $\bfZ_{i-1}$ is a normal unipotent $k$-algebraic group hence by assumption $Z(G/Z_{i-1})$ is compact. But then $u Z_{i-1} \in Z(G/Z_{i-1}) $ which is a contradiction since compact algebraic groups have no unipotent elements. Hence the action of $L$ on $\Rad{u}{G}$ is with no non-trivial fixed points and (3) holds.

By the Lie-correspondence for unipotent algebraic groups in characteristic zero \cite[Chapter I, Proposition (1.3.1)]{margulis1991discrete}, the logarithmic mapping $\Rad{u}{\bfG}\to \Lie(\Rad{u}{\bfG}$ is an $\bfL$ equivariant bijection.
Hence a non-trivial fixed point in the unipotent radical will be mapped to a non-trivial invariant vector in the Lie-Algebra, and vise versa, therefore (3) and (4) are equivalent.

    We will prove that condition (3) implies (1) by induction on $n$, the nilpotency degree of the unipotent radical $\Rad{u}{\bfG}$.
    If $n=0$, that is $\Rad{u}{\bfG}=\{e\}$, then the group $\bfG$ is reductive and $G$ has compact center, it follows by Proposition~\ref{prop : reductive sealed} that $G$ is sealed.
    If $n=1$, that is $\Rad{u}{\bfG}$ is abelian, it follows by Proposition~\ref{prop : sealed for commutaitve rad gemeral} that $G$ is sealed.
    Assume that the nilpotency degree of $\Rad{u}{\bfG}$ is $n>1$. 
    Set $\bfG_1 = \bfL \ltimes \bfZ_1$, and $\bfG^1 = \bfL \ltimes \Rad{u}{\bfG} / \bfZ_1$. We will show that condition (3) holds for the $k$-groups $\bfG_1$ and $\bfG^1$.
    Clearly $Z(L)$ is compact, and as $\bfZ_1\le \Rad{u}{\bfG}$, the action of $\bfL$ on $\bfZ_1$ is with non-trivial fixed point.
    We are left to show that the $\bfL$-action on $\Rad{u}{\bfG} / \bfZ_1$ is with no non-trivial fixed points.
    
    Assume by contradiction that there exists a non-trivial fixed point in $\Rad{u}{\bfG} / \bfZ_1$, i.e. there exists an element $u\notin\bfZ_1$ such that for any $l\in \bfL$, $[l,u]\in \bfZ_1$.
    The commutation map 
    \[
    \mathbf{c}_u \colon \bfL \to \bfZ_1 \ \  \mathbf{c}_u(l)=[l,u]
    \]
    satisfies the following co-cycle relation
    \[
       c_u(xy)=xyuy^{-1}x^{-1}u^{-1} = x(yuy^{-1}u^{-1})x^{-1} (x ux^{-1}u^{-1})
         = c_u(x)+x.c_u(y)
    \]
    with respect to the conjugation action of  $\bfL$ on $\bfZ_1$.
    By \cite[Chapter VII, Lemma (5.22)]{margulis1991discrete} the first co-homology of $\bfL$ is trivial, therefore there exist $z\in \bfZ_1$ such that $ \mathbf{c}_u (l)=l.z-z=[l,z]$. We get that for any $l\in \bfL$ 
    \begin{align*}
        e &= [l^{-1},u][l^{-1},z]^{-1} \\
         &= [l^{-1},u][z,l^{-1}] \\
        &=  l^{-1}z^{-1} uz l u^{-1}z l^{-1} z^{-1}l\\
        &=  l^{-1}z^{-1} [uz, l] z^{-1} l\\
     &=  l^{-1} [uz, l]  l\\
    \end{align*}
    hence for any $l\in \bfL$, $[uz,l]=e$ in contradiction to $\Rad{u}{\bfG}$ having no non-trivial fixed point.
    This shows that the $\bfL$-action on $\Rad{u}{\bfG} / \bfZ_1$ is indeed with no non-trivial fixed points.
   
    Since the nilpotency degrees of $\Rad{u}{\bfG_1}$ and of $\Rad{u}{\bfG^1}$ are strictly smaller then $n$, we get by the induction hypothesis that $G_1$ and $G^1$ are sealed.
  
    Finally, let $f\colon G\to H$, be a continuous homomorphism into a topological group $H$ with a dense image.
    The normal subgroup $G_1\lhd G$ is sealed, hence $f(G_1)$ is a normal closed subgroup of $H$, making $H/f(G_1)$ a topological group.
    Since $Z_1\le G_1$ the map $f$ factors through  $f_{1}\colon G/(Z_1) \to H/f(G_1)$.
    Observe that $G/Z_1\cong G^1$. Consider the following commutative diagram
    \[
    \begin{tikzcd}
        G \rar["f"]\dar[] & H \dar[]\\
        G^1\rar["f_{1}"] & H/f(G_1)
    \end{tikzcd}
    \] 
    The subgroup $f_{1}(G^1)$ is a closed subgroup in $H/f(G_1)$ and therefore its preimage in $H$, which is equal to $f(G)$, is closed. This means that $G$ is sealed, and the proof is complete.
\end{proof}
\section{A note on positive characteristic}
\label{Positive char}
As a proof of concept, we will show how the tools developed in the previous sections can be used to show the minimality of semisimple groups over local fields of positive characteristics. 

Similar to the proof of Proposition~\ref{prop : minimality for reductive}, we show that minimal-parabolic subgroups are minimal as topological groups and conclude minimality by co-compactness of parabolic subgroups.
Throughout this section we let $k$ be a local field of characteristic $p$. 

\begin{lemma}
\label{lemma : rel minimality of miniaml parabolic in reductive grp}
    Let $\bfG$ be a connected reductive $k$-group, and let $P\le G$ be a minimal parabolic in $G$. Then the unipotent radical $\Rad{u}{\bfP}$ is defined over $k$ and the group of $k$-points $\Rad{u}{P}$ is minimal in $P$.
\end{lemma}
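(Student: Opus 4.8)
The plan is to mirror the proof of Lemma~\ref{lemma : minimality of miniaml parabolic in reductive grp}, isolating exactly those ingredients that are insensitive to the characteristic. Fix a maximal $k$-split torus $\bfT\le\bfG$ and recall from the structure theory of $k$-parabolic subgroups that a minimal parabolic may be written as $\bfP=\CC_\bfG(\bfT)\ltimes\Rad{u}{\bfP}$, with $\CC_\bfG(\bfT)$ the centralizer of $\bfT$; by Borel--Tits theory the unipotent radical $\Rad{u}{\bfP}$ is defined over $k$ and is $k$-split, which is the first assertion of the statement and replaces the Levi decomposition used in characteristic $0$. Since $\bfT\le\CC_\bfG(\bfT)$, the group $T=\bfT(k)$ acts on $U:=\Rad{u}{P}$ by conjugation, and because $\CC_{\Rad{u}{\bfP}}(\bfT)=\Rad{u}{\bfP}\cap\CC_\bfG(\bfT)=\{e\}$, this action has no nonzero fixed points. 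This is precisely the feature that lets us dispense with any compactness hypothesis on the center that was needed in Lemma~\ref{lem : center is reletivly miniaml}.

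The reduction to the center is characteristic-free. As noted in the remark following Lemma~\ref{lem : nilpotent center minimal}, that lemma uses no assumption on the characteristic of $k$: for any group topology $\w$ on $U$ weaker than the analytic one, if $\w$ restricts to the analytic topology on $Z(U)$ then $\w$ is analytic on all of $U$. Hence, given a group topology $\w$ on $P$ weaker than the analytic topology, it suffices to prove that $\w$ restricts to the analytic topology on $V:=Z(\Rad{u}{P})$.

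To that end, set $\bfV:=Z(\Rad{u}{\bfP})$; as the center of the $k$-split unipotent radical of a parabolic it is a vector group carrying a linear $T$-action all of whose weights are nonzero, so $0$ is its only fixed vector. As in Lemma~\ref{lem : center is reletivly miniaml}, continuity of conjugation for $\w$ makes $T\times V\to V$ continuous for the $\w$-subspace topology $\w_V$, and by Remark~\ref{remark: stronger action continuous} it stays continuous after replacing the topology on $T$ by the analytic one. Now I invoke the torus rigidity of Corollary~\ref{corollary : irreducable tori space is minimal with action}: its proof rests only on Schur's Lemma~\ref{lemma :schurs lemma}, on complete reducibility of tori, and on the characteristic-free Lemma~\ref{lem : main result scalar multiplication} (applied on each weight space, where $T$ acts by scalar multiplication through a nonzero character whose image is an infinite $k_0$-analytic subgroup of $k^*$), so it is valid over a local field of any characteristic. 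It yields that $V$ admits no strictly weaker additive group topology supporting a continuous $T$-action, whence $\w_V$ is analytic; combined with the previous paragraph, $\w_U$ is analytic, i.e. $\Rad{u}{P}$ is minimal in $P$.

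The main obstacle is algebro-geometric rather than topological: one must verify, in possibly bad characteristic, that $Z(\Rad{u}{\bfP})$ really is a vector group on which $\bfT$ acts linearly with trivial fixed space, so that the vector-space rigidity of Section~\ref{Rigidity of scalar multiplication} applies verbatim. This is exactly the point at which $k$-splitness of unipotent radicals of parabolics and the linearizability of the torus action on them must be quoted from Borel--Tits structure theory; once granted, every topological ingredient (Lemma~\ref{lem : nilpotent center minimal}, Remark~\ref{remark: stronger action continuous}, Lemma~\ref{lem : main result scalar multiplication}, and hence Corollary~\ref{corollary : irreducable tori space is minimal with action}) transfers without change from the characteristic-zero arguments. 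A secondary, routine point is that $Z(\Rad{u}{P})=Z(\Rad{u}{\bfP})(k)$, which follows from Zariski density of the $k$-points of split unipotent groups.
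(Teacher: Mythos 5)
Your skeleton is essentially the paper's: write $\bfP=\CC_\bfG(\bfT)\ltimes\Rad{u}{\bfP}$ for a maximal $k$-split torus $\bfT$, quote \cite[Chapter I, Proposition (1.3.2)]{margulis1991discrete} for $k$-splitness of $\Rad{u}{\bfP}$ and a $\bfT$-equivariant linearization of its center, reduce to $Z(\Rad{u}{P})$ via the characteristic-free Lemma~\ref{lem : nilpotent center minimal}, decompose into weight spaces with non-trivial characters, and finish with the scalar-multiplication rigidity of Lemma~\ref{lem : main result scalar multiplication}. Those reductions are correct, as is your observation that $\bfT$ has no non-trivial fixed points in $\Rad{u}{\bfP}$.

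The gap is your claim that Corollary~\ref{corollary : irreducable tori space is minimal with action} (equivalently Lemma~\ref{lem : irreducble tori rep minimal}) ``transfers without change'' to characteristic $p$. It does not: the proof of Lemma~\ref{lem : irreducble tori rep minimal} uses that the representation map $\pi\colon T\to \GL(V)$ is a \emph{separable} algebraic morphism in order to invoke \cite[Chapter I, Proposition (2.5.4)(ii)]{margulis1991discrete} and conclude that $\pi(T)$ is a $k$-analytic subgroup; separability is automatic in characteristic zero but fails for characters in characteristic $p$. Concretely, after composing a weight $\chi_i$ with a suitable cocharacter one obtains the power map $x\mapsto x^{m_i}$ on $k^*$, and when $p\mid m_i$ this map has identically vanishing differential and its image lies in the proper closed subfield $k^{p^{N_i}}$ (where $N_i$ is the $p$-adic valuation of $m_i$), which is nowhere dense in $k$; so the image is \emph{not} a positive-dimensional $k$-analytic submanifold of $k^*$, and the phrase ``whose image is an infinite $k_0$-analytic subgroup of $k^*$'' is precisely what needs proof --- you never identify a valid base field $k_0$. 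This is where the paper's proof spends all of its technical effort: it factors $m_i=p^{N_i}d_i$, uses that the Frobenius $x\mapsto x^{p^{N_i}}$ is a homeomorphism of $k^*$ onto $k_i^*$ with $k_i=k^{p^{N_i}}$, applies the inverse function theorem to the prime-to-$p$ power map $\psi_{d_i}$, regards each weight space $V_i$ as a finite-dimensional $k_i$-vector space, and only then applies Lemma~\ref{lem : main result scalar multiplication} over $k_i$ (noting that the $k_i$-analytic and $k$-analytic topologies agree), concluding with Corollary~\ref{cor : product of relitive minimal} rather than with the characteristic-zero torus corollary. Without this Frobenius descent your appeal to Lemma~\ref{lem : main result scalar multiplication} has no admissible pair $(k_0,S)$, so the core positive-characteristic step is missing.
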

\begin{proof}
    By the structure of parabolic subgroups \cite[Proposition 20.6]{borel2012linear}, the minimal parabolic subgroup $\bfP\le \bfG$ is equal to the semi-direct product $\CC_\bfG(\bfT)\ltimes \Rad{u}{\bfP}$, where $\bfT$ is a maximal $k$-split torus, and $\CC_\bfG(\bfT)$ is the centralizer of $\bfT$ in $\bfG$.
      
    By \cite[Chapter I, Proposition (1.3.2)]{margulis1991discrete}, the unipotent group $\Rad{u}{\bfP}$ is defined and split over $k$ and there exists a $\bfT$-equivariant $k$-isomorphism $\Phi \colon \Rad{u}{\bfG}\to \Lie(\Rad{u}{\bfG}) $.
    Hence pulling the $k$-linear structure from the Lie-algebra action to the center of the unipotent radical, we get that $Z=Z(\Rad{u}{P})$ is isomorphic to a finite-dimensional $k$-vector space admitting a $T$ linear action.
    Since the torus is $k$-split, the linear space $Z$ admits a weight space decomposition with respect to non-trivial characters $\chi_i\colon T \to k^*$.
    This means $Z=\bigoplus_{i}V_i$ such that for any $v_i\in V_i$ and $t\in T$, $t.v_i=\chi_i(t)\cdot v_i$.
    Following \cite[Corollary 1. 8.11]{borel2012linear}, for any $i$, there exists a co-character $\lambda_i\colon k^*\to T$  such that the composition $\psi_{m_i}= \chi_i\circ\lambda_i$ is the power map
    $$
    \psi_{m_i}\colon  k^*\to  k^*, \ \ \psi_{m_i}\colon  x\mapsto x^{m_i} 
    $$
    Writing $m_i=p^{N_i}\cdot d_i$ with $N_i$ equal to the $p$-adic valuation of $m_i$, we define the local field $k_i=k^{p^{N_i}}$. By \cite[Part II, Chapter II, Inverse Function Theorem ]{serre2009lie} the map
    $$
    \psi_{d_i}\colon k^*\to k^*,\ \  \psi_{d_i}\colon x\mapsto x^{d_i}
    $$ 
    is a local homeomorphism at $1_k$. Moreover, the Fourbenuius map 
    $$
    \psi_{p^{N_i}}\colon k^*\to k_i^*,\ \  \psi_{p^{N_i}}\colon x\mapsto x^{p^{N_i}}
    $$ 
    is a homeomorphism from $k^*$ onto $k_i^*$.

    We will show relative minimality of $\Rad{u}{P}$ in $A=T\ltimes \Rad{u}{P}\le P$ and thus obtain relative minimality of $ \Rad{u}{P}$ in $P$.
    Let $\w$ be a group topology on $A$ weaker than the analytic topology.
    We claim that for any $i$, the subgroup $V_i$ is minimal in $A$.
    
    Following Remark~\ref{remark: stronger action continuous}, furnished with the analytic topology, the group $T$ acts continuously on $V_i$ endowed with the $\w$-subspace topology.
    We get that the action of $k^*$ on $V_i$ defined by $x.v=x^{m_i}v$ is continuous, since it is the composition of the co-character $\lambda_i$ with the $T$ action via the character $\chi_i$.
    Since $k$ is a finite filed extension of $k_i$, $V_i$ is a finite-dimensional $k_i$-vector space. Denote by $m\colon k_i\times V_i \to V_i$ the scalar multiplication. By the following commutative diagram
    \[
    \begin{tikzcd}
        k^*\times V_i \drar["\pi"] \dar["\psi_{m_i}",swap]&  \\
        k_i^*\times V_i \rar["m"] & V_i
    \end{tikzcd}
    \]
    there exists an open subgroup $1\in W\le k^*$ such that $\psi_{m_i}(W)\le {k_i}^*$ is an analytic subgroup acting continuously by multiplication on $V_i$.
    Thus by Lemma~\ref{lem : main result scalar multiplication}, the topology on $V_i$ is equal to the analytic $k_i$-point topology which is equivalent to the analytic $k$-point topology.
    
    We got that for any $i$ the subgroup $V_i$ is minimal in $A$. 
    Following Corollary~\ref{cor : product of relitive minimal}, the subgroup $Z=\oplus_i V_i$ is minimal in $A$.
    Therefore, the $\w$-subspace topology on $Z$ is equal to the analytic topology. Thus, by Lemma~\ref{lem : nilpotent center minimal} the $\w$-subspace topology on $\Rad{u}{P}$ is equal to the analytic topology.
\end{proof}
\begin{lemma}
\label{lemma : minimality of miniaml parabolic in reductive grp (Psitive char)}
     Let $\bfG$ be a semisimple $k$ group, and let $P\le G$ be a minimal parabolic in $G$. Then for any central subgroup $Z_0$, the group $P/Z_0$ is minimal.
\end{lemma}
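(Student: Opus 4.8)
The plan is to mirror the reductive case of Proposition~\ref{prop : minimality for reductive}: exhibit a co-compact solvable subgroup of $P/Z_0$ on which every weaker topology is forced to be analytic, and then close the argument with Merson's Lemma~\ref{lemma : Mersons lemma} via co-compactness. First I would fix a maximal $k$-split torus $\bfT$, so that by the structure of minimal parabolics $\bfP = \CC_\bfG(\bfT)\ltimes\Rad{u}{\bfP}$, and set $A = T\ltimes\Rad{u}{P}$. Since $\CC_\bfG(\bfT)/\bfT$ is anisotropic, its $k$-points $\CC_G(T)/T$ are compact, so $A$ is co-compact in $P$. As $\bfG$ is semisimple, $Z(G)$ is finite; by Lemma~\ref{lem : center of parabolic} we have $Z(P)=Z(G)$, so the central subgroup $Z_0\le Z(P)$ is finite. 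Because every element of $Z_0$ is semisimple and central, uniqueness of Jordan decomposition inside $A=T\ltimes\Rad{u}{P}$ shows $A\cap Z_0 = T\cap Z_0$, whence the image $\bar A$ of $A$ in $P/Z_0$ is isomorphic to $A/(T\cap Z_0)$ and is co-compact in $P/Z_0$.

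The heart of the matter is to prove that $\bar A\cong A/(T\cap Z_0)$ is a minimal topological group. By Lemma~\ref{lemma : rel minimality of miniaml parabolic in reductive grp}, $\Rad{u}{P}$ is minimal in $A=T\ltimes\Rad{u}{P}$; since $T\cap Z_0$ is central it acts trivially on $\Rad{u}{P}$, so Corollary~\ref{cor : minimality under qutient} transports this to minimality of $\Rad{u}{P}$ in $A/(T\cap Z_0)$, i.e. any weaker group topology on $\bar A$ restricts to the analytic topology on $\Rad{u}{P}$. Now let $C=\{t\in T:\ [t,u]=e\ \forall u\in\Rad{u}{P}\}$ be the kernel of the $T$-action. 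Since $\bfG$ is semisimple and $\bfT$ is a maximal $k$-split torus, the weights of $\bfT$ on $\Rad{u}{P}$ (the positive relative roots) have finite common kernel, so $C=\bigcap_\alpha\ker\alpha$ is finite; in particular $C_0 := T\cap Z_0\le C$ is a closed co-compact subgroup of $C$. I would then invoke Corollary~\ref{cor : compact qutient minimality of solvable} for the connected solvable group $\bfA=\bfT\ltimes\Rad{u}{\bfP}$ with this $C_0$. Its proof is insensitive to the characteristic: it relies only on the representation of $A$ on $\bigoplus_i Z_i/Z_{i-1}$ built from the upper central series of $\Rad{u}{P}$, which is available here because $\Rad{u}{\bfP}$ is $k$-split (established in Lemma~\ref{lemma : rel minimality of miniaml parabolic in reductive grp}) and hence has vector-group central quotients. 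As every weaker topology keeps $\Rad{u}{P}$ analytic, the corollary yields that every weaker topology on $A/C_0=\bar A$ is analytic, that is, $\bar A$ is minimal.

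It remains to combine these facts. Let $\w$ be any group topology on $P/Z_0$ weaker than the analytic one. By the minimality just proved, $\w$ restricts to the analytic topology on $\bar A$, so $\bar A$ is a Polish, hence $\w$-closed, subgroup. Because $\bar A$ is co-compact in $P/Z_0$, the $\w$-quotient topology on $(P/Z_0)/\bar A$ is Hausdorff and weaker than the compact analytic quotient topology, hence equal to it. Merson's Lemma~\ref{lemma : Mersons lemma} then forces $\w$ to coincide with the analytic topology, proving that $P/Z_0$ is minimal. The main obstacle is the step flagged above: one must verify that the solvable input, Corollary~\ref{cor : compact qutient minimality of solvable}, genuinely carries over to positive characteristic, which hinges precisely on the $k$-splitness of $\Rad{u}{\bfP}$ and on the finiteness of the kernel $C$ of the torus action.
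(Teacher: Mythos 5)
Your overall route coincides with the paper's: reduce to the co-compact solvable subgroup $A=T\ltimes\Rad{u}{P}$, invoke Lemma~\ref{lemma : rel minimality of miniaml parabolic in reductive grp} for relative minimality of $\Rad{u}{P}$, run the solvable-group representation machinery of Lemma~\ref{lem : lemma for minimality of solvable}/Corollary~\ref{cor : compact qutient minimality of solvable} on a quotient of $A$, and finish with Merson's Lemma~\ref{lemma : Mersons lemma} via co-compactness of $A$ in $P$. Your bookkeeping around $Z_0$ is also fine: $Z_0$ is finite by Lemma~\ref{lem : center of parabolic} and semisimplicity, $A\cap Z_0=T\cap Z_0$ by uniqueness of Jordan decomposition, and the kernel $C$ of the $T$-action is finite since it is killed by all relative roots (equivalently, since $C\le Z(A)\le Z(P)=Z(G)$).

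However, there is a genuine gap at precisely the step you flag as the main obstacle. Your justification for why the solvable machinery survives in characteristic $p$ --- that $\Rad{u}{\bfP}$ is $k$-split ``and hence has vector-group central quotients'' --- is not sufficient. The construction in Lemma~\ref{lem : lemma for minimality of solvable} needs more than each $Z_i/Z_{i-1}$ being $k$-isomorphic to some $\bfG_a^n$: it needs the conjugation action of $T$ on each $Z_i/Z_{i-1}$ to be \emph{$k$-linear}, so that one obtains a finite-dimensional $k$-rational representation $\pi\colon A\to\GL(V)$, with algebraic (hence closed) image and kernel $C\ltimes\Rad{u}{P}$, to which Corollary~\ref{corollary : bijection onto polish group makes minimal} can be applied. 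In characteristic zero this linearity is automatic (see \S\ref{subsec:Ala}), but in characteristic $p$ an algebraic group automorphism of a vector group need not be linear --- it may involve $p$-polynomial twists such as $(x,y)\mapsto(ax,\,by+cx^{p})$ --- so a torus acting by group automorphisms on a vector group does not a priori land in $\GL(V)$ for an arbitrary choice of vector-group structure. The missing ingredient, and what the paper's proof actually invokes at this point, is the $\bfT$-equivariant $k$-isomorphism $\Rad{u}{\bfP}\to\Lie(\Rad{u}{\bfP})$ of \cite[Chapter I, Proposition (1.3.2)]{margulis1991discrete}, which transports the conjugation action to the (linear) adjoint action and thereby produces the required $k$-rational representation with kernel $C\ltimes\Rad{u}{P}$. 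This equivariant isomorphism is already established inside Lemma~\ref{lemma : rel minimality of miniaml parabolic in reductive grp}, which you cite; you need to extract from it the linearization of the $T$-action, not merely the $k$-splitness of $\Rad{u}{\bfP}$. With that substitution your argument closes and agrees with the paper's proof.
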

\begin{proof}
    Let $A=T\ltimes \Rad{u}{P}$ be as in the proof of Lemma~\ref{lemma : rel minimality of miniaml parabolic in reductive grp} above were we established relative minimality of the unipotent radical $\Rad{u}{P}$.
    Using the same methods as in Lemma~\ref{lem : lemma for minimality of solvable}, the $\bfT$-equivariant $k$-morphism from $\Rad{u}{\bfP}$ to its Lie algebra $ \Lie(\Rad{u}{\bfP})$ fosters a $k$-rational representation $\pi\colon A \to \GL(V)$ with kernel equal to $C\ltimes \Rad{u}{P}$, where $C$ is the kernel of the $T$ action.
    The proof of the minimality of $P/Z_0$, now follows the same as in the
    proof of Lemma~\ref{lemma : minimality of miniaml parabolic in reductive grp}.
\end{proof}
We, therefore, obtain the general result for semisimple groups over local fields.
\begin{thm}[Bader-Gelander, Theorem 5.1 \cite{MR3692904}]
    Let $k$ be a local field, and $\bfG$ a semisimple $k$-group. Then $G=\bfG(k)$ is sealed.
\end{thm}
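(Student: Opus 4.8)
The plan is to reproduce, now in positive characteristic, the exact chain of implications that produced the sealed property for semisimple groups in characteristic zero (Lemma~\ref{lem : simple groups are sealed} through Lemma~\ref{lemma : G+ is seald}). The only new input is the minimality of minimal parabolics furnished by Lemma~\ref{lemma : minimality of miniaml parabolic in reductive grp (Psitive char)}; everything downstream of it is characteristic-free, so the work is mostly one of re-running the earlier arguments with this lemma substituted for its characteristic-zero counterpart, Lemma~\ref{lemma : minimality of miniaml parabolic in reductive grp}.

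First I would upgrade parabolic minimality to minimality of the whole group, following verbatim the proof of Proposition~\ref{prop : minimality for reductive}. For a connected semisimple $\bfG$ the center is finite, so the compact-center hypothesis there is automatic; fixing a minimal parabolic $P\le G$ and a finite central subgroup $Z_0\le G$, Lemma~\ref{lemma : minimality of miniaml parabolic in reductive grp (Psitive char)} gives that $P/Z_0$ is minimal, hence relatively minimal in $G/Z_0$, and since $P$ is co-compact it is co-minimal; Lemma~\ref{lamma : minimal and co-minimal} together with Merson's Lemma~\ref{lemma : Mersons lemma} then yields that $G/Z_0$ is minimal for every finite central $Z_0$. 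Next I would mirror Lemma~\ref{lem : simple groups are sealed}: for $\bfG$ almost $k$-simple and a continuous $f\colon G\to H$, the Margulis structure theorem (valid in every characteristic) forces $\ker(f)$ to be either co-compact, in which case $f(G)$ is compact and hence closed, or finite and central, in which case $G/\ker(f)$ is minimal by the preceding step and the induced injection $G/\ker(f)\to H$ has closed image by Corollary~\ref{cor : minimal has closed image}. Thus almost $k$-simple groups are sealed.

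Finally I would reduce the general semisimple case to the almost simple one through the simply connected covering, exactly as in Lemma~\ref{lemma : G+ is seald}. Writing $\widetilde{\bfG}$ for the simply connected cover, its almost simple factors split into anisotropic ones, whose $k$-points are compact and therefore sealed, and isotropic ones, which are sealed by the previous paragraph; hence $\widetilde{\bfG}(k)$, a finite product of sealed groups, is sealed by Corollary~\ref{cor : product of sealed}. The central $k$-isogeny $\widetilde{\bfG}\to\bfG$ carries $\widetilde{\bfG}(k)$ onto a closed subgroup containing $G^{+}$, which is therefore itself sealed (as a topological quotient of a sealed group, using Proposition~\ref{prop : image and kernel of k point maps}) and co-compact in $G$; Corollary~\ref{cor : co compact sealed} then gives that $G$ is sealed.

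The genuinely characteristic-dependent difficulty — that the torus weights on the unipotent radical are power maps $x\mapsto x^{m_i}$ rather than honest field scalars, so that one must peel off a Frobenius factor and apply the inverse function theorem — has already been absorbed into Lemmas~\ref{lemma : rel minimality of miniaml parabolic in reductive grp} and~\ref{lemma : minimality of miniaml parabolic in reductive grp (Psitive char)}. The remaining steps are assembly, and the one point that still demands care is that the isogeny $\widetilde{\bfG}(k)\to G$ need not be surjective; this is precisely why the argument routes through the co-compact subgroup $G^{+}$ and invokes Corollary~\ref{cor : co compact sealed}, rather than presenting $G$ as a direct quotient of a product of the factors.
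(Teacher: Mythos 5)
Your proposal is correct and follows essentially the same route as the paper: the paper's proof simply declares that the argument of \S\ref{subsecting sealed reductive} goes through verbatim once Lemma~\ref{lemma : minimality of miniaml parabolic in reductive grp (Psitive char)} replaces Proposition~\ref{prop : minimality for reductive}, which is exactly the chain you re-run (parabolic minimality $\Rightarrow$ minimality of $G/Z_0$ $\Rightarrow$ almost $k$-simple groups sealed $\Rightarrow$ simply connected cover, products, and the co-compact subgroup $G^+$). Your explicit handling of the non-surjectivity of $\widetilde{\bfG}(k)\to G$ via $G^+$ and Corollary~\ref{cor : co compact sealed} is precisely what Lemma~\ref{lemma : G+ is seald} and Corollary~\ref{corollary : finite index in ss} do in the paper, just spelled out rather than cited.
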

\begin{proof}
   The theory developed by Borel and Tits on the structure of $G^+$ is valid in positive characteristic as well. Therefore, the proof follows exactly as in \S \ref{subsecting sealed reductive}, where we first obtain the sealed property for almost $k$-simple groups by minimality of $P/Z_0$ where $P$ is a minimal parabolic and $Z_0$ is a central subgroup, see Lemma~\ref{lem : simple groups are sealed}.
   Products and quotients of sealed groups are sealed. As semisimple $k$-groups are a quotient of a product of $k$-almost simple groups (see \cite[Theorem 22.10]{borel2012linear}), we get that the $k$-points of semisimple $k$-group is sealed.
\end{proof}
    
\bibliographystyle{alpha}
\bibliography{endtobig.bib}
\end{document}